\newcommand{\inlineitem}[1][]{%
	\ifnum\enit@type=\tw@
	{\descriptionlabel{#1}}
	\hspace{\labelsep}%
	\else
	\ifnum\enit@type=\z@
	\refstepcounter{\@listctr}\fi
	\quad\@itemlabel\hspace{\labelsep}%
	\fi} \makeatother
\newcommand{\ga}{\alpha}
\newcommand{\gb}{\beta}
\newcommand{\gga}{\gamma}
\newcommand{\gd}{\delta}
\newcommand{\gep}{\epsilon}
\newcommand{\gth}{\theta}
\newcommand{\gk}{\kappa}
\newcommand{\gl}{\lambda}
\newcommand{\gm}{\mu}
\newcommand{\gn}{\nu}
\newcommand{\gp}{\pi}
\newcommand{\gr}{\rho}
\newcommand{\gs}{\sigma}
\newcommand{\gt}{\tau}
\newcommand{\gf}{\phi}
\newcommand{\gc}{\psi}
\newcommand{\gch}{\chi}
\newcommand{\gom}{\omega}
\newcommand{\Gd}{\Delta}
\newcommand{\Gl}{\Lambda}
\newcommand{\Gs}{\Sigma}
\newcommand{\Gf}{\Phi}
\newcommand{\Gom}{\Omega}
\newcommand{\ugl}{\ul{\gl} = (\gl_{1}^{\gr _1}>\gl_{2}^{\gr _2}>\gl_{3}^{\gr _3}>\ldots>\gl_{k}^{\gr _k})}
\newcommand{\grpp}{\mcl{A}_{\ul{\gl}}}
\newcommand{\autgp}{\mcl{G}_{\ul{\gl}}}
\newcommand{\autmgp}{\mcl{G}_{\ul{\gm}}}
\newcommand{\subs}{\subset}
\newcommand{\sbnq}{\subsetneq}
\newcommand{\bs}{\backslash}
\newcommand{\nin}{\notin}
\newcommand{\ti}{\tilde}
\newcommand{\mbb}{\mathbb}
\newcommand{\mcl}{\mathcal}
\newcommand{\ul}{\underline}
\newcommand{\ol}{\overline}
\newcommand{\us}{\underset}
\newcommand{\os}{\overset}
\newcommand{\lra}{\longrightarrow}
\newcommand{\Z}{\mbb Z}
\newcommand{\ZZ}[1]{\Z/p^{#1}\Z}
\newcommand{\Ra}{\Rightarrow}
\newcommand{\Llra}{\Longleftrightarrow}
\newcommand{\es}{\emptyset}
\newcommand{\fo}[3]{
	\begingroup
	{\fontsize{#1}{#2}\selectfont {#3}}
	\endgroup
}
\newcommand{\equ}[1]{%
	\begin{equation*}
		#1
	\end{equation*}
}
\newcommand{\equa}[1]{%
	\begin{equation*}
		\begin{aligned}
			#1
		\end{aligned}
	\end{equation*}
}
\newcommand{\equan}[2]{%
	\begin{equation}
		\label{Eq:#1}
		\begin{aligned}
			#2
		\end{aligned}
	\end{equation}
}
\DeclareMathOperator{\Det}{Det}
\DeclareMathOperator{\Hom}{Hom}
\DeclareMathOperator{\Aut}{Aut}
\DeclareMathOperator{\Ker}{Ker}
\DeclareMathOperator{\Diag}{Diag}
\DeclareMathOperator{\Tra}{Trace}
\newcommand{\mattwo}[4]{%
	\begin{pmatrix}
		#1 & #2\\ #3 & #4
	\end{pmatrix}
}
\newcommand{\matthree}[9]{%
	\begin{pmatrix}
		#1 & #2 & #3\\ #4 & #5 & #6\\ #7 & #8 & #9
	\end{pmatrix}
}
\theoremstyle{plain}
\newtheorem{theorem}{Theorem}[section]
\newtheorem{prop}[theorem]{Proposition}
\newtheorem{cor}[theorem]{Corollary}
\newtheorem{claim}[theorem]{Claim}
\def\namedlabel#1#2{\begingroup
	\def\@currentlabel{#2}%
	\label{#1}\endgroup
}
\theoremstyle{definition}
\newtheorem{defn}[theorem]{Definition}
\theoremstyle{remark}
\newtheorem{remark}[theorem]{Remark}
\newtheorem{example}[theorem]{Example}
\numberwithin{equation}{section}
\begin{document}
	
	\title[A Vanishing Criterion]{On the Vanishing Criterion for the Cohomology Groups of the Automorphism Group of a finite Abelian $p$-Group}
	\author{Chudamani Pranesachar Anil Kumar}
	\address{Department of Mathematics, KREA University Campus, 5655, Central Expressway, Sri City, Andhra Pradesh, 517646, INDIA  \,\, email: {\tt akcp1728@gmail.com}}
	\author{Soham Swadhin Pradhan}
	\address{School of Mathematics, Harish Chandra Research Institute, Chhatnag Road, Jhunsi, Prayagraj, 211019, INDIA  \,\, email: {\tt soham.spradhan@gmail.com}}
	
	\subjclass[2010]{Primary 20J06, Secondary 20K40}
	\keywords{Finite Abelian $p$-Groups, Automorphism Groups, First and Second Cohomology Groups}
	\thanks{This work is done while the first author is a Postdoctoral Fellow at Harish-Chandra Research Institute, INDIA and later an Assistant Professor of Mathematics at KREA University, Sri City, Andhra Pradesh, INDIA and the second author is a Post Doctoral Fellow at Harish-Chandra Research Institute, INDIA}
	\date{\sc \today}
	\vspace*{-3cm}
	\begin{abstract}
		For a partition $\ugl$ and its associated finite abelian $p$-group $\grpp=\us{i=1}{\os{k}{\oplus}} (\ZZ {\gl_i})^{\gr_i}$, where $p$ is a prime, we consider two actions of its automorphism group $\autgp=\Aut(\grpp)$ on $\grpp$. The first action is the natural action $g\bullet a=\ ^ga$ for all $g\in\autgp$ and $a\in\grpp$ where the action map is denoted by $\Gl_1=Id_{\autgp}:\autgp\lra \autgp$ and the second action is the trivial action $g\bullet a=a$ for all $g\in\autgp$ and $a\in\grpp$ where the action map is denoted by $\Gl_2:\autgp\lra \{e\}\subs\autgp$ the trivial map. For the natural action $\Gl_1$, we show that the first and second cohomology groups $H_{\Gl_1}^i(\autgp,\grpp),i=1,2$ vanish for any partition $\ul{\gl}$ for an odd prime $p$. For the trivial action $\Gl_2$ we show that, for an odd prime $p$,  the first cohomology group $H_{\Gl_2}^1(\autgp,\grpp)$ and for an odd prime $p\neq 3$, the second cohomology group $H_{\Gl_2}^2(\autgp,\grpp)$ vanish if and only if the difference between two successive parts of the partition $\ul{\gl}$ is at most one. This is done by proving that the $\mod p$ cohomologies $H^i(\autgp,\Z/p\Z),i=1,2$ vanish if and only if the difference between two successive parts of the partition $\ul{\gl}$ is at most one for an odd prime $p\neq 3$. The vanishing of the second cohomology in this context is proved using the extended Hochschild-Serre exact sequence for central extensions. 
	\end{abstract}
	\maketitle
\section{\bf{Introduction}}
The cohomology of finite groups is a very vast subject which has its interactions with other areas such as group theory, represention theory, homological algebra, number theory, K-theory, classifying spaces, group actions, characteristic classes, homotopy theory. It is because of its interactions with the other areas, the subject is of immense interest. The calculation of cohomology of finite groups is quite challenging as it involves a number of complicated ingredients. An article~\cite{MR2355780} by A.~Adem mentions in Section 4.3, a calculational method for the computation of cohomology of finite groups. The calculation method mentioned here is also elaborated in the survey article~\cite{MR1460209} by A.~Adem. The details of this method are mentioned in Section~\ref{sec:CMC}.

Among the finite groups, the class of symmetric groups and the class of general linear groups are particularly important and interesting as far as the cohomology computation is concerned. D.~Quillen~\cite{MR0315016} has computed the mod $l$ cohomology ring of $GL_n(\Z/p\Z)$ for a prime $l\neq p$ and gives partial results when $l=p$. Later B.~M.~Mann~\cite{MR0500961} has computed the mod $p$ cohomology ring of symmetric groups for an odd prime $p$. However, nothing much is known about the mod $p$ cohomology of the automorphism group $\autgp$ of a finite abelian $p$ group $\grpp$ except when the abelian $p$-group $\grpp$ is elementary. Even when $\ul{\gl}=(1^n)$ only a vanishing range for the cohomologies is known (see D.~Quillen~\cite{MR2355780}, E.~M.~Friedlander and B.~J.~Parshall~\cite{MR0722727}). So obtaining a vanishing criterion for $H^1_{Trivial\ Action}(\autgp,\Z/p\Z)$ or $H^2_{Trivial\ Action}(\autgp,\Z/p\Z)$ itself is a new result in this direction.

\subsection{\bf{Motivation}}
There is a general principle that when cohomology of groups is used to classify obstructions to constructions then $H^2$ classifies the isomorphism classes of structures up to suitable equivalence and $H^1$ acts simply transitively on the set of equivalence classes of automorphisms of a given structure. When we have vanishing theorems for $H^1$ which occurs in most important situations then structures being studied do not have ``nontrivial" automorphisms. When we have vanishing theorems for $H^2$ then there are no ``nontrivial" obstructions to constructions. In this article, as mentioned in the abstract, we study $H^i_{\Gl_j}(\autgp,\grpp)$ for $i,j=1,2$ and give vanishing results in the case of $\Gl_1$ for odd primes and give vanishing criteria in the case of $\Gl_2$ for odd primes $p\neq 3$. Here $\Gl_1$ is the natural action of $\autgp$ on $\grpp$ and $\Gl_2$ is the trivial action of $\autgp$ on $\grpp$. Henceforth we will specify the action more explicitly instead of using the notation $\Gl_1,\Gl_2$ to avoid any confusion.

Since this article concerns finite abelian p-groups and their automorphism groups which are characterized by partitions, the study of $H^1$ and $H^2$ is naturally of combinatorial interest. The vanishing criteria is expressed in terms of the combinatorics of the partitions.

In this article, for the computation of $H^i_{Trivial\ Action}(\autgp,\grpp),i=1,2$, naturally the mod $p$ cohomologies $H^i_{Trivial\ Action}(\autgp,\Z/p\Z),i=1,2$ play an important role. Since automorphism group of a finite abelian p-group is a generalization of a finite general linear group over $\Z/p\Z$, the mod $p$ cohomologies\linebreak $H^i_{Trivial\ Action}(GL_n(\Z/p\Z),\Z/p\Z),i=1,2$ of the finite general linear groups $GL_n(\Z/p\Z),n\geq 1$ is of special interest.

D.~Quillen in his article~\cite{MR0315016} gives a vanishing range for the mod $p$ cohomology of the finite general linear group and the range was further improved in the article~\cite{MR0722727} by E.~M.~Friedlander and B.~J.~Parshall. There is a landmark result due to D.~Quillen~\cite{MR0298694} which tells us that the $p$-elementary abelian subgroups can be used to understand most of the mod $p$ cohomology of a group. This is also mentioned in the survey article~\cite{MR1460209} by A.~Adem.

\subsection{\bf{Ideas of Main Results and Brief Summary of the Article}}  
The sections in this article are organized as follows. Section~\ref{sec:Prelim} contains the preliminaries required to understand most of the results of the article.  Sections~\ref{sec:H1NaturalAction},~\ref{sec:H2NaturalAction} give  vanishing results for $H^i_{Natural\ Action}(\autgp,\grpp),i=1,2$ in Theorems~\ref{theorem:H1NaturalAction},~\ref{theorem:H2NaturalAction} for odd primes respectively. Sections~\ref{sec:H1TrivialAction},~\ref{sec:H2TrivialAction} describe vanishing criteria for $H^i_{Trivial\ Action}(\autgp,\grpp),i=1,2$ in terms of partitions in Theorems~\ref{theorem:CriterionH1TrivialAction},~\ref{theorem:CriterionH2TrivialAction} respectively. The vanishing and nonvanishing results in the case $H^i_{Trivial\ Action}(\autgp,\grpp),i=1,2$ can be observed from the mod $p$ cohomologies 
$H^i_{Trivial\ Action}(\autgp,\Z/p\Z),i=1,2$.

The proofs of Theorems~\ref{theorem:H1NaturalAction},~\ref{theorem:H2NaturalAction} are not long, though not obvious and follows with some tricky arguments. The nonvanishing cases of $H^i_{Trivial\ Action}(\autgp,\grpp)$ follow somewhat easily though they are not straightforward. The vanishing case of $H^1_{Trivial\ Action}(\autgp,\grpp)$ given in Theorem~\ref{theorem:VanishingH1}  and the vanishing case of $H^2_{Trivial\ Action}(\autgp,\grpp)$ given in Theorem~\ref{theorem:VanishingH2} are involved.  Much of the paper is devoted to proving these two theorems. So we will mention the ideas involved in proving them.

The proof of Theorem~\ref{theorem:VanishingH1} requires a computation of the commutator subgroup of $\autgp$ in the case where the parts of the partition $\ul{\gl}$ differ by at most one. The proof of Theorem~\ref{theorem:VanishingH2} is a long one which occupies most of Section~\ref{sec:H2TrivialAction}. We mention briefly the method of the proof.

To give a vanishing range for $i\in \mbb{N}\cup\{0\}$ of the mod p cohomologies $H^i_{Trivial\ Action}$ $(GL_n(\Z/p\Z),\Z/p\Z)$ of a general linear group $GL_n(\Z/p\Z)$, D.~Quillen~\cite{MR0315016} considers the group of unipotent upper triangular matrices $U_n(\Z/p\Z)\subs GL_n(\Z/p\Z)$ and shows that the cohomology groups $H^i_{Trivial\ Action}(U_n(\Z/p\Z),\Z/p\Z)$ which are semisimple representations of the diagonal subgroup $T_n\subs GL_n(\Z/p\Z)$ has no nontrivial invariants, that is, $H^i_{Trivial\ Action}(U_n(\Z/p\Z),\Z/p\Z)^{T_n}=0$ for $i$ in a certain vanishing range. For this he uses the Poincar\'{e} series of $H^*_{Trivial\ Action}$ $(U_n(\Z/p\Z),\Z/p\Z)$ as a representation of $T_n$ and estimates the series using the Hochschild-Serre spectral sequences for central extensions. The central extensions are obtained from a chief series of the $p$-group $U_n(\Z/p\Z)$ which is nilpotent. He shows that the trivial character does not occur in the character of the representation $H^i_{Trivial\ Action}(U_n(\Z/p\Z),\Z/p\Z)$ for $i$ in a certain vanishing range.

For proving $H^2_{Trivial\ Action}(\autgp,\Z/p\Z)=0$ when the partition $\ul{\gl}$ has the property that any two consecutive parts of the partition differ by at most one, first we consider a suitable $p$-Sylow subgroup $\mcl{P}_{\ul{\gl}}\subs \autgp$ and observe that $H^2_{Trivial\ Action}(\mcl{P}_{\ul{\gl}},\Z/p\Z)$ is a semisimple representation for the restricted diagonal subgroup ${\mcl{D}_{\ul{\gl}}}\subs \autgp$ consisting of diagonal matrices whose orders divide $p-1$. Then we prove that there are no nontrivial $\mcl{D}_{\ul{\gl}}$ invariants in $H^2_{Trivial\ Action}(\mcl{P}_{\ul{\gl}},\Z/p\Z)$, that is, $H^2_{Trivial\ Action}(\mcl{P}_{\ul{\gl}},\Z/p\Z)^{\mcl{D}_{\ul{\gl}}}=0$.
Here instead of using the method of Poincar\'{e} series and estimating the series with Hochschild-Serre spectral sequence as given in D.~Quillen~\cite{MR0315016}, we use the extended Hochschild-Serre exact sequence for central extensions (a result due to N.~Iwahori and H.~Matsumoto~\cite{MR0180607}, Proposition $1.1$, Page 132) repeatedly. The central extensions here are obtained from a chief series $\mcl{N}^s_{\ul{\gl}}\trianglelefteq \mcl{P}_{\ul{\gl}}$ for $s\in S$ a totally ordered set. Then we compute the $\mcl{D}_{\ul{\gl}}$ invariants $H^2_{Trivial\ Action}(\frac{\mcl{P}_{\ul{\gl}}}{\mcl{N}^s_{\ul{\gl}}},\Z/p\Z)^{\mcl{D}_{\ul{\gl}}}$ using the extended Hochschild-Serre exact sequence and show that there are no nontrivial invariants $H^2_{Trivial\ Action}(\mcl{P}_{\ul{\gl}},\Z/p\Z)^{\mcl{D}_{\ul{\gl}}}$. The chief series computation and calculation of the $\mcl{D}_{\ul{\gl}}$ invariants are combinatorially technical as they involve the partition $\ul{\gl}$. 
\section{\bf{Preliminaries}}
\label{sec:Prelim}
In this section we mention the required preliminaries needed in this article. 

\subsection{\bf{Commutator Subgroup of $GL_n(\ZZ k)$}}
~\\
We begin with a proposition.
\begin{prop}
	\label{prop:ElementaryGeneration}
	Let $p$ be a prime and $n,k$ be two positive integers. The group $SL_n(\ZZ k)$ is generated by elementary matrices.
\end{prop}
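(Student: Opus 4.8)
The plan is to prove the statement by the classical method of Gaussian reduction, exploiting the fact that $R := \ZZ{k}$ is a commutative local ring with unique maximal ideal $\mfr m = (p)$, so that an element of $R$ is a unit precisely when it is not divisible by $p$. Throughout I write $E_{ij}(a) = I_n + a\, e_{ij}$ (with $i \neq j$ and $a \in R$) for the elementary matrices, where $e_{ij}$ is the matrix unit; recall that left (resp.\ right) multiplication by $E_{ij}(a)$ adds $a$ times row $j$ to row $i$ (resp.\ $a$ times column $i$ to column $j$), and that $E_{ij}(a)^{-1} = E_{ij}(-a)$ is again elementary. The goal is to show that any $A \in SL_n(R)$ can be carried to the identity by multiplying on the left and on the right by elementary matrices; since the inverse of a product of elementary matrices is again such a product, this is equivalent to the assertion.

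The first and main ingredient is a pivoting lemma: because $\det A$ is a unit, every column of $A$ contains a unit entry. Indeed, expanding $\det A$ by cofactors along a fixed column shows that if every entry of that column lay in $\mfr m = (p)$, then $\det A \in (p)$, contradicting $\det A = 1$. First I would use this to place a unit in the $(1,1)$ slot: if $a_{11}$ is already a unit there is nothing to do, and otherwise I pick $i \geq 2$ with $a_{i1}$ a unit and replace row $1$ by row $1$ plus row $i$ (left multiplication by $E_{1i}(1)$); since $a_{11} \in (p)$ and $a_{i1} \notin (p)$, the new entry $a_{11}+a_{i1}$ is a unit. With the unit $u = a_{11}$ in place I clear the rest of the first column and first row by the operations $E_{j1}(-a_{j1}u^{-1})$ on the left and $E_{1j}(-u^{-1}a_{1j})$ on the right, arriving at a block matrix $\begin{pmatrix} u & 0 \\ 0 & A' \end{pmatrix}$ with $A' \in GL_{n-1}(R)$. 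Since elementary operations do not change the determinant, $\det A' = u^{-1}$ is again a unit, so the pivoting lemma applies to $A'$ and I may iterate. This reduces $A$, by elementary operations alone, to a diagonal matrix $D = \mrm{diag}(u_1, \ldots, u_n)$ with $u_1 \cdots u_n = 1$.

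It remains to write such a $D$ as a product of elementary matrices, and here the key tool is the Whitehead identity. Working in the upper-left $2\times 2$ block, a direct computation gives
\[
w(u) := E_{12}(u)\,E_{21}(-u^{-1})\,E_{12}(u) = \begin{pmatrix} 0 & u \\ -u^{-1} & 0 \end{pmatrix}, \qquad w(u)\,w(-1) = \begin{pmatrix} u & 0 \\ 0 & u^{-1} \end{pmatrix},
\]
so that $\mrm{diag}(u, u^{-1}, 1, \ldots, 1)$ — and, by placing the same block in the coordinates $(i, i+1)$, each matrix $\mrm{diag}(1, \ldots, u, u^{-1}, \ldots, 1)$ — is a product of elementary matrices. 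Multiplying $D$ successively by $\mrm{diag}(u_1^{-1}, u_1, 1, \ldots, 1)$, then by $\mrm{diag}(1, (u_1u_2)^{-1}, u_1u_2, 1, \ldots, 1)$, and so on, telescopes the diagonal to $\mrm{diag}(1, \ldots, 1, u_1\cdots u_n) = I_n$, using $u_1 \cdots u_n = 1$ at the last step. Hence $D$ is a product of elementary matrices, and combining the two stages exhibits $A$ itself as such a product, as required.

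The argument is essentially routine once the setup is right; the step that genuinely needs the structure of $\ZZ{k}$ is the pivoting lemma, where locality (every non-unit lies in the single maximal ideal $(p)$) is exactly what guarantees a unit pivot and makes the reduction terminate — over a general commutative ring this can fail. I would also remark that an alternative, shorter route is available: reduction modulo $p^k$ gives a surjection $SL_n(\Z) \lra SL_n(\ZZ{k})$, and since $SL_n(\Z)$ is generated by elementary matrices, so is its image. This, however, shifts the work onto the surjectivity of reduction and the classical generation result for $SL_n(\Z)$, so I prefer the self-contained Gaussian-reduction proof sketched above.
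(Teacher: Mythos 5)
Your proof is correct and follows essentially the same route as the paper's: Gaussian reduction to a diagonal matrix using unit pivots (which exist by locality of $\Z/p^k\Z$), followed by an explicit $2\times 2$ identity expressing determinant-one diagonal matrices as products of elementary matrices (your Whitehead-lemma telescoping plays the role of the paper's single explicit product $B$). In fact your write-up is slightly more careful than the paper's, since you justify the existence of a unit entry in the first column via cofactor expansion and replace the paper's loose ``bring this entry to the $(1,1)$ position by an elementary matrix'' with the correct row-addition trick.
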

\begin{proof}
	Clearly all elementary matrices have determinant $1$. 
	Let $A\in SL_n(\ZZ k)$. There exists an entry in the first column of $A$ which is a unit in the ring $\ZZ k$. By using an elementary matrix bring this entry to $11^{th}$ position. Now again using elementary matrices clear all the entries in the first row and first column and make them zero except the $11^{th}$ entry which is a unit. Now continue this process to the remaining submatrix and reduce $A$ to a diagonal matrix $\Diag(a_1,a_2,\cdots,a_n)$ of determinant one. Let $E_{ij}(\ga)=I+e_{ij}(\ga)$ for $1\leq i\neq j\leq n$ where $e_{ij}(\ga)$ is the $n\times n$ matrix with all zero entries except the $ij^{th}$ entry which is $\ga$. Now we observe the following.
	If \equ{B=E_{n\ n-1}(1-a_n^{-1})E_{n-1\ n}(-1)E_{n\ n-1}(1)E_{n\ n-1}(-a_n)E_{n-1\ n}(a_n^{-1})} then  
	$B\Diag(a_1,a_2,\cdots,a_n)=\Diag(a_1,a_2,\cdots,a_{n-1}a_n,1)$. Hence diagonal matrices of determinant one are product of elementary matrices. This completes the proof.	
\end{proof}
\begin{theorem}
\label{theorem:CommutatorSubgroup}
Let $p$ be a prime and $n,k$ be two positive integers. The commutator subgroup of $GL_n(\ZZ k)$ is $SL_n(\ZZ k)$ unless $p=2,n=2$ in which case the commutator subgroup is strictly contained in $SL_2(\Z/2^k\Z)$.
\end{theorem}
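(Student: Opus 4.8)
The plan is to establish the two inclusions separately. The inclusion $[GL_n(\ZZ{k}),GL_n(\ZZ{k})]\sbq SL_n(\ZZ{k})$ is immediate: the determinant $\det\colon GL_n(\ZZ{k})\lra (\ZZ{k})^{\times}$ is a homomorphism into an abelian group, so every commutator lies in its kernel $SL_n(\ZZ{k})$. For $n=1$ the group is abelian and both sides are trivial, so I assume $n\geq 2$. For the reverse inclusion in the non-exceptional cases I would invoke Proposition~\ref{prop:ElementaryGeneration}: since $SL_n(\ZZ{k})$ is generated by the elementary matrices $E_{ij}(\ga)=I+e_{ij}(\ga)$, it suffices to exhibit each $E_{ij}(\ga)$ as a product of commutators in $GL_n(\ZZ{k})$.

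For $n\geq 3$ I would use the Steinberg commutator identity $[E_{ik}(\ga),E_{kj}(1)]=E_{ij}(\ga)$, valid whenever $i,j,k$ are pairwise distinct. Because $n\geq 3$, for every pair $i\neq j$ there is an index $k\nin\{i,j\}$, so every elementary matrix is literally a commutator; hence $SL_n(\ZZ{k})\sbq [GL_n(\ZZ{k}),GL_n(\ZZ{k})]$ and equality holds, for every prime $p$.

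For $n=2$ the Steinberg trick is unavailable (there is no third index), so instead I would conjugate by diagonal matrices. A direct computation gives $[\Diag(a,b),E_{12}(\ga)]=E_{12}((ab^{-1}-1)\ga)$ and similarly $[\Diag(a,b),E_{21}(\ga)]=E_{21}((ba^{-1}-1)\ga)$. When $p$ is odd the element $2$ is a unit of $\ZZ{k}$ with $2-1=1$ again a unit, so taking $\Diag(2,1)$ and $\Diag(1,2)$ realizes every $E_{12}(\ga)$ and every $E_{21}(\ga)$ as a commutator; thus $[GL_2(\ZZ{k}),GL_2(\ZZ{k})]=SL_2(\ZZ{k})$ for odd $p$. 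The point at which this breaks for $p=2$ is exactly that a unit $u$ of $\Z/2^k\Z$ is odd, so $u-1$ is even and never a unit; the scalar $ab^{-1}-1$ can never be invertible, and this single arithmetic fact is the source of the exceptional behaviour.

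It remains to prove the strict containment when $p=2,\ n=2$, which I regard as the main obstacle, since failure of the diagonal argument does not by itself preclude writing $E_{12}(\ga)$ as a longer product of commutators. Here I would produce an explicit nontrivial abelian quotient of $SL_2(\Z/2^k\Z)$ that annihilates the commutator subgroup of $GL_2(\Z/2^k\Z)$. Reduction modulo $2$ gives a surjection $GL_2(\Z/2^k\Z)\lra GL_2(\Z/2\Z)\cong S_3$ whose restriction to $SL_2(\Z/2^k\Z)$ is still surjective (the elementary generators lift), and composing with the sign character $S_3\lra \Z/2\Z$ produces a homomorphism $\varepsilon\colon GL_2(\Z/2^k\Z)\lra \Z/2\Z$. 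Being valued in an abelian group, $\varepsilon$ kills $[GL_2(\Z/2^k\Z),GL_2(\Z/2^k\Z)]$, yet $\varepsilon$ is already surjective on $SL_2(\Z/2^k\Z)$; hence the commutator subgroup lies in $\Ker\varepsilon\cap SL_2(\Z/2^k\Z)$, a proper subgroup of $SL_2(\Z/2^k\Z)$. This yields the claimed strict inclusion and completes the proof.
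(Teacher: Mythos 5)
Your proposal is correct and follows essentially the same route as the paper: determinant for one inclusion, Steinberg commutators plus elementary generation for $n\geq 3$, diagonal-conjugation commutators (your $\Diag(2,1)$ is the paper's $\Diag(\gb,1)$ with $\gb=2$) for $n=2$ and odd $p$, and reduction modulo $2$ to $GL_2(\Z/2\Z)\cong S_3$ for the exceptional case. The only cosmetic difference is in that last step: you phrase the obstruction via the sign character $\varepsilon$ being trivial on commutators yet surjective on $SL_2(\Z/2^k\Z)$, while the paper says the image of a commutator must lie in $A_3=\Ker(\mathrm{sgn})$ and derives a contradiction from the order-$2$ element $E_{12}(1)$ — the same fact in two guises.
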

\begin{proof}
	Let $E_{ij}(\ga)=I+e_{ij}(\ga)$ for $1\leq i\neq j\leq n$ where $e_{ij}(\ga)$ is the $n\times n$ matrix with all zero entries except the $ij^{th}$ entry which is $\ga$. Then we have for $n\geq 3$ and $r\neq i\neq j\neq r$ we have $[E_{ir}(\ga),E_{rj}(\gb)]=E_{ij}(\ga\gb)$.
	Hence for $n\geq 3$, the commutator subgroup contains $SL_n(\ZZ k)$ using Proposition~\ref{prop:ElementaryGeneration}. Conversely since the group $\frac{GL_n(\ZZ k)}{SL_n(\ZZ k)}\cong (\ZZ k)^*$ is abelian via the determinant homomorphism, we have the commutator subgroup is contained in $SL_n(\ZZ k)$. Hence for $n\geq 3$, we have $SL_n(\ZZ k)=[GL_n(\ZZ k),GL_n(\ZZ k)]$.
	
	Now assume $n=2$. If $p>2$ then there exists $\gb\in \ZZ k$ such that both $\gb,\gb-1$ are invertible. Here we have \equa{E_{12}(\ga)&=\Diag(\gb,1)E_{12}((\gb-1)^{-1}\ga)\Diag(\gb^{-1},1)E_{12}(-(\gb-1)^{-1}\ga)\\&=[\Diag(\gb,1),E_{12}((\gb-1)^{-1}\ga)].}
	So $E_{12}(\ga)$ and similarly $E_{21}(\ga)$ are commutators. Hence again we have  $SL_2(\ZZ k)$ $=[GL_2(\ZZ k),GL_2(\ZZ k)]$.
	
	Now assume $n=2,p=2$.	Suppose $E_{12}(1)$ is in the commutator subgroup of $GL_2(\Z/2^k\Z)$ then reducing modulo $2$ we get that $E_{12}(1)$ is in the commutator subgroup of $GL_2(\Z/2\Z)\cong S_3$. But $E_{12}(1)$ has order $2$ in $GL_2(\Z/2\Z)$. The commutator subgroup $A_3$ of $S_3$ has no element of order $2$ which is a contradiction. Hence $[GL_2(\Z/2^k\Z),GL_2(\Z/2^k\Z)]\neq SL_2(\Z/2^k\Z)$. We infact have that 
	\equ{[GL_2(\Z/2^k\Z),GL_2(\Z/2^k\Z)]\sbnq SL_2(\Z/2^k\Z)}
	since $\frac{GL_2(\Z/2^k\Z)}{SL_2(\Z/2^k\Z)}\cong (\Z/2^k\Z)^*$ is abelian via the deteriminant homomophism.
\end{proof}
\subsection{\bf{A Vanishing Criterion for the second Cohomology}}
~\\
We state the theorem.
\begin{theorem}
	\label{theorem:SecondCohoTriv}
	Let $G$ be a finite group and $p$ be a prime. Then the following are equivalent.
	\begin{enumerate}
		\item $H^2_{Trivial\ Action}(G,A)=0$ for all finite abelian $p$-groups $A$.
		\item $H^2_{Trivial\ Action}(G,\Z/p\Z)=0$.
	\end{enumerate}
\end{theorem}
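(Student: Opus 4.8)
The direction $(1)\Rightarrow(2)$ is immediate, since $\Z/p\Z$ is itself a finite abelian $p$-group, so one simply specializes the hypothesis to $A=\Z/p\Z$. All the content is in $(2)\Rightarrow(1)$, and my plan is to bootstrap from the single vanishing statement $H^2_{Trivial\ Action}(G,\Z/p\Z)=0$ to arbitrary finite abelian $p$-group coefficients by two successive reductions: first reducing to cyclic coefficients, then climbing the tower of cyclic $p$-groups by induction on the exponent.

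By the structure theorem for finite abelian groups, any finite abelian $p$-group $A$ decomposes as a finite direct sum $A\cong\bigoplus_j \ZZ{k_j}$ of cyclic $p$-groups. Since $G$ acts trivially on $A$, each summand is a $G$-submodule and the cochain complex computing $H^\bullet_{Trivial\ Action}(G,A)$ splits as the direct sum of the complexes of the summands; hence cohomology is additive in the coefficient module, giving
\equ{H^2_{Trivial\ Action}(G,A)\cong\bigoplus_j H^2_{Trivial\ Action}(G,\ZZ{k_j}).}
It therefore suffices to prove that $H^2_{Trivial\ Action}(G,\ZZ{k})=0$ for every $k\geq 1$.

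I would establish this by induction on $k$. The base case $k=1$ is exactly hypothesis $(2)$. For the inductive step with $k\geq 2$, I would use the short exact sequence of trivial $G$-modules
\equ{0\lra \Z/p\Z\lra \ZZ{k}\lra \ZZ{k-1}\lra 0,}
in which the first map sends $1\mapsto p^{k-1}$ and the second is reduction modulo $p^{k-1}$. Since these maps are $G$-equivariant for the trivial actions, the associated long exact sequence in group cohomology relates precisely the trivial-action groups of the statement, and it contains the exact segment
\equ{H^2_{Trivial\ Action}(G,\Z/p\Z)\lra H^2_{Trivial\ Action}(G,\ZZ{k})\lra H^2_{Trivial\ Action}(G,\ZZ{k-1}).}
Here the left term vanishes by hypothesis $(2)$ and the right term vanishes by the inductive hypothesis, so the middle term is squeezed between two zero groups and is therefore zero. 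This completes the induction, and combined with the additivity above it yields $(1)$.

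The argument is essentially formal once the two ingredients are in place, namely the additivity of $H^2$ in the coefficient module and the long exact sequence attached to the short exact sequence above; I do not expect a genuine obstacle. The only points meriting care are verifying that every term of the short exact sequence carries the trivial $G$-action (so the long exact sequence indeed interpolates the trivial-action cohomologies of the statement), and noting that exactness at the middle forces the map $H^2_{Trivial\ Action}(G,\ZZ{k})\to H^2_{Trivial\ Action}(G,\ZZ{k-1})$ to be injective, which is exactly what allows the two flanking zeros to collapse the middle term.
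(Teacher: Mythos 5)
Your proof is correct, but it follows a genuinely different route from the paper's. The paper proves $(2)\Rightarrow(1)$ by induction on the order $p^n$ of an arbitrary finite abelian $p$-group $A$: it picks any subgroup $B\subseteq A$ of index $p$, notes that the reduction $\ol{c}$ of a given $2$-cocycle $c$ modulo $B$ lands in $A/B\cong\Z/p\Z$ and hence is a coboundary by hypothesis $(2)$, subtracts from $c$ the coboundary $\partial v$ of a lift $v$ of the relevant $1$-cochain so that $c-\partial v$ takes values in $B$, and then applies the inductive hypothesis to the $B$-valued cocycle $c-\partial v$. This is a cocycle-level implementation of the exactness of $H^2(G,B)\lra H^2(G,A)\lra H^2(G,A/B)$, in which the hypothesis kills the quotient term and induction kills the subgroup term; it needs neither the structure theorem nor the long exact sequence. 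You instead first invoke the structure theorem and additivity of cohomology in the coefficient module to reduce to cyclic coefficients, and then run the induction the other way around: in your sequence $0\lra\Z/p\Z\lra\ZZ{k}\lra\ZZ{k-1}\lra 0$ the hypothesis kills the subgroup term and induction kills the quotient term, with the long exact sequence supplying exactness at the middle. Both arguments are complete; what the paper's buys is self-containedness (only the definitions of cocycle and coboundary are used, so the theorem stays elementary), while yours is more modular and makes the formal mechanism --- two vanishing flanking terms squeezing the middle --- completely transparent, at the cost of importing the long exact sequence and the structure theorem as black boxes.
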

\begin{proof}
	Clearly $(1)\Ra (2)$. Now we prove $(2)\Ra (1)$. Suppose $H^2_{Trivial\ Action}(G,$ $\Z/p\Z)=0$. We prove $H^2_{Trivial\ Action}(G,A)=0$ by induction on $n$ where the cardinality of $A$ is $p^n$. For $n=1$ the assertion holds. Now assume the assertion holds for $n=m$. Let $\mid A \mid =p^{m+1}$. Let $B$ be an abelian subgroup of index $p$ in $A$. Then any $2$-cocycle $c:G\times G\lra A$ gives rise to a $2$-cocycle $\ol{c}:G\times G\lra \frac{A}{B}\cong \Z/p\Z$. Hence $\ol{c}$ is a coboundary. Let $v:G\lra A$ be such that $\ol{c}(g_1,g_2)=\ol{v}(g_1)+\ol{v}(g_2)-\ol{v}(g_1g_2)$ where $\ol{v}:G\lra \frac{A}{B}$ obtained from $v$. Let $\partial v:G\times G\lra A$ be defined by $\partial v(g_1,g_2)=v(g_1)+v(g_2)-v(g_1g_2)$, a 2-coboundary. Then we have $c-\partial v:G\times G \lra B\subs A$ is $2$-cocycle cohomologous to $c$. But by induction $c-\partial v$ is $2$-coboundary. Hence $c$ is a $2$-couboundary. Therefore $H^2_{Trivial\ Action}(G,A)=0$. This proves $(2)\Ra (1)$. Hence the proposition follows. 
\end{proof}
\subsection{\bf{On the Stable Cohomology Classes}}
~\\
We state the theorem.
\begin{theorem}
\label{theorem:BasicMultiple}
Let $r$ be a positive integer and $p$ be a prime.
Let $c:\Z/p^r\Z\times \Z/p^r\Z\lra \Z/p\Z$ be a $2$-cocycle, that is, $c\in Z^2_{Trivial\ Action}(\Z/p^r\Z,\Z/p\Z)$. Let $\gs\in (\Z/p^r\Z)^*$ such that $\gs^{p-1}=1$. Define another $2$-cocycle $d:\Z/p^r\Z\times \Z/p^r\Z\lra \Z/p\Z$ such that $d(x,y)=c(\gs x,\gs y)$. If $c$ is not cohomologous to zero, that is, $c$ is not a $2$-coboundary then $d$ is cohomologous to $c$ if and only if $\gs =1$. 
\end{theorem}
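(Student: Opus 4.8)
The plan is to identify $H^2_{Trivial\ Action}(\Z/p^r\Z,\Z/p\Z)$ explicitly and then to compute the action that the substitution $c\mapsto d$ induces on it. Since $\Z/p^r\Z$ is cyclic, its periodic resolution yields for every trivial module $M$ a natural isomorphism $H^2_{Trivial\ Action}(\Z/p^r\Z,M)\cong M/p^rM$; concretely a normalized $2$-cocycle $c$ is sent to its norm residue $\Gf(c)=\sum_{j=0}^{p^r-1}c(j,1)$, which is exactly the $p^r$-th power $\ti g^{\,p^r}$ of a lift $\ti g$ of the generator $1$ in the central extension determined by $c$. Taking $M=\Z/p\Z$ and using $p^rM=0$ gives $H^2_{Trivial\ Action}(\Z/p^r\Z,\Z/p\Z)\cong\Z/p\Z$, cyclic of order $p$.

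First I would show that under this identification $[d]=\gs\cdot[c]$, multiplication by the residue of $\gs$ on $\Z/p\Z$. The cocycle $d(x,y)=c(\gs x,\gs y)$ represents the pullback of the extension of $c$ along the automorphism $x\mapsto\gs x$ of $\Z/p^r\Z$, so in this pullback a lift of the generator $1$ is a lift $s(\gs)$ of $\gs$ in the original extension, and $\Gf(d)=s(\gs)^{\,p^r}$. Writing $s(\gs)=s(1)^{\,\ti{\gs}}+m$ for an integer representative $\ti{\gs}$ of $\gs$ and some $m\in M$, and using that $s(1)^{\,p^r}\in M$ is central, one obtains $s(\gs)^{\,p^r}\equiv\ti{\gs}\,s(1)^{\,p^r}\pmod{p^rM}$, that is $\Gf(d)\equiv\gs\,\Gf(c)$. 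Hence $[d]=\gs\,[c]$ in $\Z/p\Z$.

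Granting this, the cohomological conclusion is immediate: $d$ is cohomologous to $c$ precisely when $(\gs-1)[c]=0$ in $\Z/p\Z$, and since $[c]\neq 0$ by hypothesis this happens exactly when $\gs\equiv 1\pmod p$. It then remains to settle the arithmetic statement that for $\gs\in(\Z/p^r\Z)^*$ with $\gs^{p-1}=1$ one has $\gs\equiv 1\pmod p$ if and only if $\gs=1$; only the forward direction needs proof. For this I would observe that the kernel of the reduction $(\Z/p^r\Z)^*\to(\Z/p\Z)^*$ has order $p^{r-1}$ and is thus a $p$-group, while $\gs^{p-1}=1$ forces the order of $\gs$ to divide $p-1$; an element of a $p$-group whose order divides $p-1$ is trivial, so $\gs=1$. (For $p=2$ the hypothesis $\gs^{p-1}=1$ already reads $\gs=1$, and both sides hold trivially.) Combining the two parts yields that $d$ is cohomologous to $c$ if and only if $\gs=1$.

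The main obstacle is the middle step, namely confirming that the induced self-map of $H^2$ is multiplication by $\gs$ and not some other scalar; this demands care in setting up the norm-residue isomorphism and in transporting the automorphism through it. I note, however, that the final equivalence is robust to this computation: any identification giving $[d]=\gs^{k}[c]$ for a fixed exponent $k$ would still yield $d\sim c$ if and only if $\gs\equiv 1\pmod p$, because $\gs$ is a unit modulo $p$.
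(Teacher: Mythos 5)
Your proof is correct, and it takes a genuinely different route from the paper's. Both arguments turn on the same structural fact --- that the substitution $c\mapsto c(\gs\,\cdot\,,\gs\,\cdot\,)$ acts on $H^2_{Trivial\ Action}(\Z/p^r\Z,\Z/p\Z)\cong\Z/p\Z$ as multiplication by $\gs\bmod p$ --- but you reach it functorially, whereas the paper reaches it by explicit construction. The paper fixes the carry cocycle $f(a,b)=\lfloor(a+b)/p^r\rfloor$ of the extension $\Z/p^{r+1}\Z$ of $\Z/p^r\Z$ by $\Z/p\Z$, notes that $f,2f,\dots,(p-1)f$ exhaust the nonzero classes, and proves $[f_{\gk}]=[kf]$ for $\gk\equiv k\bmod p$, $\gk^{p-1}=1$, by assembling a six-row chain of equivalences between the extension groups $A_{kf}$ and $B_{\gk,f}$; any non-coboundary $c$ is then some $tf$ and the conclusion follows. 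You instead invoke the norm-residue isomorphism $[c]\mapsto\sum_{j}c(j,1)=\ti{g}^{\,p^r}$ for cyclic groups and observe that $d$ classifies the pullback of the extension of $c$ along $x\mapsto\gs x$, so a lift of the generator $1$ in the pullback maps to a lift of $\gs$, whose $p^r$-th power equals $\ti{\gs}$ times $\ti{g}^{\,p^r}$ because the kernel is central and killed by $p$; this eliminates the diagram chase and is the cleaner argument, while the paper's version additionally produces explicit cocycle representatives and the classification of the $p-1$ inequivalent extensions (not needed elsewhere in the paper). The closing arithmetic is the same in substance: an element of $(\Z/p^r\Z)^*$ of order dividing $p-1$ and congruent to $1$ mod $p$ is trivial (you argue by coprimality of orders in the kernel of reduction; the paper quotes the isomorphism $\{\gk\mid\gk^{p-1}=1\}\cong(\Z/p\Z)^*$). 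Two small caveats: your formula $s(\gs)=s(1)^{\ti{\gs}}+m$ mixes multiplicative and additive notation --- harmless here, since a central extension with cyclic quotient is abelian, but it should be written consistently --- and your final ``robustness'' remark is not correct as stated: if the identification produced $[d]=\gs^{k}[c]$ with, say, $k$ a multiple of $p-1$, then $\gs^{k}\equiv 1\bmod p$ for every unit $\gs$ and the criterion would fail, so pinning down the exponent (here $k=1$, or at least $k$ prime to $p-1$) is genuinely necessary.
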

\begin{proof}
First note that $H^2_{Trivial\ Action}(\ZZ r,\ZZ {})=\ZZ {}$. This follows from the periodic resolution 
\equ{\cdots \os{N}{\lra} \Z[\ZZ r]\os{\gep-1}{\lra} \Z[\ZZ r]\os{N}{\lra} \Z[\ZZ r]\os{\gep-1}{\lra} \Z[\ZZ r]\lra \Z\lra 0,}
where $N=1+\gep+\gep^2+\cdots+\gep^{p^r-1}$ for $\gep$ a generator of $\ZZ r$, that is $\ZZ r=\langle \gep\rangle$.
The group $\ZZ {r+1}$ occurs as an extension in $p-1$ different ways. Let $k_1,k_2\in (\Z/p\Z)^*$. Consider the following diagram where $i^{k_1}(1)=p^rk_1$ and $i^{k_2}(1)=p^rk_2$. The map $\gp$ is reduction modulo $p^r$. 
\[
\begin{tikzcd}
	0 \arrow[r,""]& \Z/p\Z \arrow[hook]{r}{i^{k_1}} \arrow[]{d}{\Vert}[swap]{Id_{\Z/p\Z}} & \ZZ {r+1} \arrow[two heads]{r}{\gp} \arrow[]{d}{\cong}[swap]{\gf} & \Z/p^r\Z \arrow[r,""] \arrow[]{d}{\Vert}[swap]{Id_{\Z/p^r\Z}} & 0\\
	0 \arrow[r,""]& \Z/p\Z\arrow[hook]{r}{i^{k_2}} & \ZZ {r+1}\arrow[two heads]{r}{\gp}& \Z/p^r\Z \arrow[r,""] & 0
\end{tikzcd}
\]
If $\gf$ is an automorphism then there exists $s\in (\ZZ {r+1})^*$ such that $\gf(x)=sx$ for all $x\in \ZZ {r+1}$. Since 
$\gp\circ \gf=\gp$ we have $s\equiv 1\mod p^r$. On the other hand we have $\gf\circ i^{k_1}=i^{k_2}$. Therefore $p^rsk_1\equiv p^rk_2\mod p^{r+1}\Ra sk_1\equiv k_2\mod p \Ra k_1\equiv k_2\mod p$. So we get $p-1$ inequivalent extensions for $k\in (\Z/p\Z)^*$. 

Consider the standard $2$-cocycle $f:\Z/p^r\Z\times \Z/p^r\Z\lra \Z/p\Z$ of the standard extension
\equ{0\lra \Z/p\Z\os{i}{\lra} \ZZ {r+1}\os{\gp}{\us{\mod p^r}{\lra}} \Z/p^r\Z\lra 0}
where $i(1)=p^r,i(a)=p^ra,\gp(\us{i=0}{\os{r}{\sum}}a_ip^i)=\us{i=0}{\os{r-1}{\sum}}a_ip^i$. Here $f(a,b)=\lfloor \frac{a+b}{p^r}\rfloor$ for $a,b\in \{0,1,\cdots,p^r-1\}$ and $0\leq a+b\leq 2p^r-2$. Now $H^2_{Trivial\ Action}(\Z/p^r\Z,\Z/p\Z)$ $\cong \Z/p\Z$ and the $2$-cocycles $f,2f,\cdots (p-1)f$ are mutually non-cohomologous representing all the nonzero cohomology classes of  $H^2_{Trivial\ Action}(\Z/p^r\Z,\Z/p\Z)$.
For $1\leq k\leq p-1$, let $A_{kf}=\Z/p\Z\times \Z/p^r\Z$ as a set. Define a group structure on $A_{kf}$ as follows. For $(a,b),(a',b')\in A_{kf}$ 
\equ{(a,b)+(a',b')=(a+a'+kf(b,b'),b+b').}
Then the following diagram commutes for $k=1$.
\[
\begin{tikzcd}
	0 \arrow[r,""]& \Z/p\Z \arrow[hook]{r}{i} \arrow[]{d}{\Vert}[swap]{Id_{\Z/p\Z}} & \ZZ {r+1} \arrow[two heads]{r}{\gp} \arrow[]{d}{\cong}[swap]{\gf} & \Z/p^r\Z \arrow[r,""] \arrow[]{d}{\Vert}[swap]{Id_{\Z/p^r\Z}} & 0\\
	0 \arrow[r,""]& \Z/p\Z\arrow[hook]{r}{i_1} & A_f\arrow[two heads]{r}{\gp_1}& \Z/p^r\Z \arrow[r,""] & 0
\end{tikzcd}
\]
where $i_1(a)=(a,0),\gf(\us{i=0}{\os{r}{\sum}}a_ip^i)=(a_r,\us{i=0}{\os{r-1}{\sum}}a_ip^i), 0\leq a_0,a_1,\cdots,a_r\leq p-1,\gp_1(a,b)=b$. Let $\gk\in (\ZZ r)^*$ be such that $\gk^{p-1}\equiv 1\mod p^r$. Let $k\in (\ZZ {})^*$ be such that $\gk\equiv k\mod p$. Let $B_{\gk,f}=\Z/p\Z\times \Z/p^r\Z$ as a set. Define a group structure on $B_{\gk,f}$ as follows. For $(a,b),(a',b')\in B_{\gk,f}$ 
\equ{(a,b)+(a',b')=(a+a'+f(\gk b,\gk b'),b+b').}
Let $\ti{\gk}\in (\ZZ {r+1})^*$ be such that $\ti{\gk}\equiv \gk\mod p^r$. 

Consider the following diagram
\[
\begin{tikzcd}
	0 \arrow[r,""]& \Z/p\Z \arrow[hook]{r}{i_k} \arrow[]{d}{\Vert}[swap]{Id_{\Z/p\Z}} & A_{kf} \arrow[two heads]{r}{\gp_k} \arrow[]{d}{\cong}[swap]{\psi} & \Z/p^r\Z \arrow[r,""] \arrow[]{d}{\Vert}[swap]{Id_{\Z/p^r\Z}} & 0\\
	0 \arrow[r,""]& \Z/p\Z \arrow[hook]{r}{j_k} \arrow[]{d}{\Vert}[swap]{Id_{\Z/p\Z}} & A_f \arrow[two heads]{r}{\gp_1} \arrow[]{d}{\cong}[swap]{\gf^{-1}} & \Z/p^r\Z \arrow[r,""] \arrow[]{d}{\Vert}[swap]{Id_{\Z/p^r\Z}} & 0\\
	0 \arrow[r,""]& \Z/p\Z \arrow[hook]{r}{l_k} \arrow[]{d}{\Vert}[swap]{Id_{\Z/p\Z}} & \ZZ {r+1} \arrow[two heads]{r}{\gp} \arrow[]{d}{\cong}[swap]{\gf_{\ti{\gk}}} & \Z/p^r\Z \arrow[r,""] \arrow[]{d}{\Vert}[swap]{Id_{\Z/p^r\Z}} & 0\\
	0 \arrow[r,""]& \Z/p\Z \arrow[hook]{r}{i} \arrow[]{d}{\Vert}[swap]{Id_{\Z/p\Z}} & \ZZ {r+1} \arrow[two heads]{r}{\gb_{\gk}} \arrow[]{d}{\cong}[swap]{\gf} & \Z/p^r\Z \arrow[r,""] \arrow[]{d}{\Vert}[swap]{Id_{\Z/p^r\Z}} & 0\\
	0 \arrow[r,""]& \Z/p\Z \arrow[hook]{r}{i_1} \arrow[]{d}{\Vert}[swap]{Id_{\Z/p\Z}} & A_f \arrow[two heads]{r}{\gga_{\gk}} \arrow[]{d}{\cong}[swap]{\gm} & \Z/p^r\Z \arrow[r,""] \arrow[]{d}{\Vert}[swap]{Id_{\Z/p^r\Z}} & 0\\
	0 \arrow[r,""]& \Z/p\Z\arrow[hook]{r}{m_k} & B_{\gk,f}\arrow[two heads]{r}{\gd_{\gk}}& \Z/p^r\Z \arrow[r,""] & 0	
\end{tikzcd}
\]
where 
\begin{itemize}
	\item $i_k(a)=(a,0),j_k(a)=(k^{-1}a,0),l_k(a)=p^rk^{-1}a,m_k(a)=(a,0)$,
	\item $\gp_k(a,b)=b,\gb_{\gk}(x)=(\ti{\gk})^{-1}x \mod p^r=\gp\circ \gf^{-1}_{\ti{\gk}}(x),\gga_{\gk}(a,b)=\gk^{-1}b$, $\gd_{\gk}(a,b)=b$,
	\item $\psi(a,b)=(k^{-1}a,b),\gf_{\ti{\gk}}(x)=\ti{\gk}x,\gm(a,b)=(a,\gk^{-1}b)$.
\end{itemize}
The above diagram commutes and all vertical maps are group isomorphisms and all horizontal maps are group homomorphisms. 
The above diagram gives an equivalence of extenstions $A_{kf}$ and $B_{\gk,f}$. So we have the cocycles $kf$ and $f_{\gk}$ are cohomologous where $f_{\gk}(a,b)=f(\gk a,\gk b)$. Therefore the set $\{f_{\gk_1},f_{\gk_2},\cdots, f_{\gk_{p-1}}\}$ represent distinct nontrivial cohomology classes in $H^2_{Trivial\ Action}(\Z/p^r\Z,\Z/p\Z)$ where $\gk_i\equiv i\mod p$. So we have for any $1\leq s,t\leq p-1,\gs,\gt\in (\ZZ r)^*$ such that $\gs\equiv s\mod p$ and $\gt\equiv t\mod p$, the cocycles 
\equ{sf_{\gt},tf_{\gs},stf,f_{\gs\gt}} represent the same cohomology class. Finally note that we have 
\equ{\Z/(p-1)\Z\cong \{\gk\in (\ZZ r)^*\mid \gk^{p-1}\equiv 1\mod p^r\}\os{\mod p}{\us{\cong}{\lra}} (\Z/p\Z)^* }
is an isomorphism.

We have that $c$ is a $2$-coboundary if and only if $d$ is a $2$-coboundary. 	
Assume that $c$ is not a $2$-coboundary. So there exists $1\leq t\leq p-1$ such that $c=tf$. So we have $d=tf_{\gs}=stf$. Hence 
$c$ and $d$ represent the same class if and only if $\gs=1$. This proves the theorem.
\end{proof}
\subsection{\bf{Calculational Methods and Computations}}
\label{sec:CMC}
~\\
Calculating the cohomology of finite groups can be quite challenging, as it involves a number of complicated ingredients. For a prime $p$, we outline one technique of computing the $\mod p$ cohomology which is used in this article. Here in this section we assume that $G$ is a finite group which acts trivially on $\Z/p\Z$.

Let $\mcl{F}$ denote a family of subgroups of $G$ which satisfies the following two properties.
\begin{enumerate}
	\item  If $H\in \mcl{F}, K\subs H$, then $K\in \mcl{F}$.
	\item If $g\in G,H\in \mcl{F}$ then $gHg^{-1}\in \mcl{F}$.
\end{enumerate}
Then we can define for $i=1,2$

\equa{&\us{H\in \mcl{F}}{\lim}H^i_{Trivial\ Action}(H,\Z/p\Z)=\{(\ga_H)\mid \ga_{K}=res^H_{K}\ga_H, \text{ if }K\subs H,\\ & \ga_{K}=c_g(\ga_H) \text{ if }K=gHg^{-1}\}\subseteq \us{H\in \mcl{F}}{\prod}H^i_{Trivial\ Action}(H,\Z/p\Z),}
where $c_g:H^i_{Trivial\ Action}(H,\Z/p\Z)\lra H^i_{Trivial\ Action}(gHg^{-1},\Z/p\Z)$ is the conjugation map which sends the cohomology class of $c\in Z^i_{Trivial\ Action}(H,\Z/p\Z)$ to the cohomology class of $d\in Z^i_{Trivial\ Action}(gHg^{-1},\Z/p\Z)$ where $d(g_1,\cdots,g_i)=c(g^{-1}g_1g,\cdots,g^{-1}g_ig)$. Here $g_i\in gHg^{-1},g\in G$.
\begin{remark}
	\label{remark:TrivialAction}
Let $G$ be a finite group, $N$ be a normal subgroup of $G$, then $G$ acts on $H^i_{Trivial\ Action}(N,\Z/p\Z)$ for $i=1,2$ via the action map $g\lra c_g$. It is standard result in group cohomology that $c_n$ acts like identity for all $n\in N$. Hence $H^i_{Trivial\ Action}(N,\Z/p\Z)$ becomes a $\frac GN$-module for $i=1,2$.
\end{remark}
\begin{theorem}
	\label{theorem:CartanEilenberg}
	Let $S_p(G)$ denote the family of all $p$-subgroups of $G$. Then for $i=1,2$ the restrictions induce an isomorphism 
	\equ{H^i_{Trivial\ Action}(G,\Z/p\Z)\cong \us{P\in S_p(G)}{\lim}H^i_{Trivial\ Action}(P,\Z/p\Z).}
\end{theorem}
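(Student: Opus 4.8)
The plan is to identify the inverse limit with the module of \emph{stable classes} in the cohomology of a fixed Sylow $p$-subgroup, and then to show that restriction to that Sylow subgroup is an isomorphism onto the stable classes. Denote by $\Phi$ the map induced by restriction, $\alpha\mapsto (res^G_P\alpha)_{P\in S_p(G)}$; transitivity of restriction together with the fact that inner automorphisms of $G$ act trivially on $H^i_{Trivial\ Action}(G,\Z/p\Z)$ shows that this family is compatible, so $\Phi$ lands in the limit. First I would fix a Sylow $p$-subgroup $S\in S_p(G)$ and note that, since every $p$-subgroup lies in some conjugate $gSg^{-1}$ and all Sylow subgroups are conjugate, the projection $\us{P\in S_p(G)}{\lim}H^i_{Trivial\ Action}(P,\Z/p\Z)\lra H^i_{Trivial\ Action}(S,\Z/p\Z)$ is injective: if a compatible family $(\ga_P)$ has $\ga_S=0$, then for any $H\sbq gSg^{-1}$ the compatibility relation $\ga_H=res^{gSg^{-1}}_H c_g(\ga_S)$ forces $\ga_H=0$. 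Its image is exactly the set of \emph{stable} classes $u\in H^i_{Trivial\ Action}(S,\Z/p\Z)$ satisfying $res^S_{S\cap gSg^{-1}}u=res^{gSg^{-1}}_{S\cap gSg^{-1}}c_g(u)$ for all $g\in G$; conversely any stable $u$ extends to a family by $\ga_H:=res^{gSg^{-1}}_H c_g(u)$ for any $g$ with $H\sbq gSg^{-1}$, the stability condition guaranteeing independence of the choice of $g$.

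Next I would prove that $res^G_S:H^i_{Trivial\ Action}(G,\Z/p\Z)\lra H^i_{Trivial\ Action}(S,\Z/p\Z)$ is injective by the standard transfer argument: the composite $cor^G_S\circ res^G_S$ is multiplication by the index $[G:S]$, and since $p\nmid[G:S]$ this is an automorphism of the $\Z/p\Z$-module $H^i_{Trivial\ Action}(G,\Z/p\Z)$. Because the composition of $\Phi$ with the projection to the $S$-component is precisely $res^G_S$, the injectivity of $\Phi$ follows at once from the injectivity of $res^G_S$.

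The substance of the argument is surjectivity, namely that every stable class lies in the image of $res^G_S$. For a stable class $u$ I would set $\ga:=[G:S]^{-1}\,cor^G_S(u)$ and compute $res^G_S(\ga)$ by the Mackey double coset formula
\equ{res^G_S\, cor^G_S(u)=\us{g\in S\bs G/S}{\sum} cor^S_{S\cap gSg^{-1}}\, res^{gSg^{-1}}_{S\cap gSg^{-1}}\, c_g(u).}
Stability of $u$ rewrites each inner term $res^{gSg^{-1}}_{S\cap gSg^{-1}}c_g(u)$ as $res^S_{S\cap gSg^{-1}}u$, so each summand becomes $cor^S_{S\cap gSg^{-1}}res^S_{S\cap gSg^{-1}}u=[S:S\cap gSg^{-1}]\,u$; since the indices $[S:S\cap gSg^{-1}]$ sum over the double cosets to $[G:S]$, one obtains $res^G_S\,cor^G_S(u)=[G:S]\,u$ and hence $res^G_S(\ga)=u$. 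Then $\Phi(\ga)$ and the given compatible family share the same $S$-component, so by injectivity of the projection they coincide, which gives surjectivity of $\Phi$.

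I expect the main obstacle to be the correct formulation and application of the double coset formula together with the verification that the inverse limit conditions over the whole family $S_p(G)$ — which involve precisely the intersections $S\cap gSg^{-1}$, themselves members of $S_p(G)$ — are exactly the stability conditions needed. Since $H^i_{Trivial\ Action}(G,\Z/p\Z)$ is already $p$-torsion, no separate passage to a $p$-primary component is required, and the cases $i=1,2$ demand nothing beyond the general argument; one may alternatively invoke the Cartan--Eilenberg stable element theorem directly, the computation above being its proof specialized to $\Z/p\Z$ coefficients.
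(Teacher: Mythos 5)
Your proof is correct, so the only caveat is one of comparison: the paper does not actually prove Theorem~\ref{theorem:CartanEilenberg} at all --- it defers to G.~Karpilovsky~\cite{MR1183469}, Chapter 9, and to Cartan--Eilenberg~\cite{MR1731415} --- so what you have written is a reconstruction of the classical stable-elements argument contained in those references rather than an alternative to anything in the text. Your route (identify the inverse limit with the stable submodule of a fixed Sylow subgroup $S$ via injectivity of the projection to the $S$-component; get injectivity of $res^G_S$ from $cor^G_S\circ res^G_S=[G:S]\cdot id$ with $[G:S]$ invertible on $\Z/p\Z$-modules; get surjectivity onto stable classes from the Mackey double coset formula, which for stable $u$ gives $res^G_S\,cor^G_S(u)=[G:S]\,u$) is exactly the proof of the Cartan--Eilenberg stable element theorem, and it has the additional benefit of simultaneously proving the paper's Theorem~\ref{theorem:StabilityConditions}, which the paper also only cites; in that sense your argument makes Section~\ref{sec:CMC} self-contained. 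One small logical remark: in your first paragraph you assert that a stable class $u$ extends to a compatible family by setting $\ga_H:=res^{gSg^{-1}}_H c_g(u)$ for any $g$ with $H\sbq gSg^{-1}$, ``the stability condition guaranteeing independence of the choice of $g$''; this well-definedness is not immediate from stability as formulated (it involves only the intersections $S\cap gSg^{-1}$) and you never verify it. Fortunately it is also never used: your surjectivity argument produces an honest class $\ga\in H^i_{Trivial\ Action}(G,\Z/p\Z)$ with $res^G_S\ga=u$ and then invokes injectivity of the projection to the $S$-component, so the compatible family extending $u$ is simply $\Phi(\ga)$, whose compatibility is automatic. With that sentence either deleted or replaced by the remark that it follows a posteriori, the proof is complete.
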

\begin{proof}
	For a proof of this theorem, refer to G.~Karpilovsky~\cite{MR1183469}, Chapter $9$ on Group Cohomology or refer to H.~Cartan, S.~Eilenberg~\cite{MR1731415}.  
\end{proof}
In order to compute $H^2_{Trivial\ Action}(G,\Z/p\Z)$ we need to find out a collection of subgroups $H_i,1\leq i\leq l$ such that the map 
\equ{H^2_{Trivial\ Action}(G,\Z/p\Z)\lra \us{i=1}{\os{l}{\oplus}}H^2_{Trivial\ Action}(H_i,\Z/p\Z)} is injective in which case this collection is said to detect the cohomology. The philosphy used here is:

\begin{center}
	\fbox{\begin{varwidth}{\dimexpr\textwidth-2\fboxsep-2\fboxrule\relax}
			{\it Reduce to the Sylow $p$-subgroup via the Cartan–Eilenberg result, and then 
			combine information about the cohomology of $p$-groups with stability conditions.}
	\end{varwidth}}
\end{center}
For a group $G$ and a subgroup $H\subseteq G$, for $i=1,2$ we say an element $x\in H^i_{Trivial\ Action}(H,\Z/p\Z)$ is {\bf $G$-stable} or simply stable if 
\equ{Res^H_{gHg^{-1}\cap H}(x)=Res^{gHg^{-1}}_{gHg^{-1}\cap H}(c_g(x)) \text{ for all }g\in G.}
\begin{remark}
For a group $G$ and subgroup $H\subseteq G$, even though for $i=1,2$ the space $H^i_{Trivial\ Action}(H,\Z/p\Z)$ need not be a $G$-module, we can define a {\bf $G$-stable submodule} consisting of {\bf $G$-stable elements} of $H^i_{Trivial\ Action}(H,\Z/p\Z)$.
\end{remark}
\begin{theorem}
\label{theorem:StabilityConditions}
Let $G$ be a finite group acting trivially on $\Z/p\Z$ for a prime $p$. Let $P\subseteq G$ be a $p$-Sylow subgroup of $G$. Then for $i=1,2$, the cohomology group $H^i_{Trivial\ Action}(G,\Z/p\Z)$ restricts isomorphically onto the {\bf $G$-stable submodule} of
$H^i_{Trivial\ Action}(P,\Z/p\Z)$. 
\end{theorem}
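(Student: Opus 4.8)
The plan is to deduce the statement from Theorem~\ref{theorem:CartanEilenberg}. Writing $L:=\us{Q\in S_p(G)}{\lim}H^i_{Trivial\ Action}(Q,\Z/p\Z)$ for the inverse limit over all $p$-subgroups (with structure maps the restrictions $\mathrm{Res}$ and the conjugations $c_g$), that theorem supplies an isomorphism $H^i_{Trivial\ Action}(G,\Z/p\Z)\cong L$ induced by the family of restrictions $w\mapsto(\mathrm{Res}^G_Q w)_Q$, for $i=1,2$. Under this isomorphism the map $\mathrm{Res}^G_P$ becomes the projection $\gp_P\colon L\lra H^i_{Trivial\ Action}(P,\Z/p\Z)$, $(\ga_Q)_Q\mapsto\ga_P$. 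Hence it suffices to show that $\gp_P$ is injective with image equal to the $G$-stable submodule. Throughout I would invoke Sylow's theorem: every $p$-subgroup $Q$ satisfies $Q\sbq gPg^{-1}$ for some $g\in G$.

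First I would dispose of injectivity together with the inclusion $\mathrm{image}(\gp_P)\sbq(\text{stable submodule})$. For a compatible family $(\ga_Q)$ and any $Q\sbq gPg^{-1}$, the two structure maps (conjugation $\ga_{gPg^{-1}}=c_g(\ga_P)$ followed by restriction) force $\ga_Q=\mathrm{Res}^{gPg^{-1}}_{Q}(c_g(\ga_P))$; since every $Q$ arises this way, the whole family is determined by $\ga_P$, so $\gp_P$ is injective. For stability of $\ga_P$, specialise to $Q=gPg^{-1}\cap P$: compatibility under $\mathrm{Res}$ from $P$ gives $\ga_Q=\mathrm{Res}^P_{gPg^{-1}\cap P}(\ga_P)$, while compatibility through $gPg^{-1}$ gives $\ga_Q=\mathrm{Res}^{gPg^{-1}}_{gPg^{-1}\cap P}(c_g(\ga_P))$, and equating the two is exactly the defining stability relation.

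The main work, and the step I expect to be the real obstacle, is surjectivity onto the stable submodule: given a $G$-stable $x$, reconstruct a compatible family with $\ga_P=x$. The forced definition is $\ga_Q:=\mathrm{Res}^{gPg^{-1}}_{Q}(c_g(x))$ for any $g$ with $Q\sbq gPg^{-1}$, and the crux is independence of the choice of $g$. I would first record that the stability relation, stated at $gPg^{-1}\cap P$, restricts downward: applying $\mathrm{Res}^{gPg^{-1}\cap P}_{Q}$ shows $\mathrm{Res}^P_Q(x)=\mathrm{Res}^{gPg^{-1}}_Q(c_g(x))$ whenever $Q\sbq P\cap gPg^{-1}$. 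The two-conjugate ambiguity $Q\sbq g_1Pg_1^{-1}\cap g_2Pg_2^{-1}$ then reduces to this one-sided statement by conjugating with $g_1^{-1}$ and using $c_{g_1^{-1}}c_{g_1}=\mathrm{id}$ together with the naturality identity $c_{g_1^{-1}}\circ\mathrm{Res}^{g_1Pg_1^{-1}}_{Q}=\mathrm{Res}^{P}_{g_1^{-1}Qg_1}\circ c_{g_1^{-1}}$, which turns the claim into the one-sided identity for $h=g_1^{-1}g_2$ and $Q'=g_1^{-1}Qg_1\sbq P\cap hPh^{-1}$. With well-definedness secured, verifying that $(\ga_Q)$ is compatible under all restrictions and conjugations is routine, so it lies in $L$ and projects to $x$; thus $\gp_P$ carries $L\cong H^i_{Trivial\ Action}(G,\Z/p\Z)$ isomorphically onto the $G$-stable submodule.

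As an alternative independent of Theorem~\ref{theorem:CartanEilenberg}, one can argue by transfer: the corestriction satisfies $\mathrm{cor}^G_P\circ\mathrm{Res}^G_P=[G:P]\cdot\mathrm{id}$, which forces injectivity because $[G:P]$ is prime to $p$ and the coefficient module is $p$-torsion; and the Mackey double-coset formula for $\mathrm{Res}^G_P\circ\mathrm{cor}^G_P$, evaluated on a $G$-stable class $u$, collapses (using stability to identify each summand with $\mathrm{cor}^P_{P\cap gPg^{-1}}\mathrm{Res}^P_{P\cap gPg^{-1}}(u)$) to multiplication by $\us{PgP}{\sum}[P:P\cap gPg^{-1}]=[G:P]$, so that $u=\mathrm{Res}^G_P\big([G:P]^{-1}\mathrm{cor}^G_P(u)\big)$ lies in the image. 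This version is uniform in $i$, in particular for $i=1,2$.
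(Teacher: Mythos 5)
Your proposal is correct, but it and the paper take genuinely different routes: the paper offers no argument of its own for Theorem~\ref{theorem:StabilityConditions}, deferring entirely to G.~Karpilovsky~\cite{MR1183469} (Chapter 9, Proposition 2.5(iv) and Theorem 5.3). Your main argument instead deduces the theorem from Theorem~\ref{theorem:CartanEilenberg}: under the isomorphism $H^i_{Trivial\ Action}(G,\Z/p\Z)\cong \us{Q\in S_p(G)}{\lim}\,H^i_{Trivial\ Action}(Q,\Z/p\Z)$ the map $Res^G_P$ becomes the projection onto the $P$-component, and you verify --- using Sylow's theorem, the compatibility conditions defining the limit, and the identities $c_{g'}\circ c_g=c_{g'g}$ and the commutation of conjugation with restriction --- that this projection is injective with image exactly the $G$-stable classes; the delicate point, independence of the choice of conjugate $gPg^{-1}\supseteq Q$ when reconstructing a compatible family from a stable class, is handled correctly by your reduction to the one-sided stability relation for $h=g_1^{-1}g_2$. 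This buys a purely formal proof showing that the two results the paper imports from the literature are in fact equivalent given Sylow's theorem, so only one of them needs an external source. Your alternative transfer argument is essentially the classical Cartan--Eilenberg stable-elements proof, i.e., the very argument behind the paper's citation; it is self-contained (modulo corestriction and the Mackey double-coset formula, which the paper never introduces) and uniform in the degree $i$. Its only gap as written is that it establishes injectivity and that every stable class lies in the image of $Res^G_P$, but not the containment of $Res^G_P\big(H^i_{Trivial\ Action}(G,\Z/p\Z)\big)$ in the stable submodule, which the statement ``restricts isomorphically onto'' also requires; that containment follows in one line from $c_g\circ Res^G_P=Res^G_{gPg^{-1}}$ (inner automorphisms act trivially on the cohomology of $G$) together with transitivity of restriction, and is in any case already established inside your first argument.
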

\begin{proof}
For a proof of the theorem refer to G.~Karpilovsky~\cite{MR1183469}, Chapter $9$, Section $2$, Proposition $2.5(iv)$ on pages $399-400$ and G.~Karpilovsky~\cite{MR1183469}, Chapter $9$, Section $5$, Theorem $5.3$ on page $411$. 
\end{proof}
\subsection{\bf{The extended Hochschild-Serre Exact Sequence for Central Extensions}}
~\\
First we mention a remark similar to Remark~\ref{remark:TrivialAction} but for general actions.
\begin{remark}
Let $G$ be a group and $N$ be a normal subgroup of $G$. Let $A$ be an abelian group on which $G$ acts via the map $g\lra \ ^g\bullet:a\lra\ ^ga$. Then $G$ acts on $H^i(N,A),i=1,2$ via the action map $g\lra c_g:H^i(N,A)\lra H^i(N,A)$ where $c_g$ sends the cohomology class of $c\in Z^i(N,A)$ to the cohomology class of $d\in Z^i(N,A)$ where $d(g_1,\cdots,g_i)=\ ^gc(g^{-1}g_1g,\cdots,g^{-1}g_ig)$. So $H^i(N,A)$ is a $G$-module. In fact $N$ acts trivially on $H^i(N,A)$ which turns $H^i(N,A)$ into a $\frac GN$-module for $i=1,2$.
\end{remark}
The Hochschild-Serre exact sequence is given in the following theorem.
\begin{theorem}
Let $G$ be a group and $N$ be a normal subgroup of $G$. Let $A$ be an abelian group on which $G$ acts. Then the sequence 
\equ{1\lra H^1(\frac GN,A^N)\os{Inf}{\lra} H^1(G,A)\os{Res}{\lra}H^1(N,A)^G\os{Tra}{\lra}H^2(\frac GN,A^N)\os{Inf}{\lra}H^2(G,A)}
is exact where $Inf$ is the inflation map, $Res$ is the restriction map and $Tra$ is the transgression map.
\end{theorem}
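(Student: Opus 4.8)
The plan is to deduce this five-term sequence from the Lyndon--Hochschild--Serre spectral sequence attached to the extension $1 \lra N \lra G \lra \frac GN \lra 1$ with coefficients in the $G$-module $A$. First I would build the first-quadrant cohomological spectral sequence whose second page is $E_2^{p,q} = H^p(\frac GN, H^q(N,A))$ and which converges to $H^{p+q}(G,A)$, where $H^q(N,A)$ carries the $\frac GN$-module structure recorded in the remark preceding the statement. The cleanest construction is the Grothendieck spectral sequence for the composite of the two left-exact functors $(-)^N$ and $(-)^{\frac GN}$, whose composite is $(-)^G$; the hypothesis one must check, that $(-)^N$ sends injectives to $\frac GN$-acyclics, holds because $(-)^N$ is right adjoint to the exact inflation functor and hence preserves injective objects.

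Next I would extract the exact sequence of terms of low degree. For any first-quadrant cohomological spectral sequence the edge homomorphisms together with the single second-page differential $d_2 \colon E_2^{0,1} \lra E_2^{2,0}$ fit into the exact sequence
\equ{0 \lra E_2^{1,0} \lra H^1(G,A) \lra E_2^{0,1} \os{d_2}{\lra} E_2^{2,0} \lra H^2(G,A).}
Substituting $E_2^{1,0} = H^1(\frac GN, A^N)$, $E_2^{0,1} = H^1(N,A)^{\frac GN} = H^1(N,A)^G$, and $E_2^{2,0} = H^2(\frac GN, A^N)$ reproduces the displayed sequence (the leading $1$ merely records injectivity of inflation). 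It then remains to identify the two edge maps out of the bottom row $E_2^{*,0}$ with inflation and the edge map into $E_2^{0,1}$ with restriction; this is the standard verification that, at the cochain level, these edge homomorphisms are induced by the projection $G \lra \frac GN$ together with $A^N \hookrightarrow A$, and by the inclusion $N \hookrightarrow G$, respectively.

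The step I expect to be the main obstacle is making the transgression $\mathrm{Tra} = d_2$ explicit and verifying exactness at the two middle nodes. Concretely I would represent an element of $H^1(N,A)^G$ by a crossed homomorphism $f \colon N \lra A$ whose cohomology class is $\frac GN$-invariant, choose a set-theoretic section of $G \lra \frac GN$ normalized at the identity, and lift $f$ to a $1$-cochain $\tilde f$ on $G$. Using the $G$-invariance of $[f]$ one shows that the failure of $\tilde f$ to be a cocycle, measured by $\delta \tilde f$, descends to a $2$-cocycle on $\frac GN$ with values in the fixed submodule $A^N$; that cocycle represents $\mathrm{Tra}([f])$. Showing this class is independent of the chosen lift and section, and then proving exactness precisely at $E_2^{0,1}$ (so that $\ker d_2$ equals the image of restriction) and at $E_2^{2,0}$ (so that $\ker \mathrm{Inf}$ equals the image of transgression), is the delicate bookkeeping that carries the real content of the theorem.

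Since the assertion is classical, a self-contained alternative avoids spectral sequences entirely and argues on inhomogeneous cochains: define $\mathrm{Inf}$ by precomposition with $G \lra \frac GN$, $\mathrm{Res}$ by restriction of cochains to $N$, and $\mathrm{Tra}$ by the lifting construction above, and then check exactness at each interior node directly. This trades the homological apparatus for elementary but lengthy cocycle manipulations, concentrating all the difficulty in the same transgression computation; either way, the transgression is where the work lies.
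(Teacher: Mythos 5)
Your proposal is correct and follows essentially the same route as the paper: the paper does not prove this statement itself but defers to its references, namely Karpilovsky (Chapter 1, Theorem 1.12) and Benson (p.\ 110), the latter of which derives the five-term sequence exactly as you do, as the low-degree exact sequence of the Lyndon--Hochschild--Serre spectral sequence of the extension $1\to N\to G\to G/N\to 1$. Your elementary cochain-level alternative, with the explicit lifting construction of the transgression, is precisely the Karpilovsky-style argument, so both branches of your plan match the cited proofs.
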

\begin{proof}
For a proof of the theorem refer to G.~Karpilovsky~\cite{MR1215935}, Chapter $1$, Section $1$, Theorem $1.12$ on page $16$ or refer to D.~Benson~\cite{MR1156302}, page $110$ on the inflation restriction sequence which is obtained as a consequence of the spectral sequence of a group extension.
\end{proof}
Let $G_1,G_2$ be two groups and $A$ be an abelian group. We define an abelian group $P(G_1,G_2,A)$ as follows:
\equan{BilinearMap}{P(G_1,G_2,A)&=\{f:G_1\times G_2\lra A\mid f(xy,z)=f(x,z)+f(y,z),\\& f(x,zw)=f(x,z)+f(x,w) \text{ for all }x,y\in G_1, z,w\in G_2\}.}
The set $P(G_1,G_2,A)$ is a group under addition induced from that of $A$.
\begin{prop}
	\label{prop:ThetaMap}
Let $G_1,G_2$ be two subgroups of $G$ such that $xy=yx$ for all $x\in G_1,y\in G_2$. Let $A$ be an abelian group on which $G$ acts trivially. For any given $c\in Z^2_{Trivial\ Action}(G,A)$, let $\gb:G_1\times G_2\lra A$ be defined by $\gb(x,y)=c(x,y)-c(y,x)$ for $x\in G_1,y\in G_2$. Then $\gb\in P(G_1,G_2,A)$ and $\gb$ depends only on the cohomology class $[c]\in H^2_{Trivial\ Action}(G,A)$ and the map $\gth:[c]\lra \gb_c=\gb$ from $H^2_{Trivial\ Action}(G,A)$ to $P(G_1,G_2,A)$ is a well defined group homomorphism. 
\end{prop}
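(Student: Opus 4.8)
The plan is to verify the three assertions---biadditivity of $\gb$, invariance under cohomology, and the homomorphism property---in turn, with the $2$-cocycle identity and the hypothesis $xy=yx$ for $x\in G_1,y\in G_2$ doing all the work. Throughout I would use the trivial-action cocycle identity $c(g,h)+c(gh,k)=c(h,k)+c(g,hk)$ for all $g,h,k\in G$, which is essentially the only structural input available, together with the element-wise commutativity between $G_1$ and $G_2$.

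First I would establish that $\gb\in P(G_1,G_2,A)$, that is, that $\gb$ is additive in each variable separately. To check $\gb(xy,z)=\gb(x,z)+\gb(y,z)$ for $x,y\in G_1$ and $z\in G_2$, I would expand $c(xy,z)$ by a single application of the cocycle identity (taking $g=x,h=y,k=z$), and expand $c(z,xy)$ by two nested applications: first split off $z$, then commute $z$ past $x$ using $zx=xz$ and re-apply the identity, finally rewriting $zy$ as $yz$. Substituting these into $\gb(xy,z)=c(xy,z)-c(z,xy)$, the terms $c(x,yz)$ and $c(x,y)$ cancel in pairs, and what remains collapses exactly to $[c(x,z)-c(z,x)]+[c(y,z)-c(z,y)]=\gb(x,z)+\gb(y,z)$. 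Additivity in the second variable, $\gb(x,zw)=\gb(x,z)+\gb(x,w)$ for $x\in G_1$ and $z,w\in G_2$, I would prove by the symmetric computation, again moving $x$ through the products $zw$ and $wx$ via the commutation relations $xz=zx$ and $xw=wx$.

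Next I would show that $\gb$ depends only on the class $[c]$. If $c'=c+\pr v$ for some $v:G\lra A$, where $\pr v(g,h)=v(g)+v(h)-v(gh)$, then a direct substitution gives $\gb_{c'}(x,y)-\gb_{c}(x,y)=\pr v(x,y)-\pr v(y,x)=v(yx)-v(xy)$, and this vanishes precisely because $xy=yx$ forces $v(xy)=v(yx)$. Hence $\gb$ is constant on cohomology classes, so the assignment $\gth\colon[c]\lra\gb_c$ is well defined on $H^2_{Trivial\ Action}(G,A)$. The homomorphism property is then immediate: $\gb$ is manifestly $\Z$-linear in the cocycle $c$, so $\gb_{c+c'}=\gb_c+\gb_{c'}$, which descends to $\gth([c]+[c'])=\gth([c])+\gth([c'])$.

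I expect the only real obstacle to be the first step: correctly unwinding $c(z,xy)$ (and, in the second-variable case, $c(x,zw)$) requires applying the cocycle identity more than once and interleaving it with the commutation relations in the right order. One must rewrite every intermediate product ($zx$, $xz$, $zy$, $yz$, and their analogues) consistently so that the bookkeeping of cancellations goes through cleanly. Once the commutativity hypothesis is used to identify the relevant products, the remaining two assertions are purely formal.
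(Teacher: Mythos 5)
Your proposal is correct, and it is worth pointing out that the paper itself never proves Proposition~\ref{prop:ThetaMap}: it defers entirely to G.~Karpilovsky, \emph{Group Representations, Volume 2}, Chapter 1, Section 2, Lemma 2.2. Your direct verification is therefore a self-contained substitute for that citation rather than a variant of an argument in the paper, and every step checks out. Writing the trivial-action cocycle identity as $c(g,h)+c(gh,k)=c(h,k)+c(g,hk)$, one application gives $c(xy,z)=c(y,z)+c(x,yz)-c(x,y)$, and the two nested applications you describe (split off $z$; replace $c(zx,y)$ by $c(xz,y)$; expand again; replace $c(x,zy)$ by $c(x,yz)$) give $c(z,xy)=c(z,x)+c(z,y)+c(x,yz)-c(x,z)-c(x,y)$; subtracting, the $c(x,yz)$ and $c(x,y)$ terms cancel exactly as you claim, leaving $\gb(x,z)+\gb(y,z)$, and the second variable is the mirror-image computation. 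The class-invariance step is also right and is where the hypothesis $xy=yx$ is genuinely used: $\gb_{c+\pr v}-\gb_{c}$ evaluates to $v(yx)-v(xy)=0$, and without elementwise commutation of $G_1$ and $G_2$ neither this nor the biadditivity would survive. Linearity of $c\lra\gb_c$ together with class-invariance then makes $\gth$ a well-defined homomorphism. The only caveat is presentational: your text is a plan (``I would expand\dots'') rather than a finished proof, so the displayed computations above would need to be written out explicitly; but none of the steps you outline conceals a difficulty, and the bookkeeping you flag as the main risk does go through cleanly.
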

\begin{proof}
For a proof of the proposition refer to G.~Karpilovsky~\cite{MR1215935}, Chapter $1$, Section $2$, Lemma $2.2$ on page $19$.
\end{proof}
\begin{theorem}
\label{theorem:ProductCohomology}
Let $G_1,G_2$ be two groups and $A$ be an abelian group on which $G_1,G_2,G_1\times G_2$ act trivially. Then 
\equ{H^2_{Trivial\ Action}(G_1\times G_2,A)\cong H^2_{Trivial\ Action}(G_1,A)\times H^2_{Trivial\ Action}(G_2,A)\times P(G_1,G_2,A).}
The isomorphism being 
\equ{[c]\lra \big(Res_1([c]),Res_2([c]),\gb_{[c]}\big)}
where $Res_i:H^2_{Trivial\ Action}(G_1\times G_2,A)\lra H^2_{Trivial\ Action}(G_i,A)$ are the restriction maps for $i=1,2$ and $\gb_{[c]}(x,y)=c\big((x,e_{G_2}),(e_{G_1},y)\big)-c\big((e_{G_1},y),(x,e_{G_2})\big)$ for all $x\in G_1,y\in G_2$ where $c\in Z^2_{Trivial\ Action}(G_1\times G_2,A)$ is any cocycle representing the cohomology class $[c]\in H^2_{Trivial\ Action}(G_1\times G_2,A)$. (Here $e_{G_i}$ is the identity element of the group $G_i$, i=1,2.) 
\end{theorem}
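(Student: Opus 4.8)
The plan is to show that the stated map $\Phi\colon [c]\mapsto \big(Res_1[c],Res_2[c],\gb_{[c]}\big)$ is an isomorphism by exhibiting an explicit two-sided inverse. First I would check that $\Phi$ is well defined and a homomorphism. The restrictions $Res_1,Res_2$ to the commuting subgroups $G_1\times\{e\}$ and $\{e\}\times G_2$ (writing $e$ for the identity of either factor) are the usual restriction homomorphisms, and the third coordinate is exactly the homomorphism $\gth$ of Proposition~\ref{prop:ThetaMap} applied to these two subgroups; thus $\gb_{[c]}$ lies in $P(G_1,G_2,A)$, depends only on the class $[c]$, and is additive in $[c]$. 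Hence $\Phi$ is a group homomorphism into the product.

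Second, I would build the candidate inverse $\Psi$. Given $([c_1],[c_2],f)$, choose normalized representatives $c_i\in Z^2_{Trivial\ Action}(G_i,A)$ and set
\equ{\hat c\big((x_1,y_1),(x_2,y_2)\big)=c_1(x_1,x_2)+c_2(y_1,y_2)+f(x_1,y_2).}
A direct check of the cocycle identity shows $\hat c\in Z^2_{Trivial\ Action}(G_1\times G_2,A)$: the $c_1$- and $c_2$-contributions vanish because $c_1,c_2$ are cocycles, while the terms coming from $f$ cancel precisely because $f$ is biadditive in the sense of~\eqref{Eq:BilinearMap}. Since altering $c_1$ or $c_2$ by a coboundary alters $\hat c$ by the coboundary of $(x,y)\mapsto h_1(x)+h_2(y)$, the class $[\hat c]$ depends only on the triple, so $\Psi\colon([c_1],[c_2],f)\mapsto[\hat c]$ is a well-defined homomorphism.

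The composite $\Phi\circ\Psi$ is the identity by a short computation with normalized cocycles: restricting $\hat c$ to $G_1\times\{e\}$ kills the $c_2$-term and the term $f(x_1,e)=0$, returning $[c_1]$, and symmetrically $Res_2[\hat c]=[c_2]$; meanwhile $\gb_{\hat c}(x,y)=\hat c((x,e),(e,y))-\hat c((e,y),(x,e))=f(x,y)$, again using normalization together with $f(e,y)=0$. So $\Phi\circ\Psi=\mrm{id}$.

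The heart of the matter is the reverse composite $\Psi\circ\Phi=\mrm{id}$: every $c$ must be cohomologous to the standard cocycle $\hat c$ assembled from $\Phi([c])$, where $c_1,c_2$ are the literal restrictions of a normalized $c$ and $f=\gb_c$. The difference $d=c-\hat c$ is then a normalized cocycle whose restrictions to both factors vanish identically and with $\gb_d=0$, so it suffices to prove such a $d$ is a coboundary. For this I would form the central extension $E=A\times(G_1\times G_2)$ whose product is twisted by $d$: vanishing of $d$ on each factor makes $x\mapsto(0,(x,e))$ and $y\mapsto(0,(e,y))$ homomorphic sections $s_1,s_2$, the identity $\gb_d=0$ says precisely that the images of $s_1$ and $s_2$ commute in $E$, and—since $G_1$ and $G_2$ already commute in the product—$(x,y)\mapsto s_1(x)s_2(y)$ is a homomorphic splitting of $E$. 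A split central extension has cohomologically trivial cocycle, so $d$ is a coboundary and $[c]=[\hat c]$. I expect the main obstacle to be exactly this last step: one must verify that $\gb_d=0$ yields genuine commuting of $s_1,s_2$ (rather than commuting only up to a coboundary) and that the biadditivity cancellations really force $d$ to vanish on each factor. Granting these, $\Phi$ and $\Psi$ are mutually inverse homomorphisms, and the explicit form of the isomorphism in the statement follows.
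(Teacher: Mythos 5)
Your proof is correct; note that the paper itself offers no argument for this theorem, deferring entirely to Karpilovsky (Group Representations, Vol.~2, Chapter~1, Section~2, Theorem~2.3), so any complete proof is necessarily a different route from the paper's. Your construction of the inverse $\Psi$ via $\hat c\big((x_1,y_1),(x_2,y_2)\big)=c_1(x_1,x_2)+c_2(y_1,y_2)+f(x_1,y_2)$ is sound: the cocycle identity follows from biadditivity of $f$ as you say, well-definedness on classes is handled by the coboundary of $(x,y)\mapsto h_1(x)+h_2(y)$, and $\Phi\circ\Psi=\mathrm{id}$ is a correct computation with normalized representatives. The step you flag as the potential obstacle is in fact settled by your own setup: with $c$ normalized and $f=\gb_c$ taken literally from that representative, the difference $d=c-\hat c$ vanishes identically (not merely up to coboundary) on $G_1\times\{e_{G_2}\}$ and on $\{e_{G_1}\}\times G_2$, since $c_1(x,e)=c_1(e,x)=c_2(y,e)=c_2(e,y)=0$ and $f(x,e_{G_2})=f(e_{G_1},y)=0$ by biadditivity; likewise $\gb_d=\gb_c-\gb_{\hat c}=f-f=0$ holds exactly as functions, because $\gb_{\hat c}=f$ on the nose for normalized data. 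Hence $s_1,s_2$ are genuine homomorphisms whose images genuinely commute in the twisted extension $E_d$, the map $(x,y)\mapsto s_1(x)s_2(y)$ is a homomorphic section by the universal property of the direct product, and a central extension built from a cocycle splits if and only if that cocycle is a $2$-coboundary, giving $[d]=0$ and $\Psi\circ\Phi=\mathrm{id}$. What your route buys over the paper's citation is that it is self-contained and stays within tools the paper already develops (Proposition~\ref{prop:ThetaMap} and the dictionary between $2$-cocycles of trivial modules and central extensions), with the extension-splitting trick replacing what would otherwise be an explicit cochain-level verification that $d$ is a coboundary.
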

\begin{proof}
For a proof of the theorem refer to G.~Karpilovsky~\cite{MR1215935}, Chapter $1$, Section $2$, Theorem $2.3$ on page $20$.
\end{proof}
As a consequence of the above theorem we have the following corollary.
\begin{cor}
\label{cor:ProductCohomology}
Let $G_i,1\leq i\leq n$ be groups. Let $A$ be an abelian group on which $G_i$ act trivially for $1\leq i\leq n$. Then 
\equ{H^2_{Trivial\ Action}(\us{i=1}{\os{n}{\prod}}G_i,A)\cong \us{i=1}{\os{n}{\prod}}H^2_{Trivial\ Action}(G_i,A)\times \us{1\leq i<j\leq n}{\prod}P(G_i,G_j,A)}
with the isomorphism being
\equ{[c]\lra \big((Res_i([c]))_{1\leq i\leq n},(\gb_{ij})_{1\leq i<j\leq n}\big)}
where $Res_i:H^2_{Trivial\ Action}(\us{i=1}{\os{n}{\prod}}G_i,A) \lra H^2_{Trivial\ Action}(G_i,A)$ is the restriction map for $1\leq i\leq n$ and $\gb_{ij}:H^2_{Trivial\ Action}(\us{i=1}{\os{n}{\prod}}G_i,A)\lra P(G_i,G_j,A)$ is a map defined as $\gb_{ij}([c])(x,y)=c\big((e_{G_1},\cdots,e_{G_{i-1}},x,e_{G_{i+1}},\cdots,e_{G_n}),(e_{G_1},\cdots,e_{G_{j-1}},y,e_{G_{j+1}},\cdots,e_{G_n})\big)$ $-c\big((e_{G_1},\cdots,e_{G_{j-1}},y,e_{G_{j+1}},\cdots,e_{G_n}),(e_{G_1},\cdots,e_{G_{i-1}},x,e_{G_{i+1}},\cdots,e_{G_n})\big)$ for $x\in G_i$, $y\in G_j$ for $1\leq i<j\leq n$ where $c\in Z^2_{Trivial\ Action}(\us{i=1}{\os{n}{\prod}}G_i,A)$ is any cocycle representing the cohomology class $[c]$. (Here $e_{G_i}$ is the identity element of the group $G_i, 1\leq i\leq n$).
\end{cor}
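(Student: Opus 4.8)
The plan is to argue by induction on $n$, taking Theorem~\ref{theorem:ProductCohomology} as the base case $n=2$. For the inductive step I would regroup the factors as $\us{i=1}{\os{n}{\prod}}G_i\cong\big(\us{i=1}{\os{n-1}{\prod}}G_i\big)\times G_n$ and apply the two–factor isomorphism of Theorem~\ref{theorem:ProductCohomology} with first factor $\us{i=1}{\os{n-1}{\prod}}G_i$ and second factor $G_n$. This yields
\equ{H^2_{Trivial\ Action}(\us{i=1}{\os{n}{\prod}}G_i,A)\cong H^2_{Trivial\ Action}(\us{i=1}{\os{n-1}{\prod}}G_i,A)\times H^2_{Trivial\ Action}(G_n,A)\times P\big(\us{i=1}{\os{n-1}{\prod}}G_i,G_n,A\big),}
after which the first factor is expanded by the inductive hypothesis.

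To reassemble this into the claimed form, the essential ingredient is a splitting
\equ{P\big(\us{i=1}{\os{n-1}{\prod}}G_i,G_n,A\big)\cong \us{i=1}{\os{n-1}{\prod}}P(G_i,G_n,A),}
which I would establish directly from the biadditivity defining~\eqref{Eq:BilinearMap}. Writing $x=(x_1,\ldots,x_{n-1})$ as the product of the coordinate inclusions $\gi_i(x_i)=(e,\ldots,x_i,\ldots,e)$, additivity in the first variable gives $f(x,y)=\us{i=1}{\os{n-1}{\sum}}f(\gi_i(x_i),y)$, so $f$ is determined by the maps $f_i(x_i,y)=f(\gi_i(x_i),y)$, each lying in $P(G_i,G_n,A)$; conversely any tuple $(f_i)$ assembles to a biadditive map via $f(x,y)=\us{i=1}{\os{n-1}{\sum}}f_i(x_i,y)$. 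Substituting the inductive decomposition of the first factor together with this splitting into the two–factor isomorphism produces exactly $\us{i=1}{\os{n}{\prod}}H^2_{Trivial\ Action}(G_i,A)\times \us{1\leq i<j\leq n}{\prod}P(G_i,G_j,A)$, the pairs $(i,j)$ with $j\leq n-1$ coming from the inductive $P$–factors and the pairs $(i,n)$ coming from the splitting.

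The genuine work, and the step I expect to be the main obstacle, is not the abstract isomorphism but verifying that the component maps of the composite agree on the nose with the explicitly prescribed $Res_i$ and $\gb_{ij}$ in the statement. For the restriction maps this reduces to observing that the inclusion $G_i\hookrightarrow \us{k=1}{\os{n}{\prod}}G_k$ factors through $\us{k=1}{\os{n-1}{\prod}}G_k$ for $i<n$, so $Res_i$ is the two–factor restriction followed by the inductive one, while $Res_n$ is the second two–factor restriction. For the bilinear components one checks that the formula for $\gb_{ij}$ depends only on cocycle values at tuples supported in coordinates $i$ and $j$: when $j\leq n-1$ these tuples have trivial $n$–th coordinate, so $\gb_{ij}$ is the inductive map precomposed with restriction to $\us{k=1}{\os{n-1}{\prod}}G_k$; when $j=n$, setting $x=\gi_i(x_i)$ in the two–factor $\gb$–formula recovers precisely $\gb_{in}$ through the splitting. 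These compatibilities are routine but require careful bookkeeping of the inserted identities, which is where the argument must be written out in full.
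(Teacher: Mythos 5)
Your proposal is correct and is exactly the intended argument: the paper offers no written proof, presenting the corollary as an immediate consequence of Theorem~\ref{theorem:ProductCohomology}, and the natural way to fill this in is your induction on $n$ via the regrouping $\us{i=1}{\os{n}{\prod}}G_i\cong\big(\us{i=1}{\os{n-1}{\prod}}G_i\big)\times G_n$, the biadditive splitting $P\big(\us{i=1}{\os{n-1}{\prod}}G_i,G_n,A\big)\cong \us{i=1}{\os{n-1}{\prod}}P(G_i,G_n,A)$, and the bookkeeping identifying the composite components with $Res_i$ and $\gb_{ij}$. Nothing further is needed.
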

In the following theorem we describe the extended Hochschild-Serre exact sequence for central extensions.
\begin{theorem}
	\label{theorem:HSESequence}
Let $G$ be a finite group and $H\subseteq G$ be a central subgroup, that is, $H\subseteq Z(G)$. Let $A$ be a finite abelian group on which $G$ acts trivially. Then the following sequence 
\equa{0&\lra \Hom(\frac{G}{H},A)\os{Inf}{\lra}\Hom(G,A)\os{Res}{\lra}\Hom(H,A)\os{Tra}{\lra}H^2_{Trivial\ Action}(\frac{G}{H},A)\\&\os{Inf}{\lra}H^2_{Trivial\ Action}(G,A) \os{\gt=Res\times \gth}{\lra}H^2_{Trivial\ Action}(H,A) \times P(G,H,A)}
is exact where $Inf$ is the inflation map, $Res$ is the restriction map, $Tra$ is the transgression map and $\gth:H^2_{Trivial\ Action}(G,A)\lra P(G,H,A)$ is the map defined as $\gth([c])(g,h)=c(g,h)-c(h,g)$ for $g\in G,h\in H$ with $c\in Z^2_{Trivial\ Action}(G,A)$ being any cocycle which represents the cohomology class $[c]$. 
\end{theorem}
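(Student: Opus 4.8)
The portion of the asserted sequence from $0$ through the arrow $Inf\colon H^2_{Trivial\ Action}(\frac GH,A)\lra H^2_{Trivial\ Action}(G,A)$ is the classical five-term inflation--restriction sequence of Hochschild and Serre (the preceding theorem), specialised to the present setting. Because $H$ is central and $G$ acts trivially on $A$, we have $A^H=A$ and the induced $\frac GH$-action on $\Hom(H,A)$ is trivial, whence $\Hom(H,A)^{\frac GH}=\Hom(H,A)$; this accounts for the stated shape of the first four maps and for exactness at every spot through $H^2_{Trivial\ Action}(\frac GH,A)$. The one genuinely new assertion is therefore exactness at $H^2_{Trivial\ Action}(G,A)$, namely that the image of $Inf$ coincides with $\Ker(\gt)$ for $\gt=Res\times\gth$. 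I would prove the two inclusions separately, working throughout with normalised cocycles.

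\textbf{Image $\subseteq$ kernel.} Let $[c]\in H^2_{Trivial\ Action}(\frac GH,A)$ and let $\ti c(g_1,g_2)=c(\gp g_1,\gp g_2)$ be its inflation, where $\gp\colon G\lra \frac GH$ is the quotient map. Whenever an argument lies in $H=\Ker\gp$ the corresponding entry of $\gp$ is the identity of $\frac GH$, so normalisation gives $\ti c(h_1,h_2)=0$ for $h_1,h_2\in H$ and $\ti c(g,h)=\ti c(h,g)=0$ for $g\in G,\,h\in H$. Reading off the two components of $\gt$, this says precisely $Res(Inf[c])=0$ and $\gth(Inf[c])=0$, so $\gt\circ Inf=0$.

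\textbf{Kernel $\subseteq$ image.} This is the heart of the matter. Fix a normalised cocycle $c$ with $Res[c]=0$ and $\gth[c]=0$. Since $Res[c]=0$, we have $c|_{H\times H}=\pr\gm$ for some $\gm\colon H\lra A$, and subtracting the coboundary of any extension of $\gm$ to $G$ lets me assume $c|_{H\times H}=0$. Next I would evaluate the cocycle identity at $(g,h,h')$ with $h,h'\in H$; using $c|_{H\times H}=0$ it collapses to $c(g,hh')=c(gh,h')+c(g,h)$, which is exactly the consistency condition needed to solve, coset by coset, for a function $\gl\colon G\lra A$ with $\gl|_H=0$ whose coboundary cancels $c(\cdot,h)$. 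After this second adjustment $c(g,h)=0$ for all $g\in G,\,h\in H$, while $c|_{H\times H}=0$ is preserved. Finally, as $\gth$ depends only on the cohomology class (Proposition~\ref{prop:ThetaMap}), $\gth[c]=0$ still holds, i.e.\ $c(g,h)=c(h,g)$; combined with the previous line this forces $c(h,g)=0$ as well.

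It then remains to descend. Feeding these vanishings back into the cocycle identity at $(h,g,g')$, and invoking centrality $hg=gh$, yields $c(gh,g')=c(g,g')$; evaluating instead at $(g,g',h)$ yields $c(g,g'h)=c(g,g')$. Hence $c(g_1h_1,g_2h_2)=c(g_1,g_2)$ for all $h_1,h_2\in H$, so $c$ is constant along $H$-cosets in each variable and descends to a cocycle $\ol c$ on $\frac GH$ with $Inf[\ol c]=[c]$, placing $[c]$ in the image of $Inf$. The delicate part is this final stretch: one must check that the relation $c(g,hh')=c(gh,h')+c(g,h)$ really guarantees a well-defined $\gl$ across each coset, and then orchestrate the cocycle identity and centrality so that \emph{both} hypotheses are used---$Res[c]=0$ to kill $c(\cdot,h)$ and $\gth[c]=0$ to kill $c(h,\cdot)$. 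Neither hypothesis alone suffices, and it is their interaction, rather than any single computation, that is the crux.
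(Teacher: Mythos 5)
Your proof is correct, but it is a genuinely different route from the paper's: the paper does not prove Theorem~\ref{theorem:HSESequence} at all, it outsources it to Karpilovsky (Chapter 1, Corollary 1.13 together with Theorem 2.8 of \cite{MR1215935}), whereas you give a self-contained argument. Your reduction of the first five terms to the previously quoted Hochschild--Serre sequence is the standard specialisation (centrality gives $A^{H}=A$ and makes the conjugation action on $\Hom(H,A)$ trivial, so $\Hom(H,A)^{G/H}=\Hom(H,A)$), and your treatment of the one new spot, $\mathrm{Im}(Inf)=\Ker(Res\times\gth)$, is a clean normalised-cocycle computation. The step you flag as delicate does go through exactly as you set it up: fixing a representative $\bar g$ in each coset (with $\bar e=e$) and setting $\gl(\bar g k):=-c(\bar g,k)$ for $k\in H$, the identity $c(g,hh')=c(gh,h')+c(g,h)$ (valid once $c|_{H\times H}=0$) gives $(\pr\gl)(g,h)=c(g,h)$ for all $g\in G$, $h\in H$, while $\gl|_{H}=0$ keeps $c|_{H\times H}=0$; then $\gth[c]=0$, which by Proposition~\ref{prop:ThetaMap} is insensitive to the coboundary adjustments, kills $c(h,g)$ as well, and the cocycle identity at $(h,g,g')$ and $(g,g',h)$ plus centrality ($hg=gh$) shows $c$ descends to $G/H$. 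What the paper's citation buys is brevity; what your proof buys is transparency about exactly where each hypothesis enters --- $Res[c]=0$ and $\gth[c]=0$ are used independently and neither suffices alone, and centrality of $H$ is used precisely twice, once to trivialise the action on $\Hom(H,A)$ and once in the descent step.
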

\begin{proof}
For a proof of the theorem, refer to G.~Karpilovsky~\cite{MR1215935}, Chapter $1$, Section $1$, Corollary $1.13$ on page $17$ and G.~Karpilovsky~\cite{MR1215935}, Chapter $1$, Section $2$, Theorem $2.8$ on page $23$.
\end{proof}
\subsection{\bf{An Algebraic Identity}}
In this section we prove an algebraic identity which is useful later.
Let $X$ be a variable (indeterminate) such that $X^2=0$.
Let $A,B$ be two $n\times n$ matrices with entries $a_{ij},b_{ij}$ for $n\geq i>j\geq 1$ and with entries $Xa_{ij},Xb_{ij}$ for $1\leq i<j\leq n$ respectively and with diagonal entries being $0$ in both of them, where $a_{ij},b_{ij},1\leq i\neq j\leq n$ are variables. All the variables $a_{ij},b_{ij},1\leq i\neq j\leq n,X$ commute with each other.
Then $A$ and $B$ are nilpotent matrices because $X^2=0\Ra A^{2n}=B^{2n}=0$. Moreover we have $\Tra(A^i)=\Tra(B^i)=0$ for $i=1,i\geq n+1$.  Let $g^a=A+\Diag(1+Xa_{11},1+Xa_{22},\cdots,1+Xa_{nn}),g^b=B+\Diag(1+Xb_{11},1+Xb_{22},\cdots,1+Xb_{nn})$ where the elements $a_{ii},b_{ii}$ appearing in the diagonals of $g^a,g^b$ are also commuting variables which also commute with all the remaining variables.
Then we have $g^c=g^ag^b=(L_A+XU_A+XD_A+I_n)(L_B+XU_B+XD_B+I_n)$ where $L_A,L_B$ are the strictly lower triangular part of $A,B$, and
$XU_A,XU_B$ are the strictly upper triangular part of $A,B$, and $I_n+XD_A,I_n+XD_B$ are the diagonal parts of $g^a,g^b$ respectively.
So $X^2=0\Ra g^c=(L_A+L_B+L_AL_B)+X(L_AU_B+U_AL_B+U_A+U_B)+X(L_AD_B+D_AL_B)+X(D_A+D_B)+I_n$. Now define a matrix 
$C=[c_{ij}]_{1\leq i,j\leq n}$ where $c_{ii}=0$ for $1\leq i\leq n$, $c_{ij}=(L_A+L_B+L_AL_B)_{ij}$ for $n\geq i>j\geq 1$ the strictly lower triangular part of $g^c$ where we have ignored the coefficients of $X$ in these entries, $c_{ij}=X(L_AU_B+U_AL_B+U_A+U_B)_{ij}$ for $1\leq i<j\leq n$ the strictly upper triangular part of $g^c$.  Note that $C$ is also nilpotent, $C^{2n}=0, \Tra(C^i)=0$ for $i=1,i\geq n+1$.
\begin{prop}
	\label{prop:Identity}
With notations as above we have for $n\geq 2$,
\begin{enumerate}
	\item $1+\us{i=2}{\os{n}{\sum}}(-1)^{i-1}\Tra(\frac{N^i}{i})=\Det(I_n+N)$ for $N=A,B,C$.
	\item $\Det(I_n+A)+\Det(I_n+B)=1+\Det(I_n+A)\Det(I_n+B)$.
	\item $\Det(I_n+A)\Det(I_n+B)=\Det(I_n+C)+\Tra(AB)$.
	\item \equ{\Tra(AB)+\us{i=2}{\os{n}{\sum}}(-1)^{i-1}\Tra(\frac{C^i}{i})-\us{i=2}{\os{n}{\sum}}(-1)^{i-1}\Tra(\frac{A^i}{i})-\us{i=2}{\os{n}{\sum}}(-1)^{i-1}\Tra(\frac{B^i}{i})=0.}
\end{enumerate}
\end{prop}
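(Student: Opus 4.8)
The governing principle is that the relation $X^2=0$ linearizes every expression in sight: any element of the coefficient ring $R$ that is divisible by $X$ squares to zero, so determinants and exponentials collapse to their linear terms. Throughout I write $N=L_N+XU_N$ for $N\in\{A,B,C\}$, with $L_N$ the strictly lower-triangular part and $U_N$ the strictly upper-triangular part, so that $I_n+L_N$ is unipotent lower-triangular and $\Det(I_n+L_N)=1$. For part $(1)$ the first observation is that $\Tra(N^i)\in XR$ for every $i\ge1$: expanding $(L_N+XU_N)^i$ modulo $X^2$ leaves $L_N^i$, which is strictly lower-triangular and hence traceless, plus a single $X$-linear correction. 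I would then invoke $\Det(I_n+N)=\exp\bigl(\Tra\log(I_n+N)\bigr)$, with $\log(I_n+N)=\sum_{i\ge1}(-1)^{i-1}N^i/i$ a finite sum since $N$ is nilpotent. The exponent equals $\sum_{i\ge1}(-1)^{i-1}\Tra(N^i)/i$, which lies in $XR$, so the exponential truncates to $1$ plus the exponent; discarding the vanishing $i=1$ term and the terms with $i>n$ yields precisely $\Det(I_n+N)=1+\sum_{i=2}^n(-1)^{i-1}\Tra(N^i/i)$.

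Part $(2)$ is then immediate: by $(1)$ we may write $\Det(I_n+A)=1+a$ and $\Det(I_n+B)=1+b$ with $a,b\in XR$, whence $ab=0$ and the claimed identity is the trivial $(1+a)+(1+b)=1+(1+a)(1+b)$. For parts $(3)$ and $(4)$ I would isolate two workhorse facts. First, the determinant linearization $\Det\bigl((I_n+L)+XY\bigr)=1+X\Tra\bigl((I_n+L)^{-1}Y\bigr)$ for any strictly lower-triangular $L$. Second, a trace lemma: if $Y$ has vanishing strictly upper-triangular part, then $\Tra\bigl((I_n+L)^{-1}Y\bigr)=\Tra(Y)$, because $(I_n+L)^{-1}$ is unipotent lower-triangular, the product of a lower-triangular matrix with a strictly lower-triangular one is traceless, and against the diagonal part the unit diagonal of $(I_n+L)^{-1}$ contributes only $\Tra(Y)$.

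The crux is part $(3)$, which I would derive from the multiplicativity $\Det(g^c)=\Det(g^a)\Det(g^b)$. Applying the two facts to $g^a=(I_n+L_A)+X(U_A+D_A)$, whose $X$-coefficient has strictly upper part $U_A$ and diagonal part $D_A$, gives $\Det(g^a)=\Det(I_n+A)+X\Tra(D_A)$, and likewise for $g^b$. Multiplying and dropping $X^2$-terms yields $\Det(g^c)=\Det(I_n+A)\Det(I_n+B)+X\Tra(D_A)+X\Tra(D_B)$. On the other hand, by the definition of $C$ the $X$-coefficient $M$ of $g^c$ has strictly upper-triangular part exactly $U_C$, so $M-U_C$ has vanishing strictly upper-triangular part; the trace lemma then collapses $\Det(g^c)-\Det(I_n+C)=X\Tra(M-U_C)=X\Tra(M)$, and the only surviving contributions to $\Tra(M)$ are $\Tra(U_AL_B)+\Tra(L_AU_B)+\Tra(D_A)+\Tra(D_B)$, since $\Tra(U_A)=\Tra(U_B)=0$ and the mixed terms $D_AL_B,L_AD_B$ are strictly lower-triangular. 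Equating the two expressions for $\Det(g^c)$ and recognizing $\Tra(AB)=X\bigl(\Tra(U_AL_B)+\Tra(L_AU_B)\bigr)$ modulo $X^2$ gives $\Det(I_n+A)\Det(I_n+B)=\Det(I_n+C)+\Tra(AB)$, which is $(3)$.

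Part $(4)$ is then a formal consequence of $(1)$, $(2)$ and $(3)$: substituting $\sum_{i=2}^n(-1)^{i-1}\Tra(N^i/i)=\Det(I_n+N)-1$ for $N=A,B,C$ into the left-hand side of $(4)$, using $(3)$ to rewrite $\Tra(AB)$, and then $(2)$ to simplify the remaining combination of determinants, makes every term cancel. I expect the main obstacle to lie in part $(3)$, namely in controlling how the truncation built into the definition of $C$ interacts with the trace against the non-diagonal matrix $(I_n+L_C)^{-1}$ and in checking that the leftover is exactly the bilinear quantity $\Tra(AB)$ rather than some other combination; the trace lemma is precisely the device that reduces the opaque term $\Tra\bigl((I_n+L_C)^{-1}(M-U_C)\bigr)$ to the transparent $\Tra(M)$.
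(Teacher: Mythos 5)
Your proof is correct, and parts (2) and (4) coincide with the paper's own deductions; the genuine divergence is in part (1), with a milder organizational difference in part (3). The paper proves (1) by pure cycle combinatorics: since $X^2=0$, every cyclic product contributing to $\Tra(A^i)$ that ``ascends'' more than once is killed, leaving $\Tra(A^i)=iX\sum a_{j_1j_2}a_{j_2j_3}\cdots a_{j_ij_1}$ summed over strictly decreasing cycles $j_1>j_2>\cdots>j_i$, and the same observation applied to the permutation expansion of $\Det(I_n+A)$ leaves exactly the single decreasing $i$-cycles with sign $(-1)^{i-1}$; matching the two sums gives (1). Your route through $\Det(I_n+N)=\exp\bigl(\Tra\log(I_n+N)\bigr)$ is shorter, but note what it costs: that identity, while true for nilpotent $N$ here, is not just the familiar fact about complex matrices, because the ambient ring $\Z[a_{ij},b_{ij}][X]/(X^2)$ (or its $\mathbb{Q}$-form, which you implicitly pass to in order to divide by $i$ and by factorials) has nilpotent elements, so proofs by density of diagonalizable matrices or by Jordan form do not apply; one must argue, say, via Jacobi's formula $\tfrac{d}{dt}\Det Y=\Tra(\mathrm{adj}(Y)\,Y')$ applied to $Y(t)=\exp(tM)$, $M=\log(I_n+N)$, together with uniqueness of polynomial solutions of $f'=\Tra(M)f$, $f(0)=1$. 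So your argument outsources the real content of (1) to a lemma that itself needs justification in this setting, whereas the paper's computation is self-contained, stays over $\Z$, and --- importantly --- produces the explicit decreasing-cycle expansion of $\Tra(N^i)/i$ that Theorem~\ref{theorem:Identity} tacitly uses when it converts item (4) of Proposition~\ref{prop:Identity} into an identity about cycle sums; with your proof of (1) that conversion would need a supplementary computation (it does follow from your expansion via $\Tra(N^i)=iX\Tra(U_NL_N^{i-1})$ by cyclicity of the trace, but you would have to say so). In (3) the difference is only one of packaging: your Fact 1 and trace lemma are a systematized form of the paper's Equations~\ref{Eq:DiagTrace} and~\ref{Eq:DetLower}, but the paper applies them to $(I_n+A)(I_n+B)=I_n+A+B+AB$, so the diagonal parts $D_A,D_B$ never enter, while you expand $\Det(g^ag^b)$ and let the terms $X\Tra(D_A)+X\Tra(D_B)$ cancel between your two expressions for $\Det(g^c)$; both are valid, yours is slightly more roundabout, but the two lemmas you isolate are clean, correctly proved in your sketch, and reusable.
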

\begin{proof}
We prove $(1)$ for $N=A$. Note $X^2=0$. Hence we have for $i\geq 2$, \equ{\Tra(A^i)=iX\bigg(\us{n\geq j_1>j_2>\ldots>j_i\geq 1}{\sum}a_{j_1j_2}a_{j_2j_3}\cdots a_{j_{i-1}j_i}a_{j_ij_1}\bigg).}
Note that the product $Xa_{j_1j_2}a_{j_2j_3}\cdots a_{j_{i-1}j_i}a_{j_ij_1}$ appears $i$ times in the trace of $A^i$.

Now for $2\leq i\leq n$ let $S^i=\{\gs\in S_n\mid \gs \text{ is an }i\text{-cycle of the form }(j_1j_2\cdots j_i)\text{ where }$ $n\geq j_1>j_2 >\cdots>j_i\geq 1\}$.
Since $X^2=0$ we have \equ{\Det(I_n+A)=1+X\us{i=2}{\os{n}{\sum}}\bigg(\us{\gs=(j_1j_2\cdots j_i)\in S^i}{\sum}sign(\gs)a_{j_1j_2}a_{j_2j_3}\cdots a_{j_{i-1}j_i}a_{j_ij_1}\bigg).}

Now it follows that $1+\us{i=2}{\os{n}{\sum}}(-1)^{i-1}\Tra(\frac{A^i}{i})=\Det(I_n+A)$. Similarly $(1)$ follows for $N=B,C$.

We prove $(2)$. This is clear because we have $(1+Xu)(1+Xv)=1+Xu+Xv$ since $X^2=0$. So $(2)$ follows using $(1)$. 

We prove $(3)$. Notice that for $g^a=A+I_n+X\Diag(a_{11},a_{22},\cdots,a_{nn})$ we have,
since $X^2=0$ \equan{DiagTrace}{\Det(g^a)=\Det(I_n+A)+X(\us{i=1}{\os{n}{\sum}}a_{ii}).}
Here $X(\us{i=1}{\os{n}{\sum}}a_{ii})=\Tra\big(X\Diag(a_{11},a_{22},\cdots,a_{nn})\big)$.
Also we have \equan{DetLower}{\Det(g^a+XL_M)=\Det(g^a)} for any strictly lower triangular matrix $L_M$ as this follows since $X^2=0$.
Now \equ{\Det(I_n+A)\Det(I_n+B)=\Det(I_n+A+B+AB).} 
The matrices $C$ and $I_n+A+B+AB$ differ in diagonal entries by a mulitple of $X$ contributed by the term $AB$ and in the strictly lower triangular entries a multiple of $X$ again contributed by the term $AB$. Hence we get using Equations~\ref{Eq:DiagTrace},~\ref{Eq:DetLower} that 
\equ{\Det(I_n+A)\Det(I_n+B)=\Det(I_n+A+B+AB)=\Det(I_n+C)+\Tra(AB).}
This proves $(3)$.

Now $(4)$ is an immediate consequence of $(1),(2),(3)$. Hence the proposition is proved.
\end{proof}
\begin{theorem}
	\label{theorem:Identity}
Let the matrices $A,B,C$ be as defined before with variables $a_{ij},b_{ij}, 1\leq i\neq j\leq n$.
\equa{\us{1\leq i<j\leq n}{\sum}a_{ij}b_{ji}&+a_{ji}b_{ij}+\us{i=2}{\os{n}{\sum}}(-1)^{i-1}\bigg(\us{n\geq j_1>j_2>\ldots>j_i\geq 1}{\sum}\big(c_{j_1j_2}c_{j_2j_3}\cdots c_{j_{i-1}j_i}c_{j_ij_1}\\&-a_{j_1j_2}a_{j_2j_3}\cdots a_{j_{i-1}j_i}a_{j_ij_1}-b_{j_1j_2}b_{j_2j_3}\cdots b_{j_{i-1}j_i}b_{j_ij_1}\big)\bigg)=0.}
\end{theorem}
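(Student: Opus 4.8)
The plan is to recognize that this identity is nothing more than the explicit, $X$-free reformulation of Proposition~\ref{prop:Identity}$(4)$. All the quantities in that proposition live in the ring with $X^2=0$, and inspection shows that every summand appearing in $(4)$ is a multiple of $X$; consequently the vanishing asserted there is equivalent to the vanishing of its $X$-coefficient, and that coefficient is precisely the expression claimed in the theorem. So the entire argument reduces to reading off the coefficient of $X$ from each term of $(4)$.

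First I would recall the trace formula established inside the proof of Proposition~\ref{prop:Identity}$(1)$: for $N\in\{A,B,C\}$ with off-diagonal entries $n_{ij}$ and $2\leq i\leq n$ one has $\Tra(N^i)=iX\big(\us{n\geq j_1>j_2>\ldots>j_i\geq 1}{\sum} n_{j_1j_2}n_{j_2j_3}\cdots n_{j_{i-1}j_i}n_{j_ij_1}\big)$, since under $X^2=0$ a closed walk contributes only when it makes exactly one ascent (one strictly-upper step carrying the single factor of $X$), which forces its support to be a strictly decreasing cycle, counted $i$ times by its rotations. Dividing by $i$ gives $\Tra(N^i/i)=X\,\us{n\geq j_1>\ldots>j_i\geq 1}{\sum} n_{j_1j_2}\cdots n_{j_ij_1}$, and this accounts for the three nested sums over $c$, $a$, and $b$ in the theorem.

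Next I would compute $\Tra(AB)$ directly. Since $A$ and $B$ have zero diagonal, $\Tra(AB)=\us{i\neq k}{\sum} A_{ik}B_{ki}$, and for each such pair exactly one of the two factors is a strictly-lower entry (carrying no $X$) while the other is a strictly-upper entry (carrying one $X$); collecting the unordered pairs with $i<j$ then gives $\Tra(AB)=X\,\us{1\leq i<j\leq n}{\sum}\big(a_{ij}b_{ji}+a_{ji}b_{ij}\big)$, which matches the bilinear term of the theorem exactly. Substituting this together with the three trace sums into Proposition~\ref{prop:Identity}$(4)$ and cancelling the common factor $X$ produces the stated identity verbatim.

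The computation is essentially mechanical, and the one place demanding care is the bookkeeping of the single factor $X$. I expect the main (small) obstacle to be twofold: confirming that $\Tra(AB)$ yields exactly the bilinear sum $\us{1\leq i<j\leq n}{\sum}(a_{ij}b_{ji}+a_{ji}b_{ij})$ with no diagonal or double-$X$ contamination, and matching indices so that in each product $c_{j_1j_2}\cdots c_{j_ij_1}$ the descending factors $c_{j_kj_{k+1}}$ are read as strictly-lower entries of $C$ while the single ascending factor $c_{j_ij_1}$ is read as the $X$-coefficient of the corresponding strictly-upper entry. Once these conventions are fixed, the identity drops out of Proposition~\ref{prop:Identity}$(4)$ with no further work.
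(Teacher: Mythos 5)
Your proposal is correct and takes essentially the same route as the paper: the paper's own proof of this theorem is nothing more than the citation of Proposition~\ref{prop:Identity}(4), and your argument simply makes explicit the mechanical content of that citation, namely extracting the coefficient of $X$ using the trace formula $\Tra(N^i)=iX\sum_{j_1>\cdots>j_i}n_{j_1j_2}\cdots n_{j_ij_1}$ from the proof of Proposition~\ref{prop:Identity}(1) together with the computation $\Tra(AB)=X\sum_{i<j}(a_{ij}b_{ji}+a_{ji}b_{ij})$. Your bookkeeping of the single $X$ factor and the reading of the upper-triangular entries as $X$-coefficients is exactly the intended interpretation, so the identity in $\Z[a_{ij},b_{ij}]$ follows as you describe.
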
 
\begin{proof}
The theorem follows from Proposition~\ref{prop:Identity}(4). The identity is a very general identity which holds in the ring 
$\Z[a_{ij},b_{ij}:1\leq i\neq j\leq n]$.
\end{proof}
\begin{remark}
We apply Theorem~\ref{theorem:Identity} in Theorem~\ref{theorem:InflationFromAbelinization} later.
\end{remark}
\section{\bf{Computation of $H^1_{Natural\ Action}(\autgp,\grpp)$}}
\label{sec:H1NaturalAction}
\begin{theorem}
	\label{theorem:H1NaturalAction}
	Let $A$ be an abelian group and $G=\Aut(A)$. Let $q\in \mathbb{Z}$ be such that $qId_A:A\lra A$ is an automorphism. Then $(q-1)$ annihilates $H^1_{Natural\ Action}(G,A)$. As a consequence, for a partition
	$\ugl$ and $\grpp=\us{i=1}{\os{k}{\oplus}}(\Z/p^{\gl_i}\Z)^{\gr_i}$, its associated finite abelian $p$-group where $p$ is a prime with $\autgp=\Aut(\grpp)$ its automorphism group, we obtain the following. 
	\begin{enumerate}[label=(\alph*)]
		\item For $p\neq 2, H_{Natural\ Action}^1(\autgp,\grpp)=0$.
		\item For $p=2,H_{Natural\ Action}^1(\autgp,\grpp)$ is a finite elementary abelian $2$-group. 
	\end{enumerate}
\end{theorem}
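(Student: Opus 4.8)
The plan is to exploit the observation that, whenever $q\,\id{A}$ is an automorphism, the scalar map $z := q\,\id{A}$ lies in the center of $G=\Aut(A)$. Indeed, for any $\gf\in\Aut(A)$ and $a\in A$, additivity of $\gf$ gives $\gf(qa)=q\gf(a)$, so $z$ commutes with every element of $G$. I would then substitute the centrality relation $zg=gz$ into the crossed-homomorphism identity and read off that $(q-1)$ times any cocycle is a coboundary.

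Concretely, take $f\in Z^1_{Natural\ Action}(G,A)$, so that $f(g_1g_2)=f(g_1)+g_1\bullet f(g_2)$. Since $z$ acts on $A$ as multiplication by $q$, expanding both $f(zg)$ and $f(gz)$ and equating them (using $zg=gz$) yields
\[
f(z)+q\,f(g)=f(g)+g\bullet f(z),
\]
whence $(q-1)f(g)=g\bullet f(z)-f(z)$ for all $g\in G$. The right-hand side is exactly the principal crossed homomorphism attached to the fixed element $f(z)\in A$, so $(q-1)f\in B^1_{Natural\ Action}(G,A)$ and therefore $(q-1)$ kills the class $[f]$ in $H^1_{Natural\ Action}(G,A)$. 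This proves the first assertion.

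For the two consequences I would first note that $\autgp$ and $\grpp$ are finite, so $H^1_{Natural\ Action}(\autgp,\grpp)$ is a finite abelian group. In case (a), $p$ is odd, so $\gcd(2,p)=1$ and $2\,\id{\grpp}$ is an automorphism; applying the general assertion with $q=2$ gives that $q-1=1$ annihilates $H^1_{Natural\ Action}(\autgp,\grpp)$, forcing it to vanish. In case (b), $p=2$ makes $2\,\id{\grpp}$ non-invertible, so I instead take $q=3$, which is a unit modulo every $2^{\gl_i}$; hence $3\,\id{\grpp}$ is an automorphism and the general assertion gives that $q-1=2$ annihilates $H^1_{Natural\ Action}(\autgp,\grpp)$. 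A finite abelian group killed by $2$ is an elementary abelian $2$-group, which is precisely the claim.

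The argument has no real obstacle: everything hinges on the single structural fact that $q\,\id{A}$ is central in $\Aut(A)$. The only things demanding care are bookkeeping, namely choosing $q$ coprime to $p$ so that $q\,\id{\grpp}$ is genuinely an automorphism (which is what forces the odd-$p$ versus $p=2$ dichotomy), and invoking finiteness of $H^1$ so that ``annihilated by $2$'' can be promoted to ``elementary abelian $2$-group''. I would also double-check the sign and composition conventions in the cocycle identity, since these determine whether the coboundary comes out as $g\bullet f(z)-f(z)$ rather than its negative; the final conclusion is insensitive to this choice.
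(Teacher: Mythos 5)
Your proof is correct and follows essentially the same route as the paper: both arguments hinge on the fact that $q\,\mathrm{id}_A$ is central in $\Aut(A)$, use the cocycle identity to derive $(q-1)f(g)=g\bullet f(q\,\mathrm{id}_A)-f(q\,\mathrm{id}_A)$, and then specialize to $q=2$ for odd $p$ and $q=3$ for $p=2$. Your organization of the computation (equating the two expansions of $f(zg)=f(gz)$) is in fact a slightly tidier bookkeeping than the paper's expansion of $f(qgg^{-1})$ via the inverse formula $f(g^{-1})=-g^{-1}f(g)$, but the underlying idea and the resulting coboundary are identical.
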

\begin{proof}
	Let $f\in Z^1_{Natural\ Action}(G,A)$. Then we have the following.
	\begin{enumerate}
		\item $f(Id_{A})=0$.
		\item $f(g^{-1})=-g^{-1}f(g)$.
		\item $(q-1)f\in B^1_{Natural\ Action}(G,A)$. 
	\end{enumerate}
We prove $(1)$. $f(gh)=gf(h)+f(g)$ for all $g,h\in G$. Hence $f(Id_A)=f(Id_AId_A)=Id_Af(Id_A)+f(Id_A)=2f(Id_A) \Ra f(Id_A)=0$. This proves $(1)$.

We prove $(2)$. $0=f(g^{-1}g)=gf(g^{-1})+f(g)$. So $f(g^{-1})=-g^{-1}f(g)$ for all $g\in G$. This proves $(2)$.

We prove $(3)$. $f(qId_{A})=f(qgg^{-1})=qgf(g^{-1})+gf(qId_{A})+f(g)\Ra (q-1)f(g)=gf(qId_{A})-f(qId_{A})$.
This implies that $(q-1)f\in B^1_{Natural\ Action}(G,A)$. This proves $(3)$.

Now let $q$ be a prime such that $qId_{\grpp}:\grpp\lra \grpp$ is an isomorphism. Here it is enough to choose $q=2$ if $p$ is odd and $q=3$ if $p=2$. Then $(a),(b)$ follow. This proves the theorem.
\end{proof}
\begin{remark}
For more details about the group $H^1_{Natural\ Action}(\autgp,\grpp)$ when $p=2$ and a description of a basis elements of the vector space $H^1_{Natural\ Action}(\autgp,\grpp)$, refer to W.~H.~Mills~\cite{MR0087657}.
\end{remark}
\section{\bf{Computation of $H^2_{Natural\ Action}(\autgp,\grpp)$}}
\label{sec:H2NaturalAction}
\begin{theorem}
	\label{theorem:H2NaturalAction}
	Let $A$ be an abelian group and $G=\Aut(A)$. Let $q\in \mathbb{Z}$ be such that $qId_A:A\lra A$ is an automorphism. Then $(q-1)^2$ annihilates $H^2_{Natural\ Action}(G,A)$. As a consequence, for a partition
	$\ugl$ and $\grpp=\us{i=1}{\os{k}{\oplus}}(\Z/p^{\gl_i}\Z)^{\gr_i}$, its associated finite abelian $p$-group where $p$ is a prime with $\autgp=\Aut(\grpp)$ its automorphism group, we obtain the following. 
	\begin{enumerate}[label=(\alph*)]
		\item For $p\neq 2, H_{Natural\ Action}^2(\autgp,\grpp)=0$.
		\item For $p=2,H_{Natural\ Action}^2(\autgp,\grpp)$ is a direct sum of finitely many copies of $\Z/2\Z$ and finitely many copies of $\Z/4\Z$. 
	\end{enumerate}
\end{theorem}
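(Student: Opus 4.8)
The plan is to prove the displayed annihilation statement in full generality and then obtain (a) and (b) by specializing $q$. The mechanism is identical in spirit to the proof of Theorem~\ref{theorem:H1NaturalAction}: the scalar $z=q\,\id{A}$ lies in the centre of $G=\Aut(A)$ and acts on $A$ by $a\mapsto qa$ under the natural action, and this single central element forces a divisibility on every cohomology class. Concretely, I would fix $f\in Z^2_{Natural\ Action}(G,A)$ and exploit the cocycle identity
\equ{g_1\bullet f(g_2,g_3)-f(g_1g_2,g_3)+f(g_1,g_2g_3)-f(g_1,g_2)=0}
by substituting $z$ into each of its three slots, repeatedly using centrality in the form $zg=gz$ and $z\bullet a=qa$.

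The key steps are as follows. I introduce the two $1$-cochains $\gf(g)=f(z,g)$ and $\gc(g)=f(g,z)$. Substituting $z$ in the first and the middle slots and simplifying yields a formula for the coboundary $\pr\gf$ of the shape $qf-E$, where $E(g_1,g_2)=f(g_1,zg_2)-f(g_1,z)$ is an explicit ``error'' $2$-cochain; substituting $z$ in the last slot yields $\pr\gc=f-E$ with exactly the same $E$ (here one uses $zg_2=g_2z$). Subtracting these two identities cancels $E$ and gives
\equ{\pr(\gf-\gc)=(q-1)f,}
so $(q-1)f\in B^2_{Natural\ Action}(G,A)$; in particular $(q-1)^2$ annihilates $H^2_{Natural\ Action}(G,A)$, which is the asserted general fact. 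The only delicate point, and the main obstacle, is the bookkeeping: one must organize the three slot-substitutions and the centrality rewrites so that the error terms produced by $\pr\gf$ and $\pr\gc$ are literally equal and cancel. Everything else is formal.

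Finally I read off the two consequences by choosing $q$ so that $q\,\id{\grpp}$ is an automorphism of $\grpp$. For $p\neq 2$ I take $q=2$; then $(q-1)^2=1$ kills the group and $H^2_{Natural\ Action}(\autgp,\grpp)=0$, giving (a). For $p=2$ I take $q=3$; then $(q-1)^2=4$ annihilates $H^2_{Natural\ Action}(\autgp,\grpp)$. Since $\autgp$ and $\grpp$ are finite this cohomology group is finite, and since the coefficient group $\grpp$ is a $2$-group every cochain is $2$-power torsion, so $H^2_{Natural\ Action}(\autgp,\grpp)$ is a finite abelian $2$-group; being killed by $4$, it is a direct sum of finitely many copies of $\Z/2\Z$ and $\Z/4\Z$, which is (b). I would also remark that the computation above actually produces annihilation by $(q-1)$, so for $p=2$ the group is in fact killed by $2$ and hence elementary abelian --- a sharpening that is a special case of the stated form of (b).
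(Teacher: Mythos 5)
Your proof is correct, and it is worth recording that it proves something strictly stronger than the paper does. I checked your two key identities: writing $z=qId_A$, $\gf(g)=u(z,g)$, $\gc(g)=u(g,z)$ and $E(g_1,g_2)=u(g_1,zg_2)-u(g_1,z)$ for a cocycle $u\in Z^2_{Natural\ Action}(G,A)$, substituting $z$ into the first and second slots of the cocycle identity and using centrality $g_1z=zg_1$ gives $\pr\gf=qu-E$, while substituting $z$ into the third slot gives $\pr\gc=u-E$; subtracting,
\equ{\pr(\gf-\gc)=(q-1)u,}
so $(q-1)$ already annihilates $H^2_{Natural\ Action}(G,A)$. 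The mechanism is the same as the paper's (your $\gf-\gc$ is exactly the cochain $v(x)=u(qId_A,x)-u(x,qId_A)$ defined there), but the paper's elimination scheme in Equations~\ref{Eq:Seven}--\ref{Eq:Thirteen} only extracts $(q-1)^2u=\pr\big((q-1)v\big)$, losing a factor of $(q-1)$ that your cancellation of the error term $E$ retains. Your sharper statement is also the conceptually expected one: conjugation by the central element $qId_A$ is trivial on $G$ and acts as multiplication by $q$ on coefficients, and since conjugation acts trivially on cohomology, multiplication by $q$ must be the identity on $H^*(G,A)$ — the classical ``center kills'' argument, which your computation makes explicit at the cochain level for $H^2$. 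The payoff is real: for $p=2$, taking $q=3$ shows $H^2_{Natural\ Action}(\autgp,\grpp)$ is killed by $2$, hence elementary abelian, so no $\Z/4\Z$ summands occur at all; this partially settles the question raised in the remark following the theorem, where the paper states that the number of $\Z/4\Z$ summands is unknown. (This is consistent with the paper's $D_8/Q_8$ example, where $H^2_{Natural\ Action}(\Z/2\Z,\Z/4\Z)\cong\Z/2\Z$.) Your deductions of (a) and (b), including the finiteness and $2$-torsion observations, are routine and correct.
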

\begin{proof}
	We have $H_{Natural\ Action}^2(G,A)=\frac{Z_{Natural\ Action}^2(G,A)}{B_{Natural\ Action}^2(G,A)}$ where $Z_{Natural\ Action}^2(G,A)=\{u:G\times G\lra A\mid xu(y,z)+u(x,yz)=u(x,y)+u(xy,z)\}$,  $B_{Natural\ Action}^2(G,A)=\{u:G\times G\lra A\mid u(x,y)=xv(y)+v(x)-v(xy)\text{ for some }v:G\lra A\}$.
	Clearly if  $u(x,y)=xv(y)+v(x)-v(xy)\text{ for some }v:G\lra A$ then we have $xu(y,z)+u(x,yz)=u(x,y)+u(xy,z)$. Conversely let \equan{Six}{xu(y,z)+u(x,yz)=u(x,y)+u(xy,z)} for some $u:G\times G \lra A$. Then define $v:G\lra A$ as 
	\equ{v(x)=u(qId_A,x)-u(x,qId_A).}
	We have by substituting $x=qId_A$ in Equation~\ref{Eq:Six}
	\equan{Seven}{qu(y,z)+u(qId_A,yz)=u(qId_A,y)+u(qy,z).}
	By substituting $z=qId_A$ we have
	\equ{qu(y,qId_A)+u(qId_A,qy)=u(qId_A,y)+u(qy,qId_A)}
	which implies
	\equan{Eight}{(q-1)u(y,qId_A)=v(y)-v(qy)}
	and
	\equan{Nine}{(q-1)u(qId_A,y)=qv(y)-v(qy).}
	Replacing the variable $y$ by the variable $x$ and the variable $z$ by the variable $y$ in Equation~\ref{Eq:Seven} we get
	\equan{Ten}{qu(x,y)+u(qId_A,xy)=u(qId_A,x)+u(qx,y).}
	Using Equations~\ref{Eq:Eight},~\ref{Eq:Nine} we get 
	\equan{Eleven}{q(q-1)u(x,y)+qv(xy)-v(qxy)=qv(x)-v(qx)+(q-1)u(qx,y).}
	Subtituting $y=qId_A$ in Equation~\ref{Eq:Six}
	we get 
	\equ{xu(qId_A,z)+u(x,qz)=u(x,qId_A)+u(qx,z).}
	Now replace variable $z$ in the previous equation by the variable $y$ to get 
	\equ{xu(qId_A,y)+u(x,qy)=u(x,qId_A)+u(qx,y).}
	Using Equations~\ref{Eq:Eight},~\ref{Eq:Nine} we get 
	\equan{Twelve}{qxv(y)-xv(qy)+(q-1)u(x,qy)=v(x)-v(qx)+(q-1)u(qx,y).}
	By substituting $z=qId_A$ in Equation~\ref{Eq:Six} we get 
	\equ{xu(y,qId_A)+u(x,qy)=u(x,y)+u(xy,qId_A).}
	Using Equations~\ref{Eq:Eight},~\ref{Eq:Nine} we get 
	\equan{Thirteen}{xv(y)-xv(qy)+(q-1)u(x,qy)=(q-1)u(x,y)+v(xy)-v(qxy).}
	Now eliminating $(q-1)u(qx,y)$ and $(q-1)u(x,qy)$ from Equations~\ref{Eq:Eleven},~\ref{Eq:Twelve},~\ref{Eq:Thirteen} and solving for $u(x,y)$ we get
	\equ{(q-1)^2u(x,y)=(q-1)xv(y)+(q-1)v(x)-(q-1)v(xy).}
	
	This proves that $(q-1)^2u:G\times G\lra A$ is a 2-coboundary, that is, $(q-1)^2u\in B_{Natural\ Action}^2(G,A)$. Therefore we have 
	that $H_{Natural\ Action}^2(G,A)$ is annihilated by $(q-1)^2$. 
	
	Now let $q$ be a prime such that $qId_{\grpp}:\grpp\lra \grpp$ is an isomorphism. Here it is enough to choose $q=2$ if $p$ is odd and $q=3$ if $p=2$. Then $(a),(b)$ follow. This proves the theorem.
\end{proof}
\begin{remark}
In case $p=2$ in the previous theorem, the number of copies of $\Z/2\Z$ and the number of copies of $\Z/4\Z$ that occurs in $H^2_{Natural\ Action}(\autgp,\grpp)$ is still not known. We provide two examples below illustrating the remark.
\end{remark}
\begin{example}
	Let $A=\Z/4\Z$ and $G=\Z/2\Z$. Then there are at least two possibilities for the extension $E$ of $G$ by $A$ which gives rise to the natural action of $G=\Aut(A)$ on $A$. They are $E=D_8,Q_8$. Hence $H_{Natural\ Action}^2(G,A)\neq 0$.
\end{example}
\begin{example}
	Let $A=(\Z/2\Z)^2$ and $G=GL_2(\Z/2\Z)$. Then any extension $E$ of $G$ by $A$ which gives rise to the natural action of $G=\Aut(A)$ on $A$ is a nonabelian group of order $24$ with trivial center. Hence $E\cong S_4$ and there is only one normal subgroup $V_4$ of $S_4$ which is isomorphic to $(\Z/2\Z)^2$. Any automorphism of $V_4$ extends to an automorphism of $E=S_4$. Also note that all automorphisms of $S_4,S_3$ are inner automorphisms and we have $S_3\cong Inn(S_3)= \Aut(S_3),S_4\cong Inn(S_4)= \Aut(S_4)$. Any exact sequence with $E=S_4$ looks like 
	\equ{0\lra (\Z/2\Z)^2 \os{i\circ \gc}{\hookrightarrow} S_4 \os{\gf\circ \gp}{\twoheadrightarrow}GL_2(\Z/2\Z)\lra  1}
	where $\gc$ is an automorphism $(\Z/2\Z)^2$ and $\gf$ is an automorphism of $GL_2(\Z/2\Z)=S_3$ and $i((\Z/2\Z)^2)=V_4$.
	But the following two extensions are equivalent.
	
	\[
	\begin{tikzcd}
		0 \arrow[r,""]& (\Z/2\Z)^2 \arrow[hook]{r}{i} \arrow[]{d}{\Vert}[swap]{Id_A} & S_4 \arrow[two heads]{r}{\gf\circ \gp} \arrow[]{d}{\cong}[swap]{\ti{\psi}} & GL_2(\Z/2\Z) \arrow[r,""] \arrow[]{d}{\Vert}[swap]{Id_G} & 1\\
		0 \arrow[r,""]& (\Z/2\Z)^2 \arrow[hook]{r}{i\circ \psi} & S_4\arrow[two heads]{r}{\gf\circ \gp}& GL_2(\Z/2\Z) \arrow[r,""] & 1
	\end{tikzcd}
	\]
	where $\ti{\psi}$ is an extension of $\psi$. Let $g\in S_4$ and $\psi_g$ be inner automophism of $S_4$ induced by $g$. If $\gf_1,\gf_2$ are two automorphisms of $GL_2(\Z/2\Z)$ then the following two extensions 
	\[
	\begin{tikzcd}
		0 \arrow[r,""]& (\Z/2\Z)^2 \arrow[hook]{r}{i} \arrow[]{d}{\Vert}[swap]{Id_A} & S_4 \arrow[two heads]{r}{\gf_1\circ \gp} \arrow[]{d}{\cong}[swap]{\psi_g} & GL_2(\Z/2\Z) \arrow[r,""] \arrow[]{d}{\Vert}[swap]{Id_G} & 1\\
		0 \arrow[r,""]& (\Z/2\Z)^2 \arrow[hook]{r}{i} & S_4\arrow[two heads]{r}{\gf_2\circ \gp}& GL_2(\Z/2\Z) \arrow[r,""] & 1
	\end{tikzcd}
	\]
	are equivalent if and only if $g\in V_4=\{Identity,(12)(34),(13)(24),(14)(23)\}\subs S_4$ and $\gf_1=\gf_2$. Among all exact sequences of the form 
	\equ{0\lra (\Z/2\Z)^2 \os{i}{\hookrightarrow} S_4 \os{\gf\circ \gp}{\twoheadrightarrow}GL_2(\Z/2\Z)\lra  1}
	the group $GL_2(\Z/2\Z)$ induces natural action on $(\Z/2\Z)^2$ if and only if $\gf$ is the identity automorphism.
	Hence there is essentially only one way in which $S_4$ appears as an extension for the natural action of $GL_2(\Z/2\Z)$ on $(\Z/2\Z)^2$ which is the semidirect product $(\Z/2\Z)^2 \rtimes GL_2(\Z/2\Z)$.
	So we have $H^2_{Natural\ Action}(G,A)$ is a group of order $1$, that is, $H^2_{Natural\ Action}(G,A)=0$.  
\end{example}
\section{\bf{Computation of $H^1_{Trivial\ Action}(\autgp,\grpp)$}}
\label{sec:H1TrivialAction}
\begin{prop}
	\label{prop:Index}
	Let $\ugl$ be a partition. Let $\grpp$ be the finite abelian $p$-group associated to $\ul{\gl}$, where $p$ is a prime. Let $\autgp=\Aut(\grpp)$ be its automorphism group. Then the following are equivalent. 
	\begin{enumerate}
		\item $H^1_{Trivial\ Action}(\autgp,\grpp)\neq 0$.
		\item $\Hom(\autgp,\grpp)\neq 0$.
		\item $\Hom(\autgp,\Z/p\Z)\neq 0$, that is, there exists a normal subgroup $N\trianglelefteq\autgp$ of index $p$, that is, $[\autgp: N]=p$. 
		\item The order of the abelian group $\frac{\autgp}{[\autgp,\autgp]}$ is divisible by $p$.
	\end{enumerate}
\end{prop}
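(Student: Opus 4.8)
The plan is to reduce all four statements to the single arithmetic condition $p \mid |\autgp/[\autgp,\autgp]|$ appearing in (4), routing everything through the elementary identification of the first cohomology with a Hom-group.

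First I would dispose of $(1)\Lra(2)$. Since $\autgp$ acts trivially on $\grpp$, a $1$-cocycle $f\colon \autgp\lra\grpp$ satisfies $f(gh)=f(g)+f(h)$, so $Z^1_{Trivial\ Action}(\autgp,\grpp)=\Hom(\autgp,\grpp)$; and every $1$-coboundary has the form $g\mapsto g\cdot a-a=0$, whence $B^1_{Trivial\ Action}(\autgp,\grpp)=0$. Therefore $H^1_{Trivial\ Action}(\autgp,\grpp)=\Hom(\autgp,\grpp)$, which gives $(1)\Lra(2)$ immediately.

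Next, the core of the argument is a single bridge lemma: for any finite abelian group $H$ and any nontrivial finite abelian $p$-group $A$, one has $\Hom(H,A)\neq 0$ if and only if $p \mid |H|$. To apply it I would first use $\Hom(\autgp,A)=\Hom\big(\autgp/[\autgp,\autgp],A\big)$, valid since $A$ is abelian, to replace $\autgp$ by its abelianization $H:=\autgp/[\autgp,\autgp]$. For the lemma itself, write $H=H_p\times H_{p'}$ with $H_p$ its $p$-Sylow subgroup; since $A$ is a $p$-group one has $\Hom(H_{p'},A)=0$, so $\Hom(H,A)=\Hom(H_p,A)$. If $p\nmid|H|$ then $H_p=0$ and this vanishes; if $p\mid|H|$ then $H_p\neq 0$ admits a surjection onto $\Z/p\Z$, while $A\neq 0$ being a nontrivial $p$-group contains a subgroup isomorphic to $\Z/p\Z$, and composing yields a nonzero element of $\Hom(H,A)$. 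Applying the lemma with $A=\grpp$ (which is nontrivial) gives $(2)\Lra(4)$, and applying it with $A=\Z/p\Z$ gives $(3)\Lra(4)$, settling the equivalence of (2), (3), (4). For the reformulation inside (3), I would note that any nonzero homomorphism $\autgp\lra\Z/p\Z$ is surjective because $\Z/p\Z$ is simple, so its kernel is a normal subgroup of index $p$; conversely a normal subgroup $N\trianglelefteq\autgp$ with $[\autgp:N]=p$ produces $\autgp/N\cong\Z/p\Z$ and hence a nonzero homomorphism.

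The argument is essentially formal, and the only point demanding a little care is the bridge lemma, specifically its ``only if'' direction: one must observe that the image of a homomorphism into a $p$-group is a nontrivial $p$-group quotient of $H$, which forces $p\mid|H|$. Everything else reduces to the fact that $H^1$ with trivial coefficients is a Hom-group, together with the standard structure theory of finite abelian groups.
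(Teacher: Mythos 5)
Your proof is correct, and since the paper dismisses this proposition with ``The proof is immediate,'' your argument simply supplies the standard details the authors had in mind: the identification $H^1_{Trivial\ Action}(\autgp,\grpp)=\Hom(\autgp,\grpp)$, factoring through the abelianization, and the elementary fact that a finite abelian group admits a nonzero homomorphism to a nontrivial finite abelian $p$-group if and only if $p$ divides its order. Nothing further is needed.
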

\begin{proof}
	The proof is immediate.
\end{proof}
\begin{prop}
	\label{prop:pIndex}
~\\
	\begin{enumerate}	
		\item We have $p$ divides $\mid \frac{GL_n(\ZZ k)}{[GL_n(\ZZ k),GL_n(\ZZ k)]}\mid $ if $k\geq 2$ for any prime $p$ for all $n\geq 1$. 
		\item For $k=1$ and $p$ odd we have $p$ does not divide $\mid\frac{GL_n(\ZZ {})}{[GL_n(\ZZ {}),GL_n(\ZZ {})]}\mid$ for all $n\geq 1$.
		\item For $k=1,p=2$, we have $2$ does not divide $\mid\frac{GL_n(\Z/2\Z)}{[GL_n(\Z/2\Z),GL_n(\Z/2\Z)]}\mid$ if and only if $n\geq 3$ or $n=1$.
	\end{enumerate}
\end{prop}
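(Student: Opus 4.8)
The plan is to read off the abelianization $\frac{GL_n(\ZZ k)}{[GL_n(\ZZ k),GL_n(\ZZ k)]}$ from Theorem~\ref{theorem:CommutatorSubgroup} together with the determinant homomorphism. The determinant gives a surjection $\Det\colon GL_n(\ZZ k)\twoheadrightarrow (\ZZ k)^*$ with kernel $SL_n(\ZZ k)$; as the target is abelian, the commutator subgroup lies inside $SL_n(\ZZ k)$, so $\Det$ descends to a surjection of the abelianization onto $(\ZZ k)^*$. Hence $\mid (\ZZ k)^*\mid$ always divides $\mid \frac{GL_n(\ZZ k)}{[GL_n(\ZZ k),GL_n(\ZZ k)]}\mid$, with equality whenever Theorem~\ref{theorem:CommutatorSubgroup} yields $[GL_n(\ZZ k),GL_n(\ZZ k)]=SL_n(\ZZ k)$. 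Throughout I would use $\mid (\ZZ k)^*\mid =\varphi(p^k)=p^{k-1}(p-1)$.

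For $(1)$ I would argue uniformly in $p$ and $n$, avoiding any case split. Since $k\geq 2$ makes $p\mid p^{k-1}$, the prime $p$ divides $\varphi(p^k)=\mid (\ZZ k)^*\mid$, and by this surjection it therefore divides $\mid \frac{GL_n(\ZZ k)}{[GL_n(\ZZ k),GL_n(\ZZ k)]}\mid$. This already settles every prime and every $n\geq 1$, including the exceptional pair $p=2,n=2$: even though Theorem~\ref{theorem:CommutatorSubgroup} does not identify the commutator subgroup there, the determinant quotient alone contributes the factor $p$.

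For $(2)$, with $k=1$ and $p$ odd, the exceptional pair of Theorem~\ref{theorem:CommutatorSubgroup} never occurs, so $[GL_n(\ZZ {}),GL_n(\ZZ {})]=SL_n(\ZZ {})$ for all $n\geq 1$ and the abelianization is exactly $(\ZZ {})^*\cong \Z/(p-1)\Z$, of order $p-1$. As $\gcd(p,p-1)=1$, $p$ does not divide this order. For $(3)$, with $k=1$ and $p=2$, one has $(\Z/2\Z)^*=\{1\}$ and hence $GL_n(\Z/2\Z)=SL_n(\Z/2\Z)$; I would split on $n$. For $n=1$ the group is trivial, and for $n\geq 3$ Theorem~\ref{theorem:CommutatorSubgroup} gives $[GL_n(\Z/2\Z),GL_n(\Z/2\Z)]=SL_n(\Z/2\Z)=GL_n(\Z/2\Z)$, so $GL_n(\Z/2\Z)$ is perfect, and in both cases the abelianization is trivial so $2$ does not divide its order. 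The leftover case $n=2$ is exactly the exception, and here I would use $GL_2(\Z/2\Z)\cong S_3$ directly: its commutator subgroup is $A_3$, the abelianization is $\Z/2\Z$, and $2$ does divide. Assembling these, $2\nmid \mid \frac{GL_n(\Z/2\Z)}{[GL_n(\Z/2\Z),GL_n(\Z/2\Z)]}\mid$ precisely when $n=1$ or $n\geq 3$.

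The main obstacle is the exceptional pair $p=2,n=2$, where Theorem~\ref{theorem:CommutatorSubgroup} stops short of computing the commutator subgroup. The point that rescues $(1)$ is that the exact abelianization is never needed there, since the determinant quotient already forces divisibility by $p$; and for $(3)$ the concrete identification $GL_2(\Z/2\Z)\cong S_3$ disposes of the single remaining case by hand.
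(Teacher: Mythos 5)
Your proposal is correct, and for parts (2) and (3) it coincides with the paper's argument (Theorem~\ref{theorem:CommutatorSubgroup} for the identification of the commutator subgroup, plus $GL_2(\Z/2\Z)\cong S_3$ for the exceptional case). The genuine difference is in part (1). The paper splits into two cases: in the generic case ($p\neq 2$ or $n\neq 2$) it invokes Theorem~\ref{theorem:CommutatorSubgroup} to get $[GL_n(\ZZ k),GL_n(\ZZ k)]=SL_n(\ZZ k)$ exactly, so the abelianization has order $p^{k-1}(p-1)$; in the exceptional case $p=2,n=2$ it falls back on the weaker containment $[GL_2(\Z/2^k\Z),GL_2(\Z/2^k\Z)]\subseteq SL_2(\Z/2^k\Z)$ to conclude that $2^{k-1}$ divides the order of the abelianization. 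You instead observe that the second, weaker argument already suffices in all cases: since $\Det:GL_n(\ZZ k)\twoheadrightarrow (\ZZ k)^*$ has abelian image, the commutator subgroup lies in $\Ker(\Det)=SL_n(\ZZ k)$ regardless of whether equality holds, so $\varphi(p^k)=p^{k-1}(p-1)$ divides the order of the abelianization, and $k\geq 2$ gives divisibility by $p$. This buys you two things: part (1) becomes uniform in $p$ and $n$ with no case split, and it becomes entirely independent of Theorem~\ref{theorem:CommutatorSubgroup} (the elementary-matrix generation result), which is then needed only for parts (2) and (3), where exact identification of the commutator subgroup is genuinely required.
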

\begin{proof}
	We prove $(1)$. If $k\geq 2$ and either $p\neq 2$ or $n\neq 2$ then the commutator subgroup is $SL_n(\ZZ k)$ using Theorem~\ref{theorem:CommutatorSubgroup}. Hence we have 
	\equ{\mid \frac{GL_n(\ZZ k)}{SL_n(\ZZ k)}\mid =p^{k-1}(p-1)\Ra p \text{ divides }  \vert \frac{GL_n(\ZZ k)}{[GL_n(\ZZ k),GL_n(\ZZ k)]}\vert.} 
	If $k\geq 2, n=2$ and $p=2$ then  $[GL_2(\Z/2^k\Z),GL_2(\Z/2^k\Z)]\sbnq SL_2(\Z/2^k\Z)$ using Theorem~\ref{theorem:CommutatorSubgroup}. Hence we have 
	\equ{2^{k-1}=\mid \frac{GL_n(\ZZ k)}{SL_n(\ZZ k)}\mid \text{ divides } \mid \frac{GL_n(\ZZ k)}{[GL_n(\ZZ k),GL_n(\ZZ k)]}\mid.} Since $k-1\geq 1$ we have 
	$2\text{ divides } \mid \frac{GL_n(\ZZ k)}{[GL_n(\ZZ k),GL_n(\ZZ k)]}\mid$. This proves $(1)$.
	
	We prove $(2)$. For $k=1,p\neq 2$ we have that the commutator subgroup of $GL_n(\Z/p\Z)$ is $SL_n(\Z/p\Z)$ using Theorem~\ref{theorem:CommutatorSubgroup}. Hence $\vert \frac{GL_n(\ZZ {})}{[GL_n(\ZZ {}),GL_n(\ZZ {})]}\vert=p-1$. This proves $(2)$.
	
	Now we prove $(3)$. For $k=1,p=2$ and $n=1$, $GL_1(\Z/2\Z)$ is trivial. For $k=1,p=2,n\geq 3$, the commutator subgroup of $GL_n(\Z/2\Z)$ is $SL_n(\Z/2\Z)=GL_n(\Z/2\Z)$ using Theorem~\ref{theorem:CommutatorSubgroup}. Hence $\frac{GL_n(\Z/2\Z)}{[GL_n(\Z/2\Z),GL_n(\Z/2\Z)]}$ is trivial. For $k=1,n=2,p=2$, we have 
	$\frac{GL_2(\Z/2\Z)}{[GL_2(\Z/2\Z),GL_2(\Z/2\Z)]}\cong \Z/2\Z$. This proves $(3)$.
\end{proof}
\begin{theorem}
	\label{theorem:FirstCohomologyIsotypic}
Let $p$ be a prime and $n,k$ be two positive integers. Then \equ{H^1_{Trivial\ Action}(GL_n(\ZZ k),(\ZZ k)^n)\neq 0}
if and only if $k\geq 2$ or $k=1,p=2,n=2$.
\end{theorem}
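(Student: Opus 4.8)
The plan is to recognize the theorem as an immediate corollary of the two preceding propositions, once the correct dictionary is set up. First I would observe that $GL_n(\ZZ k)$ is precisely the automorphism group $\Aut\big((\ZZ k)^n\big)$ of the homogeneous finite abelian $p$-group $(\ZZ k)^n$, which is the group $\grpp$ attached to the rectangular partition $\ul{\gl}=(k^n)$. With this identification Proposition~\ref{prop:Index} applies verbatim: since the action is trivial there are no nonzero $1$-coboundaries, so $H^1_{Trivial\ Action}$ coincides with $\Hom\big(GL_n(\ZZ k),(\ZZ k)^n\big)$, and because the coefficient module $(\ZZ k)^n$ is an abelian $p$-group this $\Hom$ is nonzero if and only if $p$ divides the order of the abelianization $\frac{GL_n(\ZZ k)}{[GL_n(\ZZ k),GL_n(\ZZ k)]}$. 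Concretely, a nonzero homomorphism into a nontrivial finite abelian $p$-group forces a $\Z/p\Z$ quotient of the abelianization, while the absence of $p$ in the order of the abelianization makes every homomorphism into a $p$-group trivial.

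Next I would simply read off the answer from Proposition~\ref{prop:pIndex}, splitting into the three regimes it records. When $k\geq 2$, part $(1)$ guarantees that $p$ divides $|\frac{GL_n(\ZZ k)}{[GL_n(\ZZ k),GL_n(\ZZ k)]}|$ for every $n\geq 1$ and every prime $p$, so $H^1\neq 0$. When $k=1$ and $p$ is odd, part $(2)$ says $p$ does not divide this order, whence $H^1=0$; this is consistent with the claim, since neither $k\geq 2$ nor $p=2$ holds. When $k=1$ and $p=2$, part $(3)$ says $2$ fails to divide the order exactly when $n=1$ or $n\geq 3$, so $H^1\neq 0$ precisely when $n=2$. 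Assembling the three cases reproduces exactly the stated criterion, namely $k\geq 2$, or $k=1,p=2,n=2$.

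The main obstacle is not in the present statement at all, but in the inputs it consumes: the genuine content lies in the commutator-subgroup computation of Theorem~\ref{theorem:CommutatorSubgroup} (especially the exceptional behaviour at $p=2,n=2$, where the commutator subgroup sits strictly inside $SL_2(\Z/2^k\Z)$) and in the index count of Proposition~\ref{prop:pIndex}. Granting those, the only thing to verify carefully is the elementary fact that, for a finite abelian group $A$ and a nontrivial finite abelian $p$-group $P$, one has $\Hom(A,P)\neq 0$ if and only if $p$ divides $|A|$. I would therefore present the argument as a short deduction rather than a standalone proof, making explicit the rectangular-partition identification $\ul{\gl}=(k^n)$ so that Proposition~\ref{prop:Index} can be cited without reproof, and then quoting the three parts of Proposition~\ref{prop:pIndex} to finish.
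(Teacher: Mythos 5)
Your proposal is correct and follows exactly the paper's route: the paper's entire proof is the one-line citation of Propositions~\ref{prop:Index} and~\ref{prop:pIndex}, which you reproduce with the same case split ($k\geq 2$; $k=1$, $p$ odd; $k=1$, $p=2$). Your extra step of making the identification $GL_n(\ZZ k)=\Aut\big((\ZZ k)^n\big)$ via the rectangular partition $\ul{\gl}=(k^n)$ explicit is a helpful clarification of what the paper leaves implicit, but it is not a different argument.
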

\begin{proof}
This follows from Propositions~\ref{prop:Index},~\ref{prop:pIndex}. 
\end{proof}
Now we prove another required proposition.
\begin{prop}
	\label{prop:StrongApprox}
	Let $p$ be an odd prime, $k,n$ be positive integers. Let $M_{n}(\ZZ k)$ be the space of $n\times n$ matrices over the ring $\ZZ k$. Let $K_n(\ZZ k)$ be the kernel of the composed homomorphism $GL_n(\ZZ k)\os{\mod p}{\lra} GL_n(\Z/p\Z) \os{\Det}{\lra} (\Z/p\Z)^*$. If $g\in K_n(\ZZ k)$ then there exists \equ{g'\in SL_n(\ZZ k)=[GL_n(\ZZ k),GL_n(\ZZ k)]\text{ and a matrix }h\in M_{n}(\ZZ k)} such that \equ{g=g'+ph.}
\end{prop}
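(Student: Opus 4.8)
The plan is to use the fact that the hypothesis $g\in K_n(\ZZ k)$ says exactly that $\Det(g)\equiv 1\pmod p$, and then to correct the determinant of $g$ to be \emph{exactly} $1$ by a multiplication that leaves $g$ unchanged modulo $p$. First I would set $d=\Det(g)$. Since $g$ lies in the kernel of $g\mapsto \Det(g\bmod p)$, we have $d\equiv 1\pmod p$; as $d\in(\ZZ k)^*$ this forces its inverse to satisfy $d^{-1}\equiv 1\pmod p$ as well.

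Next I would introduce the diagonal correcting factor $E=\Diag(d^{-1},1,\ldots,1)\in GL_n(\ZZ k)$ and set $g'=gE$. Two facts then hold at once: $\Det(g')=d\cdot d^{-1}=1$, so $g'\in SL_n(\ZZ k)$; and $E\equiv I_n\pmod p$ because $d^{-1}\equiv 1\pmod p$, so $g'=gE\equiv g\pmod p$. The congruence $g'\equiv g\pmod p$ means precisely that $g-g'\in p\,M_n(\ZZ k)$, that is, $g=g'+ph$ for some $h\in M_n(\ZZ k)$, which is the desired decomposition.

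To conclude that $g'$ lies in the commutator subgroup I would invoke Theorem~\ref{theorem:CommutatorSubgroup}, which identifies $SL_n(\ZZ k)$ with $[GL_n(\ZZ k),GL_n(\ZZ k)]$; this is the only place the oddness of $p$ is used, since it excludes the exceptional case $p=2,\,n=2$ for all $n$. I do not expect a genuine obstacle: the whole argument turns on the single observation that the determinant can be trimmed back to $1$ by a diagonal factor congruent to the identity modulo $p$, and it is the kernel condition $\Det(g)\equiv 1\pmod p$ --- rather than mere invertibility of $g$ --- that makes this correcting factor reduce to $I_n$ modulo $p$ and hence preserve the reduction of $g$ modulo $p$.
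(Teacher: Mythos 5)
Your proof is correct, and it takes a genuinely different route from the paper's. The paper argues by induction on $k$: it reduces $g$ modulo $p^{k-1}$, applies the inductive hypothesis to obtain $g''\in SL_n(\ZZ{k-1})$ with $g\equiv g''\pmod p$, and then lifts $g''$ to $SL_n(\ZZ k)$ using the surjectivity of the reduction map $SL_n(\ZZ k)\lra SL_n(\ZZ{k-1})$, which is in turn deduced from generation by elementary matrices (Proposition~\ref{prop:ElementaryGeneration}). Your argument replaces the induction and the lifting lemma by a single observation: because $\Det(g)\equiv 1\pmod p$, the correcting factor $E=\Diag(\Det(g)^{-1},1,\ldots,1)$ satisfies $E\equiv I_n\pmod p$, so $g'=gE$ has determinant exactly $1$ while $g'\equiv g\pmod p$, giving $g=g'+ph$ directly. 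This is shorter, needs no lifting property of $SL_n$, and makes transparent that it is the kernel condition $\Det(g)\equiv 1\pmod p$ (not mere invertibility) that drives the congruence. What the paper's route records along the way is the surjectivity of $SL_n(\ZZ k)\lra SL_n(\ZZ{k-1})$ --- a lifting statement of independent interest, consonant with the proposition's ``strong approximation'' label --- but that fact is not needed for the conclusion. Both proofs invoke Theorem~\ref{theorem:CommutatorSubgroup} in the same way, and the oddness of $p$ enters only there, to exclude the exceptional case $p=2$, $n=2$.
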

\begin{proof}
	We need to use the important fact that for $k\geq 2$ the reduction map $\gf:SL_n(\ZZ k)\lra SL_n(\ZZ {k-1})$ is surjective. This can be proved using the fact that both $SL_n(\ZZ k),SL_n(\ZZ {k-1})$ are generated by elementary matrices. Let $g\in K_n(\ZZ k)$ then \equ{g \mod p^{k-1}\in K_n(\ZZ {k-1}).}
	So by induction there exists a $g''\in SL_n(\ZZ {k-1})$ and $h''\in M_n(\ZZ {k-1})$ such that $g\mod p^{k-1}=g''+ph''$.
	Now there exists $g'\in SL_n(\ZZ k)$ such that $\gf(g')=g''$. Hence we have $g \mod p=g'\mod p\Ra g=g'+ph$ for some $h\in M_n(\ZZ k)$.
\end{proof}
\subsubsection{\bf{The Vanishing Case of $H^1_{Trivial\ Action}(\autgp,\grpp)$}}
\begin{theorem}
	\label{theorem:VanishingH1}
Let $\ugl$ be a partition such that $\gl_i=k-i+1$. Let $\grpp$ be the finite abelian $p$-group associated to $\ul{\gl}$, where $p$ is an odd prime. Let $\autgp=\Aut(\grpp)$ be its automorphism group. Then $H^1_{Trivial\ Action}(\autgp,\grpp)=\Hom(\autgp,\grpp)=0$.
\end{theorem}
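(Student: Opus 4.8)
The plan is to reduce, via Proposition~\ref{prop:Index}, to a statement about the abelianization of $\autgp$, and then to compute that abelianization through a sequence of commutator identities that exploit the hypothesis $\gl_i=k-i+1$. Since $\grpp$ is a $p$-group, Proposition~\ref{prop:Index} tells us that $H^1_{Trivial\ Action}(\autgp,\grpp)=\Hom(\autgp,\grpp)=0$ is equivalent to $\Hom(\autgp,\Z/p\Z)=0$, i.e.\ to the statement that $p$ does not divide the order of $\frac{\autgp}{[\autgp,\autgp]}$. So the entire problem becomes: \emph{the abelianization of $\autgp$ has order prime to $p$.}

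I would first fix the matrix model of $\autgp$. Writing $N=\sum_{i=1}^k\gr_i$, an automorphism of $\grpp$ is an $N\times N$ block matrix whose $(i,j)$ block records a homomorphism $(\ZZ{\gl_j})^{\gr_j}\lra(\ZZ{\gl_i})^{\gr_i}$; for $i<j$ (so $\gl_i>\gl_j$) every entry of this block is divisible by $p^{\gl_i-\gl_j}$, and under the hypothesis $\gl_i=k-i+1$ one has $\gl_i-\gl_j=j-i$, so the block just above the diagonal is divisible by exactly $p^1$. Reducing mod $p$ therefore kills all the strictly-upper blocks and lands $\autgp$ in the lower block-triangular parabolic $P\subseteq GL_N(\Z/p\Z)$ with Levi $\prod_{i=1}^k GL_{\gr_i}(\Z/p\Z)$; write $K=\Ker(\autgp\to P)$, a finite $p$-group. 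Using Proposition~\ref{prop:ElementaryGeneration} and Theorem~\ref{theorem:CommutatorSubgroup} (valid for odd $p$), the commutator subgroup of $P$ contains its unipotent radical and each $SL_{\gr_i}(\Z/p\Z)$, so $P^{\mathrm{ab}}\cong\prod_{i=1}^k(\ZZ{})^*$ via the block determinants, which has order prime to $p$. Right-exactness of abelianization gives a surjection $\autgp^{\mathrm{ab}}\twoheadrightarrow P^{\mathrm{ab}}$ whose kernel is the image of the $p$-group $K$; hence the $p$-part of $\autgp^{\mathrm{ab}}$ is carried by $K$, and everything reduces to proving $K\subseteq[\autgp,\autgp]$.

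The heart of the argument is therefore to realize a generating set of $K$ by commutators. Two families of generators arise: the congruence transvections $I+p\,e_{\ga\gb}$, and the diagonal scalings of a single coordinate in the $i$-th block by a principal $p$-unit $1+p$ (these generate the $p$-parts $\cong\ZZ{(\gl_i-1)}$ of the diagonal tori $(\ZZ{\gl_i})^*$ for odd $p$). The transvections are handled exactly as in the proof of Theorem~\ref{theorem:CommutatorSubgroup}, via $[E_{ir}(a),E_{rj}(b)]=E_{ij}(ab)$ together with conjugation by diagonal units to adjust parameters, with Proposition~\ref{prop:StrongApprox} used to absorb the $p\,M_N$ corrections. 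The decisive point is the diagonal scalings: for an adjacent pair $\gl_i=\gl_{i+1}+1$, let $T$ be a lower transvection from block $i{+}1$ into block $i$ (unconstrained mod $p$) and let $S$ be the upper transvection from block $i$ into block $i{+}1$, whose single nonzero entry is forced to equal $p$ times a unit \emph{because the two exponents differ by exactly one}. A direct computation (of the type I checked in the smallest case $\grpp=\ZZ2\oplus\ZZ{}$, where $[S,T]$ sends $x\mapsto(1+p)x$, $y\mapsto y$, i.e.\ is exactly the scaling by $1+p$) shows that $[S,T]$ equals the relevant coordinate scaling by $1+p$ up to a lower transvection already known to lie in $[\autgp,\autgp]$. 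This is precisely where the hypothesis is indispensable: if consecutive parts differed by $\ge 2$, the entry of $S$ would be divisible by $p^{\ge 2}$, so $[S,T]$ would only produce $1+p^{\ge 2}$, which generates a proper subgroup of the $p$-part of $(\ZZ{\gl_i})^*$ and leaves a $\Z/p\Z$ surviving in the abelianization — exactly the nonvanishing phenomenon seen elsewhere in the paper.

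The step I expect to be the main obstacle is carrying out this last reduction uniformly: checking that, for arbitrary multiplicities $\gr_i$, the commutators above really produce \emph{every} generator of $K$ — the $1+p$ scaling on each of the $\gr_i$ coordinates of every block, together with all congruence transvections across all block pairs — so that $K\subseteq[\autgp,\autgp]$ in full. The within-block $SL$-part is routine, but threading the difference-one identity through each adjacent pair of blocks, and bookkeeping which transvections exist and which scalings they generate, is the delicate combinatorial core. Once it is complete, $\autgp^{\mathrm{ab}}\cong P^{\mathrm{ab}}\cong\prod_{i=1}^k(\ZZ{})^*$ has order prime to $p$, whence $\Hom(\autgp,\Z/p\Z)=0$ and the theorem follows.
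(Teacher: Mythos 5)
You follow the paper's frame for the reduction: Proposition~\ref{prop:Index} converts the theorem into the assertion that the abelianization of $\autgp$ has order prime to $p$, that prime-to-$p$ quotient is detected by the mod-$p$ block determinants, and the transvections (within-block and cross-block, with their forced divisibilities) are commutators via conjugation by diagonal units, exactly as in the paper. The genuine gap is in your decisive step, the identity for $[S,T]$. With $S=I+p\,e_{\ga\gb}$ and $T=I+e_{\gb\ga}$, where $\ga$ lies in block $i$ and $\gb$ in block $i+1$, a direct computation gives
\equ{[S,T]=I+(p+p^{2})\,e_{\ga\ga}-p^{2}\,e_{\ga\gb}+p\,e_{\gb\ga}-p\,e_{\gb\gb}.}
Clearing the two off-diagonal entries by transvections (which are indeed already known to be commutators) leaves the diagonal matrix that scales coordinate $\ga$ by $1+p+p^{2}$ \emph{and} scales coordinate $\gb$ by $1-p$ (up to terms of order $p^{3}$). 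The factor at $\gb$ is a nontrivial principal-unit scaling whenever $\gl_{i+1}\geq 2$; it is not a transvection and is not yet known to be a commutator at this stage. So your claim that $[S,T]$ is ``the coordinate scaling by $1+p$ up to a lower transvection'' is false in general; it holds precisely when $\gb$ lies in a block of exponent $1$, i.e.\ for the bottom pair $(k-1,k)$ --- which is exactly the case $\grpp=\ZZ2\oplus\ZZ{}$ you checked, where $1-p\equiv 1\pmod p$ makes the error term invisible.

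Two additions repair the argument. First, run the induction bottom-up over blocks: for the pair $(k-1,k)$ the correction at $\gb$ vanishes; once all principal-unit scalings in block $i+1$ are known to be commutator products, the displayed identity shows the scaling by $1+p+p^{2}$ at each coordinate of block $i$ is one too. Second, you need the remark that for odd $p$ the principal units $1+p\,\ZZ{\gl_i}$ form a cyclic group in which $1+p+p^{2}$ is a generator (being $\not\equiv 1\pmod{p^{2}}$); this is necessary, not cosmetic, because $[S,T]$ never produces $1+p$ itself, only $1+p+p^{2}$, and a priori you would only conclude for the subgroup it generates. With these two points, plus the routine row reduction showing the congruence kernel is generated by congruence transvections and $1+p$-scalings, your route closes, and it is then genuinely different from the paper's in its core mechanism: the paper writes $g_{11}=g_{11}'+ph$ with $g_{11}'\in SL_{\gr_1}$ (Proposition~\ref{prop:StrongApprox}) and absorbs $ph$ onto $g_{11}'$ one entry at a time via block elementary operations built from matrices $B,C$ with $pBCg_{11}'=pe_{ij}$ --- the difference-one hypothesis entering through the factor $pB$ --- with the leftover block-$2$ factor $I_2-ph_{ij}CB$ handled by induction on $k$. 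Your patched mechanism makes the role of $\gl_i-\gl_{i+1}=1$ more transparent; the paper's handles an arbitrary congruence-kernel block in one sweep without needing the cyclic-generator argument.
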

\begin{proof}
	It suffices to show that $p$ does not divide the order of the group $\frac{\autgp}{[\autgp,\autgp]}$ using Proposition~\ref{prop:Index}.
	We show that the commutator subgroup $\autgp'=[\autgp,\autgp]$ of $\autgp$ is the kernel of the surjective homomorphism 
	\equ{\Gf:\autgp \lra (\Z/p\Z)^*\times (\Z/p\Z)^*\times \cdots \times (\Z/p\Z)^*,} 
	given as follows: If $g=[g_{mn}]_{1\leq m,n\leq k},g_{mn}:(\Z/p^{\gl_n}\Z)^{\gr_n}\lra (\Z/p^{\gl_m}\Z)^{\gr_m}$, 
	\equ{\Gf(g)=(\Det(g_{11})\mod p, \Det(g_{22})\mod p,\cdots, \Det(g_{kk})\mod p).}
	It is clear that $\Ker(\Gf)$ contains the commutator subgroup $\autgp'$. Now we show that $\Ker(\Gf)\subseteq \autgp'$. 
	
	Let $I_{\ul{\gl}}$ be the identity element of $\autgp$.
	First it is clear for $1\leq i< j\leq k$ the block elementary matrix $E_{ij}(p^{\gl_i-\gl_j}A)=I_{\ul{\gl}}+e_{ij}(p^{\gl_i-\gl_j}A)\in \autgp'$ where 
	$e_{ij}(p^{\gl_i-\gl_j}A)$ is a matrix with all zero blocks except for the $ij^{th}$ block entry which is $p^{\gl_i-\gl_j}A$ where 
	$p^{\gl_i-\gl_j}A:(\ZZ {\gl_j})^{\gr_j}\lra (\ZZ {\gl_i})^{\gr_i}$. 
	This is because if $I_l$ denotes the identity matrix in $GL_{\gr_l}(\ZZ {\gl_l})$ for $1\leq l\leq k$ then 
	\equa{E_{ij}(p^{\gl_i-\gl_j}A)=&\Diag(I_1,\cdots,I_{i-1},\gb I_i,I_{i+1},\cdots I_k)E_{ij}((\gb-1)^{-1}p^{\gl_i-\gl_j}A)\\&\Diag(I_1,\cdots,I_{i-1},\gb^{-1} I_i,I_{i+1},\cdots I_k)E_{ij}(-(\gb-1)^{-1}p^{\gl_i-\gl_j}A)\\
	=&[\Diag(I_1,\cdots,I_{i-1},\gb I_i,I_{i+1},\cdots I_k), E_{ij}((\gb-1)^{-1}p^{\gl_i-\gl_j}A)]}
	where $\gb\in (\ZZ {\gl_i})^*$ such that $\gb-1\in (\ZZ {\gl_i})^*$.
	
	It is also clear that for $1\leq j< i\leq k$ the block elementary matrix $E_{ij}(A)=I_{\ul{\gl}}+e_{ij}(A)\in \autgp'$ where 
	$e_{ij}(A)$ is a matrix with all zero blocks except for the $ij^{th}$ block entry which is $A$ where 
	$A:(\ZZ {\gl_j})^{\gr_j}\lra (\ZZ {\gl_i})^{\gr_i}$. This is because 
	\equa{E_{ij}(A)=&\Diag(I_1,\cdots,I_{j-1},\gb^{-1} I_j,I_{j+1},\cdots I_k)E_{ij}((\gb-1)^{-1}A)\\&\Diag(I_1,\cdots,I_{j-1},\gb I_j,I_{j+1},\cdots I_k)E_{ij}(-(\gb-1)^{-1}A)\\
	=&[\Diag(I_1,\cdots,I_{j-1},\gb^{-1} I_j,I_{j+1},\cdots I_k), E_{ij}((\gb-1)^{-1}A)]}
where $\gb\in (\ZZ {\gl_j})^*$ such that $\gb-1\in (\ZZ {\gl_j})^*$.
	
	Now by induction on $k$ we observe that the matrix $g=\Diag(g_{11},g_{22},\cdots,g_{kk})$ $\in \autgp'$ where $g_{11}\in SL_{\gr_1}(\ZZ {\gl_1})$ and $g_{ii}\in K_{\gr_i}(\ZZ {\gl_i})$ for $2\leq i\leq k$. 
	
	Now consider $g=\Diag(g_{11},I_2,I_3,\cdots,I_k)\in \autgp$ where $g_{11}\in K_{\gr_1}(\ZZ {\gl_1})$. We show that $g\in \autgp'$. 
	Using Proposition~\ref{prop:StrongApprox} there exists $g_{11}'\in SL_{\gr_1}(\ZZ {\gl_1})$ and $h\in M_{\gr_1}(\ZZ {\gl_1})$ such that $g_{11}=g_{11}'+ph$. Now the matrix \equ{\Diag(g_{11}',I_2,I_3,\cdots,I_k)\in \autgp'.}
	
	Choose \equ{pB:(\ZZ {\gl_2})^{\gr_2}\lra (\ZZ {\gl_1})^{\gr_1},C:(\ZZ {\gl_1})^{\gr_1}\lra (\ZZ {\gl_2})^{\gr_2}} such that \equ{pBCg_{11}'=pe_{ij}:(\ZZ {\gl_1})^{\gr_1}\lra (\ZZ {\gl_1})^{\gr_1}, 1\leq i,j\leq \gr_1.} Such a choice of matrices $B,C$ clearly exist. Note here $\gl_1-\gl_2=1$.
	
	Let $h_{ij}$ be the $ij^{th}$ element of $h\in M_{\gr_1}(\ZZ {\gl_1})$ for $1\leq i,j\leq \gr_1$.
	We have \equ{E_{21}(-C)E_{12}(ph_{ij}B)\Diag(g_{11}',I_2,I_3,\cdots,I_k)=\mattwo{\mattwo{g_{11}'}{ph_{ij}B}{-Cg_{11}'}{I_2-ph_{ij}CB}}{0}{0}{I_{\ul{\gm}}}}
	where $I_{\ul{\gm}}$ is the identity element of $\autmgp$ with $\ul{\gm}=(\gl_3^{\gr_3}>\cdots>\gl_k^{\gr_k})$.
	We have 
	\equa{&\mattwo{\mattwo{g_{11}'}{ph_{ij}B}{-Cg_{11}'}{I_2-ph_{ij}CB}}{0}{0}{I_{\ul{\gm}}}E_{21}(Cg_{11}')E_{12}(-ph_{ij}(g_{11}')^{-1}(I_1+ph_{ij}BC)^{-1}B)\\
		&=\mattwo{\mattwo{g_{11}'+ph_{ij}BCg_{11}'}{0}{0}{I_2-ph_{ij}CB}}{0}{0}{I_{\ul{\gm}}}.}
	Now the matrix $I_2-ph_{ij}CB\in K_{\gr_2}(\ZZ {\gl_2})$. Hence we get 
	\equa{&\mattwo{\mattwo{g_{11}'+ph_{ij}BCg_{11}'}{0}{0}{I_2-ph_{ij}CB}}{0}{0}{I_{\ul{\gm}}}\Diag(I_1,(I_2-ph_{ij}CB)^{-1},I_3,I_4,\cdots,I_k)\\
		&=\Diag(g_{11}'+ph_{ij}e_{ij},I_2,\cdots,I_k)\in \autgp'.}
	Now applying this procedure repeatedly for all $1\leq i,j\leq \gr_1$ we get that 
	\equ{\Diag(g_{11}'+p\us{i,j}{\sum}h_{ij}e_{ij},I_2,\cdots,I_k)=\Diag(g_{11}'+ph,I_2,\cdots,I_k)=\Diag(g_{11},I_2,\cdots,I_k)}
	belongs to $\autgp'$. Hence we get that for $g_{ii}\in K_{\gr_i}(\ZZ {\gl_i}),1\leq i\leq k$ the matrix 
	\equ{\Diag(g_{11},g_{22},\cdots,g_{kk})=\Diag(g_{11},I_2,\cdots,I_k)\Diag(I_1,g_{22},g_{33},\cdots,g_{kk})\in \autgp'.}
	
	Now if $g\in \Ker(\Gf)$ with diagonal entries $g_{ii}\in K_{\gr_i}(\ZZ {\gl_i})$ then by using elementary block matrices which are in $\autgp'$ we can reduce $g$ to the diagonal block matrix $\Diag(g_{11},g_{22},\cdots,g_{kk})$ which is also in $\autgp'$. Hence we have $g\in \autgp'$. Therefore we have proved 
	\equ{\Ker{\Gf}=\autgp'.}
	Hence the theorem follows.
\end{proof}
\begin{remark}
The above theorem does not hold if $p=2$ as the following example illustrates. Also refer to Theorem~\ref{theorem:FirstCohomologyIsotypic} for another such example where $k=1,n=2,p=2$.
\end{remark}
\begin{example}
	If $k=2,p=2$ and $\ul{\gl}=(2>1)$ then  $\Hom(\autgp,\grpp)\neq 0$. This is because $\autgp$ is a group of order $8$. Hence there exists a normal subgroup of order $4$ with quotient isomorphic to $\Z/2\Z$. 
\end{example}
\subsubsection{\bf{The Nonvanishing Case of $H^1_{Trivial\ Action}(\autgp,\grpp)$}}
\begin{theorem}
	\label{theorem:NonvanishingH1}
	Let $\ugl$ be a partition. Let $\grpp$ be the finite abelian $p$-group associated to $\ul{\gl}$, where $p$ is any prime.
	Let $\autgp=\Aut(\grpp)$ be its automorphism group. If $\gl_i-\gl_{i+1}\geq 2$ for some $1\leq i\leq k-1$ or if $\gl_k\geq 2$ then $H^1_{Trivial\ Action}(\autgp,\grpp)\neq 0$. 
\end{theorem}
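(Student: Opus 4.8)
The plan is to use Proposition~\ref{prop:Index} to replace the cohomological statement by the purely group-theoretic one that $p$ divides $\vert\frac{\autgp}{[\autgp,\autgp]}\vert$, equivalently that $\Hom(\autgp,\ZZ{})\neq 0$. Instead of computing the commutator subgroup as in Theorem~\ref{theorem:VanishingH1}, I would exhibit a surjective homomorphism $\gch:\autgp\lra (\ZZ a)^*$ for a suitable integer $a\geq 2$. Since $\vert(\ZZ a)^*\vert=p^{a-1}(p-1)$ is divisible by $p$ exactly when $a\geq 2$, the existence of such a surjection forces $p\mid\vert\frac{\autgp}{[\autgp,\autgp]}\vert$ and hence the desired nonvanishing, for \emph{any} prime $p$.

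First I would unify the two hypotheses. Adopting the convention $\gl_{k+1}=0$, the assumption ``$\gl_i-\gl_{i+1}\geq 2$ for some $1\leq i\leq k-1$, or $\gl_k\geq 2$'' says precisely that there is an index $i$ with $1\leq i\leq k$ and $a:=\gl_i-\gl_{i+1}\geq 2$ (the case $\gl_k\geq 2$ being $i=k$). Fixing such an $i$, I would then build a characteristic subquotient of $\grpp$ of homogeneous exponent $a$. Both $p^{\gl_{i+1}}\grpp$ and $p^{\gl_i}\grpp$ are fully invariant subgroups with $p^{\gl_i}\grpp\sbq p^{\gl_{i+1}}\grpp$, so every automorphism preserves them and acts on $M:=\frac{p^{\gl_{i+1}}\grpp}{p^{\gl_i}\grpp}$. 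Writing $\grpp=\bigoplus_{m=1}^{k}(\ZZ{\gl_m})^{\gr_m}$, the summands with $m>i$ have $\gl_m\leq\gl_{i+1}$ and contribute nothing, while for $m\leq i$ one computes $p^{\gl_{i+1}}(\ZZ{\gl_m})/p^{\gl_i}(\ZZ{\gl_m})\cong\ZZ a$. Hence $M\cong(\ZZ a)^R$ with $R=\sum_{m\leq i}\gr_m\geq\gr_1\geq 1$, and the induced action gives a homomorphism $\rho:\autgp\lra\Aut(M)=GL_R(\ZZ a)$; I set $\gch=\Det\circ\rho$.

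The remaining point is surjectivity of $\gch$, which uses the diagonal automorphisms. Given $u\in(\ZZ a)^*$, lift it to an integer coprime to $p$ and take the block-diagonal $g=\Diag(\Diag(u,1,\ldots,1),I_2,\ldots,I_k)\in\autgp$, which scales one coordinate of the first component by $u$ and fixes the rest. Because $g$ acts on $M$ summand by summand, $\gch(g)=u$, so $\gch$ is onto $(\ZZ a)^*$. As $a\geq 2$ this makes $p$ divide $\vert\frac{\autgp}{[\autgp,\autgp]}\vert$, and Proposition~\ref{prop:Index} yields $H^1_{Trivial\ Action}(\autgp,\grpp)=\Hom(\autgp,\grpp)\neq 0$.

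The computations identifying $M\cong(\ZZ a)^R$ and checking that $\rho$ is well defined are routine. The one place needing care --- the main, if modest, obstacle --- is the bookkeeping that unifies the ``large gap'' and ``$\gl_k\geq 2$'' cases under $\gl_{k+1}=0$ and guarantees $R\geq 1$, so that $M\neq 0$ and $\gch$ is nontrivial; it is also worth stressing that, unlike the natural-action theorems, this argument is insensitive to the parity of $p$.
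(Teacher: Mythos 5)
Your proof is correct, and it reaches the conclusion by a route that differs from the paper's in one substantive point. Both arguments share the same skeleton: set $\gl_{k+1}=0$ so that the two hypotheses merge into the existence of some $1\leq i\leq k$ with $\gl_i-\gl_{i+1}\geq 2$, and then, via Proposition~\ref{prop:Index}, reduce the cohomological statement to exhibiting a surjection of $\autgp$ onto an abelian group of order divisible by $p$. The difference lies in how that surjection is built. The paper truncates $g=[g_{mn}]_{1\leq m,n\leq k}$ to its top-left corner $[g_{mn}]_{1\leq m,n\leq i}$ and reduces mod $p^2$, obtaining $\gf:\autgp\lra GL_{\gr}(\ZZ 2)$ with $\gr=\gr_1+\cdots+\gr_i$; that this truncation is multiplicative is not automatic, but rests on the fact that for $m\leq i<n$ every entry of $g_{mn}$ is divisible by $p^{\gl_m-\gl_n}$, hence by $p^2$, so the cross terms vanish mod $p^2$. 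You instead let $\autgp$ act on the fully invariant subquotient $M=p^{\gl_{i+1}}\grpp/p^{\gl_i}\grpp\cong(\ZZ a)^R$, where $a=\gl_i-\gl_{i+1}\geq 2$ and $R=\gr_1+\cdots+\gr_i$, so that functoriality makes $\rho:\autgp\lra \Aut(M)=GL_R(\ZZ a)$ a homomorphism with no computation at all; in both proofs the only remaining point is surjectivity of the determinant character, which both obtain from the same diagonal automorphisms $\Diag(\Diag(u,1,\ldots,1),I_2,\ldots,I_k)$. What your version buys is that the well-definedness of the key homomorphism is structural rather than a matrix estimate, and that the target $(\ZZ a)^*$, of order $p^{a-1}(p-1)$, records the full gap $a$ rather than only its mod $p^2$ shadow (either order is divisible by $p$, which is all that is needed). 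What the paper's version buys is a target $GL_{\gr}(\ZZ 2)$ that is uniform in the size of the gap, together with an explicit matrix description that the paper reuses verbatim in the proof of Theorem~\ref{theorem:NonvanishingH2}. Both arguments are valid for every prime $p$, as you correctly emphasize.
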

\begin{proof}
	Define $\gl_{k+1}=0$. If $g\in \autgp$ then $g=[g_{mn}]_{1\leq m,n\leq k}$ where $g_{mn}:(\ZZ {\gl_n})^{\gr_n}\lra (\ZZ {\gl_m})^{\gr_m}$. Consider the homomorphism \equ{\gf:\autgp \lra GL_{\gr}(\ZZ {2})} where $\gr=\us{j=1}{\os{i}{\sum}}\gr_j$ given by 
	\equ{g=[g_{mn}]_{1\leq m,n\leq k} \lra \ol{g}=[g_{mn}]_{1\leq m,n\leq i}\mod p^2.}
	Since $\gl_m-\gl_n\geq 2$ if $n\geq {i+1}$ and $m\leq i$ the map $\gf$ is a homomorphism.
	Now consider the homomorphism $\gc:GL_{\gr}(\ZZ {2})\lra (\ZZ {2})^{*}=\Z/p(p-1)\Z$ given by the determinant. The composed map 
	$\gc\circ\gf$ maps $\autgp$ onto a cyclic group of order $p(p-1)$ which can further be mapped onto $\Z/p\Z$. Hence using Proposition~\ref{prop:Index} we conclude that $H^1_{Trivial\ Action}(\autgp,\grpp)\neq 0$.
\end{proof}
\subsubsection{\bf{The Vanishing/Nonvanishing Criterion for $H^1_{Trivial\ Action}(\autgp,\grpp)$ for Odd Primes}}
As a consquence of Theorems~\ref{theorem:VanishingH1},~\ref{theorem:NonvanishingH1} we have proved the following theorem for odd primes.
\begin{theorem}
	\label{theorem:CriterionH1TrivialAction}
Let $\ugl$ be a partition. Let $\grpp$ be the finite abelian $p$-group associated to $\ul{\gl}$, where $p$ is an odd prime. Let $\autgp=\Aut(\grpp)$ be its automorphism group. Then $H^1_{Trivial\ Action}(\autgp,\grpp)=0$ if and only if the difference between two successive parts of $\ul{\gl}$ is at most $1$.
\end{theorem}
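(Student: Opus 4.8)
The plan is to deduce this biconditional directly from the two preceding theorems, Theorem~\ref{theorem:VanishingH1} (the vanishing case) and Theorem~\ref{theorem:NonvanishingH1} (the nonvanishing case), once the combinatorial condition is pinned down precisely. Writing the distinct parts as $\gl_1>\gl_2>\cdots>\gl_k$ and adopting the convention $\gl_{k+1}=0$ already used in the proof of Theorem~\ref{theorem:NonvanishingH1}, I would first record the trivial but essential observation that strict monotonicity forces $\gl_i-\gl_{i+1}\geq 1$ for every $1\leq i\leq k$. Hence the hypothesis ``the difference between two successive parts is at most $1$'' is equivalent to the equality $\gl_i-\gl_{i+1}=1$ holding for all $1\leq i\leq k$, where the boundary instance $i=k$ reads $\gl_k=1$.

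For the forward implication ($\Leftarrow$) I would telescope these equalities starting from $i=k$ to conclude $\gl_i=k-i+1$ for all $i$. This is exactly the staircase hypothesis of Theorem~\ref{theorem:VanishingH1}, whose conclusion gives $H^1_{Trivial\ Action}(\autgp,\grpp)=\Hom(\autgp,\grpp)=0$ at once.

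For the reverse implication ($\Rightarrow$) I would argue by contraposition. If the condition fails then, under the convention above, either $\gl_i-\gl_{i+1}\geq 2$ for some $1\leq i\leq k-1$, or the boundary difference fails, i.e.\ $\gl_k\geq 2$. These are precisely the two alternatives in the hypothesis of Theorem~\ref{theorem:NonvanishingH1}, which then yields $H^1_{Trivial\ Action}(\autgp,\grpp)\neq 0$. Combining the two implications gives the stated equivalence.

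The only point that requires genuine care is the reconciliation of the single combinatorial condition in the statement with the fact that Theorem~\ref{theorem:NonvanishingH1} lists the last-part condition $\gl_k\geq 2$ separately from the internal gap condition: the bookkeeping device of setting $\gl_{k+1}=0$ makes ``$\gl_k=1$'' a boundary case of ``successive differences equal $1$'' and removes the apparent asymmetry between the two theorems. There is no analytic obstacle here, since all of the substantive work is carried by the two cited theorems; alternatively one could route the whole argument through Proposition~\ref{prop:Index}, reducing throughout to whether $p$ divides $\mid\frac{\autgp}{[\autgp,\autgp]}\mid$, but invoking the two theorems directly is cleaner.
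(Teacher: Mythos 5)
Your proposal is correct and follows exactly the paper's route: the paper states this theorem as an immediate consequence of Theorem~\ref{theorem:VanishingH1} (vanishing for the staircase partition $\gl_i=k-i+1$) and Theorem~\ref{theorem:NonvanishingH1} (nonvanishing when some gap is at least $2$ or $\gl_k\geq 2$), which is precisely your two-implication argument. Your explicit reconciliation of the single ``successive differences at most $1$'' condition with the two hypotheses, via the convention $\gl_{k+1}=0$, is the same bookkeeping the paper leaves implicit.
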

\section{\bf{Computation of $H^2_{Trivial\ Action}(\autgp,\grpp)$}}
\label{sec:H2TrivialAction}
In this section we compute the second cohomology group for the trivial action of $\autgp$ on $\grpp$.
\subsection{\bf{The Nonvanishing Case of $H^2_{Trivial\ Action}(\autgp,\grpp)$}}
\begin{theorem}
	\label{theorem:NonvanishingH2}
	Let $\ugl$ be a partition. Let $\grpp$ be the finite abelian $p$-group associated to $\ul{\gl}$, where $p$ is a prime.
	Let $\autgp=\Aut(\grpp)$ be its automorphism group. If $\gl_i-\gl_{i+1}\geq 2$ for some $1\leq i\leq k-1$ or if $\gl_k\geq 2$ then $H^2_{Trivial\ Action}(\autgp,\grpp)\neq 0$.
\end{theorem}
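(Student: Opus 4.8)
The plan is to deduce this from the nonvanishing of $H^1$ already established under the \emph{same} hypothesis in Theorem~\ref{theorem:NonvanishingH1}, and then to promote a first-cohomology class to a second-cohomology class by a Bockstein-style connecting homomorphism. First I would apply Theorem~\ref{theorem:NonvanishingH1} together with Proposition~\ref{prop:Index}: the hypothesis gives $H^1_{Trivial\ Action}(\autgp,\grpp)\neq 0$, hence $\Hom(\autgp,\Z/p\Z)\neq 0$, i.e. $p$ divides $\mid\frac{\autgp}{[\autgp,\autgp]}\mid$, so the $p$-part of the abelianization is nontrivial. Let $p^e$ (with $e\geq 1$) be the largest order of a cyclic $p$-quotient of $\frac{\autgp}{[\autgp,\autgp]}$, and fix a surjection $\gc:\autgp\twoheadrightarrow \ZZ e$ realizing it. The hypothesis of the theorem enters only here, through $H^1$.

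Since the action is trivial, cohomology commutes with the direct sum in the coefficients, so $H^2_{Trivial\ Action}(\autgp,\grpp)=\us{i=1}{\os{k}{\oplus}}H^2_{Trivial\ Action}(\autgp,\ZZ{\gl_i})^{\gr_i}$, and it suffices to produce a nonzero class with coefficients in the single summand $\ZZ{\gl_1}$. I would use the short exact sequence of trivial $\autgp$-modules
\equ{0\lra \ZZ{\gl_1}\os{\times p^e}{\lra}\Z/p^{e+\gl_1}\Z\os{\mod p^e}{\lra}\ZZ e\lra 0}
and its associated connecting homomorphism $\gd:\Hom(\autgp,\ZZ e)=H^1_{Trivial\ Action}(\autgp,\ZZ e)\lra H^2_{Trivial\ Action}(\autgp,\ZZ{\gl_1})$. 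The goal is then to show $\gd(\gc)\neq 0$, which by exactness is equivalent to $\gc$ not lifting to a homomorphism $\autgp\lra \Z/p^{e+\gl_1}\Z$.

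The crux is exactly this non-liftability, and here the maximality of $e$ does the work: any lift $h$ of $\gc$ would, upon reduction modulo $p^e$, recover the surjection $\gc$, and a subgroup of $\Z/p^{e+\gl_1}\Z$ surjecting onto $\ZZ e$ must be the whole group, so $h$ itself would be surjective and would exhibit a cyclic $p$-quotient of $\autgp$ of order $p^{e+\gl_1}>p^e$, contradicting the choice of $e$. Hence $\gd(\gc)\neq 0$ and $H^2_{Trivial\ Action}(\autgp,\ZZ{\gl_1})\neq 0$, giving $H^2_{Trivial\ Action}(\autgp,\grpp)\neq 0$. The main obstacle I expect is precisely the passage from $\Z/p\Z$-coefficients (where the $H^1$ class naturally lives) to the genuine module $\grpp$: a naive inflation of the nonzero $H^1$ class need not stay nonzero. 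Building the class directly with $\ZZ{\gl_1}$-coefficients through the connecting map, and reducing its nonvanishing to the clean group-theoretic statement that a maximal cyclic $p$-quotient cannot be lifted, is what circumvents this difficulty.
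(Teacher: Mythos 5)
Your proposal is correct, and it proves the theorem by a genuinely different mechanism than the paper's own proof. The paper does not invoke Theorem~\ref{theorem:NonvanishingH1} or any connecting homomorphism: it reruns the explicit construction from that proof, namely the surjection $\gc\circ\gf:\autgp\lra (\ZZ{2})^*\cong \Z/p(p-1)\Z$ (determinant of the mod-$p^2$ reduction of the relevant block), pulls back a $2$-cocycle of the nonsplit extension $0\lra \Z/p\Z\lra \Z/p^2(p-1)\Z\lra (\ZZ{2})^*\lra 0$, pushes it into the summand $\ZZ{\gl_k}$ via $\gs(1)=p^{\gl_k-1}$, and proves the resulting central extension $E$ of $\autgp$ is nonsplit by restricting it to the diagonal subgroup $D=(\ZZ{\gl_i})^*\subs\autgp$, over which it maps onto the nontrivial cyclic extension; the complementary summand $\mcl{A}_{\ul{\gm}}$ is then added so the kernel becomes all of $\grpp$ (the paper's counterpart of your coefficient decomposition). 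You instead use Theorem~\ref{theorem:NonvanishingH1} as a black box and manufacture the class as a Bockstein image $\gd(\gc)$ for the coefficient sequence $0\lra \ZZ{\gl_1}\os{\times p^e}{\lra}\Z/p^{e+\gl_1}\Z\lra \ZZ{e}\lra 0$; your three key steps — exactness identifies $\Ker(\gd)$ with the liftable homomorphisms, the only subgroup of $\Z/p^{e+\gl_1}\Z$ surjecting onto $\ZZ{e}$ is the whole group, and a surjection $\autgp\twoheadrightarrow \Z/p^{e+\gl_1}\Z$ would factor through the abelianization and contradict the maximality of $e$ — are all sound. Structurally the two arguments are cousins (your $\gd(\gc)$ is again the class of a pullback of a cyclic extension along a homomorphism to a cyclic group), but they certify nontriviality differently: the paper by explicit restriction to a subgroup where the extension is recognizable, you by the abstract maximality of the cyclic $p$-quotient. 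Your route buys brevity and generality: it in fact shows that for any finite group $G$ acting trivially, $p$ dividing $\vert G/[G,G]\vert$ forces $H^2_{Trivial\ Action}(G,A)\neq 0$ for every nontrivial finite abelian $p$-group $A$, so the partition hypothesis enters only through $H^1$. The paper's route buys an explicit nonsplit extension of $\autgp$ by $\grpp$, constructed at the level of cocycles without appealing to the long exact coefficient sequence.
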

\begin{proof}
	Define $\gl_{k+1}=0$. If $g\in \autgp$ then $g=[g_{mn}]_{1\leq m,n\leq k}$ where $g_{mn}:(\ZZ {\gl_n})^{\gr_n}\lra (\ZZ {\gl_m})^{\gr_m}$. Consider the homomorphism \equ{\gf:\autgp \lra GL_{\gr}(\ZZ {2})} where $\gr=\us{j=1}{\os{i}{\sum}}\gr_j$ given by 
	\equ{g=[g_{mn}]_{1\leq m,n\leq k} \lra \ol{g}=[g_{mn}]_{1\leq m,n\leq i}\mod p^2.}
	Since $\gl_m-\gl_n\geq 2$ if $n\geq {i+1}$ and $m\leq i$ the map $\gf$ is a homomorphism.
	Now consider the homomorphism $\gc:GL_{\gr}(\ZZ {2})\lra (\ZZ {2})^{*}=\Z/p(p-1)\Z$ given by the determinant. The composed map 
	$\gc\circ\gf$ maps $\autgp$ onto a cyclic group of order $p(p-1)$.
	Consider a $2$-cocycle $c:(\ZZ 2)^*\times (\ZZ 2)^*\lra \Z/p\Z$ for the nontrivial central extension 
	\equ{0\lra \Z/p\Z\lra \Z/p^2(p-1)\Z\lra (\ZZ 2)^*\lra 0.}
	Let $\gs:\Z/p\Z\lra \ZZ {\gl_k}$ be defined as the inclusion homomorphism: $\gs(1)=p^{\gl_k-1}$ and $\gs(a)=p^{\gl_k-1}a$.
	Define a group structure on the set $E=\ZZ {\gl_k}\times \autgp$ as follows.
	Let $(x,g),(x',g')\in E$. Then define the multiplication on $E$ as:
	\equ{(x,g).(x',g')=(x+x'+\gs(c(\gc\circ\gf(g),\gc\circ\gf(g'))),gg').}
	\begin{claim}
		Consider the following central extension,
		\equ{0\lra \ZZ {\gl_k}\lra E\lra \autgp \lra 0.}
		Then $E$ is a nontrivial central extension of $\autgp$ by $\Z/p^{\gl_k}\Z$.
	\end{claim}
	\begin{proof}[Proof of Claim]
		Consider the set $E_1=\Z/p\Z\times \autgp$. Define a group structure on the set $E_1=\ZZ {}\times \autgp$ as follows.
		Let $(x,g),(x',g')\in E_1$. Then define the multiplication on $E_1$ as:
		\equ{(x,g).(x',g')=(x+x'+c(\gc\circ\gf(g),\gc\circ\gf(g')),gg').}
		Then $E_1$ is a nontrivial central extension of $\autgp$ by $\Z/p\Z$ if and only if $E$ is a nontrivial central extenstion of $\autgp$ by $\ZZ {\gl_k}$. Now consider the diagonal subgroup $D=(\ZZ {\gl_i})^*\subs \us{l=1}{\os{k}{\oplus}}((\ZZ {\gl_l})^*)^{\gr_l}\subs \autgp$ where $i$ is as mentioned in the theorem. Suppose the extension is given as:
		\equ{0\lra \ZZ {}\lra E_1 \os{\gp}{\lra} \autgp \lra 0.}
		Then we have a central extension.
		\equ{0\lra \ZZ {}\lra \gp^{-1}(D) \os{\gp}{\lra} D \lra 0.}
		Note that $\gp^{-1}(D)$ is an abelian group and as a set $\gp^{-1}(D)=\Z/p\Z\times (\ZZ {\gl_i})^*$.
		We have a group homomorphism $\Z/p\Z\times (\ZZ {\gl_i})^* \os{\mod p^2}{\lra} \Z/p\Z\times (\ZZ {2})^*$ where the group strucure in both of them are given by the respective $2$-cocycles.
		So we have a commutative diagram
		\[
		\begin{tikzcd}
			0 \arrow[r,""]& \Z/p\Z \arrow[hook]{r}{i} \arrow[]{d}{\Vert}[swap]{Id_{\Z/p\Z}} & \gp^{-1}(D) \arrow[two heads]{r}{\gp} \arrow[]{d}{}[swap]{\mod p^2} & D \arrow[r,""] \arrow[]{d}{\mod p^2}[swap]{} & 0\\
			0 \arrow[r,""]& \Z/p\Z\arrow[hook]{r}{i_1} & \Z/p\Z\times (\Z/p^2\Z)^*\arrow[two heads]{r}{\gp_1}& (\Z/p^2\Z)^* \arrow[r,""] & 0
		\end{tikzcd}
		\]
		Hence now we observe that by a straight-forward calculation we have the set $\Z/p\Z\times (\Z/p^2\Z)^*$ as a group is isomorphic to $\Z/p^2(p-1)\Z$ which is a nontrivial central extension. Hence $\gp^{-1}(D)$ and $E_1$ cannot be trivial central extensions implying that $E$ is not a trivial extension. This proves the claim.
	\end{proof}
	Continuing with the proof of the theorem, we have that the extension $E$ gives rise to a nontrivial  central extension
	\equ{0\lra \grpp\lra \mcl{A}_{\ul{\gm}}\oplus E\lra \autgp\lra 0} where 
	$\ul{\gm}=\big(\gl_1^{\gr_1}>\gl_2^{\gr_2}>\cdots>\gl_k^{\gr_k-1}\big)$. Hence $H^2_{Trivial\ Action}(\autgp,\grpp)\neq 0$.
	
\end{proof}
\subsection{\bf{The Vanishing Case of $H^2_{Trivial\ Action}(\autgp,\grpp)$}}
~\\
Throughout this section we assume that $\ugl$ is a partition such that $\gl_i=k-i+1$. We also assume that $p$ is an odd prime and $p\neq 3$. However we will mention where we require that the prime $p$ must be odd and where we also require in addition that the odd prime $p$ is not equal to $3$.
\subsubsection{\bf{Chief Series of a $p$-Sylow Subgroup of $\autgp$}}
Let $\mcl{P}_{\ul{\gl}}\subs \autgp$  be the subgroup consisting of those automorphisms of $\grpp$ which are unipotent lower triangular matrices modulo $p$ in $GL_{\gr}(\Z/p\Z)$ where $\gr=\us{i=1}{\os{k}{\sum}}\gr_i$.  Then $\mcl{P}_{\ul{\gl}}$ is a $p$-Sylow subgroup of $\autgp$. We describe a chief series for $\mcl{P}_{\ul{\gl}}$.

First we describe a typical element $g^a\in \mcl{P}_{\ul{\gl}}$ where $g$ and $a$ are just two symbols.  
Let $g^a=[g^a_{mn}]_{1\leq m,n\leq k}$ where $g^a_{mn}:(\ZZ {\gl_n})^{\gr_n}\lra (\ZZ {\gl_m})^{\gr_m}$, that is, $g^a_{mn}=[(g^{ij}_{mn})^a]_{1\leq i\leq \gr_m,1\leq j\leq \gr_n}$ where we have 
\equ{(g^{ij}_{mn})^a=\begin{cases}
		1+a_{1,i,i,m,m}p+a_{2,i,i,m,m}p^2+\cdots+a_{\gl_m-1,i,i,m,m}p^{\gl_m-1}\\ \text{ if }m=n,i=j\\
		a_{0,i,j,m,m}+a_{1,i,j,m,m}p+a_{2,i,j,m,m}p^2+\cdots+a_{\gl_m-1,i,j,m,m}p^{\gl_m-1}\\ \text{ if }m=n,i>j\\
		a_{1,i,j,m,m}p+a_{2,i,j,m,m}p^2+\cdots+a_{\gl_m-1,i,j,m,m}p^{\gl_m-1}\\ \text{ if }m=n,i<j\\
		a_{0,i,j,m,n}+a_{1,i,j,m,n}p+a_{2,i,j,m,n}p^2+\cdots+a_{\gl_m-1,i,j,m,n}p^{\gl_m-1}\\ \text{ if }m>n\\
		a_{\gl_m-\gl_n,i,j,m,n}p^{\gl_m-\gl_n}+\cdots+a_{\gl_m-1,i,j,m,n}p^{\gl_m-1}\\ \text{ if }m<n\\
\end{cases}}
with $a_{l,i,j,m,n}\in \{0,1,2,\cdots,p-1\}$ for $0\leq l\leq \gl_m-1,1\leq i\leq \gr_m,1\leq j\leq \gr_n,1\leq m,n\leq k$.
The value $a_{0,i,i,m,m}$ is defined to be $1$ for $1\leq i\leq \gr_m,1\leq m\leq k$ and the value $a_{l,i,j,m,n}$ is defined to be zero for $l=0,1\leq m=n\leq k,1\leq i<j\leq \gr_m$ and for $0\leq l\leq \gl_m-\gl_n-1,1\leq i\leq \gr_m,1\leq j\leq \gr_n,1\leq m<n\leq k$. The matrix $g^a$ is of size $\gr\times \gr$. The position of $(g^{ij}_{mn})^a$ in the matrix $g^a$ is \equ{Pos((g^{ij}_{mn})^a)=\big(\us{f=1}{\os{m-1}{\sum}}\gr_f+i,\us{h=1}{\os{n-1}{\sum}}\gr_h+j\big).}
We observe that $a_{l,i,j,m,n}$ is the coefficient of $p^l$ in the $p$-adic expansion of the element $(g^{ij}_{mn})^a$ in the matrix $g^a$ which occurs in the position $Pos((g^{ij}_{mn})^a)$.  
We also write $Pos(a_{l,i,j,m,n})=\big(\us{f=1}{\os{m-1}{\sum}}\gr_f+i,\us{h=1}{\os{n-1}{\sum}}\gr_h+j\big)$ and sometimes when the element $g^a$ is understood by default then we write 
\equ{Pos(l,i,j,m,n)=\big(\us{f=1}{\os{m-1}{\sum}}\gr_f+i,\us{h=1}{\os{n-1}{\sum}}\gr_h+j\big).} 

Define a function $\gch:\mbb{N} \times \mbb{N}\lra \mbb{Z}$ as $\gch(x,y)=x-y$. Let $S'=\{(l,i,j,m,n)\mid 0\leq l\leq \gl_m-1,1\leq i\leq \gr_m,1\leq j\leq \gr_n,1\leq m,n\leq k\}$ and $S''=\{(0,i,j,m,m)\mid 1\leq i\leq j\leq \gr_m,1\leq m\leq k\}\bigcup\{(l,i,j,m,n)\mid 0\leq l\leq \gl_m-\gl_n-1,1\leq i\leq \gr_m,1\leq j\leq \gr_n,1\leq m<n\leq k\}$.
We define a total order $\leq_{TO}$ on the set $S=(S'\bs S'')\bigcup \{\es\}$ as follows. For $(l,i,j,m,n), (l',i',j',m',n')\in S\bs\{\es\}$, we say that 
\equ{(l,i,j,m,n)\leq_{TO} (l',i',j',m',n')}
\begin{itemize}
	\item if $l>l'$ or
	\item if $l=l'$ and $\gch(Pos(l,i,j,m,n))>\gch(Pos(l',i',j',m',n'))$ or
	\item if $l=l'$ and $\gch(Pos(l,i,j,m,n))=\gch(Pos(l',i',j',m',n'))$ and \equ{\us{f=1}{\os{m-1}{\sum}}\gr_f+i\geq \us{f=1}{\os{m'-1}{\sum}}\gr_f+i'.}
\end{itemize}
We also define that $\es\in S$ is the least element, that is, $\es\leq_{TO}(l,i,j,m,n)$ for all $(l,i,j,m,n)\in S\bs \{\es\}$. It is clear that $\leq_{TO}$ is a total order on the set $S$.

Now we define a chain of subgroups of $\mcl{P}_{\ul{\gl}}$ indexed by the set $S$. Define for $s\in S$,
\equ{\mcl{P}^{s}_{\ul{\gl}}=\{g^a\in \mcl{P}_{\ul{\gl}}\mid a_{s'}=0 \text{ for all }s'\in S \text{ and }s<_{TO}s'\}.}
Note that if $s=\es\in S$ then $\mcl{P}^{\es}_{\ul{\gl}}=\{1\}$, the trivial subgroup. From the definition of $\mcl{P}^{s}_{\ul{\gl}}$ it is clear that $\mcl{P}^{s}_{\ul{\gl}}\sbnq \mcl{P}^{s'}_{\ul{\gl}} \Llra s<_{TO}s'$.
\begin{theorem}
\label{theorem:Subgroup}
The subset $\mcl{P}^{s}_{\ul{\gl}}$ is indeed a subgroup for any $s\in S$.
\end{theorem}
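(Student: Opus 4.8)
The plan is to reduce the statement to closure under multiplication and then to isolate a single combinatorial lemma describing how the $p$-adic coordinates of a product behave with respect to the total order $\leq_{TO}$. First I would note that $\mcl{P}_{\ul{\gl}}$ is a finite group and that the identity $I_{\ul{\gl}}$ lies in $\mcl{P}^{s}_{\ul{\gl}}$ for every $s$, since all of its free coordinates $a_{s'}$, $s'\in S\bs\{\es\}$, vanish. Because a nonempty subset of a finite group that is closed under the group multiplication is automatically a subgroup (inverses appear as positive powers), it suffices to prove that $\mcl{P}^{s}_{\ul{\gl}}$ is closed under multiplication.

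The heart of the matter is the following lemma. Writing $g^c=g^ag^b$ for a product of two elements of $\mcl{P}_{\ul{\gl}}$, and denoting by $c_{s'},a_{s'},b_{s'}$ the coordinate indexed by $s'=(l',i',j',m',n')\in S\bs\{\es\}$, I claim
\[
c_{s'}\equiv a_{s'}+b_{s'}+F_{s'}\pmod p,
\]
where $F_{s'}$ is a sum of monomials in the coordinates $\{a_{s''},b_{s''}\}$, each monomial containing at least one factor whose index $s''$ satisfies $s'<_{TO}s''$. Granting this, closure is immediate: if $g^a,g^b\in\mcl{P}^{s}_{\ul{\gl}}$ and $s<_{TO}s'$, then $a_{s'}=b_{s'}=0$, while every monomial of $F_{s'}$ carries a factor indexed by some $s''$ with $s<_{TO}s'<_{TO}s''$, which also vanishes; hence $c_{s'}=0$ for all $s'$ with $s<_{TO}s'$, that is, $g^c\in\mcl{P}^{s}_{\ul{\gl}}$.

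To establish the lemma I would expand the $(i',j')$ entry of the $(m',n')$ block of the product, $(g^{i'j'}_{m'n'})^c=\us{t}{\sum}\us{s}{\sum}(g^{i's}_{m't})^a(g^{sj'}_{tn'})^b$, and extract the coefficient of $p^{l'}$. The leading additive part $a_{s'}+b_{s'}$ comes from the two terms in which one factor is a diagonal unit $1+O(p)$ and the other supplies the coordinate at position $s'$; everything else is a genuine product of two coordinates. I would then verify, case by case over the five shapes of entries in the description of $g^a$ (the on-, below-, and above-diagonal entries of the diagonal blocks, and the two off-diagonal block types), that each remaining contribution is indexed strictly higher in $\leq_{TO}$. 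Two mechanisms produce these corrections: the matrix sum over an intermediate index, where the two factors occupy positions whose $\gch$-values add up to $\gch(\mrm{Pos}(s'))$, so that at least one factor has $\gch$-value not exceeding that of $s'$ and is placed above $s'$ by the finer tiers of the order; and the $p$-adic convolution together with carries, where any cross term or any carry draws on coordinates of strictly smaller $p$-adic level $l<l'$, which lie above $s'$ because $\leq_{TO}$ ranks higher levels first.

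The hard part will be precisely this case analysis: one must check that no cross term and no carry can yield a monomial all of whose factors are indexed at or below $s'$ in $\leq_{TO}$. This is where the three-tier design of $\leq_{TO}$ (descending $p$-adic level, then the position invariant $\gch$, then row index) is used essentially, and where the hypothesis $\gl_i=k-i+1$ enters, since it pins down the minimal level $\gl_m-\gl_n$ at which an above-diagonal block can be nonzero and thereby controls the level shifts that occur when an above-diagonal factor is composed with a below-diagonal one. Once the bookkeeping is fixed, each case reduces to a routine comparison of indices under $\leq_{TO}$; the delicate point is organizing the argument so that the $\gch$-splitting and the carry contributions are handled uniformly across all five entry types.
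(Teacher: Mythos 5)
Your proposal is correct and follows essentially the same route as the paper: both arguments reduce to closure under multiplication (inverses coming free in a nonempty finite subset closed under the operation) and then verify closure by expanding the entries of $g^c=g^ag^b$ and checking that every contribution to a coordinate indexed strictly above $s$ in $\leq_{TO}$ carries a factor that is itself indexed strictly higher, either through a drop in $p$-adic level (cross terms, diagonal-unit corrections, carries) or through the additivity of $\gch$ over an intermediate index. The only difference is bookkeeping --- the paper fixes $s$ and phrases the computation as divisibility by $p^{l+1}$ of the entries lying in three regions of the matrix, whereas you phrase it as a universal coordinate-wise product lemma; the two are equivalent, and your lemma restricted to the coordinate $s$ itself is exactly what the paper records as Remark~\ref{remark:MultiplyEntry} right after its proof.
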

\begin{proof}
\begin{figure}[h]
		\centering
		\includegraphics[width = 0.8\textwidth]{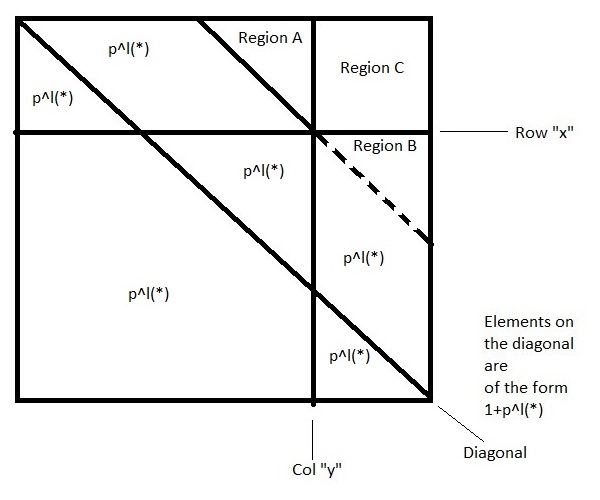}
		\caption{Matrix Form if $(x,y)$ corresponds to a strictly upper triangular entry}
		\label{fig:One}
\end{figure}
If $s=\es$ then the proof is trivial. So assume that $s=(l,i,j,m,n)\in S\bs\{\es\}$ and $Pos(s)=(x,y)$.  Suppose $\gch(Pos(s))=x-y<0$, that is, $(x,y)$ corresponds to a strictly upper triangular entry. Let $g^a,g^b\in \mcl{P}^s_{\ul{\gl}}$. Then certainly $g^c=g^a.g^b\in \mcl{P}_{\ul{\gl}}$. Moreover $g^a \equiv g^b\equiv I\mod p^l$. Hence $g^c\equiv I\mod p^l$.

Now consider Figure~\ref{fig:One}. 
Let region $A=\{(u,v)\in \mbb{N}\times \mbb{N}\mid 1\leq u< v\leq y, u-v\leq x-y<0\text{ and } (u,v)\neq (x,y)\}.$   Let region $B=\{(u,v)\in \mbb{N}\times \mbb{N}\mid x\leq u<v\leq \gr=\gr_1+\cdots+\gr_k, u-v<x-y\}$. The dotted boundary line of region $B$ is not included in region $B$. Let region $C=\{(u,v)\in \mbb{N}\times \mbb{N}\mid 1\leq u<x,y< v\leq \gr=\gr_1+\cdots+\gr_k  \}$.
To prove that $P^s_{\ul{\gl}}$ is closed under group multiplication, it is enough to prove that, the entries in regions $A,B,C$ are of the form $p^{l+1}(*)$. 

For $u<v$, the $(u,v)^{th}$ entry of $g^c=g^a.g^b$ is of the form 
\equan{One}{(g^c)_{(u,v)}=\us{z=1}{\os{u}{\sum}}(g^a)_{(u,z)}(g^b)_{(z,v)}+\us{z=u+1}{\os{v}{\sum}}(g^a)_{(u,z)}(g^b)_{(z,v)}+\us{z=v+1}{\os{\gr}{\sum}}(g^a)_{(u,z)}(g^b)_{(z,v)}.}
If $(u,v)\in$ region $A$ or region $C$ then in the first sum of the RHS of~\ref{Eq:One}, for $1\leq z\leq u, (z,v)\in$ region $A$ or region $C$ and hence $(g^b)_{(z,v)}$ is of the form $p^{l+1}(*)$. If $(u,v)\in$ region $B$ then in the first sum of the RHS of~\ref{Eq:One}, for $1\leq z\leq u, (z,v)\in$ region $B$ or region $C$. So $(g^b)_{(z,v)}$ is of the form $p^{l+1}(*)$.

In the second sum of the RHS of~\ref{Eq:One}, for $u+1\leq z<v$ we have $(g^a)_{(u,z)}$ is of the form $p(*)$ since it is lower triangular modulo $p$ and $(g^b)_{(z,v)}$ is of the form $p^l(*)$. In the second sum of the RHS of~\ref{Eq:One}, for $z=v$, $(g^a)_{(u,v)}$ is of the form $p^{l+1}(*)$ since $(u,v)$ belongs to region $A$ or region $B$ or region $C$. 

In the third sum of the RHS of~\ref{Eq:One}, we have
$(g^a)_{(u,z)}$ is of the form $p(*)$ since it is lower triangular modulo $p$ and $(g^b)_{(z,v)}$ is of the form $p^l(*)$. So we conclude that $(g^c)_{(u,v)}$ is of the form $p^{l+1}(*)$. 

The proof is similar if $(x,y)$ corresponds to a strictly lower triangular entry or a diagonal entry. Hence $\mcl{P}^s_{\ul{\gl}}$ is closed under group multiplication. 

Now we prove that $\mcl{P}^s_{\ul{\gl}}$ is closed under inverses. This is immediate because any nonempty finite subset of a group which is also closed under group multiplication is closed under inverses and contains the identity element. Hence $\mcl{P}^s_{\ul{\gl}}$ is a subgroup.
\end{proof}
\begin{remark}
\label{remark:MultiplyEntry}
We also observe in the above proof that if $g^a,g^b\in \mcl{P}^s_{\ul{\gl}},s\in S\bs\{\es\},s=(l,i,j,m,n),Pos(s)=(x,y)$ and $g^c=g^a.g^b$ then the entry 
\begin{enumerate}[label=(\roman*)]
	\item $(g^c)_{(x,y)}\equiv(g^a)_{(x,y)}+(g^b)_{(x,y)} \mod p^{l+1}$, that is, $c_{l,i,j,m,n}\equiv a_{l,i,j,m,n}+b_{l,i,j,m,n} \mod p$ if $x\neq y$.
	\item $(g^c)_{(x,y)}-1\equiv (g^a)_{(x,y)}-1+(g^b)_{(x,y)}-1 \mod p^{l+1}$, that is, $(g^c)_{(x,y)}\equiv 1+p^la_{l,i,j,m,n}+p^lb_{l,i,j,m,n}\mod p^{l+1},(g^a)_{(x,y)} \equiv 1+p^la_{l,i,j,m,n}\mod p^{l+1}$, $(g^b)_{(x,y)} \equiv 1+p^lb_{l,i,j,m,n}\mod p^{l+1}$  if $x=y$.
\end{enumerate}
\end{remark}
\begin{remark}
In the proof of Theorem~\ref{theorem:Subgroup}, $p$ can be any prime. 
\end{remark}
\begin{theorem}
\label{theorem:CosetRepresentatives}
Let $t_1,t_2\in S$ and $t_1<t_2$. Let $S_{t_i}=\{t\in S\mid t\leq t_i,t\neq \es\},i=1,2$. Then 
\begin{enumerate}
\item the set $C_{t_1}^{t_2}=\{g^c\mid g^c\in \mcl{P}^{t_2}_{\ul{\gl}},c_t=0\text{ for all }t\in S_{t_1}\}$ forms a set of distinct left and right coset representatives for the subgroup $\mcl{P}^{t_1}_{\ul{\gl}}\subs \mcl{P}^{t_2}_{\ul{\gl}}$.
\item For $g^c\in C^{t_2}_{t_1}$, $g^d\in g^c\mcl{P}^{t_1}_{\ul{\gl}}$ or $g^d\in \mcl{P}^{t_1}_{\ul{\gl}}g^c$ if and only if $c_t=d_t$ for all $t_1<t\leq t_2$. 
\item For $g^c,g^d\in \mcl{P}_{\ul{\gl}}$, $g^c\mcl{P}^{t_1}_{\ul{\gl}}=g^d\mcl{P}^{t_1}_{\ul{\gl}}$ or $\mcl{P}^{t_1}_{\ul{\gl}}g^c=\mcl{P}^{t_1}_{\ul{\gl}}g^d$ if and only if $c_t=d_t$ for all $t_1<t$. 
\item The index $[\mcl{P}^{t_2}_{\ul{\gl}}: \mcl{P}^{t_1}_{\ul{\gl}}]=p^{\mid S_{t_2}\bs S_{t_1}\mid}$.
\end{enumerate}
\end{theorem}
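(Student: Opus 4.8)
The plan is to reduce the whole statement to a single invariance lemma describing how the coordinates $a_t$ transform under multiplication, and then to deduce (1)--(4) by a counting argument combined with that lemma.

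First I would record the bookkeeping already implicit in the construction: the assignment $g^a\mapsto (a_t)_{t\in S_s}$ is a bijection from $\mcl{P}^{s}_{\ul{\gl}}$ onto $\prod_{t\in S_s}\{0,1,\dots,p-1\}$, since the nonfixed coordinates of a matrix in $\mcl{P}^{s}_{\ul{\gl}}$ are exactly those indexed by $S_s=\{t\in S:\es\neq t\le_{TO}s\}$. Hence $\mid\mcl{P}^{s}_{\ul{\gl}}\mid=p^{\mid S_s\mid}$, and because $t_1<_{TO}t_2$ forces $S_{t_1}\subseteq S_{t_2}$, Lagrange's theorem already gives $[\mcl{P}^{t_2}_{\ul{\gl}}:\mcl{P}^{t_1}_{\ul{\gl}}]=p^{\mid S_{t_2}\bs S_{t_1}\mid}$, which is part (4). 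The same bijection shows $\mid C_{t_1}^{t_2}\mid=p^{\mid S_{t_2}\bs S_{t_1}\mid}$, since the members of $C_{t_1}^{t_2}$ are precisely those $g^c\in\mcl{P}^{t_2}_{\ul{\gl}}$ whose free coordinates are exactly the ones indexed by $S_{t_2}\bs S_{t_1}=\{t\in S: t_1<_{TO}t\le_{TO}t_2\}$. Thus $\mid C_{t_1}^{t_2}\mid$ coincides with the index, and for (1) it remains only to check that distinct elements of $C_{t_1}^{t_2}$ lie in distinct cosets.

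The key step is the following invariance lemma: \emph{if $g^a\in\mcl{P}^{t_2}_{\ul{\gl}}$ and $g^b\in\mcl{P}^{t_1}_{\ul{\gl}}$, then both products $g^ag^b$ and $g^bg^a$ have coordinate equal to $a_t$ at every position $t$ with $t_1<_{TO}t$.} Granting this, everything follows. For $g^c\in C_{t_1}^{t_2}$, every element of $g^c\mcl{P}^{t_1}_{\ul{\gl}}$ (respectively $\mcl{P}^{t_1}_{\ul{\gl}}g^c$) has coordinate $c_t$ at each $t$ with $t_1<_{TO}t\le_{TO}t_2$; since two distinct members of $C_{t_1}^{t_2}$ differ in some such coordinate and agree (both zero) on $S_{t_1}$, they lie in distinct left cosets and in distinct right cosets. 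As there are exactly $[\mcl{P}^{t_2}_{\ul{\gl}}:\mcl{P}^{t_1}_{\ul{\gl}}]$ of them, $C_{t_1}^{t_2}$ is simultaneously a complete set of left and of right coset representatives, proving (1). Statement (2) is then immediate: the coset of $g^c$ is contained in $\{g^d\in\mcl{P}^{t_2}_{\ul{\gl}}: d_t=c_t \text{ for } t_1<_{TO}t\le_{TO}t_2\}$ by the lemma, and the two sides have the same cardinality $p^{\mid S_{t_1}\mid}$, so they are equal. Finally, taking $t_2$ to be the largest element of the finite totally ordered set $S$, so that $\mcl{P}^{t_2}_{\ul{\gl}}=\mcl{P}_{\ul{\gl}}$, and comparing arbitrary $g^c,g^d\in\mcl{P}_{\ul{\gl}}$ with their unique representatives in $C_{t_1}^{t_2}$ yields (3).

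It remains to prove the invariance lemma, and this is where I expect the real work to lie. The argument is a refinement of the region analysis already used for Theorem~\ref{theorem:Subgroup}: fixing $t=(l,i,j,m,n)$ with $Pos(t)=(x,y)$, one expands $(g^ag^b)_{xy}=\us{z}{\sum}(g^a)_{xz}(g^b)_{zy}$ and must show that the coefficient of $p^l$ in this sum equals $a_t$. The decisive feature of the total order $\leq_{TO}$ is that every intermediate index $z$ which could contribute a cross term of $p$-order at most $l$ to the $(x,y)$-entry is forced to reference a coordinate of $g^b$ whose index is $\leq_{TO}$-strictly larger than $t$, hence larger than $t_1$, and therefore zero in $g^b\in\mcl{P}^{t_1}_{\ul{\gl}}$; this is exactly what the ordering (largest $p$-power first, then largest $\gch=x-y$ first) is engineered to guarantee, and it is already visible modulo $p$ in the computation of Remark~\ref{remark:MultiplyEntry}. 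The only genuinely delicate point, and the main obstacle, is the $p$-adic carry bookkeeping for $l>0$: one must verify that the lower-order contributions to the $(x,y)$-entry never produce a carry that disturbs the $p^l$-coefficient, which forces one to track the products in regions $A$, $B$ and $C$ of Theorem~\ref{theorem:Subgroup} modulo $p^{l+1}$ rather than merely modulo $p$. The symmetric statement for $g^bg^a$ is obtained by the same analysis with the roles of rows and columns interchanged.
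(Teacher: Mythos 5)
Your proposal is correct, and it takes a genuinely different route from the paper's. The paper proves the theorem by induction on $\mid S_{t_2}\bs S_{t_1}\mid$: the base case is the predecessor situation of Remark~\ref{remark:MultiplyEntry}, and in the inductive step two representatives lying in a common coset are compared by stripping off the top coordinate $t_2$ via multiplication by elementary matrices $E_{(u_2,v_2)}(-p^{l_2}c_{l_2,i_2,j_2,m_2,n_2})$, landing in $\mcl{P}^{t_4}_{\ul{\gl}}$ and then recovering the one remaining coordinate $c_{t_3}$ through a case analysis ($l_2>0$ versus $l_2=0$, and $Pos(t_3)=Pos(t_2)$ or not); the index formula (4) comes out of this induction. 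You instead isolate a single invariance lemma (multiplying $g^a$ on either side by an element of $\mcl{P}^{t_1}_{\ul{\gl}}$ leaves every coordinate $a_t$ with $t_1<_{TO}t$ unchanged) and obtain (1)--(3) by counting, with (4) free from Lagrange once one records that $g^a\mapsto (a_t)_{t\in S_s}$ is a bijection from $\mcl{P}^{s}_{\ul{\gl}}$ onto all coordinate tuples --- a fact implicit in the paper's parametrization (every tuple gives a matrix which is unipotent lower triangular mod $p$, hence invertible) but which you should state explicitly. Your lemma is true, and the ``carry bookkeeping'' you flag as the main obstacle actually dissolves: writing $(g^ag^b)_{(x,y)}=\us{z}{\sum}(g^a)_{(x,z)}(g^b)_{(z,y)}$ for $t$ with first coordinate $l$ and $Pos(t)=(x,y)$, one checks each summand separately modulo $p^{l+1}$. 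For $z\leq x$, $z\neq y$, the definition of $\leq_{TO}$ forces every coordinate of $p$-order at most $l$ at position $(z,y)$ to be $>_{TO}t>_{TO}t_1$, hence zero in $g^b$, so $(g^b)_{(z,y)}\equiv 0\pmod{p^{l+1}}$; for $z>x$ the factor $(g^a)_{(x,z)}$ lies strictly above the diagonal and is divisible by $p$, while the coordinates of $g^b$ at $(z,y)$ of order less than $l$ vanish for the same order-theoretic reason, so the product again lies in $p^{l+1}\Z$; and in the term $z=y$ the deviation of $(g^b)_{(y,y)}$ from $1$ is killed either because the relevant diagonal coordinates of $g^b$ vanish (when $x\geq y$) or because $(g^a)_{(x,y)}$ is itself divisible by $p$ (when $x<y$). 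Hence $(g^ag^b)_{(x,y)}\equiv (g^a)_{(x,y)}\pmod{p^{l+1}}$ as an exact congruence, so all digits up to $p^l$ agree simultaneously and no digit-by-digit carry tracking is needed; the computation for $g^bg^a$ is the mirror image with rows and columns exchanged. In summary, your organization buys a shorter, more conceptual proof in which the combinatorial design of $\leq_{TO}$ is invoked in exactly one computation of the same type as Theorem~\ref{theorem:Subgroup}, whereas the paper's incremental argument never needs the full invariance statement, only coset equalities, at the price of a more intricate case analysis.
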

\begin{proof}
For $s=\es$ the proof is trivial. Assume that $s\neq \es$. Let $s_1$ be the predecessor of $s$ in the set $S$ where $Pos(s)=(x,y)$. Then using Remark~\ref{remark:MultiplyEntry} (i),(ii), we have that the set of elementary or diagonal matrices $\{E_{(x,y)}(p^la_{l,i,j,m,n})\mid 0\leq a_{l,i,j,m,n}\leq p-1\}$ form a set of distinct left and right coset representatives for the subgroup $\mcl{P}^{s_1}_{\ul{\gl}}\subs \mcl{P}^{s}_{\ul{\gl}}$. Here $E_{(x,y)}(p^la_{l,i,j,m,n})=I+e_{(x,y)}(p^la_{l,i,j,m,n})$ where $e_{(x,y)}(p^la_{l,i,j,m,n})$ is the matrix of zeroes except for the $(x,y)^{th}$ entry which is $p^la_{(l,i,j,m,n)}$. Note that if $(l,i,j,m,n)$ is in diagonal position, that is, $x=y$ then $l>0$ and $E_{(x,y)}(p^la_{l,i,j,m,n})$ is a diagonal matrix. As a consequence we have $[\mcl{P}^{s}_{\ul{\gl}}:\mcl{P}^{s_1}_{\ul{\gl}}]=p$.

We prove the theorem by induction on the cardinality $\mid S_{t_2}\bs S_{t_1}\mid$. If $\mid S_{t_2}\bs S_{t_1}\mid=1$ then it follows from Remark~\ref{remark:MultiplyEntry} (i),(ii). Suppose $\mid S_{t_2}\bs S_{t_1}\mid>1$. Let $t_1<t_3<t_2$ where $t_3$ is the successor of $t_1$ and $g^c,g^d\in C^{t_2}_{t_1}$. Suppose the two left cosets $g^c\mcl{P}^{t_1}_{\ul{\gl}},g^d\mcl{P}^{t_1}_{\ul{\gl}}$ are equal then $g^c\mcl{P}^{t_3}_{\ul{\gl}}=g^d\mcl{P}^{t_3}_{\ul{\gl}}$. Hence we have $c_t=d_t$ for all $t_3<t\leq t_2$ by induction on $\mid S_{t_2}\bs S_{t_3}\mid=\mid S_{t_2}\bs S_{t_1}\mid-1$. A similar conclusion follows when we consider right cosets. We now prove $c_{t_3}=d_{t_3}$. Let $t_4$ be the predecessor of $t_2=(l_2,i_2,j_2,m_2,n_2)$. Let $Pos(t_2)=(u_2,v_2)$. Then we have 
\begin{itemize}
	\item $E_{(u_2,v_2)}(-p^{l_2}c_{l_2,i_2,j_2,m_2,n_2})g^c, E_{(u_2,v_2)}(-p^{l_2}d_{l_2,i_2,j_2,m_2,n_2})g^d\in \mcl{P}^{t_4}_{\ul{\gl}}$,
	\item $g^cE_{(u_2,v_2)}(-p^{l_2}c_{l_2,i_2,j_2,m_2,n_2}), g^dE_{(u_2,v_2)}(-p^{l_2}d_{l_2,i_2,j_2,m_2,n_2})\in \mcl{P}^{t_4}_{\ul{\gl}}$,
	\item $E_{(u_2,v_2)}(-p^{l_2}c_{l_2,i_2,j_2,m_2,n_2})g^c\mcl{P}^{t_1}_{\ul{\gl}}=E_{(u_2,v_2)}(-p^{l_2}d_{l_2,i_2,j_2,m_2,n_2})g^d\mcl{P}^{t_1}_{\ul{\gl}}$,
	\item $\mcl{P}^{t_1}_{\ul{\gl}}g^cE_{(u_2,v_2)}(-p^{l_2}c_{l_2,i_2,j_2,m_2,n_2})=\mcl{P}^{t_1}_{\ul{\gl}}g^dE_{(u_2,v_2)}(-p^{l_2}d_{l_2,i_2,j_2,m_2,n_2})$,
	\item $c_{l_2,i_2,j_2,m_2,n_2}=d_{l_2,i_2,j_2,m_2,n_2}$.
\end{itemize}
Hence by induction on $\mid S_{t_4}\bs S_{t_1}\mid=\mid S_{t_2}\bs S_{t_1}\mid-1$, we have that, if $g^a=$\linebreak $E_{(u_2,v_2)}(-p^{l_2}c_{l_2,i_2,j_2,m_2,n_2})g^c$ and 
$g^b=E_{(u_2,v_2)}(-p^{l_2}d_{l_2,i_2,j_2,m_2,n_2})g^d$ then $a_t=b_t$ for all $t_1<t\leq t_4$. Similarly if $g^A=g^cE_{(u_2,v_2)}(-p^{l_2}c_{l_2,i_2,j_2,m_2,n_2})$
and $g^B=$\linebreak$g^dE_{(u_2,v_2)}(-p^{l_2}d_{l_2,i_2,j_2,m_2,n_2})$ then $A_t=B_t$ for all $t_1<t\leq t_4$. We observe that $g^a$ and $g^c$ differ only in the $(u_2)^{th}$-row. Also $g^A,g^c$ differ only in the $(v_2)^{th}$-column. A similar conclusion follows for the pairs $g^b,g^d$ and $g^B,g^d$. So if $Pos(t_3)\neq (u_2,v_2)=Pos(t_2)$ then $c_{t_3}=d_{t_3}$. 

Now we assume that $Pos(t_3)=Pos(t_2)=(u_2,v_2)$. Let $t_3=(l_3,i_3,j_3,m_3,n_3)$. We need to prove that the coefficient of $p^{l_3}$ in the $p$-adic expansions of $(g^c)_{(u_2,v_2)},(g^d)_{(u_2,v_2)}$ are equal.  We have 
\equan{Two}{(g^a)_{(u_2,v_2)}&=(g^c)_{(u_2,v_2)}-p^{l_2}c_{l_2,i_2,j_2,m_2,n_2}(g^c)_{(v_2,v_2)}\\ \Ra (g^c)_{(u_2,v_2)}&=(g^a)_{(u_2,v_2)}+p^{l_2}c_{l_2,i_2,j_2,m_2,n_2}(g^c)_{(v_2,v_2)}}
and 
\equan{Three}{(g^b)_{(u_2,v_2)}&=(g^d)_{(u_2,v_2)}-p^{l_2}d_{l_2,i_2,j_2,m_2,n_2}(g^d)_{(v_2,v_2)}\\ \Ra
	(g^d)_{(u_2,v_2)}&=(g^b)_{(u_2,v_2)}+p^{l_2}d_{l_2,i_2,j_2,m_2,n_2}(g^d)_{(v_2,v_2)}.}
There are two cases: $l_2>0$ and $l_2=0$. Suppose $l_2>0$. In Equations~\ref{Eq:Two},~\ref{Eq:Three},
the coefficients of $p^{\ti{l}}$ in $(g^a)_{(u_2,v_2)}$ and $(g^b)_{(u_2,v_2)}$ are equal for all $\ti{l}\leq l_3$ since $a_t=b_t$ for all $t_1<t$. Also the coefficients of $p^{\ti{l}}$ in $(g^c)_{(v_2,v_2)},(g^d)_{(v_2,v_2)}$ are equal for $\ti{l}<l_3$ since $c_t=d_t$ for all $t_3<t$. Hence in this case $l_2>0$ we have $c_{t_3}=d_{t_3}$.

Suppose $l_2=0$ then $u_2> v_2$ because the coefficient of $p^0$ which can be nonzero and not equal to one can occur only in the strictly lower triangular entries. Hence the coefficients of $p^{\ti{l}}$ in $(g^c)_{(v_2,v_2)},(g^d)_{(v_2,v_2)}$ are equal for $\ti{l}\leq l_3$ and the coefficients of $p^{\ti{l}}$ in $(g^a)_{(u_2,v_2)}$ and $(g^b)_{(u_2,v_2)}$ are equal for all $\ti{l}\leq l_3$. Again here we have $c_{t_3}=d_{t_3}$. The converse statements in (2),(3) also follow in a similar way. This proves the theorem.
\end{proof}
\begin{remark}
	In the proof of Theorem~\ref{theorem:CosetRepresentatives}, $p$ can be any prime. 
\end{remark}
\begin{theorem}
	\label{theorem:Chain}
	We have
	\begin{enumerate}
		\item $\mcl{P}^{s}_{\ul{\gl}}$ is a normal subgroup of $\mcl{P}_{\ul{\gl}}$ for $s\in S$.
		\item If $s'$ is the successor element of $s$ in $S$  then  
		\equ{\frac{\mcl{P}^{s'}_{\ul{\gl}}}{\mcl{P}^{s}_{\ul{\gl}}} \cong \Z/p\Z.}
		\item If $s'$ is the successor element of $s$ in $S$ then there is a central extension 
		\equan{CentralExtension}{0\lra \frac{\mcl{P}^{s'}_{\ul{\gl}}}{\mcl{P}^{s}_{\ul{\gl}}} \lra \frac{\mcl{P}_{\ul{\gl}}}{\mcl{P}^{s}_{\ul{\gl}}}\lra \frac{\mcl{P}_{\ul{\gl}}}{\mcl{P}^{s'}_{\ul{\gl}}}\lra 0.} 
	\end{enumerate}
\end{theorem}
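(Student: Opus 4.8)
The plan is to reduce all three parts to a single commutator containment, proved by induction along the total order $\leq_{TO}$. Writing $s^{-}$ for the predecessor of $s\in S\setminus\{\es\}$, the key claim is
$$[\mcl{P}_{\ul{\gl}},\mcl{P}^{s}_{\ul{\gl}}]\subseteq \mcl{P}^{s^{-}}_{\ul{\gl}}\qquad(\star)$$
for every $s$. Granting $(\star)$, part (1) is immediate, since $[\mcl{P}_{\ul{\gl}},\mcl{P}^{s}_{\ul{\gl}}]\subseteq \mcl{P}^{s^{-}}_{\ul{\gl}}\subseteq \mcl{P}^{s}_{\ul{\gl}}$ shows $\mcl{P}^{s}_{\ul{\gl}}$ is normal. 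Part (2) then follows because $\mcl{P}^{s}_{\ul{\gl}}\trianglelefteq \mcl{P}^{s'}_{\ul{\gl}}$ has index $p$ by Theorem~\ref{theorem:CosetRepresentatives}(4) (as $\mid S_{s'}\setminus S_{s}\mid=1$ for a successor pair), and a group of order $p$ is cyclic, hence $\cong \Z/p\Z$. Finally part (3) is the third isomorphism theorem applied to the chain $\mcl{P}^{s}_{\ul{\gl}}\trianglelefteq\mcl{P}^{s'}_{\ul{\gl}}\trianglelefteq\mcl{P}_{\ul{\gl}}$; the extension is \emph{central} precisely because $(\star)$ for $s'$ gives $[\mcl{P}_{\ul{\gl}},\mcl{P}^{s'}_{\ul{\gl}}]\subseteq \mcl{P}^{s}_{\ul{\gl}}$, i.e. the image of $\mcl{P}^{s'}_{\ul{\gl}}/\mcl{P}^{s}_{\ul{\gl}}$ lies in the centre of $\mcl{P}_{\ul{\gl}}/\mcl{P}^{s}_{\ul{\gl}}$.

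To prove $(\star)$ I would induct on $s$, the base case $s=\es$ being trivial. Assume $\mcl{P}^{s^{-}}_{\ul{\gl}}\trianglelefteq\mcl{P}_{\ul{\gl}}$ together with $(\star)$ for $s^{-}$. Since $\mcl{P}^{s^{-}}_{\ul{\gl}}$ is normal I may pass to $\ol{G}=\mcl{P}_{\ul{\gl}}/\mcl{P}^{s^{-}}_{\ul{\gl}}$, inside which $\mcl{P}^{s}_{\ul{\gl}}/\mcl{P}^{s^{-}}_{\ul{\gl}}$ is a subgroup of order $p$ generated by the image of the single elementary (or diagonal) matrix $E_{Pos(s)}(p^{l})$ furnished by Theorem~\ref{theorem:CosetRepresentatives}. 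Establishing $(\star)$ amounts to showing this image is central in $\ol{G}$. Because the centraliser of an element is a subgroup, and because iterating the coset representatives of Theorem~\ref{theorem:CosetRepresentatives} exhibits every element of $\mcl{P}_{\ul{\gl}}$ as a product of elementary and diagonal matrices, it suffices to verify that $E_{Pos(s)}(p^{l})$ commutes modulo $\mcl{P}^{s^{-}}_{\ul{\gl}}$ with each such generator $g$, that is, that $[g,E_{Pos(s)}(p^{l})]\in\mcl{P}^{s^{-}}_{\ul{\gl}}$.

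This last point is a direct computation with elementary matrices, entirely analogous to the entrywise bookkeeping in the proof of Theorem~\ref{theorem:Subgroup}. For single off-diagonal entries one has $e_{(u,v)}^{2}=0$, so the commutator of $g=E_{(u,v)}(\gb)$ with $E_{(x,y)}(p^{l})$ is \emph{exactly} the identity (when the indices do not chain) or a single elementary matrix, namely $E_{(u,y)}(\gb p^{l})$ if $v=x$ or $E_{(x,v)}(-p^{l}\gb)$ if $u=y$; a diagonal generator merely rescales the $(x,y)$ entry by a factor $\equiv 1 \bmod p^{l}$. In each case I would check that the resulting coordinate lies strictly below $s$ in $\leq_{TO}$, hence in $\mcl{P}^{s^{-}}_{\ul{\gl}}$: if the perturbing generator carries a positive power of $p$ the new entry acquires $p$-level $\geq l+1$, which lowers it in the order since $\leq_{TO}$ ranks higher powers of $p$ first; while in the remaining case, where $g$ is a strictly lower-triangular unit entry (level $0$), the new entry stays at level $l$ but occupies a position with strictly larger value of $\gch=\text{row}-\text{col}$, which again lowers it by the secondary clause defining $\leq_{TO}$. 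The reduction $\bmod\,p^{\gl_m}$ inside each block can only raise the level further or annihilate the entry, so it never interferes. The main obstacle is exactly this last situation: the naive hope that commutators always raise the $p$-level fails for the level-$0$ lower-triangular generators, and one must instead exploit the $\gch$-refinement built into the total order to obtain the required drop. With $(\star)$ secured, parts (1)--(3) follow as explained above.
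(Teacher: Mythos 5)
Your reduction of (1)--(3) to the commutator containment $(\star)$, the induction along $\leq_{TO}$, and the reduction of $(\star)$ to commutators with the generators supplied by Theorem~\ref{theorem:CosetRepresentatives} are all sound, and this is essentially the paper's own strategy (the paper likewise inducts on $S\setminus\{\es\}$ and proves centrality of $\mcl{P}^{s}_{\ul{\gl}}/\mcl{P}^{s_1}_{\ul{\gl}}$ in $\mcl{P}_{\ul{\gl}}/\mcl{P}^{s_1}_{\ul{\gl}}$, recognizing cosets via Theorem~\ref{theorem:CosetRepresentatives}). The gap is in your key computational claim. You assert that $[E_{(u,v)}(\gb),E_{(x,y)}(p^{l})]$ is \emph{exactly} the identity or a single elementary matrix, the two chaining cases being $v=x$ or $u=y$. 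You omit the case where both chainings hold at once, namely $(u,v)=(y,x)$, the transpose of $Pos(s)$. This case is unavoidable: whenever $Pos(s)=(x,y)$ is off-diagonal, the transposed position carries generators of $\mcl{P}_{\ul{\gl}}$. There the commutator is not elementary; restricted to the rows and columns $x,y$ one computes
\equ{[E_{(y,x)}(\gb),E_{(x,y)}(p^{l}a)]=\mattwo{1-\gb p^{l}a}{\gb p^{2l}a^{2}}{-\gb^{2}p^{l}a}{1+\gb p^{l}a+\gb^{2}p^{2l}a^{2}},}
which perturbs \emph{two diagonal entries and the transposed entry} simultaneously. In the critical situation where $(x,y)$ is strictly upper triangular (so $l\geq 1$) and $\gb$ is a unit at the strictly lower triangular position $(y,x)$, these perturbations all occur at $p$-level exactly $l$, so neither your ``level goes up'' clause nor your ``single new entry with larger $\gch$'' clause applies as written.

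The containment you need is nevertheless true, and your $\gch$-refinement does rescue it, but this has to be argued: the three perturbed positions $(x,x)$, $(y,y)$, $(y,x)$ all satisfy $\gch\geq 0>x-y=\gch(Pos(s))$, so every nonzero coordinate of this commutator lies strictly below $s$ in $\leq_{TO}$ and the commutator lies in $\mcl{P}^{s^{-}}_{\ul{\gl}}$; in the opposite situation, where $(x,y)$ is strictly lower triangular, the transposed generator carries a positive power of $p$ and all perturbations have level $\geq l+1$. It is worth noting that the paper's proof sidesteps elementary commutator identities entirely: it compares $E_{(x,y)}(p^{l}a)g^{b}$ with $g^{b}E_{(x,y)}(p^{l}a)$ for an \emph{arbitrary} $g^{b}\in\mcl{P}_{\ul{\gl}}$, observes that the two products differ only in the $x^{th}$ row and the $y^{th}$ column, checks entrywise that all discrepancies sit at coordinates $\leq_{TO}s_{1}$, and concludes equality of left and right cosets by Theorem~\ref{theorem:CosetRepresentatives}(3). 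Your missed case is absorbed there automatically: it is precisely the term $p^{l}a\,g^{b}_{(y,x)}$ contributing to the diagonal entry at $(x,x)$, handled by the clause $z<y\Rightarrow x-z>x-y$. So your approach is repairable, but as written the case analysis is incomplete at exactly the point where the interaction between upper-triangular, lower-triangular and diagonal coordinates is most delicate.
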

\begin{proof}
We prove Theorem~\ref{theorem:Chain} by induction on the elements of the totally ordered set $S\bs\{\es\}$. Let $(l,i,j,m,n)=s\in S\bs\{\es\}$ and $s_1\in S$ be the predecessor of $s$. Let $Pos(s)=(x,y)$. We can assume that $\mcl{P}^{s_1}_{\ul{\gl}}$ is a normal subgroup of $\mcl{P}_{\ul{\gl}}$ by induction. Now let $g^b\in \mcl{P}_{\ul{\gl}}$. We have $g^bE_{(x,y)}(p^la_{l,i,j,m,n})=g^b+g^be_{(x,y)}(p^la_{l,i,j,m,n})$ and $E_{(x,y)}(p^la_{l,i,j,m,n})g^b=g^b+e_{(x,y)}(p^la_{l,i,j,m,n})g^b$.  The elements of the matrices $g^bE_{(x,y)}(p^la_{l,i,j,m,n})$ and $E_{(x,y)}(p^la_{l,i,j,m,n})g^b$ are the same except for the $x^{th}$-row and $y^{th}$-column.
In the $x^{th}$-row we have for any $1\leq z\leq \gr=\gr_1+\cdots+\gr_k$,
\equ{(E_{(x,y)}(p^la_{l,i,j,m,n})g^b)_{(x,z)}=g^b_{(x,z)}+p^la_{l,i,j,m,n}g^b_{(y,z)}.}
If $z>y$ then $p^{l+1}$ divides $p^la_{l,i,j,m,n}g^b_{(y,z)}$. If $z=y$ then $(E_{(x,y)}(p^la_{l,i,j,m,n})g^b)_{(x,y)}=g^b_{(x,y)}+p^la_{l,i,j,m,n}+p^{l+1}(*)$ and if $z<y$ then $x-z>x-y=\gch(Pos(s))$.
In the $y^{th}$-column we have for any $1\leq w\leq \gr=\gr_1+\cdots+\gr_k$,
\equ{(g^bE_{(x,y)}(p^la_{l,i,j,m,n}))_{(w,y)}=g^b_{(w,y)}+p^la_{l,i,j,m,n}g^b_{(w,x)}.}
Here if $w<x$ then $p^{l+1}$ divides $p^la_{l,i,j,m,n}g^b_{(w,x)}$. If $w=x$ then
 $(g^bE_{(x,y)}(p^la_{l,i,j,m,n}))_{(x,y)}$ $=g^b_{(x,y)}+p^la_{l,i,j,m,n}+p^{l+1}(*)$ and if $w>x$ then $w-y>x-y=\gch(Pos(s))$.
Hence we conclude that in the matrices $g^C=E_{(x,y)}(p^la_{l,i,j,m,n})g^b$ and $g^D=g^bE_{(x,y)}(p^la_{l,i,j,m,n})$, we have 
$C_t=D_t$ for all $s_1<t$. So using Theorem~\ref{theorem:CosetRepresentatives} we conclude that $g^C\mcl{P}^{s_1}_{\ul{\gl}}=g^D\mcl{P}^{s_1}_{\ul{\gl}}$ and $\mcl{P}^{s_1}_{\ul{\gl}}g^C=\mcl{P}^{s_1}_{\ul{\gl}}g^D$.
Hence $\frac{\mcl{P}^{s}_{\ul{\gl}}}{\mcl{P}^{s_1}_{\ul{\gl}}}$ is a central normal subgroup of $\frac{\mcl{P}_{\ul{\gl}}}{\mcl{P}^{s_1}_{\ul{\gl}}}$. Hence Theorem~\ref{theorem:Chain}$(1),(2),(3)$ follow.  This completes the proof of Theorem~\ref{theorem:Chain}. 
\end{proof}
\begin{remark}
The chain of subgroups $\mcl{P}^s_{\ul{\gl}},s\in S$ is a chief series for $\mcl{P}_{\ul{\gl}}$.
\end{remark}
\begin{remark}
	In the proof of Theorem~\ref{theorem:Chain}, $p$ can be any prime. 
\end{remark}
\subsubsection{\bf{Commutator Subgroup of $\mcl{P}_{\ul{\gl}}\subs \autgp$}}
\begin{theorem}
	\label{theorem:Commutator}
	Let $\mcl{P}'_{\ul{\gl}}=[\mcl{P}_{\ul{\gl}},\mcl{P}_{\ul{\gl}}]$ be the commutator subgroup of $\mcl{P}_{\ul{\gl}}\subs \autgp$. If $k>1$, define \equan{Abelianization}{T&=\{(0,i,j,m,n)\in S\mid \gch(Pos(0,i,j,m,n))=1\}\\
		&\bigcup \{(1,1,\gr_{m+1},m,m+1)\in S,1\leq m\leq k-1\}}
	and if $k=1$, define \equ{T=\{(0,i,j,1,1)\in S\mid \gch(Pos(0,i,j,1,1))=1\}.}
	For $g^a,g^b,g^c\in \mcl{P}_{\ul{\gl}}$, suppose $g^c=g^a.g^b$.  Then we have 
	\begin{enumerate}
		\item \equ{(l,i,j,m,n)\in T\Ra c_{(l,i,j,m,n)}\equiv a_{(l,i,j,m,n)}+b_{(l,i,j,m,n)}\mod p,}
		\item \equ{g^a\in \mcl{P}'_{\ul{\gl}} \Llra \text{ for all } (l,i,j,m,n)\in T, a_{(l,i,j,m,n)}=0.}
		\item \equ{\frac{\mcl{P}_{\ul{\gl}}}{\mcl{P}'_{\ul{\gl}}}\cong (\Z/p\Z)^{\mid T\mid}.}	
	\end{enumerate} 
\end{theorem}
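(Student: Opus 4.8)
The plan is to realize the three assertions as facets of a single group homomorphism. Define $\Phi\colon \mcl{P}_{\ul{\gl}}\lra (\Z/p\Z)^{\mid T\mid}$ by $\Phi(g^a)=(a_t)_{t\in T}$. Assertion $(1)$ is precisely the statement that $\Phi$ is additive, and once it is established $\Phi$ becomes a homomorphism into an abelian group; assertions $(2)$ and $(3)$ then follow from identifying $\Ker\Phi$ with $\mcl{P}'_{\ul{\gl}}$.

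First I would prove $(1)$ directly from the entry-product formula~\ref{Eq:One}. For $t=(l,i,j,m,n)\in T$ with $Pos(t)=(x,y)$ I examine the coefficient of $p^l$ in $(g^c)_{(x,y)}=\us{z}{\sum}(g^a)_{(x,z)}(g^b)_{(z,y)}$. The set $T$ has exactly two kinds of members: the subdiagonal positions with $\gch(Pos)=1$ at level $l=0$, and the super-diagonal ``corners'' $(1,1,\gr_{m+1},m,m+1)$, which sit at the leading level $l=\gl_m-\gl_{m+1}=1$ of the block $(m,m+1)$. In both cases the position $(x,y)$ is arranged so that, after a case analysis on the block in which the summation index $z$ lies and on the corresponding $p$-order of the two factors, every cross term with $z\neq x,y$ is divisible by $p^{l+1}$, while the terms $z=x$ and $z=y$ contribute $b_t$ and $a_t$ respectively (using that the diagonal entries are $\equiv 1\bmod p$). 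This yields $c_t\equiv a_t+b_t\bmod p$. I would emphasise that this is where the shape of $T$ is forced: a subdiagonal entry ($\gch=1$) leaves no index strictly between $y$ and $x$, and the corner choice $i=1$, $j=\gr_{m+1}$ leaves no admissible intermediate index inside the diagonal blocks $m$ or $m+1$.

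Granting $(1)$, the map $\Phi$ is a surjective homomorphism, surjectivity being witnessed by the elementary and diagonal generators $E_{Pos(t)}(p^{l_t})$, $t\in T$, whose images form the standard basis. Since the target is abelian, $\mcl{P}'_{\ul{\gl}}\sbq \Ker\Phi$, which already gives the forward implication of $(2)$ and a surjection $\mcl{P}_{\ul{\gl}}/\mcl{P}'_{\ul{\gl}}\twoheadrightarrow (\Z/p\Z)^{\mid T\mid}$, whence $\mid\mcl{P}_{\ul{\gl}}/\mcl{P}'_{\ul{\gl}}\mid\geq p^{\mid T\mid}$. Both $(2)$ and $(3)$ therefore reduce to the single reverse inclusion $\Ker\Phi\sbq \mcl{P}'_{\ul{\gl}}$: by Theorem~\ref{theorem:CosetRepresentatives} the group $\mcl{P}_{\ul{\gl}}$ is generated by the elementary and diagonal matrices $E_{Pos(s)}(p^{l_s})$, $s\in S\bs\{\es\}$, so it suffices to show that every such generator with $s\notin T$ already lies in $\mcl{P}'_{\ul{\gl}}$; counting then forces $\mid \mcl{P}_{\ul{\gl}}/\mcl{P}'_{\ul{\gl}}\mid = p^{\mid T\mid}$ and $\mcl{P}'_{\ul{\gl}}=\Ker\Phi$.

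The heart of the argument, and the step I expect to be the main obstacle, is thus to exhibit each generator indexed by $s\notin T$ as a product of commutators. I would run an induction along the chief series $\mcl{P}^s_{\ul{\gl}}$ of Theorem~\ref{theorem:Chain}, using relations of the type $[E_{(x,z)}(\ga),E_{(z,y)}(\gb)]\equiv E_{(x,y)}(\ga\gb)\bmod \mcl{P}^{s_1}_{\ul{\gl}}$ (as in Theorem~\ref{theorem:CommutatorSubgroup}), so that the correction terms living in the predecessor subgroup $\mcl{P}^{s_1}_{\ul{\gl}}$ are absorbed by the inductive hypothesis. The cases are: a strictly lower generator with $\gch\geq 2$ is split through any intermediate index; a super-diagonal generator in block $(m,m')$ with $m'\geq m+2$ is routed through block $m'-1$, reducing it to a product of two lower-order super-diagonal generators (here $\gl_i=k-i+1$ guarantees these sit at $p$-order $\geq 2$); a non-corner block-$(m,m+1)$ generator at $(i,j)$ is written as a commutator by routing through block $m$ when $i\neq 1$ or through block $m+1$ when $j\neq \gr_{m+1}$, which is exactly why only the corner $(1,\gr_{m+1})$ survives in $T$; finally the diagonal ($\gch=0$) and higher-order subdiagonal ($\gch=1$, $l\geq 1$) generators are produced by commutators with diagonal-unipotent elements, the isolation of a single diagonal entry relying on $\gl_k=1$, so that the balancing partner in block $k$ vanishes modulo $p$. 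Verifying that these identities cover precisely $S\bs (T\cup\{\es\})$, with all error terms controlled by the chief-series induction, is the combinatorial core on which the theorem rests.
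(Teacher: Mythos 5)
Your proposal follows the paper's own proof in all essentials: part $(1)$ is the same entry-by-entry analysis of $(g^c)_{(x,y)}=\us{z}{\sum}(g^a)_{(x,z)}(g^b)_{(z,y)}$, and parts $(2)$ and $(3)$ are reduced, exactly as in the paper, to showing that every elementary or diagonal generator indexed by $S\bs(T\cup\{\es\})$ lies in $\mcl{P}'_{\ul{\gl}}$. Your endgame is packaged slightly differently and arguably more cleanly: the paper finishes by factoring an arbitrary element of $\Ker\Phi$ (its subgroup $\mcl{C}_{\ul{\gl}}$) into non-$T$ generators, whereas you close by counting, combining the surjection $\mcl{P}_{\ul{\gl}}/\mcl{P}'_{\ul{\gl}}\twoheadrightarrow(\Z/p\Z)^{\mid T\mid}$ with the fact that the quotient is generated by the images of the $T$-indexed generators. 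That works once you record that each such image has order dividing $p$, because $E_{Pos(t)}(p^{l_t})^p=E_{Pos(t)}(p^{l_t+1})$ is either a non-$T$ generator or the identity (here $l_t$ is the first coordinate of $t$). Note also that for off-diagonal targets the relations $[E_{(x,z)}(\ga),E_{(z,y)}(\gb)]=E_{(x,y)}(\ga\gb)$ are exact, so the ``absorb errors modulo $\mcl{P}^{s_1}_{\ul{\gl}}$'' machinery is not needed there; if you do use it, the inductive hypothesis must be the element-wise statement $\Ker\Phi\cap\mcl{P}^{s_1}_{\ul{\gl}}\sbq\mcl{P}'_{\ul{\gl}}$, at which point the counting step becomes redundant.

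The genuine gap is in the case analysis, which you assert ``covers precisely $S\bs(T\cup\{\es\})$'' but does not. Two families of generators are unaccounted for. First, strictly upper-triangular positions \emph{inside} a diagonal block $(m,m)$ (present whenever $\gr_m\geq 2$), at levels $1\leq l\leq\gl_m-1$: these are neither subdiagonal, nor diagonal, nor in any block $(m,m')$ with $m'>m$, so none of your four cases applies. The paper disposes of them via $E_{(x,y)}(p^la_s)=[E_{(x,y+1)}(p^la_s),E_{(y+1,y)}(1)]$, which is legitimate because such entries only exist for $m<k$ (block $(k,k)$ carries no level-$\geq 1$ entries), so the column $y+1$ exists and admits level $l$. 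Second, and more seriously, the corner positions $(1,\gr_{m+1})$ of the blocks $(m,m+1)$ at levels $l\geq 2$ (present whenever $m\leq k-2$). These occupy the same matrix position as elements of $T$ but lie in $S\bs T$, since only the level-$1$ corner was placed in $T$; your aside that ``only the corner $(1,\gr_{m+1})$ survives in $T$'' suggests you treated the corner as a single object rather than level by level. Your two routing rules are blocked here by construction ($i=1$ forbids routing through block $m$, and $j=\gr_{m+1}$ forbids routing through block $m+1$), and the paper needs a dedicated argument: since $l\geq 2=\gl_m-\gl_{m+2}$, one routes through block $(m,m+2)$, i.e.\ $E_{(x,y)}(p^la_s)=[E_{(x,y+1)}(p^la_s),E_{(y+1,y)}(1)]$ with $(x,y+1)$ now in block $(m,m+2)$. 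This case cannot be waved away: it is exactly what justifies that the corner enters $T$ only at level $1$ (were these higher-level corners not commutators, the abelianization would be bigger and the theorem false), and your own counting step presupposes it, since the $p$-th power of the $T$-generator at a corner is precisely the level-$2$ corner generator, which must be known to lie in $\mcl{P}'_{\ul{\gl}}$.
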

\begin{proof}
We prove $(1)$.
Let $s\in T$. Clearly if $s=(0,i,j,m,n)$ such that $\gch(Pos(s))=1$ then $s$ appears in the leading subdiagonal or first subdiagonal (just below the diagonal). $a_s,b_s,c_s$ being the first coefficients in the $p$-adic expansion of the entry in position $Pos(s)$, it is clear that $c_s\equiv a_s+b_s\mod p$. Now assume that $s=(1,1,\gr_{m+1},m,m+1)$ for some $1\leq m\leq k-1$. Let $Pos(s)=(x,y)$ where $x=\us{f=1}{\os{m-1}{\sum}}\gr_f+1,y=\us{h=1}{\os{m}{\sum}}\gr_h+\gr_{m+1}$. Then $\gch(Pos(s))=1-\gr_m-\gr_{m+1}<0$. Hence $s$ appears in a strictly upper triangular entry, that is, $(x,y)$ corresponds to a strictly upper triangular entry. Moreover $(x,y)$ is the position  corresponding to the top right corner of the $(m,m+1)^{th}$ block matrix. We have 
\equ{g^c_{(x,y)}=\us{z=1}{\os{\gr}{\sum}}g^a_{(x,z)}g^b_{(z,y)}.}
So for $1\leq z<x$, we have $p^2\mid g^b_{(z,y)}$ since $2\leq \gl_t-\gl_{m+1}=m+1-t$ for all $t<m$. For $x<z<y$ we have 
$p\mid g^a_{(x,z)},p\mid g^b_{(z,y)}$ since $g^a,g^b$ are lower triangular modulo $p$. If $z>y$ then we have $p^2\mid g^a_{(x,z)}$ since $2\leq \gl_m-\gl_t=t-m$ for all $t>m+1$. For $z=x$ we have $g^a_{(x,x)}g^b_{(x,y)}=pb_s+p^2(*)$ and for $z=y$ we have $g^a_{(x,y)}g^b_{(y,y)}=pa_s+p^2(*)$. Hence we conclude that $c_s\equiv a_s+b_s\mod p$. This proves $(1)$.

We prove $(2)$. For $g^a\in \mcl{P}_{\ul{\gl}}$ if $g^b=(g^a)^{-1}$ then using $(1)$ we obtain that $b_s\equiv -a_s\mod p$ for all $s\in T$. Hence again using $(1)$ for $g^a,g^b\in \mcl{P}_{\ul{\gl}}$ if $g^c=[g^a,g^b]=g^ag^b(g^a)^{-1}(g^b)^{-1}$ then $c_s=0$ for all $s\in T$. Hence for $g^a\in \mcl{P}'_{\ul{\gl}}=[\mcl{P}_{\ul{\gl}},\mcl{P}_{\ul{\gl}}], a_s=0$ for all $s\in T$,
that is, if we define the subgroup $\mcl{C}_{\ul{\gl}}=\{g^a\in \mcl{P}_{\ul{\gl}}\mid a_s=0\text{ for all }s\in T\}$, then $\mcl{P}'_{\ul{\gl}}\subseteq \mcl{C}_{\ul{\gl}}$.

Now we prove the converse in $(2)$, that is, $\mcl{C}_{\ul{\gl}}\subseteq \mcl{P}'_{\ul{\gl}}$. Let $s\in S\bs T, s=(l,i,j,m,n)$ and let $Pos(s)=(x,y)$. Consider the matrix $E_{(x,y)}(p^la_s)=I+e_{(x,y)}(p^la_s)$. We show that it is a commutator or product of commutators. 

First assume that $x<y$, that is, $(x,y)$ corresponds to a strictly upper triangular entry. Then we have $m\leq n$. 

If $n=m$ then $i<j$ and $\gr_m\geq 2$ and position $(x,y)$ is in the diagonal block $(m,m)$. If $m=k$ then the entry in the $(x,y)^{th}$ position is zero. So $m<k\Ra y<\gr=\gr_1+\cdots+\gr_k$. We have $E_{(x,y)}(p^la_s)=[E_{(x,y+1)}(p^la_s),E_{(y+1,y)}(1)]$. Position $(x,y+1)$ occurs in either block $(m,m)$ or block $(m,m+1)$. So $p^l$ is allowed in position $(x,y+1)$ if $p^l$ is allowed in position $(x,y)$. 

If $n>m+1$ then $\gl_m-\gl_n=n-m\geq 2$ and $x+1<y$. So $l\geq 2$. We have $E_{(x,y)}(p^la_s)=[E_{(x,x+1)}(pa_s),E_{(x+1,y)}(p^{l-1})]$. Note that position $(x+1,y)$ occurs in either block $(m,n)$ or $(m+1,n)$. So $p^{l-1}$ is allowed in position $(x+1,y)$.

Suppose $n=m+1$. If $i>1$ then position $(x-1,y)$ is also in the block $(m,m+1)$. So $E_{(x,y)}(p^la_s)=[E_{(x,x-1)}(1),E_{(x-1,y)}(p^la_s)]$. If $j<\gr_{m+1}$ then position $(x,y+1)$ is also in the block $(m,m+1)$.
So $E_{(x,y)}(p^la_s)=[E_{(x,y+1)}(p^la_s),E_{(y+1,y)}(1)]$. So assume $i=1,j=\gr_{m+1}$. If $l\geq 2$ then positive integer $3$ is a part of $\ul{\gl}$ and $k\geq 3$. So if $m+1<k$ then So $E_{(x,y)}(p^la_s)=[E_{(x,y+1)}(p^la_s),E_{(y+1,y)}(1)]$. Position $(x,y+1)$ occurs in block $(m,m+2)$ and $p^l$ is allowed in position $(x,y+1)$ since $l\geq 2$. If $m+1=k\geq 3$ then $m\geq 2$. So $E_{(x,y)}(p^la_s)=[E_{(x,x-1)}(1),E_{(x-1,y)}(p^la_s)]$. Position $(x-1,y)$ occurs in block $(m-1,m+1)=(k-2,k)$ and $p^l$ is allowed in position $(x-1,y)$ since $l\geq 2$.

Finally in the scenario $x<y$, we are left with the case $n=m+1,i=1,j=\gr_{m+1},l=1$, that is, $s=(1,1,\gr_{m+1},m,m+1)$. But then we have that $s\in T$.

Now consider the scenario $x>y$, that is, $(x,y)$ corresponds to a strictly lower triangular entry.  Here we have $m\geq n$.

If $m=n$, that is, position $(x,y)$ is in the diagonal block $(m,m)$ then $i>j$ and $\gr_m\geq 2$. If $\gr_m=2$ then $i=2,j=1,x-y=1$.
If $l=0$ then $s=(0,2,1,m,m)\in T$. So assume $l>0$. In this case $k>1$ and $m<k$. So position $(x+1,y)$ occurs in block $(m+1,m)$. So $E_{(x,y)}(p^la_s)=[E_{(x,x+1)}(pa_s),E_{(x+1,y)}(p^{l-1})]$. Note that $p^{l-1}$ is allowed in position $(x+1,y)$ if $p^l$ is allowed in position $(x,y)$. If $\gr_m>2$, then there exists $z\neq x,z\neq y$ such that both positions $(x,z),(z,y)$ are in block $(m,m)$. If $x>z>y$ then $E_{(x,y)}(p^la_s)=[E_{(x,z)}(1),E_{(z,y)}(p^{l}a_s)]$. Now we assume that $x-y=1$ and $l>0$. If $x>y>z$ then $E_{(x,y)}(p^la_s)=[E_{(x,z)}(1),E_{(z,y)}(p^{l}a_s)]$. If $z>x>y$ then $E_{(x,y)}(p^la_s)=[E_{(x,z)}(p^{l}a_s),E_{(z,y)}(1)]$. If $x-y=1$ and $l=0$ then $s=(0,i,j=i-1,m,m)\in T$.

If $m>n+1$ then position $(x,y+1)$ is either in block $(m,n)$ or $(m,n+1)$. Also $x>y+1>y$. We have $E_{(x,y)}(p^la_s)=[E_{(x,y+1)}(p^la_s),E_{(y+1,y)}(1)]$. Note that $p^l$ is allowed in position $(x,y+1)$.

If $m=n+1, x-y>1$ then position $(x,y+1)$ is either in block $(m,n)$ or $(m,n+1)$. Here again we have $E_{(x,y)}(p^la_s)=[E_{(x,y+1)}(p^la_s),E_{(y+1,y)}(1)]$. If $x-y=1$ and $l=0$ then $(0,i,j,m,n)\in T$. If $x-y=1, l>0$ then $k>1,m<k$. So position $(x+1,y)$ is in either in block $(m,n)$ or $(m+1,n)$. We have $E_{(x,y)}(p^la_s)=[E_{(x,x+1)}(p),E_{(x+1,y)}(p^{l-1}a_s)]$. Note that $p^{l-1}$ is allowed in position $(x+1,y)$.

Now we consider the case $x=y$, that is, $m=n,i=j$ and $s=(l,i,i,m,m)$ where $l>0$. Consider first $E_{(x,x)}(p^la_s)$ for $l\geq 2$. In this case position $(x+2,x+2)$ appears in the matrix since $x+2\leq \gr=\gr_1+\cdots+\gr_k$. So position $(x+2,x)$ is in one of the blocks $(m,m),(m+1,m),(m+2,m)$. Similarly $(x,x+2)$ is in one of the blocks $(m,m),(m,m+1),(m,m+2)$. Also $(x+2,x+2)$ is in one of the blocks $(m,m),(m+1,m+1),(m+2,m+2)$. For $2\times 2$ matrices we have, for any symbol $a$ and $a^{-1}$ denoting the inverse of $a$,
\equ{\mattwo{a}{0}{0}{1}=\mattwo{1}{0}{0}{a}\mattwo{1}{1-a}{0}{1}\mattwo{1}{0}{-1}{1}\mattwo{1}{1-a^{-1}}{0}{1}\mattwo{1}{0}{a}{1}.}
Assume that $E_{(z,z)}(p^l(*)),l\geq 2$ is a product of commutators for all $x<z\leq\gr=\gr_1+\cdots+\gr_k$. We prove that $E_{(x,x)}(p^l(*))$ is a product of commutators for all $l\geq 2$.
We have using the above symbolic identity \equa{&E_{(x,x)}(p^la_s)=\\&E_{(x+2,x+2)}(p^la_s)E_{(x,x+2)}(-p^la_s)E_{(x+2,x)}(-1)E_{(x,x+2)}(1-(1+p^la_s)^{-1})E_{(x+2,x)}(p^la_s).}
Note that here $p^l$ for $l\geq 2$ is allowed in position $(x,x+2)$. Here if $l$ is very large then $p^l$ in that position becomes zero. We have $E_{(x,x+2)}(-p^la_s)$ is a commutator, $E_{(x+2,x)}(-1),E_{(x+2,x)}(p^la_s)$ are commutators. Also note that $1-(1+p^la_s)^{-1}=1-p^la_s+p^{l+1}(*)$ with $l\geq 2$. Hence \equ{E_{(x,x+2)}(1-(1+p^la_s)^{-1})=E_{(x,x+2)}(-p^la_s)E_{(x,x+2)}(p^{l+1}(*))} is a product of commutators which we have already shown. 
So $E_{(x,x)}(p^la_s)$ is a product of commutators for $l\geq 2$. So from this we can show that $E_{(x,x)}(p^l(*))$ is a product of commutators for all $l\geq 2$. This is because if $0\leq a<p-1, b$ is any non-negative integer then we have
\equa{E_{(x,x)}(p^la+p^{l+1}b)&=E_{(x,x)}(p^la)E_{(x,x)}\big(-1+\frac{(1+p^la+p^{l+1}b)}{(1+p^la)}\big)\\
	&=E_{(x,x)}(p^la)E_{(x,x)}(p^{l+1}(*)).}  

For any diagonal matrix $D\in \mcl{P}_{\ul{\gl}}$ if the diagonal entries are of the form $1+p^2(*)$ then $D$ is a product of commutators. Now consider $E_{(x,x)}(pa_s)$ where $s=(1,i,i,m,m)$ with $Pos(s)=(x,x)$. We can assume that $x<\gr=\gr_1+\cdots+\gr_k$. Position $(x+1,x)$ occurs either in block $(m,m)$ or $(m+1,m)$. Position $(x,x+1)$ occurs either in block $(m,m)$ or $(m,m+1)$. Position $(x+1,x+1)$ occurs either in block $(m,m)$ or $(m+1,m+1)$. Assume that $E_{(z,z)}(p(*))$ is a product of commutators for all $x<z\leq\gr=\gr_1+\cdots+\gr_k$. We prove that $E_{(x,x)}(p(*))$ is a product of commutators.    
First $E_{(x,x)}(pa_s)$ can be expressed as the following long product of matrices.
\equa{&E_{(x,x)}(pa_s)=E_{(x+1,x)}\big(-pa_s(1+pa_s+p^2a^2_s)^{-1}\big)[E_{(x,x+1)}(pa_s),E_{(x+1,x)}(1)]\\
	&E_{(x,x+1)}\big(p^2a_s^2(1+pa_s+p^2a^2_s)^{-1}\big)E_{(x,x)}\big(-1+(1+pa_s)(1+pa_s+p^2a^2_s)^{-1}\big)\\
&E_{(x+1,x+1)}(pa_s+p^2a_s^2).} 
This can be checked by first evaluating the commutator $[E_{(x,x+1)}(pa_s),E_{(x+1,x)}(1)]$ and then reducing it by elementary matrices to the matrix $E_{(x,x)}(pa_s)$.
Note that here $E_{(x,x)}\big(-1+(1+pa_s)(1+pa_s+p^2a^2_s)^{-1}\big)=E_{(x,x)}\big(p^2(*)\big)$ which we have proved is a product of commutators.
Moreover $E_{(x+1,x+1)}(pa_s+p^2a_s^2)=E_{(x+1,x+1)}(p(*))$ is a product of commutators by assumption. Also $E_{(x+1,x)}\big(-pa_s(1+pa_s+p^2a^2_s)^{-1}\big)=E_{(x+1,x)}(p(*))$ is a product of commutators, $E_{(x,x+1)}\big(p^2a_s^2$ $(1+pa_s+p^2a^2_s)^{-1}\big)=E_{(x,x+1)}(p^2(*))$ is a product of commutators. So $E_{(x,x)}(pa_s)$ is a product of commutators and hence
$E_{(x,x)}(p(*))$ is a product of commutators. So all diagonal matrices in $\mcl{P}_{\ul{\gl}}$ are product of commutators. 

Now every matrix in $\mcl{C}_{\ul{\gl}}$ is a product of elementary matrices $E_{(x,y)}(p^la_s)$ for $s=(i,i,j,m,n)\in S\bs T$. This can be observed as follows. We consider a matrix $g^a\in \mcl{C}_{\ul{\gl}}$ and reduce the diagonal blocks of $g^a$ to identity matrices using elementary matrices $E_{(x,y)}(p^la_s)$ with $s=(l,i,j,m,n)\in S\bs T$ and $Pos(s)$ occurs in the diagonal blocks only. Then we can reduce $g^a$ further to identity matrix in $\mcl{C}_{\ul{\gl}}$ using $E_{(x,y)}(p^la_s)$ for $s=(i,i,j,m,n)\in S\bs T$ with $Pos(s)$ occuring in the nondiagonal blocks. 
Hence we have proved that $\mcl{C}_{\ul{\gl}}\subseteq \mcl{P}'_{\ul{\gl}}$. This proves the converse in $(2)$.

$(3)$ is an immediate consequence of $(2)$. This completes the proof of Theorem~\ref{theorem:Commutator}.
\end{proof} 
\begin{remark}
	In the proof of Theorem~\ref{theorem:Commutator}, $p$ can be any prime. 
\end{remark}
\subsubsection{\bf{Modified Total Order on the Set $S$}}
Consider the two totally ordered subsets $(T,\leq_{TO})$ and $(S\bs T,\leq_{TO})$ of the set $(S,\leq_{TO})$ where $T$ is as defined in Equation~\ref{Eq:Abelianization}. The modified total order $\leq_{MTO}$ on the set $S$ is defined as follows. Let $s,s'\in S$. We say 
$s\leq_{MTO}s'$ if  
\begin{itemize}
	\item $s,s'\in S\bs T$ and $s\leq_{TO}s'$ or 
	\item $s,s'\in T$ and $s\leq_{TO}s'$ or  
	\item $s\in S\bs T,s'\in T$.
\end{itemize}
\begin{remark}
	The modified total order $\leq_{MTO}$ is introduced on the set $S$, so that the elements of $T\subs S$ become the larger elements of $S$. This is going to be useful later because for $t\in T,g^a\in \mcl{P}'_{\ul{\gl}},a_t=0$. 
\end{remark}
Now we define another chain of subgroups of $\mcl{N}_{\ul{\gl}}$ indexed by the set $S$. Define for $s\in S$
\equ{\mcl{N}^{s}_{\ul{\gl}}=\{g^a\in \mcl{P}_{\ul{\gl}}\mid a_{s'}=0 \text{ for all }s'\in S \text{ and }s<_{MTO}s'\}.}

\begin{theorem}
	\label{theorem:ModifiedChain}
	We have
	\begin{enumerate}
		\item $\mcl{N}^{s}_{\ul{\gl}}$ is a normal subgroup of $\mcl{P}_{\ul{\gl}}$ for $s\in S$.
		\item \equ{\mcl{N}^{s}_{\ul{\gl}}\sbnq \mcl{N}^{s'}_{\ul{\gl}} \Llra s<_{MTO}s'.}
		\item If $s'$ is the successor element of $s$ in $(S,\leq_{MTO})$  then  
		\equ{\frac{\mcl{N}^{s'}_{\ul{\gl}}}{\mcl{N}^{s}_{\ul{\gl}}} \cong \Z/p\Z.}
		\item If $s'$ is the successor element of $s$ in $(S,\leq_{MTO})$ then there is a central extension 
		\equan{ModifiedCentralExtension}{0\lra \frac{\mcl{N}^{s'}_{\ul{\gl}}}{\mcl{N}^{s}_{\ul{\gl}}} \lra \frac{\mcl{P}_{\ul{\gl}}}{\mcl{N}^{s}_{\ul{\gl}}}\lra \frac{\mcl{P}_{\ul{\gl}}}{\mcl{N}^{s'}_{\ul{\gl}}}\lra 0.} 
	\end{enumerate}
\end{theorem}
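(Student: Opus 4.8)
The plan is to mirror the proof of Theorem~\ref{theorem:Chain} essentially verbatim, replacing the total order $\leq_{TO}$ by the modified order $\leq_{MTO}$ throughout. Every part of that earlier argument rested on one structural fact about the multiplication in $\mcl{P}_{\ul{\gl}}$: writing $g^c=g^a g^b$, the coordinate $c_{s''}$ equals $a_{s''}+b_{s''}$ plus a sum of products $a_\alpha b_\beta$ in which both factors lie strictly above $s''$ in the order. So the first step is to isolate and re-establish this fact for $\leq_{MTO}$, namely: for every $s''\in S\bs\{\es\}$ the correction terms to $c_{s''}$ involve only coordinates $\alpha,\beta$ with $\alpha,\beta>_{MTO}s''$.

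To prove this I would split on whether $s''\in S\bs T$ or $s''\in T$. When $s''\in S\bs T$, the region analysis in the proof of Theorem~\ref{theorem:Subgroup} already shows that each correction product $a_\alpha b_\beta$ arising at $Pos(s'')$ has $\alpha,\beta>_{TO}s''$; and since $s''\in S\bs T$, every element of $S\bs T$ that is $\leq_{TO}s''$ is also $\leq_{MTO}s''$, while every element of $T$ lies above $s''$ in $\leq_{MTO}$, so $\alpha,\beta>_{TO}s''$ forces $\alpha,\beta>_{MTO}s''$. When $s''\in T$, Theorem~\ref{theorem:Commutator}(1) tells us that $c_{s''}\equiv a_{s''}+b_{s''}\bmod p$, i.e.\ there are no correction terms at all. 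This two-case check is the crux of the theorem, and it is exactly where the definition of $\leq_{MTO}$ pays off: the only way a correction to $c_{s''}$ could involve coordinates that are $\leq_{MTO}s''$ is when $s''\in T$ receives a contribution from factors in $S\bs T$ (which sit below $T$ in $\leq_{MTO}$), and that is precisely the case eliminated by Theorem~\ref{theorem:Commutator}(1). I expect verifying this compatibility to be the main obstacle, since it is the single place where the reordering could have destroyed the filtration property.

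With this fact in hand, the analogues of Theorems~\ref{theorem:Subgroup} and~\ref{theorem:CosetRepresentatives} for $\leq_{MTO}$ follow by the identical arguments. If $g^a,g^b\in\mcl{N}^{s}_{\ul{\gl}}$ then all coordinates of $g^c$ above $s$ vanish (using $a_{s''}=b_{s''}=0$ together with the vanishing of the corrections), so $\mcl{N}^{s}_{\ul{\gl}}$ is closed under multiplication and hence a subgroup; and the matrices $E_{Pos(s')}(p^{l'}a)$, $0\le a\le p-1$, form a complete set of left and right coset representatives for an $\leq_{MTO}$-successor step, giving index $p$. Part (2) is then immediate from the definitions, the proper inclusion $\mcl{N}^{s}_{\ul{\gl}}\sbnq\mcl{N}^{s'}_{\ul{\gl}}$ being witnessed by $E_{Pos(s')}(p^{l'})$, and part (3) follows since $c_{s'}=a_{s'}+b_{s'}$ makes $a\mapsto E_{Pos(s')}(p^{l'}a)\,\mcl{N}^{s}_{\ul{\gl}}$ an isomorphism $\Z/p\Z\cong\mcl{N}^{s'}_{\ul{\gl}}/\mcl{N}^{s}_{\ul{\gl}}$.

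For parts (1) and (4) I would reproduce the conjugation computation of Theorem~\ref{theorem:Chain}: for $g^b\in\mcl{P}_{\ul{\gl}}$ and the generator $\gamma=E_{Pos(s')}(p^{l'})$, the structural fact shows $g^b\gamma$ and $\gamma g^b$ agree in every coordinate indexed by $t>_{MTO}s$, because the only candidate correction involves the unique nonzero coordinate $s'$ of $\gamma$ and would require $s'>_{MTO}t$, which is impossible once $t\geq_{MTO}s'$. Hence $g^b\gamma\,\mcl{N}^{s}_{\ul{\gl}}=\gamma g^b\,\mcl{N}^{s}_{\ul{\gl}}$, so $\mcl{N}^{s'}_{\ul{\gl}}/\mcl{N}^{s}_{\ul{\gl}}$ is central in $\mcl{P}_{\ul{\gl}}/\mcl{N}^{s}_{\ul{\gl}}$, which gives the central extension of (4) and, by induction along $(S,\leq_{MTO})$, the normality of (1). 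Alternatively, normality in (1) can be obtained directly: for $s\in S\bs T$ one checks $\mcl{N}^{s}_{\ul{\gl}}=\mcl{P}^{s}_{\ul{\gl}}\cap\mcl{P}'_{\ul{\gl}}$, an intersection of two subgroups normal in $\mcl{P}_{\ul{\gl}}$ by Theorems~\ref{theorem:Chain} and~\ref{theorem:Commutator}, while for $s\in T$ one checks that $\mcl{N}^{s}_{\ul{\gl}}$ is the preimage of a coordinate subgroup of $(\Z/p\Z)^{\mid T\mid}$ under the abelianization map $\mcl{P}_{\ul{\gl}}\lra\mcl{P}_{\ul{\gl}}/\mcl{P}'_{\ul{\gl}}$ supplied by Theorem~\ref{theorem:Commutator}, hence normal.
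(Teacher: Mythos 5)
Your proposal is correct, but its main line takes a genuinely different route from the paper. You re-derive the whole $\leq_{MTO}$-filtration machinery from scratch: you isolate the compatibility fact (in $g^c=g^ag^b$ the corrections to $c_{s''}$ involve only coordinate pairs strictly above $s''$), prove it by the two-case split --- for $s''\in S\bs T$ the region analysis of Theorem~\ref{theorem:Subgroup} gives pairs above $s''$ in $\leq_{TO}$, hence above it in $\leq_{MTO}$ since the two orders agree on $S\bs T$ and $T$ sits on top, while for $s''\in T$ Theorem~\ref{theorem:Commutator}(1) kills all corrections --- and then rerun Theorems~\ref{theorem:Subgroup},~\ref{theorem:CosetRepresentatives},~\ref{theorem:Chain} for the new order. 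That split is exactly the right reason the reordering preserves the filtration, and your generator computation for centrality goes through (with the standard caveat that the ``sum of products'' form of the corrections should really be handled via entries modulo $p^{l+1}$, as the paper does, so that carries are absorbed). The paper, by contrast, proves the theorem with essentially no new matrix computation: for $s\in S\bs T$ it notes $\mcl{N}^{s}_{\ul{\gl}}=\mcl{P}^{s}_{\ul{\gl}}\cap\mcl{P}'_{\ul{\gl}}$ by Theorem~\ref{theorem:Commutator}(2), so part (1) is an intersection of normal subgroups, and for $s\in T$ it uses that $\mcl{P}_{\ul{\gl}}/\mcl{N}^{s}_{\ul{\gl}}$ is a quotient of the abelian group $\mcl{P}_{\ul{\gl}}/\mcl{P}'_{\ul{\gl}}$ --- which is precisely your alternative argument for (1), so there you coincide. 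For (4) the paper argues softly: for $g\in\mcl{P}_{\ul{\gl}}$ and $h\in\mcl{N}^{s'}_{\ul{\gl}}$, the commutator $ghg^{-1}h^{-1}$ lies in $\mcl{P}^{t}_{\ul{\gl}}$ by Theorem~\ref{theorem:Chain}(3) (with $t$ the $\leq_{TO}$-predecessor of $s'$) and lies in $\mcl{P}'_{\ul{\gl}}$ being a commutator; since every element of $S$ strictly between $s$ and $s'$ in $\leq_{TO}$ belongs to $T$, it lands in $\mcl{P}^{s}_{\ul{\gl}}\cap\mcl{P}'_{\ul{\gl}}=\mcl{N}^{s}_{\ul{\gl}}$, giving centrality --- no explicit conjugation of matrices needed. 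The trade-off: the paper's reduction is shorter and dodges any re-verification of how multiplication interacts with the reordered chain, while your route is more self-contained and yields the $\leq_{MTO}$ coset representatives and the explicit $\Z/p\Z$ generator of each quotient as by-products, facts the paper must re-derive later (in the proof of Theorem~\ref{theorem:KernelThetaTwo}) from Theorems~\ref{theorem:CosetRepresentatives} and~\ref{theorem:Commutator}(2).
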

\begin{proof}
We prove $(1)$.	Let $T_s=\{t\in T\mid t\leq_{TO} s\}$. Let $S_s=\{t\in S\mid t\leq_{TO}s\}$. Then we have $T_s\subs S_s$,
\equ{\{t\in S\mid t\leq_{MTO}s\}=S_s\bs T_s.}
Now we conclude that for $s\in S\bs T, \mcl{N}^s_{\ul{\gl}}=\mcl{P}^s_{\ul{\gl}}\cap \mcl{P}'_{\ul{\gl}}$ using Theorem~\ref{theorem:Commutator}(2). Hence it is a normal subgroup being the intersection of two normal subgroups. If $s_0$ is the maximal element of $S\bs T$ then $\mcl{N}^{s_0}_{\ul{\gl}}=\mcl{P}'_{\ul{\gl}}$. So $\frac{\mcl{P}_{\ul{\gl}}}{\mcl{N}^{s_0}_{\ul{\gl}}}$ is abelian. Hence if $t'\in T$ and $t$ is its predecessor in $(S,\leq_{MTO})$ then $\frac{\mcl{N}^{t'}_{\ul{\gl}}}{\mcl{N}^t_{\ul{\gl}}}$ is a subgroup of the abelian group $\frac{\mcl{P}_{\ul{\gl}}}{\mcl{N}^t_{\ul{\gl}}}\cong\frac{\frac{\mcl{P}_{\ul{\gl}}}{\mcl{N}^{s_0}_{\ul{\gl}}}}{\frac{\mcl{N}^t_{\ul{\gl}}}{\mcl{N}^{s_0}_{\ul{\gl}}}}$ and hence $\frac{\mcl{N}^{t'}_{\ul{\gl}}}{\mcl{N}^t_{\ul{\gl}}}$ is normal in $\frac{\mcl{P}_{\ul{\gl}}}{\mcl{N}^t_{\ul{\gl}}}$. So we conclude that $\mcl{N}^{t'}_{\ul{\gl}}$ is normal in $\mcl{P}_{\ul{\gl}}$ for all $t'\in T$. This proves $(1)$.

$(2)$ is clear. For $t\leq_{MTO} t', t'$ the successor of $t$, the group $\mcl{N}^{t}_{\ul{\gl}}$ has index $p$ in $\mcl{N}^{t'}_{\ul{\gl}}$. So $(3)$ follows.  

We prove $(4)$. If $s'\in T$ then the proof is clear. If $s,s'\in S\bs T$ and $s'$ is the successor of $s$ in $(S,\leq_{MTO})$ then for all $\ti{s}\in S$, such that $s<_{TO} \ti{s}<_{TO} s'$ we have $\ti{s}\in T$.
Let $g\in \mcl{P}_{\ul{\gl}},h\in \mcl{N}^{s'}_{\ul{\gl}}\subseteq \mcl{P}^{s'}_{\ul{\gl}}$. Let $t\leq_{TO} s'$ be the predecessor of $s'$ in $(S,\leq_{TO})$. Then using Theorem~\ref{theorem:Chain}(3), we have $\frac{\mcl{P}^{s'}_{\ul{\gl}}}{\mcl{P}^{t}_{\ul{\gl}}}$ is a central subgroup of $\frac{\mcl{P}_{\ul{\gl}}}{\mcl{P}^{t}_{\ul{\gl}}}$. So the cosets $g\mcl{P}^{t}_{\ul{\gl}}, h\mcl{P}^t_{\ul{\gl}}$ commute, that is, $ghg^{-1}h^{-1}\in \mcl{P}^{t}_{\ul{\gl}}$ which implies that if $g^a=ghg^{-1}h^{-1}$ then $a_{s'}=0$.
We also have $ghg^{-1}h^{-1}\in \mcl{P}'_{\ul{\gl}}$. So $a_{\ti{s}}=0$ for all $s<_{TO}\ti{s}<_{TO}s'$ using Theorem~\ref{theorem:Commutator}(2). This implies that 
$ghg^{-1}h^{-1}\in \mcl{P}^s_{\ul{\gl}}$. Being a commutator $ghg^{-1}h^{-1}\in \mcl{P}^s_{\ul{\gl}}\cap \mcl{P}'_{\ul{\gl}}$, that is, $ghg^{-1}h^{-1}\in \mcl{N}^s_{\ul{\gl}}$ for all $g\in \mcl{P}_{\ul{\gl}},h\in \mcl{N}^{s'}_{\ul{\gl}}$. Hence the cosets $g\mcl{N}^s_{\ul{\gl}},h\mcl{N}^s_{\ul{\gl}}$ commute. Consequentially we have 
$\frac{\mcl{N}^{s'}_{\ul{\gl}}}{\mcl{N}^s_{\ul{\gl}}}$ is a central subgroup of $\frac{\mcl{P}_{\ul{\gl}}}{\mcl{N}^s_{\ul{\gl}}}$. This proves $(4)$. 

Hence the theorem follows.
\end{proof}
\begin{remark}
	In the proof of Theorem~\ref{theorem:ModifiedChain}, $p$ can be any prime. 
\end{remark}
\subsubsection{\bf{The Action of the Restricted Diagonal Subgroup on the Cohomology}}
Let $\mcl{D}_{\ul{\gl}}\subs \autgp$ be the subgroup of diagonal matrices whose orders divide $p-1$.
Then we have 
\begin{itemize}
	\item $\mcl{D}_{\ul{\gl}}\cap \mcl{P}_{\ul{\gl}}=\{1\}$, the trivial subgroup,
	\item $\mcl{D}_{\ul{\gl}}$ normalizes $\mcl{P}_{\ul{\gl}}$, that is, $\mcl{D}_{\ul{\gl}}\subs N_{\autgp}(\mcl{P}_{\ul{\gl}})$.
\end{itemize}
As mentioned in Remark~\ref{remark:TrivialAction} the conjugation action of $\mcl{D}_{\ul{\gl}}$ on $\mcl{P}_{\ul{\gl}}$ gives rise to an action of $\mcl{D}_{\ul{\gl}}$ on the spaces $H^2_{Trivial\ Action}(\frac{\mcl{N}^{s'}_{\ul{\gl}}}{\mcl{N}^{s}_{\ul{\gl}}},\Z/p\Z)$ for $s\leq_{MTO}s'$ and 
$H^2_{Trivial\ Action}(\frac{\mcl{P}_{\ul{\gl}}}{\mcl{N}^{s}_{\ul{\gl}}},\Z/p\Z)$ for $s\in S$. Since these are vector spaces over $\Z/p\Z$, they are semisimple representations of $\mcl{D}_{\ul{\gl}}$ since $p$ does not divide the cardinality of $\mcl{D}_{\ul{\gl}}$. Using Theorem~\ref{theorem:StabilityConditions}, in order to prove $H^2_{Trivial\ Action}(\autgp,\Z/p\Z)$ is zero it is enough to prove that the trivial subrepresentation \equ{H^2_{Trivial\ Action}(\mcl{P}_{\ul{\gl}},\Z/p\Z)^{\mcl{D}_{\ul{\gl}}}\subseteq H^2_{Trivial\ Action}(\mcl{P}_{\ul{\gl}},\Z/p\Z)} is zero.

\subsubsection{\bf{The Repeated Use of the Extended Hochschild-Serre Exact Sequence}}
To compute the space $H^2_{Trivial\ Action}(\mcl{P}_{\ul{\gl}},\Z/p\Z)^{\mcl{D}_{\ul{\gl}}}$ we use the extended Hochschild-Serre exact sequences for the central extensions given in~\ref{Eq:ModifiedCentralExtension}. Actually we use the following part of the exact sequence, for any $s,s'\in S\bs T$ such that $s'$ is the successor element of $s$ in $(S,\leq_{MTO})$.
\equa{H^2_{Trivial\ Action}(\frac{\mcl{P}_{\ul{\gl}}}{\mcl{N}^{s'}_{\ul{\gl}}},\Z/p\Z)&\os{Inf}{\lra}H^2_{Trivial\ Action}(\frac{\mcl{P}_{\ul{\gl}}}{\mcl{N}^s_{\ul{\gl}}},\Z/p\Z)\\&\os{\gt=Res\times \gth}{\lra}H^2_{Trivial\ Action}(\frac{\mcl{N}^{s'}_{\ul{\gl}}}{\mcl{N}^s_{\ul{\gl}}},\Z/p\Z)\times P(\frac{\mcl{P}_{\ul{\gl}}}{\mcl{N}^s_{\ul{\gl}}},\frac{\mcl{N}^{s'}_{\ul{\gl}}}{\mcl{N}^s_{\ul{\gl}}},\Z/p\Z).}
\begin{theorem}
	\label{theorem:KernelRes}
Let $p$ be an odd prime and let $s,s'\in S$ be such that $s'$ is the successor element of $s$ in $(S,\leq_{MTO})$. Consider the restriction map $H^2_{Trivial\ Action}(\frac{\mcl{P}_{\ul{\gl}}}{\mcl{N}^s_{\ul{\gl}}},\Z/p\Z) \os{Res}{\lra} H^2_{Trivial\ Action}(\frac{\mcl{N}^{s'}_{\ul{\gl}}}{\mcl{N}^s_{\ul{\gl}}},\Z/p\Z)$. Then we have 
\equ{H^2_{Trivial\ Action}(\frac{\mcl{P}_{\ul{\gl}}}{\mcl{N}^s_{\ul{\gl}}},\Z/p\Z)^{\mcl{D}_{\ul{\gl}}}\subseteq \Ker(Res).}
\end{theorem}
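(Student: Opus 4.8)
The plan is to exploit that the restriction map is $\mcl{D}_{\ul{\gl}}$-equivariant together with the fact that $\frac{\mcl{N}^{s'}_{\ul{\gl}}}{\mcl{N}^s_{\ul{\gl}}}\cong \Z/p\Z$ is cyclic. Write $G=\frac{\mcl{P}_{\ul{\gl}}}{\mcl{N}^s_{\ul{\gl}}}$ and let $H=\frac{\mcl{N}^{s'}_{\ul{\gl}}}{\mcl{N}^s_{\ul{\gl}}}\subseteq Z(G)$ be the central cyclic factor of Theorem~\ref{theorem:ModifiedChain}, generated by the image of $E_{(x',y')}(p^{l'})$ where $s'=(l',i',j',m',n')$ and $Pos(s')=(x',y')$. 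Since $\mcl{D}_{\ul{\gl}}$ normalizes $\mcl{P}_{\ul{\gl}}$ and every term of the chief series, it acts by automorphisms on $G$ and on $H$, and $Res$ intertwines these actions; hence $Res$ sends a $\mcl{D}_{\ul{\gl}}$-invariant class into the $\mcl{D}_{\ul{\gl}}$-invariants of the line $H^2_{Trivial\ Action}(H,\Z/p\Z)\cong \Z/p\Z$. A diagonal matrix $D=\Diag(d_1,\ldots,d_\gr)\in\mcl{D}_{\ul{\gl}}$ conjugates the generator $E_{(x',y')}(p^{l'})$ to $E_{(x',y')}(d_{x'}d_{y'}^{-1}p^{l'})$, so by Theorem~\ref{theorem:BasicMultiple} (the $r=1$ case, or equivalently the Bockstein) $D$ acts on $H^2_{Trivial\ Action}(H,\Z/p\Z)$ as multiplication by $\chi(D):=d_{x'}d_{y'}^{-1}\bmod p$. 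The whole matter therefore reduces to producing a $D$ with $\chi(D)\neq 1$, or else showing directly that $Res$ is the zero map.

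\textbf{Off-diagonal case }($x'\neq y'$)\textbf{.} Here the diagonal slots $x'$ and $y'$ are distinct, so I may choose $d_{x'}$ to be a primitive $(p-1)$-th root of unity modulo $p$ and $d_{y'}=1$, the remaining entries arbitrary units of order dividing $p-1$; this $D$ lies in $\mcl{D}_{\ul{\gl}}$ and has $\chi(D)=d_{x'}\not\equiv 1\pmod p$ (using $p$ odd). Then $\mcl{D}_{\ul{\gl}}$ acts on the line $H^2_{Trivial\ Action}(H,\Z/p\Z)$ through a nontrivial character, so its only invariant element is $0$. Consequently every $\mcl{D}_{\ul{\gl}}$-invariant class of $H^2_{Trivial\ Action}(G,\Z/p\Z)$ restricts to $0$, i.e.\ lies in $\Ker(Res)$.

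\textbf{Diagonal case }($x'=y'$, so $l'\geq 1$)\textbf{.} Now $\chi(D)=1$ for all $D$, the action on $H$ is trivial, and the character argument gives nothing; I must instead show $Res$ vanishes identically. When $l'\geq 2$, I would use that $w:=E_{(x',x')}(p^{l'-1})$ satisfies $w^p=E_{(x',x')}\big((1+p^{l'-1})^p-1\big)$, and for $p$ odd the cross terms have $p$-valuation $\geq l'+1$; hence $\bar w^{\,p}=\bar z$ in $G$ (the higher-power diagonal corrections lie in $\mcl{N}^s_{\ul{\gl}}$, being larger-$l$ entries lower in the chief series), while $\bar w$ has order exactly $p^2$. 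Thus $H=\langle\bar z\rangle$ is the order-$p$ subgroup of a cyclic $\Z/p^2\Z\subseteq G$, and $Res^{G}_{H}$ factors through $H^2_{Trivial\ Action}(\Z/p^2\Z,\Z/p\Z)\lra H^2_{Trivial\ Action}(\Z/p\Z,\Z/p\Z)$, which is the zero map (restriction is already zero on $H^1$, since a homomorphism $\Z/p^2\Z\to\Z/p\Z$ kills $p\Z/p^2\Z$, and one applies the Bockstein). Hence $Res\equiv 0$ with no invariance needed.

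\textbf{The main obstacle} is the remaining case $x'=y'$ with $l'=1$, where $\bar z=E_{(x',x')}(p)$ is genuinely not a $p$-th power. Here my plan is to invoke the explicit commutator expansions of the diagonal generator used in the proof of Theorem~\ref{theorem:Commutator}, which write $E_{(x',x')}(p)$ as a product of off-diagonal transvections $E_{(x',x'+1)}(\cdot)$, $E_{(x'+1,x')}(\cdot)$, a diagonal correction $E_{(x',x')}(p^2(*))$, and the next diagonal generator $E_{(x'+1,x'+1)}(p)$. Reading the restriction-to-a-cyclic-subgroup as the lifting-obstruction function $q(g)=Res^{G}_{\langle g\rangle}([c])$, which is additive on commuting order-$p$ elements up to the antisymmetric pairing $\gth([c])$ of Proposition~\ref{prop:ThetaMap}, I would expand $q(\bar z)$: the purely off-diagonal contributions vanish by the off-diagonal case, the $p^2$-correction vanishes by the $l'\geq 2$ case, and the residual term is carried by the next diagonal index, giving a downward induction on the diagonal (base case at the bottom-right corner). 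The delicate point is precisely that the cross pairing $\gth([c])\big(E_{(x',x'+1)},E_{(x'+1,x')}\big)$ is $\mcl{D}_{\ul{\gl}}$-invariant, since the two characters $d_{x'}d_{x'+1}^{-1}$ and $d_{x'+1}d_{x'}^{-1}$ multiply to $1$; controlling this term is exactly where the hypotheses ($p$ odd, and the consecutive-parts-differ-by-one shape $\gl_i=k-i+1$) must be used, and it is the step I expect to demand the most care.
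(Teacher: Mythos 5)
Your off-diagonal case ($x'\neq y'$) is correct and is exactly the paper's argument: $Res$ is $\mcl{D}_{\ul{\gl}}$-equivariant, and by Theorem~\ref{theorem:BasicMultiple} the torus acts on the line $H^2_{Trivial\ Action}(\frac{\mcl{N}^{s'}_{\ul{\gl}}}{\mcl{N}^s_{\ul{\gl}}},\Z/p\Z)$ through the nontrivial character $d_{x'}d_{y'}^{-1}$, so its invariants vanish. The diagonal case, however, rests on a false statement. The restriction map $H^2_{Trivial\ Action}(\Z/p^2\Z,\Z/p\Z)\lra H^2_{Trivial\ Action}(p\Z/p^2\Z,\Z/p\Z)$ is not the zero map; it is an isomorphism. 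Indeed, the nonzero classes of $H^2_{Trivial\ Action}(\Z/p^2\Z,\Z/p\Z)\cong\Z/p\Z$ are represented by the extension $0\lra\Z/p\Z\lra\Z/p^3\Z\lra\Z/p^2\Z\lra 0$ and its multiples, and pulling this extension back over the subgroup $p\Z/p^2\Z$ gives $0\lra\Z/p\Z\lra p\Z/p^3\Z\cong\Z/p^2\Z\lra\Z/p\Z\lra 0$, which is nonsplit. Your Bockstein justification fails precisely because the degree-two generator for $\Z/p^2\Z$ is not a Bockstein: the connecting map $H^1(\Z/p^2\Z,\Z/p\Z)\lra H^2(\Z/p^2\Z,\Z/p\Z)$ attached to $0\lra\Z/p\Z\lra\Z/p^2\Z\lra\Z/p\Z\lra 0$ is identically zero (the reduction homomorphism $\Z/p^2\Z\lra\Z/p\Z$ lifts to the identity of $\Z/p^2\Z$), so vanishing of restriction on $H^1$ gives no information in degree two.

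Because of this, your $l'\geq 2$ argument proves nothing: knowing $H\subset\langle\bar w\rangle\cong\Z/p^2\Z$ merely transfers the problem to the equally hard statement that restriction to the larger cyclic subgroup $\langle\bar w\rangle$ vanishes, and since $\mcl{D}_{\ul{\gl}}$ centralizes $\bar w$ no character argument is available there either. Your $l'=1$ sketch collapses with it, since it invokes the $l'\geq 2$ case to dispose of the $E_{(x',x')}(p^2(*))$ correction; it also imports the hypothesis $p\neq 3$, whereas the theorem holds for every odd prime including $3$ (see the remark following it in the paper). The ingredient you are missing is that the diagonal case cannot be settled inside abelian, let alone cyclic, subgroups containing $H$: one must use the nonabelian structure around the diagonal position. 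The paper embeds $H$ into $\mcl{A}^{s'}_{\ul{\gl}}\cong\Z/p\Z\times\Z/p\Z$, spanned by the diagonal coordinate $a_{s'}$ and the superdiagonal coordinate $\ti{a}_t$ at position $(x',x'+1)$, splits the restricted class via Theorem~\ref{theorem:ProductCohomology} as $c_1+c_2+\ti{\gb}$, kills $c_2$ and $\ti{\gb}$ by $\mcl{D}_{\ul{\gl}}$-invariance ($p$ odd suffices), and then kills the remaining diagonal piece $c_1$ using invariance of the restricted class under conjugation by the unipotent element $\ol{E}_{(x'+1,x')}(C)$, whose action $a_{s'}\mapsto a_{s'}-\ti{a}_tC$ mixes the central diagonal coordinate with the off-diagonal one; taking $a_{s'}=b_{s'}=0$ and $C=-1$ forces $c_1$ to be a coboundary. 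This unipotent mixing, not any intrinsic property of cyclic subgroups, is what makes the diagonal case work, and it is absent from your proposal.
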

\begin{proof}
If $Pos(s')=(x,y)$ and $1\leq x\neq y\leq \gr=\gr_1+\cdots+\gr_k$ then we have $H^2_{Trivial\ Action}(\frac{\mcl{N}^{s'}_{\ul{\gl}}}{\mcl{N}^s_{\ul{\gl}}},\Z/p\Z)^{\mcl{D}_{\ul{\gl}}}=0$ using Theorem~\ref{theorem:BasicMultiple} since $p$ is odd. Hence \equ{Res(H^2_{Trivial\ Action}(\frac{\mcl{P}_{\ul{\gl}}}{\mcl{N}^s_{\ul{\gl}}},\Z/p\Z)^{\mcl{D}_{\ul{\gl}}})\subseteq H^2_{Trivial\ Action}(\frac{\mcl{N}^{s'}_{\ul{\gl}}}{\mcl{N}^s_{\ul{\gl}}},\Z/p\Z)^{\mcl{D}_{\ul{\gl}}}=0.}
If $Pos(s')=(x',x')$ for some $1\leq x'\leq \gr=\gr_1+\cdots+\gr_k$ where $s'=(l',i',i',m',m')$ then we have $l'>0 \Ra m'<k,1\leq x\leq \gr_1+\cdots+\gr_{k-1}$. So position $(x',x'+1)$ is in block $(m',m')$ or $(m',m'+1)$, position $(x'+1,x')$ is in block $(m',m')$ or $(m'+1,m')$, position $(x'+1,x'+1)$ is in block $(m',m')$ or $(m'+1,m'+1)$. 
Let $t\in S$ be such that $Pos(t)=(x',x'+1)$ and the first coordinate of $t$ is $l$ which is the first coordinate of $s'$. Then $s'<_{MTO}t,s'<_{TO}t$. Let the symbol $A$ denote the entry in the position $(x'+1,x')$ of any typical matrix in $\mcl{P}_{\ul{\gl}}$. Consider the subgroup \equ{\mcl{C}^{s'}_{\ul{\gl}}=\{g^a+e_{(x',x'+1)}(p^{l'}\ti{a}_t)+e_{(x'+1,x')}(A)\mid 0\leq \ti{a}_t\leq p-1,g^a\in \mcl{N}^{s'}_{\ul{\gl}}\}\subs \mcl{P}_{\ul{\gl}}.}
This is indeed a subgroup, because if $g_1=g^a+e_{(x',x'+1)}(p^{l'}\ti{a}_t)+e_{(x'+1,x')}(A),g_2=g^b+e_{(x',x'+1)}(p^{l'}\ti{b}_t)+e_{(x'+1,x')}(B)$ be two elements in $\mcl{C}^{s'}_{\ul{\gl}}$. Then we have $g_1g_2\in \mcl{C}^{s'}_{\ul{\gl}}$. We can see this as follows.
\equa{g_1g_2&=g^ag^b+g^ae_{(x',x'+1)}(p^{l'}\ti{b}_t)+g^ae_{(x'+1,x')}(B)\\&+e_{(x',x'+1)}(p^{l'}\ti{a}_t)g^b+e_{(x',x'+1)}(p^{l'}\ti{a}_t)e_{(x',x'+1)}(p^{l'}\ti{b}_t)+e_{(x',x'+1)}(p^{l'}\ti{a}_t)e_{(x'+1,x')}(B)\\
&+e_{(x'+1,x')}(A)g^b+e_{(x'+1,x')}(A)e_{(x',x'+1)}(p^{l'}\ti{b}_t)+e_{(x'+1,x')}(A)e_{(x'+1,x')}(B)\\
&=g^c+e_{(x',x'+1)}(p^{l'}\ti{c}_t)+e_{(x'+1,x')}(C),}
where $g^c\in \mcl{N}^{s'}_{\ul{\gl}},c_{s'}\equiv a_{s'}+b_{s'}+\ti{a}_tB\mod p,\ti{c}_t\equiv\ti{a}_t+\ti{b}_t\mod p,C\equiv A+B\mod p^{l'}$.
Since the nonempty finite subset $\mcl{C}^{s'}_{\ul{\gl}}$ is closed under group multiplication, it is also a subgroup. 
Now $s\in S\bs T$ is the predecessor of $s'$. We have $\mcl{N}^{s}_{\ul{\gl}}\trianglelefteq \mcl{C}^{s'}_{\ul{\gl}}$. The group multiplication in $\frac{\mcl{C}^{s'}_{\ul{\gl}}}{\mcl{N}^{s}_{\ul{\gl}}}$ is given in terms of $2\times 2$ matrices as follows. 
 \equa{&\mattwo{1+p^{l'}a_{s'}\mod p^{l'+1}}{p^l\ti{a}_{t}\mod p^{l'+1}}{A\mod p^l}{1\mod p^{l'}}\mattwo{1+p^{l'}b_{s'}\mod p^{l'+1}}{p^{l'}\ti{b}_{t}\mod p^{l'+1}}{B\mod p^{l'}}{1\mod p^{l'}}\\&=\mattwo{1+p^{l'}(a_{s'}+b_{s'}+\ti{a}_tB)\mod p^{l'+1}}{p^{l'}(\ti{a}_{t}+\ti{b}_{t})\mod p^{l'+1}}{A+B\mod p^{l'}}{1\mod p^{l'}}.}
 We consider the normal abelian subgroup $\mcl{A}^{s'}_{\ul{\gl}}\trianglelefteq\frac{\mcl{C}^{s'}_{\ul{\gl}}}{\mcl{N}^{s}_{\ul{\gl}}}$ of upper triangular $2\times 2$ matrices in $\frac{\mcl{C}^{s'}_{\ul{\gl}}}{\mcl{N}^{s}_{\ul{\gl}}}$, that is,
  
 \equa{\mcl{A}^{s'}_{\ul{\gl}}&=\bigg\{\mattwo{1+p^{l'}a_{s'}\mod p^{l'+1}}{p^{l'}\ti{a}_{t}\mod p^{l'+1}}{0\mod p^{l'}}{1\mod p^{l'}}\mid 0\leq a_{s'},\ti{a}_t\leq p-1\bigg\}\\&\cong \Z/p\Z\times \Z/p\Z.} 
 Now consider the map $Res$ as the following composition of maps $Res_2,Res_1$, that is, $Res=Res_2\circ Res_1$ where,
 \equa{H^2_{Trivial\ Action}(\frac{\mcl{P}_{\ul{\gl}}}{\mcl{N}^s_{\ul{\gl}}},\Z/p\Z) &\os{Res_1}{\lra} H^2_{Trivial\ Action}(\mcl{A}^{s'}_{\ul{\gl}},\Z/p\Z)\\ &\os{Res_2}{\lra} H^2_{Trivial\ Action}(\frac{\mcl{N}^{s'}_{\ul{\gl}}}{\mcl{N}^s_{\ul{\gl}}},\Z/p\Z).}
 To prove the theorem it is enough to show that 
\equ{Res_1(H^2_{Trivial\ Action}(\frac{\mcl{P}_{\ul{\gl}}}{\mcl{N}^s_{\ul{\gl}}},\Z/p\Z)^{\mcl{D}_{\ul{\gl}}})=0.}
Let $[c]=Res_1([d])$ where $[d]\in H^2_{Trivial\ Action}(\frac{\mcl{P}_{\ul{\gl}}}{\mcl{N}^s_{\ul{\gl}}},\Z/p\Z)^{\mcl{D}_{\ul{\gl}}}$ then cohomology class $[d]$ and hence $[c]$ does not change under conjugation action induced by the elementary invertible matrix\equ{\ol{E}_{(x'+1,x')}(C)=\mattwo{1 \mod p^{l'+1}}{0\mod p^{l'+1}}{C\mod p^{l'}}{1\mod p^{l'}}\in \frac{\mcl{C}^{s'}_{\ul{\gl}}}{\mcl{N}^{s}_{\ul{\gl}}}}
using Remark~\ref{remark:TrivialAction}.
We observe that the conjugation on $\mcl{A}^{s'}_{\ul{\gl}}$ is given as follows.
\equa{\ol{E}_{(x'+1,x')}(C)&\mattwo{1+p^{l'}a_{s'}\mod p^{l'+1}}{p^{l'}\ti{a}_{t}\mod p^{{l'}+1}}{0\mod p^{l'}}{1\mod p^{l'}}\ol{E}_{(x'+1,x')}(-C)=\\&
\mattwo{1+p^{l'}(a_{s'}-\ti{a}_tC)\mod p^{l'+1}}{p^{l'}\ti{a}_t\mod p^{l'+1}}{0\mod p^{l'}}{1\mod p^{l'}}.}
We also observe that cohomology class $[c]\in H^2_{Trivial\ Action}(\mcl{A}^{s'}_{\ul{\gl}},\Z/p\Z)^{\mcl{D}_{\ul{\gl}}}$.
We simplify the notation by denoting elements in $\mcl{A}^{s'}_{\ul{\gl}}=\{(a_{s'},\ti{a}_t)\in \Z/p\Z\times \Z/p\Z\}$. 
Using Theorem~\ref{theorem:ProductCohomology}, we have 
$c$ is cohomologous to $c_1+c_2+\ti{\gb}$ where $c_1$ and $c_2$ are cocycles obtained by restriction of the cocycle $c$ to each of the components of $\mcl{A}^{s'}_{\ul{\gl}}$ and $\ti{\gb}:\Z/p\Z\times \Z/p\Z\lra \Z/p\Z$ is a bilinear map which in particular is given as: For all $a_{s'},\ti{a}_t,b_{s'},\ti{b}_t\in \Z/p\Z$, $\ti{\gb}(a_s,\ti{b}_t)=\gb a_s\ti{b}_t$,
\equ{c((a_{s'},\ti{a}_t),(b_{s'},\ti{b}_t))\approx_{cohomologous} c_1(a_{s'},b_{s'})+c_2(\ti{a}_t,\ti{b}_t)+\gb a_{s'}\ti{b}_t}
for some $\gb\in \Z/p\Z$. Now the action of $\mcl{D}_{\ul{\gl}}$ acts nontrivially on $c_2$ since $t$ is in a nondiagonal position of any matrix in $\mcl{P}_{\ul{\gl}}$, where as $\mcl{D}_{\ul{\gl}}$ acts trivially on $c_1$ since $s'$ is in a diagonal position. Using Theorem~\ref{theorem:BasicMultiple}, since $p$ is odd, we have $c_2$ is cohomologous to zero. Now the map $\ti{\gb}$ depends only on the cohomology class of $[c]$ using Theorem~\ref{theorem:ProductCohomology}. Hence we have by using the invariance of $[c]$ under the action of $\mcl{D}_{\ul{\gl}}$, $\gb a_{s'}\ti{b}_t=\gb \gga a_{s'}\ti{b}_t$ for any $\gga \in (\Z/p\Z)^*$. Now we conclude that $\gb=0\Ra \ti{\gb}=0$ by choosing $ \gga\in (\Z/p\Z)^*,\gga\neq 1$ since $p$ is odd. Hence 
\equ{c((a_{s'},\ti{a}_t),(b_{s'},\ti{b}_t))\approx_{cohomologous} c_1(a_{s'},b_{s'}).}
We use conjugation by $\ol{E}_{(x'+1,x')}(C)$ for $C\not\equiv 0\mod p$ and conclude that 
\equ{c((a_{s'},\ti{a}_t),(b_{s'},\ti{b}_t))\approx_{cohomologous}c_1(a_{s'},b_{s'})\approx_{cohomologous} c_1(a_{s'}-\ti{a}_tC,b_{s'}-\ti{b}_tC),}
that is, $c_1(a_{s'}-\ti{a}_tC,b_{s'}-\ti{b}_tC)-c_1(a_{s'},b_{s'})=v(a_{s'}+b_{s'},\ti{a}_t+\ti{b}_t)-v(a_{s'},\ti{a}_t)-v(b_{s'},\ti{b}_t)$ for some $1$-cochain $v:\mcl{A}^{s'}_{\ul{\gl}}\lra \Z/p\Z$. 
Putting $a_{s'}=0=b_{s'},C=-1$ we conclude that  $c_1(\ti{a}_t,\ti{b}_t)$ is cohomologous to zero.
Hence we conclude that $[c]=0$. 

This proves the theorem.
\end{proof}
\begin{remark}
In the proof of Theorem~\ref{theorem:KernelRes} we only require that $p$ is an odd prime and $p$ can be equal to $3$.
\end{remark}
\begin{theorem}
	\label{theorem:GL21}
	Let $p$ be an odd prime.
	Consider the following $p$-group of order $p^2$.
	\equ{G_1=\{\mattwo{}{b}{c}{}\mid b,c\in \Z/p\Z\}}
	where the group multiplication is given by:
	\equ{\mattwo{}{b}{c}{}\mattwo{}{b'}{c'}{}=\mattwo{}{b+b'}{c+c'}{}.}
	Consider the action of the group $D=(\Z/p\Z)^*\times (\Z/p\Z)^*$ on $G_1$ as follows. For $t_i\in (\Z/p\Z)^*,i=1,2$,
	\equ{(t_1,t_2)\bullet\mattwo{}{b}{c}{}=\mattwo{}{\frac{t_1}{t_2}b}{\frac{t_2}{t_1}c}{}.}
	Then the subgroup of $D$-invariant cohomology classes of $H^2_{Trivial\ Action}(G_1,\Z/p\Z)$ is 
	\equ{H^2_{Trivial\ Action}(G_1,\Z/p\Z)^{D}=\{\gb [z]\mid \gb\in \Z/p\Z\}\cong \Z/p\Z}
	where $z\in Z^2_{Trivial\ Action}(G_1,\Z/p\Z)$ is defined as:
	\equ{z\bigg(\mattwo{}{b}{c}{},\mattwo{}{b'}{c'}{}\bigg)=bc'.} 
\end{theorem}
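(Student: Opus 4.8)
The plan is to identify $G_1$ with the elementary abelian group $\Z/p\Z\times \Z/p\Z$ and to decompose its second cohomology as a representation of $D$ using the product formula of Theorem~\ref{theorem:ProductCohomology}. Write $B=\{\mattwo{}{b}{0}{}\mid b\in \Z/p\Z\}$ and $C=\{\mattwo{}{0}{c}{}\mid c\in \Z/p\Z\}$ for the two cyclic factors; since the multiplication on $G_1$ is coordinatewise addition, $G_1\cong B\times C$ with $B\cong C\cong \Z/p\Z$. Both $B$ and $C$ are $D$-stable subgroups, because $(t_1,t_2)\in D$ scales the $b$-coordinate by $\frac{t_1}{t_2}$ and the $c$-coordinate by $\frac{t_2}{t_1}$, preserving each line. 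Consequently the restriction maps $Res_1,Res_2$ and the bilinear pairing of Theorem~\ref{theorem:ProductCohomology} are all $D$-equivariant, so that
\begin{equation*}
\begin{aligned}
H^2_{Trivial\ Action}(G_1,\Z/p\Z)&\cong H^2_{Trivial\ Action}(B,\Z/p\Z)\\
&\quad\times H^2_{Trivial\ Action}(C,\Z/p\Z)\times P(B,C,\Z/p\Z)
\end{aligned}
\end{equation*}
is an isomorphism of $D$-modules, each factor being a one-dimensional $\Z/p\Z$-vector space.

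Next I would compute the $D$-action on each of the three factors. On $H^2_{Trivial\ Action}(B,\Z/p\Z)$ the element $(t_1,t_2)$ pulls a cocycle $c(b,b')$ back to $c(\gs b,\gs b')$ with $\gs=\frac{t_1}{t_2}$; since here $r=1$ every $\gs\in(\Z/p\Z)^*$ satisfies $\gs^{p-1}=1$, so Theorem~\ref{theorem:BasicMultiple} applies and moves any nonzero class whenever $\gs\neq 1$. As $p$ is odd, $|(\Z/p\Z)^*|\geq 2$, so we may pick $(t_1,t_2)$ with $\frac{t_1}{t_2}\neq 1$; hence no nonzero class in $H^2_{Trivial\ Action}(B,\Z/p\Z)$ is $D$-invariant, and the same argument with $\gs=\frac{t_2}{t_1}$ kills the invariants in $H^2_{Trivial\ Action}(C,\Z/p\Z)$. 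On the pairing factor, a bilinear map $f$ is carried to $f(\frac{t_1}{t_2}b,\frac{t_2}{t_1}c)=\frac{t_1}{t_2}\cdot\frac{t_2}{t_1}f(b,c)=f(b,c)$, so $D$ acts trivially on $P(B,C,\Z/p\Z)$ and every element is invariant.

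Putting these together gives $H^2_{Trivial\ Action}(G_1,\Z/p\Z)^{D}\cong P(B,C,\Z/p\Z)\cong \Z/p\Z$. Finally I would verify that the explicit cocycle $z$ represents a generator of this invariant line. Restricting $z$ to $B$ gives $z(\mattwo{}{b}{0}{},\mattwo{}{b'}{0}{})=b\cdot 0=0$ and likewise $z$ restricts to $0$ on $C$, so $Res_1([z])=Res_2([z])=0$; on the other hand the associated pairing is $z(\mattwo{}{b}{0}{},\mattwo{}{0}{c'}{})-z(\mattwo{}{0}{c'}{},\mattwo{}{b}{0}{})=bc'$, the standard generator of $P(B,C,\Z/p\Z)$. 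Hence $[z]$ corresponds to $(0,0,\text{standard pairing})$ and generates the invariant subspace, which is therefore exactly $\{\gb[z]\mid \gb\in \Z/p\Z\}\cong \Z/p\Z$, as claimed.

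The main obstacle I anticipate is the equivariance bookkeeping: confirming that the product decomposition of Theorem~\ref{theorem:ProductCohomology} is genuinely a decomposition of $D$-representations (i.e.\ that $Res_1,Res_2$ and the pairing commute with the $D$-action) and that the character by which $D$ acts on each cyclic $H^2$-factor is read off correctly. The precise scalar on the $B$- and $C$-factors is inessential for the conclusion, since Theorem~\ref{theorem:BasicMultiple} only requires nontriviality for $\gs\neq 1$; what must be checked with care is the cancellation $\frac{t_1}{t_2}\cdot\frac{t_2}{t_1}=1$ on the pairing factor, as that cancellation is precisely what produces the single invariant line.
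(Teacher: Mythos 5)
Your proposal is correct and is essentially the paper's own argument: the paper's proof of this theorem consists precisely of invoking Theorem~\ref{theorem:ProductCohomology} to split $H^2_{Trivial\ Action}(G_1,\Z/p\Z)$ into the two cyclic factors plus the pairing term $P(B,C,\Z/p\Z)$, killing the invariants of the cyclic factors via Theorem~\ref{theorem:BasicMultiple} (using $p$ odd to get $\gs\neq 1$), and observing that $D$ acts trivially on the pairing factor. Your fleshed-out version, including the identification of $[z]$ with the standard generator of $P(B,C,\Z/p\Z)$, fills in exactly the details the paper leaves implicit.
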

\begin{proof}
The theorem follows from an application of Theorem~\ref{theorem:ProductCohomology} and Theorem~\ref{theorem:BasicMultiple} and the fact that $p$ is odd.
\end{proof}
\begin{remark}
In the proof of Theorem~\ref{theorem:GL21} we only require that $p$ is an odd prime and $p$ can be equal to $3$.
\end{remark}
\begin{theorem}
	\label{theorem:GL21Second}
Let $\ul{\gl}=(2>1)$ and $p$ be an odd prime. Then \equ{H^2_{Trivial\ Action}(\autgp,\Z/p\Z)=0.}
\end{theorem}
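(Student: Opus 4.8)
The plan is to realize $\autgp$ explicitly, pass to a $p$-Sylow subgroup, and reduce the whole computation to a single central extension whose invariant cohomology we already control. First I would write $\grpp=\Z/p^2\Z\oplus\Z/p\Z$ and describe the $p$-Sylow subgroup $\mcl{P}_{\ul{\gl}}$ concretely: its elements are the automorphisms represented by $\mattwo{1+pa_1}{pb}{c}{1}$ with $a_1,b,c\in\Z/p\Z$, a group of order $p^3$ with multiplication law $(a_1,b,c)\cdot(a_1',b',c')=(a_1+a_1'+bc',\,b+b',\,c+c')$. Thus $\mcl{P}_{\ul{\gl}}$ is the Heisenberg group of order $p^3$; its center and commutator subgroup coincide, $Z:=Z(\mcl{P}_{\ul{\gl}})=[\mcl{P}_{\ul{\gl}},\mcl{P}_{\ul{\gl}}]=\{(a_1,0,0)\}\cong\Z/p\Z$, and the quotient $G_1:=\mcl{P}_{\ul{\gl}}/Z\cong(\Z/p\Z)^2$ carries coordinates $(b,c)$. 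A direct computation of the conjugation action shows $\mcl{D}_{\ul{\gl}}$ fixes $Z$ pointwise and acts on $G_1$ by $(t_1,t_2)\bullet(b,c)=(\tfrac{t_1}{t_2}b,\tfrac{t_2}{t_1}c)$, so that $G_1$ together with this action is precisely the pair treated in Theorem~\ref{theorem:GL21}.

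By Theorem~\ref{theorem:StabilityConditions} and the reduction in Section~\ref{sec:H2TrivialAction}, it suffices to prove $H^2_{Trivial\ Action}(\mcl{P}_{\ul{\gl}},\Z/p\Z)^{\mcl{D}_{\ul{\gl}}}=0$. I would obtain this from the extended Hochschild--Serre exact sequence of Theorem~\ref{theorem:HSESequence} applied to the central extension $0\lra Z\lra\mcl{P}_{\ul{\gl}}\lra G_1\lra 0$, namely the segment
\equa{H^2_{Trivial\ Action}(G_1,\Z/p\Z)&\os{Inf}{\lra}H^2_{Trivial\ Action}(\mcl{P}_{\ul{\gl}},\Z/p\Z)\\&\os{Res\times\gth}{\lra}H^2_{Trivial\ Action}(Z,\Z/p\Z)\times P(\mcl{P}_{\ul{\gl}},Z,\Z/p\Z).}
Since $\mcl{D}_{\ul{\gl}}$ has order $(p-1)^2$ coprime to $p$, the invariants functor $(-)^{\mcl{D}_{\ul{\gl}}}$ is exact on $\Z/p\Z$-vector spaces, so this sequence remains exact after taking $\mcl{D}_{\ul{\gl}}$-invariants, and all three maps are $\mcl{D}_{\ul{\gl}}$-equivariant because $\mcl{D}_{\ul{\gl}}$ normalizes both $\mcl{P}_{\ul{\gl}}$ and the characteristic subgroup $Z$.

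I would then show that each relevant map vanishes on invariants. For inflation: by Theorem~\ref{theorem:GL21} the space $H^2_{Trivial\ Action}(G_1,\Z/p\Z)^{\mcl{D}_{\ul{\gl}}}$ is one-dimensional, spanned by the class $[z]$ of $z((b,c),(b',c'))=bc'$; but this is exactly the cocycle defining the extension $\mcl{P}_{\ul{\gl}}$, so $[z]$ equals the transgression of $\id_Z$ and therefore lies in $\Ker(Inf)$. Hence $Inf$ is zero on invariants, and exactness forces $Res\times\gth$ to be \emph{injective} on $H^2_{Trivial\ Action}(\mcl{P}_{\ul{\gl}},\Z/p\Z)^{\mcl{D}_{\ul{\gl}}}$. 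For the $\gth$-component: $P(\mcl{P}_{\ul{\gl}},Z,\Z/p\Z)$ consists of biadditive maps factoring through $G_1\times Z$, on which $\mcl{D}_{\ul{\gl}}$ acts through the nontrivial characters $(t_1/t_2)^{\pm1}$ in the $G_1$-variable and trivially in the $Z$-variable; thus $P(\mcl{P}_{\ul{\gl}},Z,\Z/p\Z)^{\mcl{D}_{\ul{\gl}}}=0$ and $\gth$ vanishes on invariants.

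The hard part is the $Res$-component, which is where the argument is genuinely delicate: because $\mcl{D}_{\ul{\gl}}$ acts trivially on the central factor $Z$, one has $H^2_{Trivial\ Action}(Z,\Z/p\Z)^{\mcl{D}_{\ul{\gl}}}\cong\Z/p\Z\neq 0$, so the naive weight argument does not kill the restriction to $Z$. Here I would invoke Theorem~\ref{theorem:KernelRes} in its diagonal case, with $s=\es$ and $s'$ the central coordinate $(1,1,1,1,1)$: it asserts precisely that $H^2_{Trivial\ Action}(\mcl{P}_{\ul{\gl}},\Z/p\Z)^{\mcl{D}_{\ul{\gl}}}\subseteq\Ker(Res)$, using (for $p$ odd) the splitting-off of the diagonal direction via the auxiliary subgroup in that proof. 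Combining the three vanishing statements, the map $Res\times\gth$ is simultaneously injective and identically zero on $H^2_{Trivial\ Action}(\mcl{P}_{\ul{\gl}},\Z/p\Z)^{\mcl{D}_{\ul{\gl}}}$, whence this space is zero; by the reduction in the second paragraph, $H^2_{Trivial\ Action}(\autgp,\Z/p\Z)=0$, as claimed.
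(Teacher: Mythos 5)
Your proposal is correct and takes essentially the same route as the paper: reduce to $H^2_{Trivial\ Action}(\mcl{P}_{\ul{\gl}},\Z/p\Z)^{\mcl{D}_{\ul{\gl}}}$, apply the extended Hochschild--Serre sequence to the central extension $0\lra Z(\mcl{P}_{\ul{\gl}})\lra \mcl{P}_{\ul{\gl}}\lra G_1\lra 0$, kill the $Res$-component via Theorem~\ref{theorem:KernelRes} with $s=\es$, $s'=(1,1,1,1,1)$, kill the $\gth$-component by the weight argument on $P(\mcl{P}_{\ul{\gl}},Z(\mcl{P}_{\ul{\gl}}),\Z/p\Z)$, and identify the remaining invariant class through Theorem~\ref{theorem:GL21}. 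The only cosmetic difference is that you dispose of $[z]$ by recognizing it as the transgression of $\mathrm{id}_{Z}$ (the extension class of the Heisenberg group), whereas the paper exhibits the explicit $1$-cochain $v(g)=a$ with $\partial v=z$; these are the same fact in two guises.
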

\begin{proof}
We show that $H^2_{Trivial\ Action}(\mcl{P}_{\ul{\gl}},\Z/p\Z)^{\mcl{D}_{\ul{\gl}}}=0$.
The group multiplication in $\mcl{P}_{\ul{\gl}}$ is given as follows.
\equ{\mattwo {1+a p}{b p}{c}{1}\mattwo{1+a'p}{b'p}{c'}{1}=\mattwo{1+(a+a'+bc')p}{(b+b')p}{c+c'}{1}.}
Let $[y]\in H^2_{Trivial\ Action}(\mcl{P}_{\ul{\gl}},\Z/p\Z)^{\mcl{D}_{\ul{\gl}}}$ and consider $Res:H^2_{Trivial\ Action}(\mcl{P}_{\ul{\gl}},\Z/p\Z)$ $\lra H^2_{Trivial\ Action}(Z(\mcl{P}_{\ul{\gl}}),\Z/p\Z)$. We observe using Theorem~\ref{theorem:KernelRes} where we choose $s=\es,s'=(1,1,1,1,1)$, we get $[y]\in \Ker(Res)$. Consider the map $\gth:H^2_{Trivial\ Action}(\mcl{P}_{\ul{\gl}},\Z/p\Z)\lra P(\mcl{P}_{\ul{\gl}},Z(\mcl{P}_{\ul{\gl}}),\Z/p\Z)$. The class $[y]\in \Ker(\gth)$. This is because $[y]\in H^2_{Trivial\ Action}(\mcl{P}_{\ul{\gl}},\Z/p\Z)^{\mcl{D}_{\ul{\gl}}}$ and the positions of $a,b$ are not mutually symmetric about the diagonal and the positions of $a,c$ are also not mutually symmetric about the diagonal. 

Using the exactness of the sequence 
\equa{H^2_{Trivial\ Action}(&\frac{\mcl{P}_{\ul{\gl}}}{Z(\mcl{P}_{\ul{\gl}})},\Z/p\Z)\os{Inf}{\lra} H^2_{Trivial\ Action}(\mcl{P}_{\ul{\gl}},\Z/p\Z)\\&\os{Res\times \gth}{\lra}  H^2_{Trivial\ Action}(Z(\mcl{P}_{\ul{\gl}}),\Z/p\Z) \times P(\mcl{P}_{\ul{\gl}},Z(\mcl{P}_{\ul{\gl}}),\Z/p\Z),}
we get that there exists $[x]\in H^2_{Trivial\ Action}(\frac{\mcl{P}_{\ul{\gl}}}{Z(\mcl{P}_{\ul{\gl}})},\Z/p\Z)$ such that $Inf([x])=[y]$.

So we have by averaging $[x]$ with respect to the action of the group $\mcl{D}_{\ul{\gl}}$
\equ{Inf\bigg(\frac{1}{\mid \mcl{D}_{\ul{\gl}}\mid}\us{t\in \mcl{D}_{\ul{\gl}}}{\sum}t\bullet[x]\bigg)=[y]} and the cohomology class
\equ{\frac{1}{\mid \mcl{D}_{\ul{\gl}}\mid}\us{t\in \mcl{D}_{\ul{\gl}}}{\sum}t\bullet[x]\in H^2_{Trivial\ Action}(\frac{\mcl{P}_{\ul{\gl}}}{Z(\mcl{P}_{\ul{\gl}})},\Z/p\Z)^{\mcl{D}_{\ul{\gl}}}.}

Now the group $\frac{\mcl{P}_{\ul{\gl}}}{Z(\mcl{P}_{\ul{\gl}})} \cong G_1$ (defined in Theorem~\ref{theorem:GL21}) and the action of $\mcl{D}_{\ul{\gl}}$ on $\frac{\mcl{P}_{\ul{\gl}}}{Z(\mcl{P}_{\ul{\gl}})}$ and the action of $D$ in Theorem~\ref{theorem:GL21} on $G_1$ are compatible. Hence using Theorem~\ref{theorem:GL21} we get that $[y]=\gb [z]$ for some $\gb \in \Z/p\Z$ where 
\equ{z\bigg(\mattwo {1+a p}{b p}{c}{1},\mattwo{1+a'p}{b'p}{c'}{1}\bigg)=bc'.}
But we observe that $z$ is a 2-coboundary because if $v:\mcl{P}_{\ul{\gl}}\lra \Z/p\Z$ is a $1$-cochain defined as: 
$v(\mattwo {1+a p}{b p}{c}{1})=a$ then we have $v(gg')-v(g)-v(g')=bc'$ where $g=\mattwo {1+a p}{b p}{c}{1},g'=\mattwo {1+a' p}{b' p}{c'}{1}$. 

Hence we get $[y]=0$ and so $H^2_{Trivial\ Action}(\mcl{P}_{\ul{\gl}},\Z/p\Z)^{\mcl{D}_{\ul{\gl}}}=0$. This proves the theorem.
\end{proof}
\begin{remark}
	In the proof of Theorem~\ref{theorem:GL21Second} we only require that $p$ is an odd prime and $p$ can be equal to $3$.
\end{remark}
\begin{theorem}
	\label{theorem:GL211First}
Let $p$ be an odd prime. Consider the following $p$-group of order $p^4$.
\equ{G_2=\bigg\{\matthree{}{}{b}{c}{}{}{e}{f}{}\mid b,c,e,f\in \Z/p\Z\bigg\}}
where the group multiplication is given by: 
\equ{\matthree{}{}{b}{c}{}{}{e}{f}{}\matthree{}{}{b'}{c'}{}{}{e'}{f'}{}=\matthree{}{}{b+b'}{c+c'}{}{}{e+e'+fc'}{f+f'}{}.}
Consider the action of the group $D=(\Z/p\Z)^*\times (\Z/p\Z)^*\times (\Z/p\Z)^*$ on $G_2$ as follows. For $t_i\in (\Z/p\Z)^*,i=1,2,3$,
\equ{(t_1,t_2,t_3)\bullet \matthree{}{}{b}{c}{}{}{e}{f}{}=\matthree{}{}{\frac{t_1}{t_3}b}{\frac{t_2}{t_1}c}{}{}{\frac{t_3}{t_1}e}{\frac{t_3}{t_2}f}{}.}
Then the subgroup of $D$-invariant cohomology classes of $H^2_{Trivial\ Action}(G_2,\Z/p\Z)$ is trivial, that is,
\equ{H^2_{Trivial\ Action}(G_2,\Z/p\Z)^{D}=0.}
\end{theorem}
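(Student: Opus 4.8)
The plan is to exploit a direct-product decomposition of $G_2$ and reduce to a Heisenberg group, after which every relevant invariant space is cut out by a nontrivial character and the extended Hochschild--Serre sequence forces vanishing. Writing a typical element of $G_2$ as a tuple $(b,c,e,f)$ (so $b,c,e,f$ occupy the positions $(1,3),(2,1),(3,1),(3,2)$), I would first observe that $G_2\cong\mcl{H}\times\mcl{B}$, where $\mcl{H}=\{(0,c,e,f)\}$ is the order-$p^3$ Heisenberg group generated by the $c$- and $f$-directions with commutator $[\,\cdot\,,\cdot\,]$ landing in the central $e$-direction, and $\mcl{B}=\{(b,0,0,0)\}\cong\Z/p\Z$ is the central $b$-direction. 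Both factors are stable under $D$, which only rescales coordinates, so the splitting is $D$-equivariant. Since $|D|=(p-1)^3$ is prime to $p$, the functor of $D$-invariants on $\Z/p\Z$-vector spaces is exact (Maschke), and I will use this repeatedly.

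Applying Theorem~\ref{theorem:ProductCohomology} $D$-equivariantly gives
\[
H^2_{Trivial\ Action}(G_2,\Z/p\Z)^D\cong H^2_{Trivial\ Action}(\mcl{H},\Z/p\Z)^D\times H^2_{Trivial\ Action}(\mcl{B},\Z/p\Z)^D\times P(\mcl{H},\mcl{B},\Z/p\Z)^D.
\]
The middle factor vanishes by Theorem~\ref{theorem:BasicMultiple}, since $D$ scales the $b$-coordinate by the nontrivial character $t_1/t_3$. For the cross term, a bi-additive map factors through $\mcl{H}^{\mathrm{ab}}=\langle c,f\rangle$ in the first slot and through $\mcl{B}=\langle b\rangle$ in the second; the two monomials $c\cdot b$ and $f\cdot b$ are scaled by $t_2/t_3$ and $t_1/t_2$, both nontrivial, so a direct computation (as in the proof of Theorem~\ref{theorem:KernelRes}) kills the invariants. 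Thus it remains only to prove $H^2_{Trivial\ Action}(\mcl{H},\Z/p\Z)^D=0$.

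For the Heisenberg factor I would invoke the extended Hochschild--Serre sequence (Theorem~\ref{theorem:HSESequence}) for the central extension $0\lra\langle e\rangle\lra\mcl{H}\lra\langle c,f\rangle\lra 0$ with $\langle c,f\rangle\cong(\Z/p\Z)^2$, and pass to $D$-invariants. The inflation source $H^2_{Trivial\ Action}((\Z/p\Z)^2,\Z/p\Z)^D$ vanishes: decomposing it by Corollary~\ref{cor:ProductCohomology}, its three pieces are scaled by $t_2/t_1$, $t_3/t_2$, and $(t_2/t_1)(t_3/t_2)=t_3/t_1$, all nontrivial. The target $H^2_{Trivial\ Action}(\langle e\rangle,\Z/p\Z)^D\times P(\mcl{H},\langle e\rangle,\Z/p\Z)^D$ also vanishes: the first factor dies by Theorem~\ref{theorem:BasicMultiple} ($e$ scaled by $t_3/t_1$), and in the second the monomials $c\cdot e$ and $f\cdot e$ carry nontrivial characters. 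By exactness of $D$-invariants, $H^2_{Trivial\ Action}(\mcl{H},\Z/p\Z)^D$ is trapped between two trivial groups, hence is zero, completing the proof.

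The step I expect to be genuinely delicate is the interaction of the transpose positions $(1,3)$ and $(3,1)$, i.e.\ the coordinates $b$ and $e$. Their pairing $b\cdot e$ is scaled by $(t_1/t_3)(t_3/t_1)=1$ and is therefore $D$-invariant, so a naive character count --- of the kind that suffices for the off-diagonal entries in Theorem~\ref{theorem:KernelRes} and that makes $H^2_{Trivial\ Action}(G_1,\Z/p\Z)^D\cong\Z/p\Z$ nonzero in Theorem~\ref{theorem:GL21} --- does not eliminate it. The decomposition $G_2\cong\mcl{H}\times\mcl{B}$ is exactly what resolves this: because $e$ lies in $[\mcl{H},\mcl{H}]$ it dies in $\mcl{H}^{\mathrm{ab}}$, so the invariant form $b\cdot e$ never appears in the cross term and is realized by no cohomology class of $G_2$, in contrast with $G_1$, where the analogous symmetric pair consists of two free generators. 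I would therefore be careful to check the $D$-equivariance of the isomorphism in Theorem~\ref{theorem:ProductCohomology} and of the Hochschild--Serre maps, and to confirm that each character above is nontrivial for every odd $p$ (including $p=3$), which holds because $(\Z/p\Z)^{*}$ contains an element $\neq 1$.
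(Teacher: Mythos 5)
Your proposal is correct, and it takes a genuinely different route from the paper's. The paper never splits $G_2$: it fixes the central subgroup $Z=\{(0,0,e,0)\}$ inside the full group, shows by $D$-invariance that $\gth([y])\in P(G_2,Z,\Z/p\Z)$ reduces to the single term $\ga\, b e'$ coming from the transpose positions $(1,3)$ and $(3,1)$, and then kills $\ga$ by a conjugation argument: restricting to the normal abelian subgroup $H=\{(b,c,e,0)\}$ and conjugating by $(0,0,0,\ti{f})$, which sends $e\mapsto e+\ti{f}c$, the triviality of inner automorphisms on cohomology (Remark~\ref{remark:TrivialAction}) forces $\ga b e'=\ga b(e'+\ti{f}c')$, hence $\ga=0$; inflation from $G_2/Z$ and a character count then finish. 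Your $D$-equivariant splitting $G_2\cong\mcl{H}\times\mcl{B}$ is valid (the $b$-coordinate never enters the twist $fc'$, so $\mcl{B}$ is a central direct factor and $D$ preserves both factors), and it removes the need for any conjugation computation: the unique $D$-invariant bilinear monomial $b\otimes e$ is structurally excluded, since $e$ lies in $[\mcl{H},\mcl{H}]$ and so dies in $\mcl{H}^{\mathrm{ab}}$, while $b$ lives in the complementary factor --- exactly the point you flag, and your character computations are right (every character involved has an exponent $\pm 1$, hence is nontrivial for every odd $p$, including $p=3$, matching the paper's remark that this theorem needs only $p$ odd). What your route buys is a cleaner, more structural proof of this particular statement; what the paper's route buys is the technique of killing invariant transpose pairings via inner automorphisms, which is the engine that carries over to Theorem~\ref{theorem:KernelRes} and to cases $(4)$, $(5)$ of Theorem~\ref{theorem:KernelThetaOne}, where the relevant $p$-groups admit no $D$-stable direct-product decomposition. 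One point you should make explicit in a write-up: passing to $D$-invariants across Theorem~\ref{theorem:ProductCohomology} and across the extended Hochschild--Serre sequence requires that $Res$, $\gth$ and $Inf$ be $D$-equivariant (true by naturality, since $D$ preserves $\mcl{H}$, $\mcl{B}$ and $\langle e\rangle$) together with $p\nmid\vert D\vert$; this averaging step is precisely the paper's Theorem~\ref{theorem:Averaging}, which you can cite instead of re-deriving Maschke exactness.
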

\begin{proof}
Let $[y]\in H^2_{Trivial\ Action}(G_2,\Z/p\Z)^{D}$. Consider the central subgroup $Z=\{\matthree{}{}{0}{0}{}{}{e}{0}{}\mid e\in \Z/p\Z\}$ and the map $\gth:H^2_{Trivial\ Action}(G_2,\Z/p\Z)\lra P(G_2,Z,\Z/p\Z)$ as defined in Proposition~\ref{prop:ThetaMap}. We have 
\equ{\gth([y])\bigg(\matthree{}{}{b}{c}{}{}{e}{f}{},\matthree{}{}{0}{0}{}{}{e'}{0}{}\bigg)=\ga b e'+\gb ce'+\gga fe'}
for some  $\ga,\gb,\gga\in \Z/p\Z$. By the invariance of $[y]$ under the action of the group $D$ we get that $\gb=\gga=0$.
Hence 
\equan{GL211First}{\gth([y])\bigg(\matthree{}{}{b}{c}{}{}{e}{f}{},\matthree{}{}{0}{0}{}{}{e'}{0}{}\bigg)=\ga b e'.}
Consider the normal abelian subgroup $H=\{\matthree{}{}{b}{c}{}{}{e}{0}{}\mid b,c e\in \Z/p\Z\}$ and the map $Res:H^2_{Trivial\ Action}(G_2,\Z/p\Z)\lra H^2_{Trivial\ Action}(H,\Z/p\Z)$.
We have by using Corollary~\ref{cor:ProductCohomology}
\equa{Res([y])\bigg(\matthree{}{}{b}{c}{}{}{e}{0}{},\matthree{}{}{b'}{c'}{}{}{e'}{0}{}\bigg)&\approx_{cohomologous} s_1(b,b')+s_2(c,c')+s_3(e,e')\\&+\ga'be'+\gb'ce'}
for some $\ga',\gb'\in \Z/p\Z$, where $s_i,i=1,2,3$ are the restrictions of $y$ to the respective components. Now using the invariance of $[y]$ under the action of $D$ and using Theorem~\ref{theorem:BasicMultiple}, since $p$ is odd, we conclude that $s_i$ are cohomologous to zero for $i=1,2,3$ and $\gb'=0$. So we have 
\equ{Res([y])\bigg(\matthree{}{}{b}{c}{}{}{e}{0}{},\matthree{}{}{b'}{c'}{}{}{e'}{0}{}\bigg)\approx_{cohomologous} \ga'be'.}
Now consider the map $\gth_0:H^2_{Trivial\ Action}(H,\Z/p\Z)\lra P(H,K,\Z/p\Z)$ as defined in Proposition~\ref{prop:ThetaMap}, where $K=\{\matthree{}{}{0}{c}{}{}{e}{0}{}\mid c,e\in \Z/p\Z\}$. We have 
\equ{\gth_0(Res([y]))\bigg(\matthree{}{}{b}{c}{}{}{e}{0}{},\matthree{}{}{0}{c'}{}{}{e'}{0}{}\bigg)=\ga'be'.}
Substituting $c'=0$ in the above equation and substuting $f=0$ in Equation~\ref{Eq:GL211First} we get that $\ga'=\ga$.

Now $Res([y])$ is also invariant under the conjugation action of the element $\matthree{}{}{0}{0}{}{}{0}{\ti{f}}{}$.
We observe that 
\equ{\matthree{}{}{0}{0}{}{}{0}{\ti{f}}{}\matthree{}{}{b}{c}{}{}{e}{0}{}\matthree{}{}{0}{0}{}{}{0}{-\ti{f}}{}=\matthree{}{}{b}{c}{}{}{e+\ti{f}c}{0}{}}
\equ{\matthree{}{}{0}{0}{}{}{0}{\ti{f}}{}\matthree{}{}{0}{c'}{}{}{e'}{0}{}\matthree{}{}{0}{0}{}{}{0}{-\ti{f}}{}=\matthree{}{}{0}{c'}{}{}{e'+\ti{f}c'}{0}{}}
So we get that 
\equ{\ga be'=\gth_0(Res([y]))\bigg(\matthree{}{}{b}{c}{}{}{e+\ti{f}c}{0}{},\matthree{}{}{0}{c'}{}{}{e'+\ti{f}c'}{0}{}\bigg)=\ga b(e'+\ti{f}c').}
This implies that $\ga=0$. Hence we get $\gth([y])=0$ in Equation~\ref{Eq:GL211First}. Now $[y]\in \Ker(\gth)$ and $[y]\in \Ker(Res: H^2_{Trivial\ Action}(G_2,\Z/p\Z)\lra H^2_{Trivial\ Action}(Z,\Z/p\Z))$ since $H^2_{Trivial\ Action}(Z,\Z/p\Z)^D=0$ and here $Res([y])\in H^2_{Trivial\ Action}(Z,\Z/p\Z)^D=0$.

Using the exactness of the sequence 
\equa{H^2_{Trivial\ Action}(&\frac{G_2}{Z},\Z/p\Z)\os{Inf}{\lra} H^2_{Trivial\ Action}(G_2,\Z/p\Z)\\&\os{Res\times \gth}{\lra}  H^2_{Trivial\ Action}(Z,\Z/p\Z) \times P(G_2,Z,\Z/p\Z),}
we get that there exists $[x]\in H^2_{Trivial\ Action}(\frac{G_2}{Z},\Z/p\Z)$ such that $Inf([x])=[y]$.

So we have by averaging $[x]$ with respect to the action of the group $D$
\equ{Inf\bigg(\frac{1}{\mid D\mid}\us{t\in D}{\sum}t\bullet[x]\bigg)=[y]} and the cohomology class
\equ{\frac{1}{\mid D\mid}\us{t\in D}{\sum}t\bullet[x]\in H^2_{Trivial\ Action}(\frac{G_2}{Z},\Z/p\Z)^{D}.}
But in $\frac{G_2}{Z}$ the positions of $b,c,f$ are nondiagonal and $D$ acts nontrivially. Also no two of $b,c,f$ are in mutually symmetric positions about the diagonal. Hence we conclude that  \equ{H^2_{Trivial\ Action}(\frac{G_2}{Z},\Z/p\Z)^{D}=0.} This implies that $[y]$ is the inflation of the zero cohomology class which implies $H^2_{Trivial\ Action}(G_2,\Z/p\Z)^{D}=0$. This proves the theorem.
\end{proof}
\begin{remark}
	In the proof of Theorem~\ref{theorem:GL211First} we only require that $p$ is an odd prime and $p$ can be equal to $3$.
\end{remark}
\begin{theorem}
	\label{theorem:GL211}
	Let $p$ be a prime. Consider the following $p$-group of order $p^5$.
	\equ{G_3=\bigg\{\matthree{}{a}{b}{c}{}{}{e}{f}{}\mid a,b,c,e,f\in \Z/p\Z\bigg\}}
	where the group multiplication is given by: 
	\equ{\matthree{}{a}{b}{c}{}{}{e}{f}{}\matthree{}{a'}{b'}{c'}{}{}{e'}{f'}{}=\matthree{}{a+a'+bf'}{b+b'}{c+c'}{}{}{e+e'+fc'}{f+f'}{}.}
	Consider the action of the group $D=(\Z/p\Z)^*\times (\Z/p\Z)^*\times (\Z/p\Z)^*$ on $G_3$ as follows. For $t_i\in (\Z/p\Z)^*,i=1,2,3$,
	\equ{(t_1,t_2,t_3)\bullet \matthree{}{a}{b}{c}{}{}{e}{f}{}=\matthree{}{\frac{t_1}{t_2}a}{\frac{t_1}{t_3}b}{\frac{t_2}{t_1}c}{}{}{\frac{t_3}{t_1}e}{\frac{t_3}{t_2}f}{}.}
	Then the subgroup of $D$-invariant cohomology classes of $H^2_{Trivial\ Action}(G_3,\Z/p\Z)$ is 
	\equ{H^2_{Trivial\ Action}(G_3,\Z/p\Z)^{D}=\{\gb [z]\mid \gb\in \Z/p\Z\}\cong \Z/p\Z}
	where $z\in Z^2_{Trivial\ Action}(G_3,\Z/p\Z)$ is defined as:
	\equ{z\bigg(\matthree{}{a}{b}{c}{}{}{e}{f}{},\matthree{}{a'}{b'}{c'}{}{}{e'}{f'}{}\bigg)=ac'+ be'.}
\end{theorem}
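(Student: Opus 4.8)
The plan is to mimic the strategy used for Theorems~\ref{theorem:GL211First} and~\ref{theorem:GL21}: restrict to the centre, feed the resulting central extension into the extended Hochschild--Serre sequence of Theorem~\ref{theorem:HSESequence}, and control everything through the weights of the restricted torus $D=((\Z/p\Z)^*)^3$. Throughout I would write $H^2(-)$ for $H^2_{Trivial\ Action}(-,\Z/p\Z)$ and use that each such space is a semisimple $D$-module, so the inflation/averaging argument behind Theorem~\ref{theorem:StabilityConditions} applies. The target is to show that the $\gth$-map of Proposition~\ref{prop:ThetaMap} embeds $H^2(G_3)^{D}$ into the two-dimensional invariant space $P(G_3,Z(G_3),\Z/p\Z)^{D}$ and that its image is exactly the line through $\gth([z])$.

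First I would record the structural data. A direct commutator computation gives $[G_3,G_3]=Z(G_3)=\{g\mid b=c=f=0\}\cong \Z/p\Z\times\Z/p\Z$, supported on the two central positions $a$ (the $(1,2)$-entry) and $e$ (the $(3,1)$-entry), while $G_3/Z(G_3)\cong(\Z/p\Z)^3$ is elementary abelian on $b,c,f$. Writing $B,C,F$ for the generators carrying $b,c,f$ and $z_a=(1,0,0,0,0)$, $z_e=(0,0,0,1,0)$ for the two central lines, one checks $[B,F]=z_a$, $[F,C]=z_e$ and $[B,C]=1$. Next I would verify directly from the multiplication law that $z(g,g')=ac'+be'$ satisfies the cocycle identity (the two ``half-cocycles'' $ac'$ and $be'$ individually fail it by $\mp bf'c''$, which cancel), that $z$ is literally $D$-invariant, and that $\gth([z])(g,h)=b\,e_0-c\,a_0$ for $h=(a_0,0,0,e_0,0)\in Z(G_3)$; since $\gth$ is well defined on cohomology and $\gth([z])\neq 0$, this already shows $[z]\neq 0$ and $[z]\in H^2(G_3)^{D}$.

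With $Z=Z(G_3)$ I would then run the two auxiliary vanishing statements as in Theorem~\ref{theorem:GL211First}, using Corollary~\ref{cor:ProductCohomology} and Theorem~\ref{theorem:BasicMultiple}. Because $Z\cong(\Z/p\Z)^2$ sits on positions $(1,2),(3,1)$ whose self- and cross-weights $(t_1/t_2)^2,(t_3/t_1)^2,t_3/t_2$ are all nontrivial, $H^2(Z)^{D}=0$; similarly $b,c,f$ are pairwise non-symmetric about the diagonal, so all relevant self- and cross-weights on $G_3/Z$ are nontrivial and $H^2(G_3/Z)^{D}=0$. Since inflation is $D$-equivariant, averaging forces $\mathrm{Im}(Inf)\cap H^2(G_3)^{D}=Inf\big(H^2(G_3/Z)^{D}\big)=0$, so by exactness of Theorem~\ref{theorem:HSESequence} the map $Res\times\gth$ is injective on $H^2(G_3)^{D}$; as $Res$ lands in $H^2(Z)^{D}=0$, the map $\gth$ alone is injective there. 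A weight bookkeeping on $P(G_3,Z)^{D}$ (a $D$-invariant biadditive form must factor through $G_3^{ab}\times Z$ and carry trivial weight) shows this invariant space is spanned by the two forms $b\,e_0$ and $c\,a_0$.

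The hard part is to see that the image of $\gth$ on $H^2(G_3)^{D}$ is only the \emph{line} $\langle b\,e_0-c\,a_0\rangle$ rather than all of $P(G_3,Z)^{D}$, i.e.\ that every invariant class has $\gth([y])=\mu_2\,b e_0+\mu_3\,c a_0$ with $\mu_2+\mu_3=0$. I would obtain this constraint from the Hall--Witt/Jacobi identity in the central extension $\widehat{G_3}$ determined by a representative cocycle: since $\gamma_3(\widehat{G_3})$ is central, the identity linearises to $\gth([y])(g,[g_1,g_2])+\gth([y])(g_1,[g_2,g])+\gth([y])(g_2,[g,g_1])=0$, which is exactly the combinatorial content of Theorem~\ref{theorem:Identity} specialised to $3\times3$ matrices. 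Taking $(g,g_1,g_2)=(B,F,C)$ and using $[F,C]=z_e$, $[C,B]=1$, $[B,F]=z_a$ yields $\mu_2+0+\mu_3=0$. Hence $\gth([y])\in\langle\gth([z])\rangle$, the injective $\gth$ has one-dimensional image, and therefore $H^2(G_3)^{D}=\langle[z]\rangle\cong\Z/p\Z$, which is the assertion. As in the earlier theorems this uses $p$ odd through Theorem~\ref{theorem:BasicMultiple}, and I would flag the single place where that hypothesis is actually needed.
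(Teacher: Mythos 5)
Your proposal is correct, but it takes a genuinely different route from the paper's. The paper never works with the full centre: it uses the cyclic central subgroup $Z=\{(a,0,0,0,0)\}\cong\Z/p\Z$ supported on the $a$-position alone. With that choice the $D$-invariant part of $P(G_3,Z,\Z/p\Z)$ is already one-dimensional (a biadditive form kills $[G_3,G_3]$ in its first slot, and among $ba'$, $ca'$, $fa'$ only $ca'$ has trivial weight), so invariance alone gives $\gth([y])(g,z_{a'})=\gb\, ca'$, which is $-\gb$ times the value of $\gth([z])$; hence $[y]+\gb[z]\in\Ker(Res\times\gth)$ with no further constraint needed. The price is that $G_3/Z\cong G_2$ is nonabelian, so the paper must invoke the separately proved Theorem~\ref{theorem:GL211First} (itself requiring a conjugation-invariance trick) to kill the inflated class. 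Your choice of the full centre reverses the trade-off: $G_3/Z(G_3)\cong(\Z/p\Z)^3$ is elementary abelian, so both auxiliary vanishings reduce to weight computations via Corollary~\ref{cor:ProductCohomology} and Theorem~\ref{theorem:BasicMultiple}, and Theorem~\ref{theorem:GL211First} is never needed; but $P(G_3,Z(G_3),\Z/p\Z)^{D}$ is then two-dimensional, and the extra input that cuts the image of $\gth$ down to the line through $\gth([z])=be_0-ca_0$ is your Hall--Witt argument. That argument is sound: $\gamma_3(\widehat{G_3})$ lies in the central fibre, so Hall--Witt linearises to the Jacobi identity in $\widehat{G_3}$, and for central $h$ the value $\gth([y])(g,h)$ is the commutator of lifts, so evaluation at $(B,F,C)$ gives $\mu_2+\mu_3=0$ exactly as you claim. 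Two presentational caveats: you should prove those two standard facts explicitly rather than gesture at Theorem~\ref{theorem:Identity}, which is a different (determinant/trace) statement and not literally what you use; and your route, while more self-contained and conceptually explaining \emph{why} only the antisymmetric combination survives, gives up the economy of the paper's inductive chain through $G_2$, which the paper needs anyway for $G_4$.

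One slip does need fixing: the self-weights you assign to the $H^2$ of the cyclic factors. By Theorem~\ref{theorem:BasicMultiple}, scaling a cyclic group by $\gs$ sends a class $[f]\in H^2_{Trivial\ Action}(\Z/p\Z,\Z/p\Z)$ to $\gs[f]$ (the cocycles $f(\gs\cdot,\gs\cdot)$ and $\gs f$ are cohomologous), so the $D$-weight on the $H^2$ of the $a$-component is $t_1/t_2$, not $(t_1/t_2)^2$. As written, your claim that $(t_1/t_2)^2$ and $(t_3/t_1)^2$ are nontrivial fails at $p=3$, where squaring is trivial on $(\Z/3\Z)^*$, and your proof of $H^2_{Trivial\ Action}(Z(G_3),\Z/p\Z)^{D}=0$ would break precisely there. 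With the correct linear weights, every character your argument needs to be nontrivial is of the form $t_i/t_j$ with $i\neq j$, or $t_i^2/(t_jt_k)$ or its inverse with $i,j,k$ distinct, and all of these are nontrivial for every odd prime; so the corrected argument covers $p=3$ as well, in agreement with the remark following the paper's proof that this theorem requires only that $p$ be odd.
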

\begin{proof}
Clearly $z$ is a $2$-cocycle on $G_3$, that is, the cocycle condition is satisfied. This is because we have 
\equ{ (ac'+ be')+ (a+a'+bf')c''+ (b+b')e''= a'c''+b'e''+ a(c'+c'')+b(e'+e''+f'c'')}
for $a,b,c,e,f,a',b',c',e',f',a'',b'',c'',e'',f''\in \Z/p\Z$. We also observe that $[z]$ is invariant under the action of the group $D$. So $[z]\in H^2_{Trivial\ Action}(G_3,\Z/p\Z)^{D}$.
Let $[y]\in H^2_{Trivial\ Action}(G_3,\Z/p\Z)^{D}$. Consider the central subgroup $Z=\{\matthree{}{a}{0}{0}{}{}{0}{0}{}\mid a\in \Z/p\Z\}$ and the map $\gth:H^2_{Trivial\ Action}(G_3,\Z/p\Z)\lra P(G_3,Z,\Z/p\Z)$. We oberve that 
\equ{\gth([y])\bigg(\matthree{}{a}{b}{c}{}{}{e}{f}{},\matthree{}{a'}{0}{0}{}{}{0}{0}{}\bigg)=\gb ca'} for some $\gb\in \Z/p\Z$. We have
\equ{\gth([z])\bigg(\matthree{}{a}{b}{c}{}{}{e}{f}{},\matthree{}{a'}{0}{0}{}{}{0}{0}{}\bigg)=-ca'.}
So $[y+\gb z]\in \Ker(\gth)$. Also $[y+\gb z]\in \Ker(Res:H^2_{Trivial\ Action}(G_3,\Z/p\Z)\lra H^2_{Trivial\ Action}(Z,\Z/p\Z))$, since $H^2_{Trivial\ Action}(Z,\Z/p\Z)^D=0$ and $Res([y+\gb z])\in H^2_{Trivial\ Action}(Z,\Z/p\Z)^D=0$.   

Using the exact sequence 
\equa{H^2_{Trivial\ Action}(&\frac{G_3}{Z},\Z/p\Z)\os{Inf}{\lra} H^2_{Trivial\ Action}(G_3,\Z/p\Z)\\&\os{Res\times \gth}{\lra}  H^2_{Trivial\ Action}(Z,\Z/p\Z) \times P(G_3,Z,\Z/p\Z),}
we get that $[y]+[\gb z]=Inf([x])$ for some $[x]\in H^2_{Trivial\ Action}(\frac{G_3}{Z},\Z/p\Z)$. 
Now $[y]+[\gb z]\in H^2_{Trivial\ Action}(G_3,\Z/p\Z)^{D}$. So we have by averaging $[x]$ with respect to the action of the group $D$
\equ{Inf\bigg(\frac{1}{\mid D\mid}\us{t\in D}{\sum}t\bullet[x]\bigg)=[y]+[\gb z]} and the cohomology class
\equ{\frac{1}{\mid D\mid}\us{t\in D}{\sum}t\bullet[x]\in H^2_{Trivial\ Action}(\frac{G_3}{Z},\Z/p\Z)^{D}.}
We observe now that $\frac{G_3}{Z}\cong G_2$ and the actions of $D$ on $\frac{G_3}{Z}$ and $G_2$ are compatible.
Using Theorem~\ref{theorem:GL211} we get that  $H^2_{Trivial\ Action}(\frac{G_3}{Z},\Z/p\Z)^{D}=0$. Hence we have 
$[y]=-\gb [z]$.

Now the class $[z]$ is nonzero because $\gth([z])\neq 0$. Hence $H^2_{Trivial\ Action}(G_3,\Z/p\Z)^{D}$ $\cong \Z/p\Z$.
This proves the theorem.
\end{proof}
\begin{remark}
	In the proof of Theorem~\ref{theorem:GL211} we only require that $p$ is an odd prime and $p$ can be equal to $3$.
\end{remark}
\begin{theorem}
	\label{theorem:GL211Second}
Let $\ul{\gl}=(2^1>1^2)$ and $p$ be an odd prime. Then \equ{H^2_{Trivial\ Action}(\autgp,\Z/p\Z)=0.}
\end{theorem}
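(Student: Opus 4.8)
The plan is to follow the argument of Theorem~\ref{theorem:GL21Second} essentially verbatim, with the model group $G_1$ of Theorem~\ref{theorem:GL21} replaced by the group $G_3$ of Theorem~\ref{theorem:GL211}. By Theorem~\ref{theorem:StabilityConditions} it suffices to prove $H^2_{Trivial\ Action}(\mcl{P}_{\ul{\gl}},\Z/p\Z)^{\mcl{D}_{\ul{\gl}}}=0$, where $\mcl{P}_{\ul{\gl}}$ is the $p$-Sylow subgroup and $\mcl{D}_{\ul{\gl}}$ the restricted diagonal subgroup. For $\ul{\gl}=(2^1>1^2)$ we have $\grpp=\Z/p^2\Z\oplus(\Z/p\Z)^2$ and $\gr=3$, so a typical element of $\mcl{P}_{\ul{\gl}}$ is the $3\times 3$ matrix
\equ{g^a=\matthree{1+\ga p}{\gb_1 p}{\gb_2 p}{\gga_1}{1}{0}{\gga_2}{\gd}{1}}
with $\ga,\gb_1,\gb_2,\gga_1,\gga_2,\gd\in\Z/p\Z$, and a direct computation gives the multiplication
\equa{\ga''&=\ga+\ga'+\gb_1\gga_1'+\gb_2\gga_2',\quad \gb_1''=\gb_1+\gb_1'+\gb_2\gd',\\ \gb_2''&=\gb_2+\gb_2',\quad \gga_1''=\gga_1+\gga_1',\\ \gga_2''&=\gga_2+\gd\gga_1'+\gga_2',\quad \gd''=\gd+\gd'.}
First I would record that the centre $Z(\mcl{P}_{\ul{\gl}})=\{g^a:\gb_1=\gb_2=\gga_1=\gga_2=\gd=0\}\cong\Z/p\Z$ is the $\ga$-subgroup in the diagonal $(1,1)$ position, on which $\mcl{D}_{\ul{\gl}}$ acts trivially, and that the quotient $\frac{\mcl{P}_{\ul{\gl}}}{Z(\mcl{P}_{\ul{\gl}})}$ is isomorphic to $G_3$ of Theorem~\ref{theorem:GL211} via the dictionary $a=\gb_1,\ b=\gb_2,\ c=\gga_1,\ e=\gga_2,\ f=\gd$, the conjugation action of $\mcl{D}_{\ul{\gl}}$ matching the action of $D$ there.

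Next, fix $[y]\in H^2_{Trivial\ Action}(\mcl{P}_{\ul{\gl}},\Z/p\Z)^{\mcl{D}_{\ul{\gl}}}$ and consider the central extension $0\lra Z(\mcl{P}_{\ul{\gl}})\lra\mcl{P}_{\ul{\gl}}\lra\frac{\mcl{P}_{\ul{\gl}}}{Z(\mcl{P}_{\ul{\gl}})}\lra 0$ together with the piece
\equa{H^2_{Trivial\ Action}\Big(\frac{\mcl{P}_{\ul{\gl}}}{Z(\mcl{P}_{\ul{\gl}})},\Z/p\Z\Big)&\os{Inf}{\lra}H^2_{Trivial\ Action}(\mcl{P}_{\ul{\gl}},\Z/p\Z)\\&\os{Res\times\gth}{\lra}H^2_{Trivial\ Action}(Z(\mcl{P}_{\ul{\gl}}),\Z/p\Z)\times P(\mcl{P}_{\ul{\gl}},Z(\mcl{P}_{\ul{\gl}}),\Z/p\Z)}
of the extended Hochschild--Serre sequence of Theorem~\ref{theorem:HSESequence}. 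I would show $[y]\in\Ker(Res\times\gth)$ in two parts. For $Res$ I apply Theorem~\ref{theorem:KernelRes} with $s=\es$ and $s'=(1,1,1,1,1)$: this index is the $\leq_{MTO}$-minimal nonempty element and satisfies $\mcl{N}^{s'}_{\ul{\gl}}=Z(\mcl{P}_{\ul{\gl}})$, its position $(1,1)$ is the diagonal case of that theorem, and since $p$ is odd we get $[y]\in\Ker(Res)$. For $\gth$ I use that every element of $P(\mcl{P}_{\ul{\gl}},Z(\mcl{P}_{\ul{\gl}}),\Z/p\Z)$ is additive in the first variable, hence factors through $\frac{\mcl{P}_{\ul{\gl}}}{\mcl{P}'_{\ul{\gl}}}$, which by Theorem~\ref{theorem:Commutator} is spanned by the classes of $\gga_1,\gd,\gb_2$; each of these sits in a non-diagonal position and so carries a nontrivial $\mcl{D}_{\ul{\gl}}$-weight, whereas the $\ga$-coordinate of $Z(\mcl{P}_{\ul{\gl}})$ has trivial weight, so $\mcl{D}_{\ul{\gl}}$-invariance of $\gth([y])$ forces $\gth([y])=0$.

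Consequently $[y]=Inf([x])$ for some $[x]\in H^2_{Trivial\ Action}(\frac{\mcl{P}_{\ul{\gl}}}{Z(\mcl{P}_{\ul{\gl}})},\Z/p\Z)$, and averaging $[x]$ over $\mcl{D}_{\ul{\gl}}$ (legitimate since $Inf$ is $\mcl{D}_{\ul{\gl}}$-equivariant, $[y]$ is invariant, and $p\nmid\mid\mcl{D}_{\ul{\gl}}\mid$) lets me take $[x]$ invariant. By the identification with $G_3$ and Theorem~\ref{theorem:GL211}, the invariant space is $\{\gb[z]:\gb\in\Z/p\Z\}$, where $z$ pulls back along the quotient to the cocycle $\ti z(g^a,g^b)=\gb_1\gga_1'+\gb_2\gga_2'$ on $\mcl{P}_{\ul{\gl}}$. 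The final step is to read off from the multiplication rule that the $1$-cochain $v(g^a)=\ga$ satisfies $v(g^a)+v(g^b)-v(g^ag^b)=-(\gb_1\gga_1'+\gb_2\gga_2')$, so $\ti z$ is a coboundary and $Inf([z])=0$; hence $[y]=\gb\,Inf([z])=0$. This yields $H^2_{Trivial\ Action}(\mcl{P}_{\ul{\gl}},\Z/p\Z)^{\mcl{D}_{\ul{\gl}}}=0$ and the theorem, and as in the remark after Theorem~\ref{theorem:GL211} only $p$ odd is needed.

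I expect the main obstacle to be the bookkeeping in identifying $\frac{\mcl{P}_{\ul{\gl}}}{Z(\mcl{P}_{\ul{\gl}})}$ with $G_3$ \emph{as $\mcl{D}_{\ul{\gl}}$-groups}, i.e.\ matching simultaneously the multiplication and the diagonal weights, and in pinning down exactly which coordinates survive in $P(\mcl{P}_{\ul{\gl}},Z(\mcl{P}_{\ul{\gl}}),\Z/p\Z)$ via Theorem~\ref{theorem:Commutator} (the content of the $\gth$-vanishing step). Once these structural identifications are in place, the vanishing of the surviving class $[z]$ under inflation is immediate, the $\ga$-coordinate being an explicit primitive for its pullback.
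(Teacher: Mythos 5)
Your proposal is correct and takes essentially the same route as the paper's own proof: reduce to $H^2_{Trivial\ Action}(\mcl{P}_{\ul{\gl}},\Z/p\Z)^{\mcl{D}_{\ul{\gl}}}$, apply the extended Hochschild--Serre sequence to the central extension by $Z(\mcl{P}_{\ul{\gl}})$ with Theorem~\ref{theorem:KernelRes} (for $s=\es$, $s'=(1,1,1,1,1)$) handling $Res$ and a $\mcl{D}_{\ul{\gl}}$-weight argument handling $\gth$, identify the quotient with $G_3$ so that Theorem~\ref{theorem:GL211} pins the invariant classes down to multiples of $[z]$, and kill the pullback of $z$ as the coboundary of the $(1,1)$-coordinate cochain. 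The only cosmetic difference is that you justify $\gth([y])=0$ by factoring bilinear maps through the abelianization from Theorem~\ref{theorem:Commutator}, where the paper argues directly from the non-symmetry of the positions of $d,f,c$ with that of $a$; the content is identical.
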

\begin{proof}
We show that $H^2_{Trivial\ Action}(\mcl{P}_{\ul{\gl}},\Z/p\Z)^{\mcl{D}_{\ul{\gl}}}=0$.
The group multiplication in $\mcl{P}_{\ul{\gl}}$ is given as follows.
\equa{\matthree {1+pa}{pb}{pc}{d}{1}{0}{e}{f}{1}&\matthree {1+pa'}{pb'}{p c'}{d'}{1}{0}{e'}{f'}{1}\\&=\matthree{1+p(a+a'+bd'+ce')}{(b+b'+cf')p}{p(c+c')}{d+d'}{1}{0}{e+e'+fd'}{f+f'}{1}.}
We observe that $Z(\mcl{P}_{\ul{\gl}})=\{\matthree {1+pa}{0}{0}{0}{1}{0}{0}{0}{1}\mid a\in \Z/p\Z\}$.

Let $[y]\in H^2_{Trivial\ Action}(\mcl{P}_{\ul{\gl}},\Z/p\Z)^{\mcl{D}_{\ul{\gl}}}$ and consider $Res:H^2_{Trivial\ Action}(\mcl{P}_{\ul{\gl}},\Z/p\Z)$ $\lra H^2_{Trivial\ Action}(Z(\mcl{P}_{\ul{\gl}}),\Z/p\Z)$. We observe using Theorem~\ref{theorem:KernelRes} where we choose $s=\es,s'=(1,1,1,1,1)$, we get $[y]\in \Ker(Res)$. Consider the map $\gth:H^2_{Trivial\ Action}(\mcl{P}_{\ul{\gl}},\Z/p\Z)\lra P(\mcl{P}_{\ul{\gl}},Z(\mcl{P}_{\ul{\gl}}),\Z/p\Z)$. The class $[y]\in \Ker(\gth)$. This is because $[y]\in H^2_{Trivial\ Action}(\mcl{P}_{\ul{\gl}},\Z/p\Z)^{\mcl{D}_{\ul{\gl}}}$ and the positions of $d,f,c$ are not symmetric about the diagonal with the position of $a$.

Using the exactness of the sequence 
\equa{H^2_{Trivial\ Action}(&\frac{\mcl{P}_{\ul{\gl}}}{Z(\mcl{P}_{\ul{\gl}})},\Z/p\Z)\os{Inf}{\lra} H^2_{Trivial\ Action}(\mcl{P}_{\ul{\gl}},\Z/p\Z)\\&\os{Res\times \gth}{\lra}  H^2_{Trivial\ Action}(Z(\mcl{P}_{\ul{\gl}}),\Z/p\Z) \times P(\mcl{P}_{\ul{\gl}},Z(\mcl{P}_{\ul{\gl}}),\Z/p\Z),}
we get that there exists $[x]\in H^2_{Trivial\ Action}(\frac{\mcl{P}_{\ul{\gl}}}{Z(\mcl{P}_{\ul{\gl}})},\Z/p\Z)$ such that $Inf([x])=[y]$.

So we have by averaging $[x]$ with respect to the action of the group $\mcl{D}_{\ul{\gl}}$
\equ{Inf\bigg(\frac{1}{\mid \mcl{D}_{\ul{\gl}}\mid}\us{t\in \mcl{D}_{\ul{\gl}}}{\sum}t\bullet[x]\bigg)=[y]} and the cohomology class
\equ{\frac{1}{\mid \mcl{D}_{\ul{\gl}}\mid}\us{t\in \mcl{D}_{\ul{\gl}}}{\sum}t\bullet[x]\in H^2_{Trivial\ Action}(\frac{\mcl{P}_{\ul{\gl}}}{Z(\mcl{P}_{\ul{\gl}})},\Z/p\Z)^{\mcl{D}_{\ul{\gl}}}.}

Now the group $\frac{\mcl{P}_{\ul{\gl}}}{Z(\mcl{P}_{\ul{\gl}})} \cong G_3$ (defined in Theorem~\ref{theorem:GL21}) and the action of $\mcl{D}_{\ul{\gl}}$ on $\frac{\mcl{P}_{\ul{\gl}}}{Z(\mcl{P}_{\ul{\gl}})}$ and the action of $D$ in Theorem~\ref{theorem:GL211} on $G_3$ are compatible. Hence using Theorem~\ref{theorem:GL211} we get that $[y]=\gb [z]$ for some $\gb \in \Z/p\Z$ where 
\equ{z\bigg(\matthree {1+pa}{pb}{pc}{d}{1}{0}{e}{f}{1},\matthree {1+pa'}{pb'}{p c'}{d'}{1}{0}{e'}{f'}{1}\bigg)=bd'+ce'.}
But we observe that $z$ is a 2-coboundary because if $v:\mcl{P}_{\ul{\gl}}\lra \Z/p\Z$ is a $1$-cochain defined as: 
$v(\matthree {1+pa}{pb}{pc}{d}{1}{0}{e}{f}{1})=a$ then we have $v(gg')-v(g)-v(g')=bd'+ce'$ where $g=\matthree {1+pa}{pb}{pc}{d}{1}{0}{e}{f}{1},g'=\matthree {1+pa'}{pb'}{p c'}{d'}{1}{0}{e'}{f'}{1}$. 

Hence we get $[y]=0$ and so $H^2_{Trivial\ Action}(\mcl{P}_{\ul{\gl}},\Z/p\Z)^{\mcl{D}_{\ul{\gl}}}=0$. This proves the theorem.
\end{proof}
\begin{remark}
	In the proof of Theorem~\ref{theorem:GL211Second} we only require that $p$ is an odd prime and $p$ can be equal to $3$.
\end{remark}
\begin{theorem}
\label{theorem:GL221}
Let $p$ be an odd prime. Consider the following $p$-group of order $p^6$.
\equ{G_4=\bigg\{\matthree{}{a}{b}{c}{}{d}{e}{f}{}\mid a,b,c,d,e,f\in \Z/p\Z\bigg\}}
where the group multiplication is given by: 
\equ{\matthree{}{a}{b}{c}{}{d}{e}{f}{}\matthree{}{a'}{b'}{c'}{}{d'}{e'}{f'}{}=\matthree{}{a+a'+bf'}{b+b'}{c+c'}{}{d+d'+cb'}{e+e'+fc'}{f+f'}{}.}
Consider the action of the group $D=(\Z/p\Z)^*\times (\Z/p\Z)^*\times (\Z/p\Z)^*$ on $G_4$ as follows. For $t_i\in (\Z/p\Z)^*,i=1,2,3,$
\equ{(t_1,t_2,t_3)\bullet \matthree{}{a}{b}{c}{}{d}{e}{f}{}=\matthree{}{\frac{t_1}{t_2}a}{\frac{t_1}{t_3}b}{\frac{t_2}{t_1}c}{}{\frac{t_2}{t_3}d}{\frac{t_3}{t_1}e}{\frac{t_3}{t_2}f}{}.}
Then the subgroup of $D$-invariant cohomology classes of $H^2_{Trivial\ Action}(G_4,\Z/p\Z)$ is 
\equ{H^2_{Trivial\ Action}(G_4,\Z/p\Z)^{D}=\{\ga[y]+\gb[z]\mid \ga,\gb\in \Z/p\Z\}\cong \Z/p\Z\oplus\Z/p\Z}
where $y,z\in Z^2_{Trivial\ Action}(G_4,\Z/p\Z)$ are defined as:
\equa{y\bigg(\matthree{}{a}{b}{c}{}{d}{e}{f}{},\matthree{}{a'}{b'}{c'}{}{d'}{e'}{f'}{}\bigg)&=ca'+df',\\
z\bigg(\matthree{}{a}{b}{c}{}{d}{e}{f}{},\matthree{}{a'}{b'}{c'}{}{d'}{e'}{f'}{}\bigg)&=ac'+be'.}
\end{theorem}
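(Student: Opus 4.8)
The plan is to follow the same inflation--restriction strategy used for $G_2$ and $G_3$ in Theorems~\ref{theorem:GL211First} and~\ref{theorem:GL211}, by realising $G_4$ as a central extension of $G_3$. First I would record the elementary bookkeeping: a direct $\delta$-computation shows that both $y$ and $z$ satisfy the $2$-cocycle identity, and a glance at the weights shows each is $D$-invariant (every monomial $ca',df',ac',be'$ pairs a coordinate with the coordinate in the mirror position about the diagonal, so the total character is trivial). Next I would identify the center: solving $gg'=g'g$ forces $b=c=f=0$, so $Z(G_4)=\{(a,0,0,d,e,0)\}\cong(\Z/p\Z)^3$, the three free coordinates $a,d,e$ being exactly the positions that carry correction terms. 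In particular the one-parameter central subgroup $Z_1=\{(0,0,0,d,0,0)\}$, the $(2,3)$-entry of weight $t_2/t_3$, is normal, and quotienting it out deletes the coordinate $d$ together with the term $cb'$; the resulting multiplication on $(a,b,c,e,f)$ is precisely that of $G_3$, and the induced $D$-action matches the one in Theorem~\ref{theorem:GL211}. Thus $G_4/Z_1\cong G_3$ as $D$-groups, the cocycle $z$ is the inflation of the generator $z_3(g,g')=ac'+be'$ of $H^2_{Trivial\ Action}(G_3,\Z/p\Z)^D$, while $y$ involves $d$ and so is \emph{not} inflated from $G_3$.

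With this in hand I would apply the extended Hochschild--Serre sequence of Theorem~\ref{theorem:HSESequence} to the central extension $0\lra Z_1\lra G_4\lra G_3\lra 0$, using the segment
\[
H^2_{Trivial\ Action}(G_3,\Z/p\Z)\os{Inf}{\lra}H^2_{Trivial\ Action}(G_4,\Z/p\Z)\os{Res\times\gth}{\lra}H^2_{Trivial\ Action}(Z_1,\Z/p\Z)\times P(G_4,Z_1,\Z/p\Z).
\]
Let $[w]\in H^2_{Trivial\ Action}(G_4,\Z/p\Z)^D$. Since $Z_1$ is cyclic of the nontrivial weight $t_2/t_3$, Theorem~\ref{theorem:BasicMultiple} gives $H^2_{Trivial\ Action}(Z_1,\Z/p\Z)^D=0$, whence $Res([w])=0$; here I use that $p$ is odd so that $(\Z/p\Z)^*$ contains an element $\neq 1$. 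For the $\gth$-component, $\gth([w])(\cdot,h)$ is a homomorphism $G_4\lra\Z/p\Z$, hence a combination of the coordinates $b,c,f$ (the only additive coordinates); $D$-invariance against the weight $t_2/t_3$ of $Z_1$ leaves only $f$, so $\gth([w])(g,h)=\gga\,f\,d_h$ for some $\gga\in\Z/p\Z$. A short computation gives $\gth([y])(g,h)=-f\,d_h$ and $Res_{Z_1}([y])=0$, so $[w']:=[w]+\gga[y]$ lies in $\Ker(Res\times\gth)=\Ran(Inf)$.

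I would then pull $[w']$ back: writing $[w']=Inf([x])$ and averaging, $[\ol{x}]=\tfrac{1}{\mid D\mid}\us{t\in D}{\sum}t\bullet[x]\in H^2_{Trivial\ Action}(G_3,\Z/p\Z)^D$ (legitimate since $p\nmid\mid D\mid$), so that $Inf([\ol{x}])=[w']$ by $D$-equivariance of $Inf$. By Theorem~\ref{theorem:GL211} this space is $\{\gb[z_3]\}\cong\Z/p\Z$ and $Inf([z_3])=[z]$, giving $[w']=\gb[z]$ and hence $[w]=\gb[z]-\gga[y]\in\{\ga[y]+\gb[z]\}$. This establishes the inclusion $H^2_{Trivial\ Action}(G_4,\Z/p\Z)^D\subseteq\{\ga[y]+\gb[z]\}$.

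Finally I would verify that $[y]$ and $[z]$ are linearly independent, hence nonzero, by a detection argument: against $Z_1$ one has $\gth([y])=-f\,d_h\neq 0$ but $\gth([z])=0$, while against the central $(3,1)$-direction $Z_2=\{(0,0,0,0,e,0)\}$ of weight $t_3/t_1$ one has $\gth([z])=b\,e_h\neq 0$ but $\gth([y])=0$; thus $\ga[y]+\gb[z]=0$ forces $\ga=\gb=0$, and the subspace is $\cong\Z/p\Z\oplus\Z/p\Z$, completing the proof. The main obstacle is the careful weight-and-coordinate bookkeeping in the two $\gth$-computations, together with the verification that $G_4/Z_1$ really is $G_3$ with matching $D$-action and with $z$ matching the inflated generator; once this extension is set up correctly, the remainder is a routine instance of the inflation--restriction machinery already developed. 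Throughout only $p$ odd is used, and the argument is insensitive to $p=3$.
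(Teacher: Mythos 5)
Your proposal is correct and follows essentially the same route as the paper's proof: quotient $G_4$ by the central $d$-coordinate subgroup to realize it as a central extension of $G_3$, correct a $D$-invariant class by a multiple of $[y]$ so that it lies in $\Ker(Res\times\gth)$, inflate from $H^2_{Trivial\ Action}(G_3,\Z/p\Z)^{D}$ (Theorem~\ref{theorem:GL211}) after averaging over $D$, and finally detect linear independence of $[y],[z]$ via $\gth$. The only cosmetic differences are that the paper organizes the $\gth$-computation against the full center $Z(G_4)$ before restricting to the $d$-direction, and it checks independence by showing $\gth([y-\gs z])\neq 0$ for all $\gs$ rather than by your two-direction detection against the $d$- and $e$-coordinate central subgroups.
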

\begin{proof}
Clearly $y,z\in Z^2_{Trivial\ Action}(G_4,\Z/p\Z)$ are indeed cocycles and are invariant with respect to the action of $D$.
We just show below that $y$ is a cocycle. We have
\equ{ca'+df'+(c+c')a''+(d+d'+cb')f''=c'a''+d'f''+c(a'+a''+b'f'')+d(f'+f'')}
for $a,b,c,d,e,f,a',b',c',d',e',f',a'',b'',c'',d'',e'',f'' \in\Z/p\Z$.

Let $[x]\in H^2_{Trivial\ Action}(G_4,\Z/p\Z)^{D}$.
Let $\gth:H^2_{Trivial\ Action}(G_4,\Z/p\Z)^{D}\lra P(G_4,Z(G_4),\Z/p\Z)$ as defined in Proposition~\ref{prop:ThetaMap}.
Then we have 
\equa{\gth([x])\bigg(\matthree{}{a}{b}{c}{}{d}{e}{f}{},\matthree{}{a'}{0}{0}{}{d'}{e'}{0}{}\bigg)&=\ga fd'+\gb ca'+\gga be'+\gd fa'+\gm fe'+\gep cd'\\&+ \gt ce'+\gom ba'+\gn bd'.}
By invariance of $[x]$ under the action of $D$ we conclude that $\gd=\gm=\gep=\gt=\gom=\gn=0$.
So  
\equa{\gth([x])\bigg(\matthree{}{a}{b}{c}{}{d}{e}{f}{},\matthree{}{a'}{0}{0}{}{d'}{e'}{0}{}\bigg)&=\ga fd'+\gb ca'+\gga be'.}
We also have 
\equa{\gth([y])\bigg(\matthree{}{a}{b}{c}{}{d}{e}{f}{},\matthree{}{a'}{0}{0}{}{d'}{e'}{0}{}\bigg)&=ca'- fd'.}
So we get that 
\equa{\gth([x+\ga y])\bigg(\matthree{}{a}{b}{c}{}{d}{e}{f}{},\matthree{}{0}{0}{0}{}{d'}{0}{0}{}\bigg)&=0.}
We also have $[x+\ga y]\in \Ker(H^2_{Trivial\ Action}(G_4,\Z/p\Z)\lra H^2_{Trivial\ Action}(Z,\Z/p\Z))$ where $Z=\{\matthree{}{0}{0}{0}{}{d}{0}{0}{}\mid d\in \Z/p\Z\}$. This is because the position of $d$ in $Z$ is nondiagonal and $D$ acts nontrivially in this position. Therefore $H^2_{Trivial\ Action}(Z,$ $\Z/p\Z)^D=\{0\}$ and $Res([x+\ga y])\in H^2_{Trivial\ Action}(Z,\Z/p\Z)^D=\{0\}$.

 Now using the exactness of the following sequence
\equa{H^2_{Trivial\ Action}(\frac{G_4}{Z},\Z/p\Z)&\os{Inf}{\lra}H^2_{Trivial\ Action}(G_4,\Z/p\Z)\\
&\os{Res\times \gth}{\lra}H^2_{Trivial\ Action}(Z,\Z/p\Z)\times P(G_4,Z,\Z/p\Z)}
we conclude that the cohomology class $[x+\ga y]$ is in the image of the inflation map, that is, there exists, $[\ti{x}]\in H^2_{Trivial\ Action}(\frac{G_4}{Z},\Z/p\Z)$ such that $Inf([\ti{x}])=[x+\ga y]$. 

Now by averaging $[\ti{x}]$ under the action of $D$ we get that 
\equ{Inf(\frac{1}{\mid D\mid}\us{t\in D}{\sum}t\bullet[\ti{x}])=[x+\ga y]}
and $\frac{1}{\mid D\mid}\us{t\in D}{\sum}t\bullet[\ti{x}]\in H^2_{Trivial\ Action}(\frac{G_4}{Z},\Z/p\Z)^D$.
We observe that $\frac{G_4}{Z}\cong G_3$ where $G_3$ is as defined in Theorem~\ref{theorem:GL211} and the actions of $D$ on $\frac{G_4}{Z}$ and $G_3$ are compatible. Using Theorem~\ref{theorem:GL211}, we get that there exists $\gk\in \Z/p\Z$ such that $\frac{1}{\mid D\mid}\us{t\in D}{\sum}t\bullet[\ti{x}]=\gk [z]$ where $[z]$ is as defined in Theorem~\ref{theorem:GL211}. Upon inflation we get that $[x+\ga y]=\gk [z]$ where $[z]$ is as defined in Theorem~\ref{theorem:GL221}. In other words we have $[x]=-\ga [y]+\gk[z]$. 

Now the classes $[y]\neq 0\neq [z]$ because $\gth([y])\neq 0\neq \gth([z])$. We also have $[y]\neq \gs[z]$ for any $\gs\in \Z/p\Z$ because $\gth([y-\gs z])\neq 0$ for any $\gs\in \Z/p\Z$. Hence we conclude that 
$H^2_{Trivial\ Action}(G_4,\Z/p\Z)^D=\Z/p\Z\oplus\Z/p\Z$.
This proves the theorem.  
\end{proof}
\begin{remark}
	In the proof of Theorem~\ref{theorem:GL221} we only require that $p$ is an odd prime and $p$ can be equal to $3$.
\end{remark}
\begin{theorem}
	\label{theorem:GL221Second}
	Let $\ul{\gl}=(2^2>1^1)$ and $p$ be an odd prime and $p\neq 3$. Then \equ{H^2_{Trivial\ Action}(\autgp,\Z/p\Z)=0.}
\end{theorem}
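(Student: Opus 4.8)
The plan is to run the same machine as in the proofs of Theorems~\ref{theorem:GL21Second} and~\ref{theorem:GL211Second}, but for the larger Sylow subgroup attached to $(2^2>1^1)$. By Theorem~\ref{theorem:SecondCohoTriv} it suffices to work with $\Z/p\Z$--coefficients, and by Theorem~\ref{theorem:StabilityConditions} together with the discussion of the restricted diagonal action it is enough to prove
\equ{H^2_{Trivial\ Action}(\mcl{P}_{\ul{\gl}},\Z/p\Z)^{\mcl{D}_{\ul{\gl}}}=0.}
Here $\grpp=(\Z/p^2\Z)^2\oplus \Z/p\Z$, so $\gr=3$ and a typical element of $\mcl{P}_{\ul{\gl}}$ is
\equ{\matthree{1+pa_{11}}{pb_{12}}{pc_{13}}{d_{21}}{1+pa_{22}}{pe_{23}}{f_{31}}{g_{32}}{1},}
with $d_{21}\in\Z/p^2\Z$ (write $d_0$ for its reduction modulo $p$) and all other parameters in $\Z/p\Z$, so that $\mid\mcl{P}_{\ul{\gl}}\mid=p^{9}$. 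First I would write out the multiplication, locate the center, and note that the genuinely new feature compared with $(2^1>1^2)$ is the nonzero entry $pe_{23}$, which couples the two $\Z/p^2\Z$--summands to the $\Z/p\Z$--summand, raises the nilpotency class, and enlarges the space of candidate invariants.

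The engine is the repeated use of the extended Hochschild--Serre sequence (Theorem~\ref{theorem:HSESequence}) along the modified chief series $\mcl{N}^s_{\ul{\gl}}$ of Theorem~\ref{theorem:ModifiedChain}, exactly in the truncated form already displayed in Section~\ref{sec:H2TrivialAction}. For each pair of successive indices $s<_{MTO}s'$ in $S\bs T$, Theorem~\ref{theorem:KernelRes} places every $\mcl{D}_{\ul{\gl}}$--invariant class of $H^2_{Trivial\ Action}(\frac{\mcl{P}_{\ul{\gl}}}{\mcl{N}^s_{\ul{\gl}}},\Z/p\Z)$ inside $\Ker(Res)$, while invariance together with Theorem~\ref{theorem:BasicMultiple} forces the $\gth$--component to vanish on every pair of positions that is not symmetric about the diagonal. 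Propagating this down the series identifies the surviving invariants with those of the small model quotients computed in Theorems~\ref{theorem:GL21},~\ref{theorem:GL211First},~\ref{theorem:GL211} and~\ref{theorem:GL221}; the decisive top quotient is the group $G_4$ of Theorem~\ref{theorem:GL221}, whose invariant space $H^2_{Trivial\ Action}(G_4,\Z/p\Z)^{D}\cong\Z/p\Z\oplus\Z/p\Z$ is spanned by $[y]$ and $[z]$.

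It then remains to trivialize the classes of $\mcl{P}_{\ul{\gl}}$ lying over $[y]$ and $[z]$. The two ``single pair'' classes are handled directly by the diagonal coordinate cochains: reading off the multiplication gives $\pr a_{11}=-(b_{12}d_0'+c_{13}f_{31}')$ and $\pr a_{22}=-(d_0 b_{12}'+e_{23}g_{32}')$, which exhibit the inflations of $[z]$ and $[y]$ as coboundaries. The hard part, and the only place where $p\neq 3$ enters, is the residual class supported on the three abelianization generators $d_0,g_{32},c_{13}$ at the positions $(2,1),(3,2),(1,3)$, which form a $3$--cycle of the $3\times 3$ pattern. To kill it I would apply the algebraic identity of Theorem~\ref{theorem:Identity} with $n=\gr=3$: it exhibits the symmetric bilinear cocycle $\Tra(AB)$ as the coboundary of the $1$--cochain $g^a\mapsto \us{i=2}{\os{3}{\sum}}(-1)^{i-1}\Tra(\frac{A^i}{i})$, whose cubic term $\Tra(\frac{A^3}{3})$ is attached exactly to this $3$--cycle and is defined modulo $p$ only when $3$ is invertible. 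Thus the required coboundary witness exists precisely for $p\neq 3$; this is the main obstacle. Once it is in place, every $\mcl{D}_{\ul{\gl}}$--invariant class is a coboundary, so $H^2_{Trivial\ Action}(\mcl{P}_{\ul{\gl}},\Z/p\Z)^{\mcl{D}_{\ul{\gl}}}=0$ and the theorem follows.
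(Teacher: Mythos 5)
Your skeleton up through the reduction to $G_4$ is indeed the paper's argument: reduce to $H^2_{Trivial\ Action}(\mcl{P}_{\ul{\gl}},\Z/p\Z)^{\mcl{D}_{\ul{\gl}}}$, run the extended Hochschild--Serre sequence three times along the chief series (killing the entries $pe$, $pf$, $pa$, i.e.\ $\mcl{N}^s_{\ul{\gl}}$ for $s=(1,2,1,1,1),(1,2,2,1,1),(1,1,1,1,1)$) so that every invariant class is inflated from the quotient $\cong G_4$, quote Theorem~\ref{theorem:GL221}, and kill the inflations of $[y]$ and $[z]$ by the coordinate cochains read off from the $(2,2)$ and $(1,1)$ entries of the product --- your formulas $\pr a_{11}$, $\pr a_{22}$ are exactly the paper's final step. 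But your last paragraph contains a genuine error, in two parts. First, the ``residual class supported on $d_0,g_{32},c_{13}$'' does not exist. A $\mcl{D}_{\ul{\gl}}$-invariant contribution must pair entries whose diagonal characters cancel; the characters of the positions $(2,1),(3,2),(1,3)$ are $t_2/t_1$, $t_3/t_2$, $t_1/t_3$, whose pairwise products ($t_3/t_1$, $t_1/t_2$, $t_2/t_3$) and whose individual values (relevant for Bockstein-type classes) are all nontrivial for odd $p$. Indeed Theorem~\ref{theorem:GL221} asserts that $H^2_{Trivial\ Action}(G_4,\Z/p\Z)^D$ is exactly two-dimensional, spanned by $[y]$ and $[z]$; once these inflate to coboundaries the proof is over --- there is no third class to kill. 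Second, Theorem~\ref{theorem:Identity} needs no hypothesis on $p$: in Proposition~\ref{prop:Identity} one has $\Tra(A^i)=iX(\cdots)$, each cycle appearing $i$ times, so $\Tra(\frac{A^i}{i})$ is an honest polynomial with integer coefficients and the identity holds over $\Z$ (the paper's remark after Theorem~\ref{theorem:InflationFromAbelinization} notes that result works for every prime). In any case that machinery is irrelevant for $(2^2>1^1)$, where the set $T_1$ of Theorem~\ref{theorem:InflationFromAbelinization} is empty because $\gr_1=2$.

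The place where $p\neq 3$ actually enters --- and which your phrase ``the $\gth$-component vanishes on every pair of positions that is not symmetric about the diagonal'' silently skips --- is the very first Hochschild--Serre step. The central subgroup there is $Z(\mcl{P}_{\ul{\gl}})=\mcl{N}^{(1,2,1,1,1)}_{\ul{\gl}}$, generated by the entry $pe$ at position $(2,1)$, and the abelianization generator $d_0$ sits at the \emph{same} position $(2,1)$. The corresponding term $\gb\, d_0 e'$ of $\gth([x])$ transforms under $\mcl{D}_{\ul{\gl}}$ by the square $(t_2/t_1)^2$, which is trivial for \emph{all} units precisely when $p=3$, since then $t^2=1$ for every $t\in(\Z/3\Z)^*$. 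Invariance therefore forces $\gb=0$ only when some $\gga\in(\Z/p\Z)^*$ satisfies $\gga^2\neq 1$, i.e.\ only for $p\neq 3$; without this, $[x]$ need not lie in $\Ker(\gth)$, the class need not be an inflation, and the whole descent to $G_4$ breaks down. So your proposal misdiagnoses the role of the hypothesis $p\neq 3$: it is needed to start the inflation argument, not to finish it, and as written your descent step is unjustified at exactly that point.
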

\begin{proof}
We show that $H^2_{Trivial\ Action}(\mcl{P}_{\ul{\gl}},\Z/p\Z)^{\mcl{D}_{\ul{\gl}}}=0$.
The group multiplication in $\mcl{P}_{\ul{\gl}}$ is given as follows.
\equa{&\matthree {1+pa}{pb}{pc}{d+pe}{1+pf}{pg}{h}{i}{1}\matthree {1+pa'}{pb'}{pc'}{d'+pe'}{1+pf'}{pg'}{h'}{i'}{1}=\\&\matthree{1+p(a+a'+bd'+ch')}{(b+b'+ci')p}{p(c+c')}{d+d'+p(e+e'+da'+fd'+gh')}{1+p(f+f'+db'+gi')}{p(g+g'+dc')}{h+h'+id'}{i+i'}{1}.}
We observe that $Z(\mcl{P}_{\ul{\gl}})=\{\matthree {1}{0}{0}{pe}{1}{0}{0}{0}{1}\mid e\in \Z/p\Z\}=\mcl{N}^{s}_{\ul{\gl}}$
where $s=(1,2,$ $1,1,1)\in S$.

Let $[x]\in H^2_{Trivial\ Action}(\mcl{P}_{\ul{\gl}},\Z/p\Z)^{\mcl{D}_{\ul{\gl}}}$ and consider $Res:H^2_{Trivial\ Action}(\mcl{P}_{\ul{\gl}},\Z/p\Z)$ $\lra H^2_{Trivial\ Action}(Z(\mcl{P}_{\ul{\gl}}),\Z/p\Z)$. We observe using Theorem~\ref{theorem:KernelRes} where we choose $s=\es,s'=(1,1,1,1,1)$, we get $[x]\in \Ker(Res)$. Consider the map $\gth:H^2_{Trivial\ Action}(\mcl{P}_{\ul{\gl}},\Z/p\Z)\lra P(\mcl{P}_{\ul{\gl}},Z(\mcl{P}_{\ul{\gl}}),\Z/p\Z)$. The class $[x]\in \Ker(\gth)$. This is because $[x]\in H^2_{Trivial\ Action}(\mcl{P}_{\ul{\gl}},\Z/p\Z)^{\mcl{D}_{\ul{\gl}}}$ and the positions of $c,i$ are not symmetric about the diagonal with the position of $a$ and the position of $d,e$ are same, but $p\neq 3$. Note if $p\neq 3$ is an odd prime then there exists $t\in \Z/p\Z$ such that $0\neq t^2\neq 1$.

Using the exactness of the sequence 
\equa{H^2_{Trivial\ Action}(&\frac{\mcl{P}_{\ul{\gl}}}{Z(\mcl{P}_{\ul{\gl}})},\Z/p\Z)\os{Inf}{\lra} H^2_{Trivial\ Action}(\mcl{P}_{\ul{\gl}},\Z/p\Z)\\&\os{Res\times \gth}{\lra}  H^2_{Trivial\ Action}(Z(\mcl{P}_{\ul{\gl}}),\Z/p\Z) \times P(\mcl{P}_{\ul{\gl}},Z(\mcl{P}_{\ul{\gl}}),\Z/p\Z),}
we get that there exists $[\ti{x}]\in H^2_{Trivial\ Action}(\frac{\mcl{P}_{\ul{\gl}}}{Z(\mcl{P}_{\ul{\gl}})},\Z/p\Z)$ such that $Inf([\ti{x}])=[x]$.

So we have by averaging $[\ti{x}]$ with respect to the action of the group $\mcl{D}_{\ul{\gl}}$
\equ{Inf\bigg(\frac{1}{\mid \mcl{D}_{\ul{\gl}}\mid}\us{t\in \mcl{D}_{\ul{\gl}}}{\sum}t\bullet[\ti{x}]\bigg)=[x]} and the cohomology class
\equ{\frac{1}{\mid \mcl{D}_{\ul{\gl}}\mid}\us{t\in \mcl{D}_{\ul{\gl}}}{\sum}t\bullet[\ti{x}]\in H^2_{Trivial\ Action}(\frac{\mcl{P}_{\ul{\gl}}}{Z(\mcl{P}_{\ul{\gl}})},\Z/p\Z)^{\mcl{D}_{\ul{\gl}}}=H^2_{Trivial\ Action}(\frac{\mcl{P}_{\ul{\gl}}}{\mcl{N}^s_{\ul{\gl}}},\Z/p\Z)^{\mcl{D}_{\ul{\gl}}}.}

Hence we have \equ{Inf(H^2_{Trivial\ Action}(\frac{\mcl{P}_{\ul{\gl}}}{\mcl{N}^s_{\ul{\gl}}},\Z/p\Z)^{\mcl{D}_{\ul{\gl}}})=H^2_{Trivial\ Action}(\mcl{P}_{\ul{\gl}},\Z/p\Z)^{\mcl{D}_{\ul{\gl}}}.}

By repeating the steps similarly once again we get that for $s'=(1,2,2,1,1)$, the successor of $s$ in $(S,\leq_{MTO})$,
\equ{Inf(H^2_{Trivial\ Action}(\frac{\mcl{P}_{\ul{\gl}}}{\mcl{N}^{s'}_{\ul{\gl}}},\Z/p\Z)^{\mcl{D}_{\ul{\gl}}})=H^2_{Trivial\ Action}(\mcl{P}_{\ul{\gl}},\Z/p\Z)^{\mcl{D}_{\ul{\gl}}}.}

By repeating the steps similarly once again we get that for $s''=(1,1,1,1,1)$, the successor of $s'$ in $(S,\leq_{MTO})$,
\equ{Inf(H^2_{Trivial\ Action}(\frac{\mcl{P}_{\ul{\gl}}}{\mcl{N}^{s''}_{\ul{\gl}}},\Z/p\Z)^{\mcl{D}_{\ul{\gl}}})=H^2_{Trivial\ Action}(\mcl{P}_{\ul{\gl}},\Z/p\Z)^{\mcl{D}_{\ul{\gl}}}.}

Now we observe that 
\equ{\frac{\mcl{P}_{\ul{\gl}}}{\mcl{N}^{s''}_{\ul{\gl}}}=\bigg\{\matthree {1\mod p}{pb}{pc}{d\mod p}{1\mod p}{pg}{h}{i}{1}\mid b,d,c,g,h,i\in \Z/p\Z\bigg\}\cong G_4}
where $G_4$ is as defined in Theorem~\ref{theorem:GL221}. So using Theorem~\ref{theorem:GL221} we conclude that 
\equ{[x]=\ga [y]+\gb [z] \text{ for some }\ga,\gb\in \Z/p\Z}
where \equ{y\bigg(\matthree {1+pa}{pb}{pc}{d+pe}{1+pf}{pg}{h}{i}{1},\matthree {1+pa'}{pb'}{pc'}{d'+pe'}{1+pf'}{pg'}{h'}{i'}{1}\bigg)=db'+gi'}
and 
\equ{z\bigg(\matthree {1+pa}{pb}{pc}{d+pe}{1+pf}{pg}{h}{i}{1},\matthree {1+pa'}{pb'}{pc'}{d'+pe'}{1+pf'}{pg'}{h'}{i'}{1}\bigg)=bd'+ch'.}
But both $y$ and $z$ are $2$-coboundaries on $\mcl{P}_{\ul{\gl}}$ by observing the elements in positions $(1,1)$ and $(2,2)$ when we multiply two general elements of $\mcl{P}_{\ul{\gl}}$.

So $H^2_{Trivial\ Action}(\mcl{P}_{\ul{\gl}},\Z/p\Z)^{\mcl{D}_{\ul{\gl}}}=0$. This completes the proof of the theorem.
\end{proof} 
\begin{remark}
	In the proof of Theorem~\ref{theorem:GL221Second} we require that $p$ is an odd prime and $p$ is not equal to $3$.
\end{remark}
\begin{theorem}
\label{theorem:KernelThetaOne}
Let $p$ be an odd prime and
let $s,s'\in S\bs T$ be such that $s'$ is the successor element of $s$ in $(S,\leq_{MTO})$. Consider the  map $\gth:H^2_{Trivial\ Action}(\frac{\mcl{P}_{\ul{\gl}}}{\mcl{N}^s_{\ul{\gl}}},\Z/p\Z) \lra P(\frac{\mcl{P}_{\ul{\gl}}}{\mcl{N}^s_{\ul{\gl}}},\frac{\mcl{N}^{s'}_{\ul{\gl}}}{\mcl{N}^s_{\ul{\gl}}},\Z/p\Z)$ which is defined in Theorem~\ref{theorem:HSESequence}. 
Then we have 
 \equ{H^2_{Trivial\ Action}(\frac{\mcl{P}_{\ul{\gl}}}{\mcl{N}^s_{\ul{\gl}}},\Z/p\Z)^{\mcl{D}_{\ul{\gl}}}\subseteq \Ker(\gth),}
in the following cases.
\begin{enumerate}
	\item $Pos(s')=(x',x')$ for some $1\leq x'\leq \gr=\gr_1+\cdots+\gr_k$.
	\item $Pos(s')=(x',y')$ and both $(x',y'),(y',x')\nin \{Pos(t)\mid t\in T\}$.
	\item $Pos(s')=(x',y')\in \{Pos(t)\mid t\in T\}, (y',x')\nin \{Pos(t)\mid t\in T\}$ and $p\neq 3$ is an odd prime.
	\item $Pos(s')=(x',y'),x'-y'>1$ and $(y',x')\in \{Pos(t)\mid t\in T\}$.
	\item $Pos(s')=(x',y'),x'-y'=1$ and $(y',x')\in \{Pos(t)\mid t\in T\}$ and $p\neq 3$ is an odd prime.
\end{enumerate}
\end{theorem}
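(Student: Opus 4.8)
The plan is to reduce $\gth([c])$ to a homomorphism and then decompose it into $\mcl{D}_{\ul{\gl}}$-characters. Write $G=\frac{\mcl{P}_{\ul{\gl}}}{\mcl{N}^s_{\ul{\gl}}}$ and let $h_0$ generate the central subgroup $\frac{\mcl{N}^{s'}_{\ul{\gl}}}{\mcl{N}^s_{\ul{\gl}}}\cong \Z/p\Z$, sitting at the matrix position $Pos(s')=(x',y')$. For $[c]\in H^2_{Trivial\ Action}(G,\Z/p\Z)$ the assignment $g\mapsto \gth([c])(g,h_0)=c(g,h_0)-c(h_0,g)$ is additive in $g$ (as $\gth([c])\in P(G,\frac{\mcl{N}^{s'}_{\ul{\gl}}}{\mcl{N}^s_{\ul{\gl}}},\Z/p\Z)$), hence a homomorphism $G\lra \Z/p\Z$ that factors through $G^{ab}$. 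Since $s\in S\bs T$ gives $\mcl{N}^s_{\ul{\gl}}\sbq \mcl{P}'_{\ul{\gl}}$, Theorem~\ref{theorem:Commutator} identifies $G^{ab}$ with $(\Z/p\Z)^{\mid T\mid}$, one coordinate per $t\in T$, the $t$-coordinate being the reduction of the entry in position $Pos(t)$. First I would record the $\mcl{D}_{\ul{\gl}}$-action: conjugation by $d=\Diag(d_1,\ldots,d_{\gr})\in \mcl{D}_{\ul{\gl}}$ scales the entry in position $(u,v)$ by the character $\gch_{(u,v)}(d)=d_ud_v^{-1}\bmod p$, so the $t$-coordinate transforms by $\gch_{Pos(t)}$ and $h_0$ by $\gch_{(x',y')}$.

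Because $[c]$ is $\mcl{D}_{\ul{\gl}}$-invariant and conjugation acts compatibly on $\gth$ (Remark~\ref{remark:TrivialAction}), one gets $\gth([c])(g,h_0)=\gth([c])(d^{-1}gd,\,d^{-1}h_0 d)$ for all $d$; by bilinearity the coefficient of $\gth([c])(-,h_0)$ on the $t$-coordinate must vanish unless the character $\gch_{Pos(t)}\cdot\gch_{(x',y')}$ is trivial, i.e. unless the multiset $\{u,x'\}$ equals $\{v,y'\}$ with $Pos(t)=(u,v)$. This settles cases $(1)$--$(3)$. In case $(1)$ the position $(x',x')$ is diagonal so $\gch_{(x',y')}$ is trivial, forcing a diagonal $T$-position; as $T$ contains none, $\gth([c])=0$. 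In case $(2)$ neither $(x',y')$ nor $(y',x')$ lies in $\{Pos(t)\mid t\in T\}$ and $(x',y')$ is off-diagonal, so no $T$-coordinate carries the matching character and $\gth([c])=0$. In case $(3)$ the only $T$-position that could match is $(x',y')$ itself, contributing the square character $\gch_{(x',y')}^2$; since $x'\neq y'$ and $p\neq 3$, the map $w\mapsto w^2$ is not the trivial character of $(\Z/p\Z)^*$ (as $p-1\nmid 2$), so that coordinate vanishes too and $\gth([c])=0$.

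The genuine difficulty is in cases $(4)$ and $(5)$, where the transpose $(y',x')$ is itself a $T$-position, necessarily the super-diagonal corner $(1,1,\gr_{m+1},m,m+1)$ of the block $(m,m+1)$, so the character count now permits exactly one surviving coefficient $\gb$: the pairing of the $(y',x')$-coordinate against $h_0$. Here I would show $\gb=0$ directly, mirroring Theorems~\ref{theorem:GL211First} and~\ref{theorem:GL221Second}. The plan is to pass to the subgroup of $G$ generated by the positions $(x',y')$, $(y',x')$ and the intermediate positions completing the relevant $2\times2$ (resp. $3\times3$) block, restrict $[c]$ and its $\gth$-datum there, and then use that conjugation by a unipotent generator $w\in G$ supported at a \emph{third} index acts as the identity on $H^2$ (inner automorphisms act trivially on cohomology, cf. Remark~\ref{remark:TrivialAction}) while translating the entry entering the pairing. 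Matching these two facts forces $\gb$ to be invariant under a nonzero translation, hence $\gb=0$. In case $(4)$ the hypothesis $x'-y'>1$ guarantees such a usable third index, so the argument runs for every odd $p$. In case $(5)$, where $x'-y'=1$ and $\gr_m=\gr_{m+1}=1$, the only available shift produces a relation governed by a squared diagonal character, which is nontrivial precisely when $p\neq 3$ --- exactly the role of the standing hypothesis, as in the ``$0\neq t^2\neq 1$'' step of Theorem~\ref{theorem:GL221Second}.

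The main obstacle I anticipate is case $(5)$: there the isotypic/character bookkeeping is powerless, and one must carry out the explicit cocycle manipulation on the small block model, verifying that the unique $\mcl{D}_{\ul{\gl}}$-invariant contribution to $\gth$ is annihilated by the translation coming from the third index. The delicate point is that this translation is effective exactly when $p\neq 3$, which is why that case, and no other, forces the extra hypothesis.
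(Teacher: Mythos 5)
Your proposal is, in substance, the paper's own proof: cases (1)--(3) by factoring $\gth([c])$ through the abelianization $(\Z/p\Z)^{\mid T\mid}$ of Theorem~\ref{theorem:Commutator} and comparing $\mcl{D}_{\ul{\gl}}$-characters, and cases (4)--(5) by restricting to a $3\times 3$ block subgroup on the indices $y',x'$ plus one auxiliary index, putting the restricted class in normal form via Corollary~\ref{cor:ProductCohomology} and $\mcl{D}_{\ul{\gl}}$-invariance, and then exploiting invariance under conjugation by a unipotent at the auxiliary index; this is exactly what the paper does with $\mcl{F}^{s'}_{\ul{\gl}},\mcl{H}^{s'}_{\ul{\gl}},\mcl{K}^{s'}_{\ul{\gl}}$ and $E_{(x',x'-1)}(p^{l'}c_r)$ in case (4), and with $\mcl{L}^{s'}_{\ul{\gl}}$ and $E_{(x',x'+1)}(pC)$ in case (5). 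Your treatment of cases (1)--(4) is correct as sketched.

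The genuine defect is that your case (5) interchanges the two halves of the argument. When $x'-y'=1$, the position $(x',y')$ is itself a first-subdiagonal $T$-position, so your assertion that ``the character count now permits exactly one surviving coefficient'' is not automatic: killing the coefficient of the $T$-element sitting at $(x',y')$ is precisely the case (3) square-character argument, and that is the \emph{only} place where $p\neq 3$ enters. The translation step, by contrast, needs only $p$ odd: the auxiliary index is $x'+1$, which exists because $s'\nin T$ and $x'-y'=1$ force $l'\geq 1$, hence $\gl_{m'+1}\geq 2$ and $x'+1\leq \gr$, and conjugation by $E_{(x',x'+1)}(pC)$ shifts the $(x',y')$-coordinate by $C$ times the $(x'+1,y')$-entry, with no squared character appearing anywhere. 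So your claims that the isotypic bookkeeping is ``powerless'' in case (5) and that ``this translation is effective exactly when $p\neq 3$'' have the roles exactly reversed; executed as written, you would either overlook the second surviving coefficient or hunt for a square character in the shift relation that is not there. A further point your sketch compresses: invariance of the pairing $\gth([c])$ itself under conjugation is vacuous, since both of its arguments see only conjugation-invariant data (the abelianization of the group and a central subgroup). The translation relation must instead be extracted from the restricted class on the block subgroup, after identifying the coefficient $\gb$ of $\gth([c])$ with the corresponding normal-form coefficient of that restriction --- this identification (the paper's maps $\gth_0$ and $\gth_1$) is the technical heart of cases (4) and (5), and it, rather than the conjugation itself, is what your phrase ``restrict $[c]$ and its $\gth$-datum'' has to deliver.
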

\begin{proof}
For $s\in S\bs T$, we have $\mcl{N}^s_{\ul{\gl}}\subseteq \mcl{P}'_{\ul{\gl}}$. So $\frac{\frac{\mcl{P}_{\ul{\gl}}}{\mcl{N}^s_{\ul{\gl}}}}{[\frac{\mcl{P}_{\ul{\gl}}}{\mcl{N}^s_{\ul{\gl}}},\frac{\mcl{P}_{\ul{\gl}}}{\mcl{N}^s_{\ul{\gl}}}]}\cong \frac{\mcl{P}_{\ul{\gl}}}{[\mcl{P}_{\ul{\gl}},\mcl{P}_{\ul{\gl}}]}\cong (\Z/p\Z)^{\mid T\mid}$ using Theorem~\ref{theorem:Commutator}(3). Hence we have \equ{P(\frac{\mcl{P}_{\ul{\gl}}}{\mcl{N}^s_{\ul{\gl}}},\frac{\mcl{N}^{s'}_{\ul{\gl}}}{\mcl{N}^s_{\ul{\gl}}},\Z/p\Z)\cong\bigg\{\ti{\gb}:\frac{\mcl{P}_{\ul{\gl}}}{[\mcl{P}_{\ul{\gl}},\mcl{P}_{\ul{\gl}}]}\times \frac{\mcl{N}^{s'}_{\ul{\gl}}}{\mcl{N}^s_{\ul{\gl}}}\lra \Z/p\Z\mid \ti{\gb} \text{ is bilinear}\bigg\}.}
If $[d]\in H^2_{Trivial\ Action}(\frac{\mcl{P}_{\ul{\gl}}}{\mcl{N}^s_{\ul{\gl}}},\Z/p\Z)^{\mcl{D}_{\ul{\gl}}}$ then using Theorem~\ref{theorem:HSESequence}, for $g^a\in \mcl{P}_{\ul{\gl}},g^b\in \mcl{N}^{s'}_{\ul{\gl}}$ 
\equa{\gth([d])(g^a\mcl{N}^s_{\ul{\gl}},g^b\mcl{N}^s_{\ul{\gl}})&=d(g^a\mcl{N}^s_{\ul{\gl}},g^b\mcl{N}^s_{\ul{\gl}})-
	d(g^b\mcl{N}^s_{\ul{\gl}},g^a\mcl{N}^s_{\ul{\gl}})\\&=\ti{\gb}((a_t)_{t\in T},b_{s'})=\us{t\in T}{\sum}\gb_t b_{s'}a_t \text{ for some }\gb_t\in \Z/p\Z,t\in T.}
Moreover $\ti{\gb}$ depends only on the cohomology class of $[d]$. Now we show that $\ti{\gb}=0$, that is, $\gb_t=0$ for all $t\in T$ in the cases $(1),(2),(3),(4),(5)$ mentioned.

We use the action of elements of $\mcl{D}_{\ul{\gl}}$ on $[d]$. Since $[d]$ is invariant under this action, we have 
the following. 
\begin{itemize}
	\item If $s'$ is in the diagonal position then $\gb_t=0$ for all $t\in T$, that is, $\ti{\gb}=0$.
	\item If $s'$ is not in the diagonal position and $Pos(s')=(x',y')$ then $\gb_t=0$ for all $t\in T$ such that
	\begin{enumerate}[label=(\roman*)]
		\item $Pos(t)\nin \{(x',y'),(y',x')\}$,
		\item $Pos(t)=Pos(s')=(x',y')$ since $p\neq 3$. Note if $p\neq 3$ then there exists $\gga\in (\Z/p\Z)^*$ such that $\gga^2\neq 1$.
	\end{enumerate} 
\end{itemize}
This establishes the theorem in cases $(1),(2),(3)$.

Now consider the case $(4): Pos(s')=(x',y'), x'-y'>1$ and there exists $t_0\in T$ such that $Pos(t_0)=(y',x')$. It remains to show that $\gb_{t_0}=0$ because 
in this case we have \equ{\gth([d])(g^a\mcl{N}^s_{\ul{\gl}},g^b\mcl{N}^s_{\ul{\gl}})=\gb_{t_0}a_{t_0}b_{s'}\text{ for }g^a\in \mcl{P}_{\ul{\gl}},g^b\in \mcl{N}^{s'}_{\ul{\gl}}.} 
If $x'-y'>1$ then $s'$ appears in a strictly lower triangular entry below the first subdiagonal. Here $t_0$ appears in a strictly upper triangular entry above the first superdiagonal. So the first coordinate of $t_0$ is $1$. Let the first coordinate of $s'$ be $l'\geq 0$.
Consider the positions in the set $\{y',x'-1,x'\}\times \{y',x'-1,x'\}$. Let $A_1,pA_2$ be typical entries of a matrix in $\mcl{P}_{\ul{\gl}}$ in positions $(x'-1,y')$ and $(y',x')$ respectively. Let $r\in S$ be such that the first coordinate of $r$ is $l'$ and $Pos(r)=(x',x'-1)$.
Consider the subgroup of $\mcl{F}^{s'}_{\ul{\gl}}=\{ g^a+e_{(x',y')}(pA_2)+e_{(x',x'-1)}(p^{l'}a_r)+e_{(x'-1,y')}(A_1)\mid 0\leq a_r\leq p-1,g^a\in \mcl{N}^{s'}_{\ul{\gl}}\}$. We can immediately see that $\mcl{F}^{s'}_{\ul{\gl}}$ is a subgroup and $\mcl{N}^s_{\ul{\gl}}\trianglelefteq \mcl{F}^{s'}_{\ul{\gl}}$ is a normal subgroup. The group multiplication in $\frac{\mcl{F}^{s'}_{\ul{\gl}}}{\mcl{N}^s_{\ul{\gl}}}$ is given in terms of $3\times 3$ matrices as follows.
For $g^a\mcl{N}^{s}_{\ul{\gl}},g^b\mcl{N}^{s}_{\ul{\gl}}\in \frac{\mcl{F}^{s'}_{\ul{\gl}}}{\mcl{N}^{s}_{\ul{\gl}}}$ the images of $g^a,g^b\in \mcl{F}^{s'}_{\ul{\gl}}$ the product is given as:
\fo{8}{8}{
	\equa{&\matthree{1\mod p^{l'+1}}{0 \mod p^{l'+1}}{pA_2 \mod p^{l'+1}}{A_1\mod p^{l'+1}}{1\mod p^{l'+1}}{0\mod p^{l'+1}}{p^{l'}a_{s'}\mod p^{l'+1}}{p^{l'}a_r\mod p^{l'+1}}{1\mod p^{l'+1}}\matthree{1\mod p^{l'+1}}{0 \mod p^{l'+1}}{pB_2 \mod p^{l'+1}}{B_1\mod p^{l'+1}}{1\mod p^{l'+1}}{0\mod p^{l'+1}}{p^{l'}b_{s'}\mod p^{l'+1}}{p^{l'}b_r\mod p^{l'+1}}{1\mod p^{l'+1}}\\
		&=\matthree{1\mod p^{l'+1}}{0 \mod p^{l'+1}}{p(A_2+B_2) \mod p^{l'+1}}{A_1+B_1\mod p^{l'+1}}{1\mod p^{l'+1}}{0\mod p^{l'+1}}{p^{l'}(a_{s'}+b_{s'}+a_rB_1)\mod p^{l'+1}}{p^{l'}(a_r+b_r)\mod p^{l'+1}}{1\mod p^{l'+1}}}
}
Here we have  \equ{\gth([d])(g^a\mcl{N}^s_{\ul{\gl}},g^b\mcl{N}^s_{\ul{\gl}})=\gb_{t_0}\ol{A}_2b_{s'}\text{ for }g^a\in \mcl{F}^{s'}_{\ul{\gl}},g^b\in \mcl{N}^{s'}_{\ul{\gl}}}
where $\ol{A}_2$ is the residue of $A_2$ modulo $p$.
Now consider the normal abelian subgroup $\mcl{H}^{s'}_{\ul{\gl}}$ of $\frac{\mcl{F}^{s'}_{\ul{\gl}}}{\mcl{N}^{s}_{\ul{\gl}}}$ given by $3\times 3$ matrices of the form  
\equ{\matthree{1\mod p^{l'+1}}{0 \mod p^{l'+1}}{pA_2 \mod p^{l'+1}}{A_1\mod p^{l'+1}}{1\mod p^{l'+1}}{0\mod p^{l'+1}}{p^{l'}a_{s'}\mod p^{l'+1}}{0\mod p^{l'+1}}{1\mod p^{l'+1}}}

where we have substituted $a_r=0$.
Consider the map $Res:H^2_{Trivial\ Action}(\frac{\mcl{P}_{\ul{\gl}}}{\mcl{N}^s_{\ul{\gl}}},\Z/p\Z)$ $\lra H^2_{Trivial\ Action}(\mcl{H}^{s'}_{\ul{\gl}},\Z/p\Z)$. We have using Corollary~\ref{cor:ProductCohomology}, for $g^a\mcl{N}^s_{\ul{\gl}},g^b\mcl{N}^s_{\ul{\gl}}\in \mcl{H}^{s'}_{\ul{\gl}}$
\equa{Res(d)(g^a\mcl{N}^s_{\ul{\gl}},&g^b\mcl{N}^s_{\ul{\gl}}) \approx_{cohomologous} s_1(A_1 \mod p^{l'+1},B_1\mod p^{l'+1})+s_2(a_{s'},b_{s'})\\&+s_3(pA_2\mod p^{l'+1},pB_2 \mod p^{l'+1})+\ga \ol{A}_1\ol{B}_2+\gga \ol{A}_1b_{s'}+\gb \ol{A}_2b_{s'}}
for some $\ga,\gb,\gga\in \Z/p\Z$ where $s_i, i=1,2,3$ are the restrictions of the cocycle $d$ in the respective components.
Since $[d]$ is invariant under the action of $\mcl{D}_{\ul{\gl}}$ we have $s_i,i=1,2,3$ are cohomolgous to zero and $\ga=\gga=0$.
Hence we have \equan{CohoClass}{Res(d)(g^a\mcl{N}^s_{\ul{\gl}},g^b\mcl{N}^s_{\ul{\gl}}) \approx_{cohomologous} \gb \ol{A}_2b_{s'} \text{ for }g^a\mcl{N}^s_{\ul{\gl}},g^b\mcl{N}^s_{\ul{\gl}}\in \mcl{H}^{s'}_{\ul{\gl}}.}
Let $\mcl{K}^{s'}$ be the abelian subgroup of $\mcl{H}^{s'}_{\ul{\gl}}$ consisting of matrices of the form 
\equ{\matthree{1\mod p^{l'+1}}{0 \mod p^{l'+1}}{0 \mod p^{l'+1}}{A_1\mod p^{l'+1}}{1\mod p^{l'+1}}{0\mod p^{l'+1}}{p^{l'}a_{s'}\mod p^{l'+1}}{0\mod p^{l'+1}}{1\mod p^{l'+1}}}
obtained by substituting $a_r=0$ and $A_2=0$ in matrices of $\frac{\mcl{F}^{s'}_{\ul{\gl}}}{\mcl{N}^{s}_{\ul{\gl}}}$.
Consider the map $\gth_0: H^2_{Trivial\ Action}(\mcl{H}^{s'}_{\ul{\gl}},\Z/p\Z)\lra P(\mcl{H}^{s'}_{\ul{\gl}},\mcl{K}^{s'}_{\ul{\gl}},\Z/p\Z)$ as defined in Proposition~\ref{prop:ThetaMap}.
We have for $g^a\mcl{N}^s_{\ul{\gl}}\in \mcl{H}^{s'}_{\ul{\gl}},g^b\mcl{N}^s_{\ul{\gl}}\in \mcl{K}^{s'}_{\ul{\gl}}$
\equ{\gb \ol{A}_2b_{s'}=\gth_0(Res([d]))(g^a\mcl{N}^s_{\ul{\gl}},g^b\mcl{N}^s_{\ul{\gl}}) \text{ from Equation~\ref{Eq:CohoClass}.}}
So, for  $g^a\mcl{N}^s_{\ul{\gl}}\in \mcl{H}^{s'}_{\ul{\gl}},g^b\mcl{N}^s_{\ul{\gl}}\in \frac{\mcl{N}^{s'}_{\ul{\gl}}}{\mcl{N}^s_{\ul{\gl}}}\subseteq \mcl{K}^{s'}_{\ul{\gl}}$,
\equa{\gb \ol{A}_2b_{s'}=\gth_0(Res([d]))(g^a\mcl{N}^s_{\ul{\gl}},g^b\mcl{N}^s_{\ul{\gl}})	
	&=d(g^a\mcl{N}^s_{\ul{\gl}},g^b\mcl{N}^s_{\ul{\gl}})-d(g^b\mcl{N}^s_{\ul{\gl}},g^a\mcl{N}^s_{\ul{\gl}})\\&=\gth([d])(g^a\mcl{N}^s_{\ul{\gl}},g^b\mcl{N}^s_{\ul{\gl}})=\gb_{t_0}\ol{A}_2b_{s'}.}
Hence $\gb=\gb_{t_0}$. Therefore for
$g^a\mcl{N}^s_{\ul{\gl}}\in \mcl{H}^{s'}_{\ul{\gl}},g^b\mcl{N}^s_{\ul{\gl}}\in \mcl{K}^{s'}_{\ul{\gl}}$
\equ{\gth_0(Res([d]))(g^a\mcl{N}^s_{\ul{\gl}},g^b\mcl{N}^s_{\ul{\gl}})=\gb_{t_0} \ol{A}_2b_{s'}.}
Now $Res([d])$ is invariant under the conjugation of the matrix $g^c=E_{(x',x'-1)}(p^{l'}c_r)$ where we have as a $3\times 3$ matrix:
\equ{g^c\mcl{N}^s_{\ul{\gl}}=\matthree{1\mod p^{l'+1}}{0 \mod p^{l'+1}}{0 \mod p^{l'+1}}{0\mod p^{l'+1}}{1\mod p^{l'+1}}{0\mod p^{l'+1}}{0\mod p^{l'+1}}{p^{l'}c_r\mod p^{l'+1}}{1\mod p^{l'+1}}.}
This conjugation detects the position $(x',y')$ and so we get for
$g^a\mcl{N}^s_{\ul{\gl}}\in \mcl{H}^{s'}_{\ul{\gl}},g^b\mcl{N}^s_{\ul{\gl}}\in \mcl{K}^{s'}_{\ul{\gl}}$
\equa{\gb_{t_0}\ol{A}_2b_{s'}&=\gth_0(Res([d]))(g^a\mcl{N}^s_{\ul{\gl}},g^b\mcl{N}^s_{\ul{\gl}})\\&=
\gth_0(Res([d]))(g^cg^a(g^c)^{-1}\mcl{N}^s_{\ul{\gl}},g^cg^b(g^c)^{-1}\mcl{N}^s_{\ul{\gl}})\\&=\gb_{t_0}\ol{A}_2(b_{s'}+c_r\ol{B}_1).}
Choosing $c_r\neq 0,\ol{B}_1\neq 0$ we get $\gb_{t_0}=0$. Hence $\gth([d])=0$ which implies $[d]\in \Ker(\gth)$. This proves the theorem in case $(4)$.

Now consider case $(5): Pos(s')=(x',y'),x'-y'=1$ and there exists $t_0\in T$ such that $Pos(t_0)=(y',x')$. Here the first coordinate of $t_0$ is $1$. In this scenario  for $g^a\in \mcl{P}_{\ul{\gl}},g^b\in \mcl{N}^{s'}_{\ul{\gl}}$, by invariance of $[d]$ under the action of $\mcl{D}_{\ul{\gl}}$ and $p\neq 3$, we conclude that
\equ{\gth([d])(g^a\mcl{N}^s_{\ul{\gl}},g^b\mcl{N}^s_{\ul{\gl}})=d(g^a\mcl{N}^s_{\ul{\gl}},g^b\mcl{N}^s_{\ul{\gl}})-d(g^b\mcl{N}^s_{\ul{\gl}},g^a\mcl{N}^s_{\ul{\gl}})=\gb_{t_0}a_{t_0}b_{s'}}
for some $\gb_{t_0}\in \Z/p\Z$. We show that $\gb_{t_0}=0$.

Here if the first coordinate of $s'$ is $l'$ then $s'\nin T, x'-y'=1 \Ra l'\geq 1$. So $x'+1\leq \gr=\gr_1+\cdots+\gr_k$. Let $A_1,pA_2$ be two typical elements which occur in positions $(x'+1,y')$ and $(y',x')$ in the matrix respectively. Note that $t_0=(1,1,1,m',m'+1)$ for some $m'$ where $\gr_{m'}=\gr_{m'+1}=1$ since $t_0\in T$ occurs in a first superdiagonal entry. Consider the abelian subgroup $\mcl{L}^{s'}_{\ul{\gl}}=\{ g^a+e_{(x'+1,y')}(p^{l'-1}A_1)+e_{(y',x')}(pA_2)
\mid g^a\in \mcl{N}^{s'}_{\ul{\gl}}\}$. We can immediately see that $\mcl{L}^{s'}_{\ul{\gl}}$ is a subgroup and $\mcl{N}^s_{\ul{\gl}}\trianglelefteq \mcl{L}^{s'}_{\ul{\gl}}$ is a normal subgroup. The group multiplication in $\frac{\mcl{L}^{s'}_{\ul{\gl}}}{\mcl{N}^s_{\ul{\gl}}}$ is given in terms of $3\times 3$ matrices as follows. Let $l''=\min(\gl_{m'+2},l'+1)\leq \gl_{m'+1}$. Now $l'+1\leq \gl_{m'+1} \Llra l'\leq \gl_{m'+2}$. So $l'\leq l'' \leq l'+1$.
For $g^a\mcl{N}^{s}_{\ul{\gl}},g^b\mcl{N}^{s}_{\ul{\gl}}\in \frac{\mcl{L}^{s'}_{\ul{\gl}}}{\mcl{N}^{s}_{\ul{\gl}}}$ the images of $g^a,g^b\in \mcl{L}^{s'}_{\ul{\gl}}$ the product is given as:
\fo{8}{8}{
	\equa{&\matthree{1\mod p^{l'+1}}{pA_2 \mod p^{l'+1}}{0 \mod p^{l'+1}}{p^{l'}a_{s'}\mod p^{l'+1}}{1\mod p^{l'+1}}{0\mod p^{l'+1}}{p^{l'-1}A_1\mod p^{l'}}{0\mod p^{l'}}{1\mod p^{l''}}\matthree{1\mod p^{l'+1}}{pB_2 \mod p^{l'+1}}{0 \mod p^{l'+1}}{p^{l'}b_{s'}\mod p^{l'+1}}{1\mod p^{l'+1}}{0\mod p^{l'+1}}{p^{l'-1}B_1\mod p^{l'}}{0\mod p^{l'}}{1\mod p^{l''}}\\
		&=\matthree{1\mod p^{l'+1}}{p(A_2+B_2) \mod p^{l'+1}}{0 \mod p^{l'+1}}{p^{l'}(a_{s'}+b_{s'})\mod p^{l'+1}}{1\mod p^{l'+1}}{0\mod p^{l'+1}}{p^{l'-1}(A_1+B_1)\mod p^{l'}}{0\mod p^{l'}}{1\mod p^{l''}}}
}
Consider the map $Res:H^2_{Trivial\ Action}(\frac{\mcl{P}_{\ul{\gl}}}{\mcl{N}^s_{\ul{\gl}}},\Z/p\Z) \lra H^2_{Trivial\ Action}(\frac{\mcl{L}^{s'}_{\ul{\gl}}}{\mcl{N}^s_{\ul{\gl}}},\Z/p\Z)$.
We have by using Corollary~\ref{cor:ProductCohomology} for $g^a\in \mcl{L}^{s'}_{\ul{\gl}},g^b\in \mcl{L}^{s'}_{\ul{\gl}}$
\equa{Res(d)&(g^a\mcl{N}^s_{\ul{\gl}},g^b\mcl{N}^s_{\ul{\gl}})\approx_{cohomologous}s_1(a_{s'},b_{s'})+s_2(p^{l'-1}A_1\mod p^{l'},p^{l'-1}B_1\mod p^{l'})\\&+s_3(pA_2\mod p^{l'+1},pB_2\mod p^{l'+1})+\ga_1\ol{A}_1\ol{B}_2+\gga_1\ol{A}_1b_{s'}+\gb_1\ol{A}_2b_{s'}}
for some $\ga_1,\gb_1,\gga_1\in \Z/p\Z$ where $s_i,i=1,2,3$ are the restrictions of the cocycle $d$ to the respective components, $\ol{A}_1,\ol{A}_2,\ol{B}_2$ are the residues of $A_1,A_2,B_2$ modulo $p$ respectively.
By invariance of $[d]$ under the action of the group $\mcl{D}_{\ul{\gl}}$ and using Theorem~\ref{theorem:BasicMultiple}, since $p$ is odd, we get that $s_i,i=1,2,3$ are cohomologous to zero and also $\ga_1=0=\gga_1$. 
So  \equ{Res(d)(g^a\mcl{N}^s_{\ul{\gl}},g^b\mcl{N}^s_{\ul{\gl}})\approx_{cohomologous}\gb_1\ol{A}_2b_{s'}.}
Let $\gth_1:H^2_{Trivial\ Action}(\frac{\mcl{L}^{s'}_{\ul{\gl}}}{\mcl{N}^s_{\ul{\gl}}},\Z/p\Z) \lra P(\frac{\mcl{L}^{s'}_{\ul{\gl}}}{\mcl{N}^s_{\ul{\gl}}},\frac{\mcl{L}^{s'}_{\ul{\gl}}}{\mcl{N}^s_{\ul{\gl}}},\Z/p\Z)$ where $\gth_1$ is given in Proposition~\ref{prop:ThetaMap}.
Now we have for $g^a\in \mcl{L}^{s'}_{\ul{\gl}},g^b\in \mcl{N}^{s'}_{\ul{\gl}}$ \equ{\gb_1\ol{A}_2b_{s'}=\gth_1(Res([d]))(g^a\mcl{N}^s_{\ul{\gl}},g^b\mcl{N}^s_{\ul{\gl}})=\gth([d])(g^a\mcl{N}^s_{\ul{\gl}},g^b\mcl{N}^s_{\ul{\gl}})=\gb_{t_0}\ol{A}_2b_{s'}\Ra \gb_1=\gb_{t_0}.}
Hence, for $g^a\in \mcl{L}^{s'}_{\ul{\gl}},g^b\in \mcl{L}^{s'}_{\ul{\gl}}$
\equ{Res(d)(g^a\mcl{N}^s_{\ul{\gl}},g^b\mcl{N}^s_{\ul{\gl}})\approx_{cohomologous}\gb_{t_0}\ol{A}_2b_{s'}.}

Now $Res([d])$ is invariant under conjugation by the following $3\times 3$ matrix which is the image of $E_{(x',x'+1)}(pC)\in \mcl{P}_{\ul{\gl}}$.
\equ{\ol{E}_{(x',x'+1)}(pC)\matthree{1\mod p^{l'+1}}{0 \mod p^{l'+1}}{0 \mod p^{l'+1}}{0\mod p^{l'+1}}{1\mod p^{l'+1}}{pC\mod p^{l'+1}}{0\mod p^{l'}}{0\mod p^{l'}}{1\mod p^{l''}}.}
We get for $g^a\in \mcl{L}^{s'}_{\ul{\gl}},g^b\in \mcl{L}^{s'}_{\ul{\gl}}$
\equa{&\gb_{t_0}\ol{A}_2b_{s'}=\gth_1(Res([d]))(g^a\mcl{N}^s_{\ul{\gl}},g^b\mcl{N}^s_{\ul{\gl}})=\\&\gth_1(Res([d]))(\ol{E}_{(x',x'+1)}(pC)g^a\ol{E}_{(x',x'+1)}(-pC)\mcl{N}^s_{\ul{\gl}},\ol{E}_{(x',x'+1)}(pC)g^b\ol{E}_{(x',x'+1)}(-pC)\mcl{N}^s_{\ul{\gl}})\\
&=\gb_{t_0}(\ol{A}_2+\ol{C}\ol{A}_1)b_{s'}\Ra \gb_{t_0}=0.}
Thus $\gth([d])=0\Ra [d]\in \Ker(\gth)$. This completes case $(5)$ of the theorem.

Hence the theorem follows.
\end{proof}
\begin{defn}[Standard $2$-Coboundaries on $\mcl{P}_{\ul{\gl}}$]
	\label{defn:StandardCoboundary}
Let $s\in S\bs\{\es\}$. We define the standard 1-cochain $w_{s}:\mcl{P}_{\ul{\gl}}\lra \Z/p\Z$ as:
\equ{\text{For }g^a\in \mcl{P}_{\ul{\gl}}, w_s(g^a)=a_s\mod p.}
Now we define the standard $2$-coboundary $v_s:\mcl{P}_{\ul{\gl}}\times \mcl{P}_{\ul{\gl}}\lra \Z/p\Z$ as:
\equ{\text{For }g^a,g^b, g^c=g^ag^b\in \mcl{P}_{\ul{\gl}},v_s(g^a,g^b)=w_s(g^c)-w_s(g^a)-w_s(g^b)=c_s-a_s-b_s \mod p.}
Hence by definition $v_s\in B^2_{Trivial\ Action}(\mcl{P}_{\ul{\gl}},\Z/p\Z)$.
\end{defn}
\begin{theorem}
	\label{theorem:KernelThetaTwo}
	Let $p\neq 3$ be an odd prime and
	let $s,s'\in S\bs T$ be such that $s'$ is the successor element of $s$ in $S\bs T$. 
	Let $Pos(s')=(x',x'+1)$.  Consider the  map $\gth:H^2_{Trivial\ Action}(\frac{\mcl{P}_{\ul{\gl}}}{\mcl{N}^s_{\ul{\gl}}},\Z/p\Z) \lra P(\frac{\mcl{P}_{\ul{\gl}}}{\mcl{N}^s_{\ul{\gl}}},\frac{\mcl{N}^{s'}_{\ul{\gl}}}{\mcl{N}^s_{\ul{\gl}}},\Z/p\Z)$ which is defined in Theorem~\ref{theorem:HSESequence}. Also consider the map $Inf:H^2_{Trivial\ Action}(\frac{\mcl{P}_{\ul{\gl}}}{\mcl{N}^s_{\ul{\gl}}},\Z/p\Z)\lra H^2_{Trivial\ Action}(\mcl{P}_{\ul{\gl}},\Z/p\Z)$.
	Then given any cohomology class $[x]\in H^2_{Trivial\ Action}(\frac{\mcl{P}_{\ul{\gl}}}{\mcl{N}^s_{\ul{\gl}}},$ $\Z/p\Z)^{\mcl{D}_{\ul{\gl}}}$ there exists a cohomology class $[v] \in H^2_{Trivial\ Action}(\frac{\mcl{P}_{\ul{\gl}}}{\mcl{N}^s_{\ul{\gl}}},\Z/p\Z)^{\mcl{D}_{\ul{\gl}}}$ such that 
	$Inf([v])=0$ and $[x]-[v]\in \Ker(\gth)$. 
\end{theorem}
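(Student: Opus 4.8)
The plan is to run the same mechanism as in the proof of Theorem~\ref{theorem:KernelThetaOne}, but now in the one configuration that theorem leaves open: $Pos(s')=(x',x'+1)$ lies on the first superdiagonal, so its mirror position $(x'+1,x')$ is a first subdiagonal slot carrying a free $p^0$-coefficient and is therefore $Pos(t_0)$ for a unique $t_0\in T$. In this situation $\gth$ need not annihilate the $\mcl{D}_{\ul{\gl}}$-invariant class $[x]$, and the content of the theorem is that whatever $\gth([x])$ turns out to be, it can be matched by a class $[v]$ that becomes trivial after full inflation to $\mcl{P}_{\ul{\gl}}$. Such a $[v]$ does not obstruct the descending induction that is used to conclude $H^2_{Trivial\ Action}(\mcl{P}_{\ul{\gl}},\Z/p\Z)^{\mcl{D}_{\ul{\gl}}}=0$, since it contributes nothing to $H^2_{Trivial\ Action}(\mcl{P}_{\ul{\gl}},\Z/p\Z)$.

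First I would pin down $\gth([x])$. Exactly as in Theorem~\ref{theorem:KernelThetaOne}, from $\mcl{N}^s_{\ul{\gl}}\subseteq\mcl{P}'_{\ul{\gl}}$ one identifies $P(\frac{\mcl{P}_{\ul{\gl}}}{\mcl{N}^s_{\ul{\gl}}},\frac{\mcl{N}^{s'}_{\ul{\gl}}}{\mcl{N}^s_{\ul{\gl}}},\Z/p\Z)$ with the bilinear forms on $\frac{\mcl{P}_{\ul{\gl}}}{[\mcl{P}_{\ul{\gl}},\mcl{P}_{\ul{\gl}}]}\times\frac{\mcl{N}^{s'}_{\ul{\gl}}}{\mcl{N}^s_{\ul{\gl}}}\cong(\Z/p\Z)^{\mid T\mid}\times\Z/p\Z$, so that $\gth([x])(g^a,g^b)=\us{t\in T}{\sum}\gb_t a_t b_{s'}$. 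Using the $\mcl{D}_{\ul{\gl}}$-invariance of $[x]$ and comparing the characters by which the torus scales $a_t$ and $b_{s'}$, every summand whose position $Pos(t)$ is not the mirror $(x'+1,x')$ of $Pos(s')$ is forced to vanish; here the hypothesis $p\neq3$ is precisely what removes the one remaining ambiguous summand, namely the case $Pos(t)=Pos(s')$, which can occur when $\gr_m=\gr_{m+1}=1$, because for $p\neq3$ there is $\gga\in(\Z/p\Z)^*$ with $\gga^2\neq1$. This leaves $\gth([x])(g^a,g^b)=\gb_{t_0}a_{t_0}b_{s'}$ for the unique $t_0\in T$ with $Pos(t_0)=(x'+1,x')$.

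Next I would produce $[v]$. Let $r\in S\bs T$ be the diagonal index at position $(x'+1,x'+1)$ whose $p$-level equals the first coordinate $l'$ of $s'$, let $v_r$ be the standard $2$-coboundary of Definition~\ref{defn:StandardCoboundary}, and set $[v]=\gb_{t_0}[v_r]$. The points to check are: (i) $v_r$ descends to a cocycle on $\frac{\mcl{P}_{\ul{\gl}}}{\mcl{N}^s_{\ul{\gl}}}$, which via the block multiplication law of $\mcl{P}_{\ul{\gl}}$ amounts to verifying that every coordinate entering the cross-terms $c_r-a_r-b_r$ of the $(x'+1,x'+1)$-entry is $>_{MTO}s$, hence visible on the quotient; (ii) $[v]$ is $\mcl{D}_{\ul{\gl}}$-invariant, which is automatic since $r$ sits on the diagonal and is fixed by the torus, so $v_r=\gd(-w_r)$ with $w_r$ invariant; and (iii) $\gth([v])=\gth([x])$, which holds because $[v_r]$ is invariant, forcing $\gth([v_r])\propto a_{t_0}b_{s'}$, while the proportionality constant is read off the single product $(g^a)_{(x'+1,x')}(g^b)_{(x',x'+1)}=a_{t_0}\cdot p^{l'}b_{s'}$ that contributes $a_{t_0}b_{s'}$ at level $p^{l'}$, the swapped term $a_{s'}b_{t_0}$ dying on $\mcl{N}^{s'}_{\ul{\gl}}$ since $b_{t_0}=0$ there. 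Finally, $v_r\in B^2_{Trivial\ Action}(\mcl{P}_{\ul{\gl}},\Z/p\Z)$ by definition, so inflating $v_r$ back to $\mcl{P}_{\ul{\gl}}$ returns a coboundary and $Inf([v])=0$; hence $[x]-[v]\in\Ker(\gth)$, with $[v]$ already in the invariant submodule.

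The main obstacle will be item (i) together with the degenerate level bookkeeping at block boundaries. Because consecutive parts of $\ul{\gl}$ differ by exactly one, an $s'$ at a between-block superdiagonal slot can carry the top level $l'=\gl_{m+1}$, for which the diagonal block $(m+1,m+1)$ has no slot at level $l'$ and the naive cross-term $p^{l'}a_{t_0}b_{s'}$ already vanishes modulo $p^{\gl_{m+1}}$; such cases must be isolated and handled separately, either by using the companion diagonal entry $(x',x')$ or by forcing $\gb_{t_0}=0$ directly. Checking that $v_r$ genuinely descends — that the remaining cross-terms of the $(x'+1,x'+1)$-entry never drop below $\mcl{N}^s_{\ul{\gl}}$ in the modified order — is where the careful design of $\leq_{MTO}$ and of the chief series $\mcl{N}^s_{\ul{\gl}}$ (with $T$ pushed to the top) does the real work, exactly as the explicit diagonal-entry coboundary arguments did in the low-rank computations of Theorems~\ref{theorem:GL21Second},~\ref{theorem:GL211Second},~\ref{theorem:GL221Second}.
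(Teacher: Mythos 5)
Your first step is exactly the paper's: using $\mcl{N}^s_{\ul{\gl}}\subseteq\mcl{P}'_{\ul{\gl}}$ to view $P(\frac{\mcl{P}_{\ul{\gl}}}{\mcl{N}^s_{\ul{\gl}}},\frac{\mcl{N}^{s'}_{\ul{\gl}}}{\mcl{N}^s_{\ul{\gl}}},\Z/p\Z)$ as bilinear forms on the abelianization, and then using $\mcl{D}_{\ul{\gl}}$-invariance together with $p\neq 3$ to reduce $\gth([x])$ to the single term $\gb_{t_0}a_{t_0}b_{s'}$ with $Pos(t_0)=(x'+1,x')$; your overall plan of subtracting a class built from a standard diagonal-entry coboundary, which inflates to zero, is also the paper's. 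The gap lies in your choice of diagonal entry: you take $r$ at $(x'+1,x'+1)$, the paper takes $r$ at $(x',x')$, and the difference is not cosmetic. First, as you yourself note, your slot need not exist: when $(x',x'+1)$ straddles blocks $(m,m+1)$ and $l'=\gl_{m+1}=\gl_m-1$, the diagonal of block $m+1$ carries no coefficient at level $l'$. You leave this case open, and your fallback of ``forcing $\gb_{t_0}=0$ directly'' is unsupported --- nothing in your argument rules out $\gb_{t_0}\neq 0$ there, and indeed the raison d'\^etre of this theorem is that invariant classes with nonvanishing antisymmetrization do occur and must be absorbed by a class inflating to zero. The paper's slot $(x',x')$ always exists, since $l'\leq \gl_m-1$ there, so no case split is needed.

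Second, and this is the fatal defect in the main construction, your item (i) is false: the cochain $c_r-a_r-b_r$ with $r$ at $(x'+1,x'+1)$ does not descend to $\frac{\mcl{P}_{\ul{\gl}}}{\mcl{N}^s_{\ul{\gl}}}$. The $(x'+1,x'+1)$ entry of $g^ag^b$ contains the cross term $(g^a)_{(x'+1,x'+2)}(g^b)_{(x'+2,x'+1)}$, which at level $l'$ contributes $a_{t^{*}}b_{t''}$, where $t^{*}$ is the index of level $l'$ at position $(x'+1,x'+2)$ and $t''\in T$ is the subdiagonal index at $(x'+2,x'+1)$. Whenever $t^{*}$ exists and lies in $S\bs T$, it has the same level and the same value of $\gch$ as $s'$ but the next row, so it is precisely the predecessor $s$; hence $a_{t^{*}}=a_s$ is a coordinate that varies freely inside a coset of $\mcl{N}^s_{\ul{\gl}}$, by Theorem~\ref{theorem:CosetRepresentatives}(3) and Theorem~\ref{theorem:Commutator}(2), and the ambiguity does not cancel in $c_r-a_r-b_r$. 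Concretely, take $\ul{\gl}=(2^2>1^1)$ (the group of Theorem~\ref{theorem:GL221Second}) and $s'=(1,1,2,1,1)$ at position $(1,2)$: then $s=(1,2,1,1,2)$ at position $(2,3)$, your $r$ sits at $(2,2)$, and pairing $g^b=E_{(3,2)}(1)$ with the two representatives $g^a=E_{(2,3)}(p)\in\mcl{N}^s_{\ul{\gl}}$ and $g^a=I$ of the identity coset gives your $v$ the values $1$ and $0$ respectively. The paper's choice $(x',x')$ avoids this because every cross term of the $(x',x')$ entry involves only coefficients of level $<l'$, coefficients of level $l'$ at superdiagonal positions in rows $<x'$ (hence $>_{TO}s'$), or $a_{s'}$ and $b_{t_0}$ themselves, all of which are determined by the cosets; the wanted term $a_{t_0}b_{s'}$ then arises from the reversed product $g^bg^a$, giving $\gth([v])=-a_{t_0}b_{s'}$. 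So your argument, as written, only covers the configurations where neither obstruction occurs; repairing it amounts to switching to the paper's choice of $r$, after which your outline coincides with the paper's proof.
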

\begin{proof}
	 Let $s'=(l',i',j',m',n')$ and let $r\in S$ be such that the first coordinate of $r$ is $l'$ and $Pos(r)=(x',x')$. Note that 
	 $r\leq_{MTO}s<_{MTO}s'$ and $l'>0$. So the first coordinate of $s$ is also $l'$. Define a cocycle $v\in Z^2_{Trivial\ Action}(\frac{\mcl{P}_{\ul{\gl}}}{\mcl{N}^s_{\ul{\gl}}},\Z/p\Z)^{\mcl{D}_{\ul{\gl}}}$ as follows.
	 For $g^a,g^b\in \mcl{P}_{\ul{\gl}}$ with $g^c=g^ag^b\in \mcl{P}_{\ul{\gl}}$, 
	 \equ{v(g^a\mcl{N}^s_{\ul{\gl}},g^b\mcl{N}^s_{\ul{\gl}})=c_r-a_r-b_r\mod p.}
	 We will show the following.
	 \begin{enumerate}
	 	\item $v$ is well defined.
	 	\item $v$ is a cocycle and $v\in Z^2_{Trivial\ Action}(\frac{\mcl{P}_{\ul{\gl}}}{\mcl{N}^s_{\ul{\gl}}},\Z/p\Z)^{\mcl{D}_{\ul{\gl}}}$.
	 	\item $Inf([v])=[v_r]=0$ where $v_r$ is the standard $2$-coboundary in  $B^2_{Trivial\ Action}($ $\mcl{P}_{\ul{\gl}},\Z/p\Z)$.
	 	\item There exists $\gb\in \Z/p\Z$ such that $[x]-\gb [v]\in \Ker(\gth)$.
 	 \end{enumerate}
We prove $(1)$. Since $s\in S\bs T$ we have $\mcl{N}^{s}_{\ul{\gl}}=\mcl{P}^{s}_{\ul{\gl}}\cap \mcl{P}'_{\ul{\gl}}$. This we have observed in the proof of Theorem~\ref{theorem:ModifiedChain}. Using Theorem~\ref{theorem:CosetRepresentatives}(3) and Theorem~\ref{theorem:Commutator}(2) we get that 
$g^a\mcl{N}^{s}_{\ul{\gl}}=g^b\mcl{N}^{s}_{\ul{\gl}}$ if and only if $a_t=b_t$ for all $s<_{TO}t,t\in S$ and also for all $t\in T$, that is, for all $t\in S$ such that $s<_{MTO}t$.

If $g^c=g^ag^b$ then we have 
\equan{GroupMult}{(g^c)_{(x',x')}=\us{z'<x'}{\sum}(g^a)_{(x',z')}(g^b)_{(z',x')}+(g^a)_{(x',x')}(g^b)_{(x',x')}+\us{z'>x'}{\sum}(g^a)_{(x',z')}(g^b)_{(z',x')}.}
Here we observe that if $z'<x'$ then $p\mid (g^b)_{(z',x')}$ and if $z'>x'$ then $p\mid (g^a)_{(x',z')}$.
So we observe that $c_r-a_r-b_r \mod p$ is depends only on  
\begin{itemize}
	\item $a_t,b_t$ for all $t=(l,i,j,m,n)$ where $l<l'$,
	\item those $b_t$ for all $t=(l,i,j,m,n)$ where $l=l'$ and which occur in positions $(z',x')$ with $z'<x'$,
	\item those $a_t$ for all $t=(l,i,j,m,n)$ where $l=l'$ and which occur in positions $(x',z')$ with $z'>x'$.	
\end{itemize}
Hence in all cases $s<_{TO} t$.
Therefore $v$ is well defined.

We prove $(2)$. It is clear that the following cocycle identity is satisfied.
\equ{v(g^a\mcl{N}^s_{\ul{\gl}},g^b\mcl{N}^s_{\ul{\gl}})+v(g^ag^b\mcl{N}^s_{\ul{\gl}},g^d\mcl{N}^s_{\ul{\gl}})=v(g^b\mcl{N}^s_{\ul{\gl}},g^d\mcl{N}^s_{\ul{\gl}})+v(g^a\mcl{N}^s_{\ul{\gl}},g^bg^d\mcl{N}^s_{\ul{\gl}}).}
This is because we have for $g^c=g^ag^b, g^e=g^bg^d, g^f=g^ag^bg^d$
\equ{(c_r-a_r-b_r)+(f_r-c_r-d_r)\equiv (e_r-b_r-d_r)+(f_r-a_r-e_r)\mod p.}
Moreover $r$ appears in a diagonal position.  Hence $v$ is invariant under the diagonal action. Therefore $v\in Z^2_{Trivial\ Action}(\frac{\mcl{P}_{\ul{\gl}}}{\mcl{N}^s_{\ul{\gl}}},\Z/p\Z)^{\mcl{D}_{\ul{\gl}}}$.

We prove $(3)$. It is clear that $Inf([v])=[v_r]$ where $v_r$ is the standard coboundary as defined in Definition~\ref{defn:StandardCoboundary}. Hence $Inf([v])=0$.

We prove $(4)$. Let $[x]\in H^2_{Trivial\ Action}(\frac{\mcl{P}_{\ul{\gl}}}{\mcl{N}^s_{\ul{\gl}}},\Z/p\Z)^{\mcl{D}_{\ul{\gl}}}$.
Let $t_0\in T$ be such that the first coordinate of $t_0$ is $0$ and $Pos(t_0)=(x'+1,x')$. Let $\ti{t}\in S$ be such that the first coordinate of $\ti{t}$ is $1$ and $Pos(\ti{t})=Pos(s')$. Then there are two cases.

\begin{enumerate}[label=(\roman*)]
	\item $\ti{t}\in T$. (Here in this case, since $s'\in S\bs T$, we have $s'\neq \ti{t}$ though $Pos(s')=Pos(\ti{t})$.)
	\item $\ti{t}\nin T$.
\end{enumerate}
Then for $g^a\in \mcl{P}_{\ul{\gl}},g^b\in \mcl{N}^{s'}_{\ul{\gl}}$, in both cases $(i),(ii)$, by invariance of $[x]$ under the action of $\mcl{D}_{\ul{\gl}}$, we conclude that
\equ{\gth([x])(g^a\mcl{N}^s_{\ul{\gl}},g^b\mcl{N}^s_{\ul{\gl}})=x(g^a\mcl{N}^s_{\ul{\gl}},g^b\mcl{N}^s_{\ul{\gl}})-x(g^b\mcl{N}^s_{\ul{\gl}},g^a\mcl{N}^s_{\ul{\gl}})=\gb_{t_0}a_{t_0}b_{s'}}
for some $\gb_{t_0}\in \Z/p\Z$. Note that in case $(i)$ we need $p\neq 3$ and in case $(ii)$ we just need $p$ to be an odd prime.

If $g^c=g^ag^b$ with $g^a\in \mcl{P}_{\ul{\gl}},g^b\in \mcl{N}^{s'}_{\ul{\gl}}$ then 
\equa{(g^b)_{(x',x'+1)}&=p^{l'}b_{s'}+p^{l'+1}(*),\\
	(g^b)_{(z',x')}&=p^{l'+1}(*) \text{ for }z'<x',\\ 
	(g^b)_{(z',x')}&=p^{l'}(*),p\mid (g^a)_{(x',z')} \text{ for }z'>x',\\
	(g^b)_{(x',x')}&=1+p^{l'}b_r+p^{l'+1}(*).}
Let $(g^a)_{(x',x')}=1+pa_{t_1}+p^2a_{t_2}+\cdots+p^{l'}a_{t_{l'}}+p^{l'+1}(*)$ where $t_i\in S$ with first coordinate of $t_i$ is $i$ and $Pos(t_i)=(x',x')$. Then we have $t_{l'}=r$ and we get from Equation~\ref{Eq:GroupMult} that, 
\equ{(g^c)_{(x',x')}\equiv (1+pa_{t_1}+p^2a_{t_2}+\cdots+p^{l'}a_{t_{l'}})(1+p^{l'}b_r)\mod p^{l'+1}.}
Therefore we have $c_r\equiv a_{t_{l'}}+b_r\equiv a_r+b_r\mod p\Ra v(g^a\mcl{N}^s_{\ul{\gl}},g^b\mcl{N}^s_{\ul{\gl}})=0$.
If $g^d=g^bg^a$ with $g^a\in \mcl{P}_{\ul{\gl}},g^b\in \mcl{N}^{s'}_{\ul{\gl}}$ then 
\equan{GroupMultTwo}{(g^d)_{(x',x')}&=\us{z'<x'}{\sum}(g^b)_{(x',z')}(g^a)_{(z',x')}+(g^b)_{(x',x')}(g^a)_{(x',x')}+(g^b)_{(x',x'+1)}(g^a)_{(x'+1,x')}\\&+\us{z'>x'+1}{\sum}(g^b)_{(x',z')}(g^a)_{(z',x')}.}
Here we have 
\equa{(g^b)_{(x',z')}&=p^{l'}(*),p\mid (g^a)_{(z',x')} \text{ for } z'<x',\\
(g^b)_{(x',z')}&=p^{l'+1}(*) \text{ for } z'>x'+1.}
So from Equation~\ref{Eq:GroupMultTwo} we get that,
\equ{(g^d)_{(x',x')}\equiv (1+p^{l'}b_r)(1+pa_{t_1}+p^2a_{t_2}+\cdots+p^{l'}a_{t_{l'}})+p^{l'}b_{s'}(a_{t_0}+p(*))\mod p^{l'+1}}
So $d_r\equiv b_r+a_{t_{l'}}+b_{s'}a_{t_0}\equiv b_r+a_r+b_{s'}a_{t_0} \mod p \Ra v(g^b\mcl{N}^s_{\ul{\gl}},g^a\mcl{N}^s_{\ul{\gl}})=a_{t_0}b_{s'}$.
So
\equ{\gth([v])(g^a\mcl{N}^s_{\ul{\gl}},g^b\mcl{N}^s_{\ul{\gl}})=v(g^a\mcl{N}^s_{\ul{\gl}},g^b\mcl{N}^s_{\ul{\gl}})-v(g^b\mcl{N}^s_{\ul{\gl}},g^a\mcl{N}^s_{\ul{\gl}})=-a_{t_0}b_{s'}.}
Hence we obtain $\gth([x]+\gb_{t_0}[v])=0\Ra [x]+\gb_{t_0}[v]\in \Ker(\gth)$.

This completes the proof of the theorem.
\end{proof}
\begin{theorem}
\label{theorem:Averaging}
Let $p$ be a prime. Let $s,s'\in S\bs T$ be such that $s'$ is the successor element of $s$ in $S\bs T$. Consider the inflation map 
$H^2_{Trivial\ Action}(\frac{\mcl{P}_{\ul{\gl}}}{\mcl{N}^{s'}_{\ul{\gl}}},\Z/p\Z) \os{Inf}{\lra} H^2_{Trivial\ Action}(\frac{\mcl{P}_{\ul{\gl}}}{\mcl{N}^s_{\ul{\gl}}},$ $\Z/p\Z)$. If $[c]\in H^2_{Trivial\ Action}(\frac{\mcl{P}_{\ul{\gl}}}{\mcl{N}^s_{\ul{\gl}}},\Z/p\Z)^{\mcl{D}_{\ul{\gl}}}$ and $[c]\in \text{Im}(Inf)$ then there exists  $[d]\in H^2_{Trivial\ Action}(\frac{\mcl{P}_{\ul{\gl}}}{\mcl{N}^{s'}_{\ul{\gl}}},\Z/p\Z)^{\mcl{D}_{\ul{\gl}}}$ such that $Inf([d])=[c]$.
\end{theorem}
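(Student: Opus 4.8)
The plan is to run the same averaging argument over $\mcl{D}_{\ul{\gl}}$ that is used repeatedly in the vanishing proofs above (for instance in Theorem~\ref{theorem:GL21Second} and Theorem~\ref{theorem:GL211First}), now extracted as a standalone statement. The essential arithmetic fact is that $p$ does not divide $\mid \mcl{D}_{\ul{\gl}}\mid$: by definition $\mcl{D}_{\ul{\gl}}$ consists of diagonal matrices whose orders divide $p-1$, so its order is prime to $p$ and the scalar $\frac{1}{\mid \mcl{D}_{\ul{\gl}}\mid}$ makes sense in the coefficient group $\Z/p\Z$. This is exactly what makes these representations semisimple and supplies the averaging projection onto the $\mcl{D}_{\ul{\gl}}$-invariants.

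The one point that genuinely requires verification is that the inflation map is $\mcl{D}_{\ul{\gl}}$-equivariant. First I would record that $\mcl{D}_{\ul{\gl}}$ normalizes $\mcl{P}_{\ul{\gl}}$ and that both $\mcl{N}^s_{\ul{\gl}}$ and $\mcl{N}^{s'}_{\ul{\gl}}$ are normal in $\mcl{P}_{\ul{\gl}}$ and stable under conjugation by $\mcl{D}_{\ul{\gl}}$: each $\mcl{N}^{\bullet}_{\ul{\gl}}$ is cut out by the vanishing of certain $p$-adic coefficients $a_{\bullet}$, and conjugation by a diagonal element only rescales those coefficients by units, hence preserves the defining condition. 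Consequently the conjugation action of $\mcl{D}_{\ul{\gl}}$ descends to both quotients $\frac{\mcl{P}_{\ul{\gl}}}{\mcl{N}^s_{\ul{\gl}}}$ and $\frac{\mcl{P}_{\ul{\gl}}}{\mcl{N}^{s'}_{\ul{\gl}}}$, and the canonical surjection $\frac{\mcl{P}_{\ul{\gl}}}{\mcl{N}^s_{\ul{\gl}}}\lra \frac{\mcl{P}_{\ul{\gl}}}{\mcl{N}^{s'}_{\ul{\gl}}}$ commutes with these actions. Since inflation is induced by this surjection and the action on cohomology is functorial (Remark~\ref{remark:TrivialAction}), I obtain $Inf(t\bullet[x])=t\bullet Inf([x])$ for every $t\in \mcl{D}_{\ul{\gl}}$ and every class $[x]$.

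With equivariance in hand the conclusion is immediate. Given $[c]\in H^2_{Trivial\ Action}(\frac{\mcl{P}_{\ul{\gl}}}{\mcl{N}^s_{\ul{\gl}}},\Z/p\Z)^{\mcl{D}_{\ul{\gl}}}$ with $[c]=Inf([x])$ for some (not necessarily invariant) class $[x]\in H^2_{Trivial\ Action}(\frac{\mcl{P}_{\ul{\gl}}}{\mcl{N}^{s'}_{\ul{\gl}}},\Z/p\Z)$, I set
\equ{[d]=\frac{1}{\mid \mcl{D}_{\ul{\gl}}\mid}\us{t\in \mcl{D}_{\ul{\gl}}}{\sum}t\bullet[x].}
Then $[d]$ is $\mcl{D}_{\ul{\gl}}$-invariant by the standard property of the averaging operator, and by equivariance of $Inf$ together with the invariance of $[c]$ one computes
\equa{Inf([d])&=\frac{1}{\mid \mcl{D}_{\ul{\gl}}\mid}\us{t\in \mcl{D}_{\ul{\gl}}}{\sum}t\bullet Inf([x])\\&=\frac{1}{\mid \mcl{D}_{\ul{\gl}}\mid}\us{t\in \mcl{D}_{\ul{\gl}}}{\sum}t\bullet[c]=[c],}
which is the assertion.

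There is no serious obstacle here: this theorem is a formalization of a step that occurs many times earlier, and the only thing to be careful about is the equivariance of inflation under the descended $\mcl{D}_{\ul{\gl}}$-action, which follows from naturality once one checks that $\mcl{D}_{\ul{\gl}}$ stabilizes the normal subgroups $\mcl{N}^s_{\ul{\gl}},\mcl{N}^{s'}_{\ul{\gl}}$. The hypothesis that $p$ is an arbitrary prime (not necessarily odd, not necessarily $\neq 3$) is harmless, since the argument uses only $p\nmid \mid\mcl{D}_{\ul{\gl}}\mid$.
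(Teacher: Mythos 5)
Your proof is correct and is essentially the same as the paper's: both take a preimage $[x]$ of $[c]$, form the average $[d]=\frac{1}{\mid \mcl{D}_{\ul{\gl}}\mid}\us{t\in \mcl{D}_{\ul{\gl}}}{\sum}t\bullet[x]$ (legitimate since $p\nmid\mid\mcl{D}_{\ul{\gl}}\mid$), and conclude $Inf([d])=[c]$ from the $\mcl{D}_{\ul{\gl}}$-equivariance of inflation together with the invariance of $[c]$. The only difference is that you explicitly verify the equivariance of $Inf$ and the stability of $\mcl{N}^s_{\ul{\gl}},\mcl{N}^{s'}_{\ul{\gl}}$ under diagonal conjugation, details the paper compresses into the phrase ``we observe.''
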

\begin{proof}
Let $Inf([x])=[c]$. Then define \equ{[d]=\frac{1}{\mid \mcl{D}_{\ul{\gl}}\mid} \us{t\in \mcl{D}_{\ul{\gl}}}{\sum} t\bullet [x]. }
We observe that $Inf([d])=[c]$ and $[d]\in H^2_{Trivial\ Action}(\frac{\mcl{P}_{\ul{\gl}}}{\mcl{N}^{s'}_{\ul{\gl}}},\Z/p\Z)^{\mcl{D}_{\ul{\gl}}}$. Note here $p$ does not divide the cardinality of $\mcl{D}_{\ul{\gl}}$.
\end{proof}
\begin{remark}
	In the proof of Theorem~\ref{theorem:Averaging}, $p$ can be any prime. 
\end{remark}
\begin{theorem}
\label{theorem:InflationFromAbelinization}
Let $s$ be the maximal element of $S\bs T$. Consider the inflation map $H^2_{Trivial\ Action}(\frac{\mcl{P}_{\ul{\gl}}}{\mcl{N}^s_{\ul{\gl}}}=\frac{\mcl{P}_{\ul{\gl}}}{[\mcl{P}_{\ul{\gl}},\mcl{P}_{\ul{\gl}}]},\Z/p\Z) \os{Inf}{\lra} H^2_{Trivial\ Action}(\mcl{P}_{\ul{\gl}},\Z/p\Z)$. Then we have \equ{Inf(H^2_{Trivial\ Action}(\frac{\mcl{P}_{\ul{\gl}}}{\mcl{N}^s_{\ul{\gl}}}=\frac{\mcl{P}_{\ul{\gl}}}{[\mcl{P}_{\ul{\gl}},\mcl{P}_{\ul{\gl}}]},\Z/p\Z)^{\mcl{D}_{\ul{\gl}}})=0.}
\end{theorem}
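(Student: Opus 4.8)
The plan is to pin down the space $H^2_{Trivial\ Action}(\mcl{P}_{\ul{\gl}}/\mcl{N}^s_{\ul{\gl}},\Z/p\Z)^{\mcl{D}_{\ul{\gl}}}$ completely and then to show that each of its classes becomes a coboundary after inflation to $\mcl{P}_{\ul{\gl}}$. Since $s$ is the maximal element of $S\bs T$, we have $\mcl{N}^s_{\ul{\gl}}=\mcl{P}'_{\ul{\gl}}=[\mcl{P}_{\ul{\gl}},\mcl{P}_{\ul{\gl}}]$, so by Theorem~\ref{theorem:Commutator}(3) the relevant group is the elementary abelian group $\mcl{P}_{\ul{\gl}}/\mcl{P}'_{\ul{\gl}}\cong(\Z/p\Z)^{\mid T\mid}$, with one degree-one coordinate class $x_t$ for each $t\in T$. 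As $p$ is odd, its second cohomology splits as a sum of an antisymmetric (wedge) part $x_t\cup x_{t'}$, detected by the map $\gth$ and the $P(\cdot,\cdot,\Z/p\Z)$ factor of Corollary~\ref{cor:ProductCohomology}, together with a Bockstein part detected by restriction to the cyclic $\Z/p^2\Z$-extension subgroups.

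Next I would compute the $\mcl{D}_{\ul{\gl}}$-invariants. Since $\mcl{D}_{\ul{\gl}}$ acts on the coordinate $x_t$ through the character $\chi_t\colon\Diag(d_1,\dots,d_\gr)\mapsto d_x/d_y$, where $Pos(t)=(x,y)$, and every $t\in T$ lies off the diagonal, each $\chi_t$ is nontrivial; by Theorem~\ref{theorem:BasicMultiple} the Bockstein classes carry no invariants, and a wedge $x_t\cup x_{t'}$ is $\mcl{D}_{\ul{\gl}}$-invariant exactly when $\chi_t\chi_{t'}=1$. Because $\mcl{D}_{\ul{\gl}}$ separates the diagonal coordinates, this forces $Pos(t')$ to be the transpose of $Pos(t)$. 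A short combinatorial check of the two kinds of elements of $T$ described in~\ref{Eq:Abelianization} shows that such a transpose pair exists precisely for a pair of consecutive blocks $m,m+1$ with $\gr_m=\gr_{m+1}=1$, where it pairs the superdiagonal generator $(1,1,1,m,m+1)$ with the subdiagonal generator $(0,1,1,m+1,m)$. Thus the invariant space is spanned by these transpose-pair wedges, whose inflations are represented by the cocycles $(g^a,g^b)\mapsto a_t b_{t'}$ on $\mcl{P}_{\ul{\gl}}$.

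It then remains to show each such class $[\tilde x_t\cup\tilde x_{t'}]$ is trivial on $\mcl{P}_{\ul{\gl}}$. The key observation is that for $Pos(t)=(x,x+1)$ the product of two elements of $\mcl{P}_{\ul{\gl}}$ picks up exactly $p\,a_t b_{t'}$ in the $p^1$-coefficient of its $(x,x)$ diagonal entry, coming from the $z=x+1$ summand of $\us{z}{\sum}(g^a)_{(x,z)}(g^b)_{(z,x)}$. Hence the standard $1$-cochain of Definition~\ref{defn:StandardCoboundary} recording that diagonal $p^1$-coefficient has coboundary equal to $-a_t b_{t'}$ plus correction terms coming from the blocks below $m$. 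To see that all such corrections, across all diagonal positions simultaneously, assemble into an honest coboundary and so do not affect cohomology classes, I would invoke the algebraic identity of Theorem~\ref{theorem:Identity} under the substitution $X\leftrightarrow p$ (so $X^2=0$ records working modulo $p^2$): it equates the total symmetric sum $\us{i<j}{\sum}(a_{ij}b_{ji}+a_{ji}b_{ij})$ of transpose-pair products with a single coboundary whose $1$-cochain is built from the coefficients of $\Det(I_\gr+N)$. Crucially, that cochain differs from the sum of the diagonal $p^1$-coefficient cochains only by the $p^1$-coefficient of $\Det$ itself, which is a genuine homomorphism $\mcl{P}_{\ul{\gl}}\lra\Z/p\Z$ and hence contributes nothing to $H^2$. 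Combining the identity with the individual diagonal cochains then strips off the non-$T$ and deeper-block contributions and yields $[\tilde x_t\cup\tilde x_{t'}]=0$ for every transpose pair, whence $Inf(H^2_{Trivial\ Action}(\mcl{P}_{\ul{\gl}}/\mcl{P}'_{\ul{\gl}},\Z/p\Z)^{\mcl{D}_{\ul{\gl}}})=0$.

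The hardest part will be this last step: matching the formal polynomial identity of Theorem~\ref{theorem:Identity} to honest $1$- and $2$-cochains on $\mcl{P}_{\ul{\gl}}$. This requires careful block-by-block tracking of $p$-adic orders to certify which matrix products actually contribute to the relevant $p^1$-coefficients, and then disentangling the single antisymmetric wedge class both from the symmetric combination produced by the identity and from the chain of correction terms generated by neighbouring unit-multiplicity blocks $\gr_{m-1}=\gr_m=\gr_{m+1}=1$.
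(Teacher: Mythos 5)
Your reduction to the abelianization $(\Z/p\Z)^{\mid T\mid}$, your identification of the $\mcl{D}_{\ul{\gl}}$-invariants with the transpose-pair classes $[a_{\gf(t)}b_t]$ for $t=(1,1,1,m,m+1)$ with $\gr_m=\gr_{m+1}=1$, and even your choice of tools for the final step (the diagonal $p^1$-coefficient cochains of Definition~\ref{defn:StandardCoboundary} together with Theorem~\ref{theorem:Identity}) all coincide with the paper's proof. The gap lies in how you deploy Theorem~\ref{theorem:Identity}. You apply it to the full $\gr\times\gr$ matrix: summing the standard coboundaries over \emph{all} diagonal positions produces the \emph{total} symmetric sum $\sum_{i<j}(a_{ij}b_{ji}+a_{ji}b_{ij})$, and the identity then says this total is a $2$-coboundary. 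That statement is vacuous for your purpose: in the total sum every transpose pair occurs together with its mirror, $a_tb_{\gf(t)}+a_{\gf(t)}b_t$, and this symmetric combination is already, tautologically, the coboundary of the $1$-cochain $g^a\mapsto a_ta_{\gf(t)}$ (both coordinates lie in $T$, hence are additive by Theorem~\ref{theorem:Commutator}(1)). So the full-matrix identity can never separate $[a_{\gf(t)}b_t]$ from $-[a_tb_{\gf(t)}]$, which is exactly the class you must show vanishes; your closing sentence ("disentangling the single antisymmetric wedge class \ldots from the symmetric combination produced by the identity") names this difficulty but supplies no mechanism that resolves it. The determinant remark does not repair it, and as a separate point ``the $p^1$-coefficient of $\Det$'' is not a well-defined function on $\mcl{P}_{\ul{\gl}}$, since entries in blocks with $\gl=1$ are defined only modulo $p$.

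What the paper does, and what is missing from your proposal, is a \emph{truncation}. Fix $t_0\in T_1$ with $Pos(t_0)=(x,x+1)$ and work only in the top-left $x\times x$ corner: sum the standard coboundaries over the diagonal positions $(x'',x'')$ with $x''\leq x$ only (the set $S^x_2$). The bookkeeping in Equation~\ref{Eq:InflationCob} then yields the symmetric sum over pairs inside $\{1,\dots,x\}$ \emph{plus the single unpaired term} $a_{t_0}b_{\gf(t_0)}$ --- unpaired precisely because its mirror $a_{\gf(t_0)}b_{t_0}$ would arise from the diagonal position $(x+1,x+1)$, which the truncation excludes. Theorem~\ref{theorem:Identity} is applied to the truncated coefficient system (the sets $S^x_0,S^x_1$), which is legitimate because $\gr_m=\gr_{m+1}=1$ and all other entries crossing the truncation boundary are divisible by $p^2$, so the corner coefficients multiply exactly as the formal variables do. Hence the truncated symmetric sum is a coboundary, hence the lone term $a_{t_0}b_{\gf(t_0)}$ is a coboundary, and the symmetric relation then gives $[a_{\gf(t_0)}b_{t_0}]=0$. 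Without this truncation your argument cannot close.
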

\begin{proof}
From Theorem~\ref{theorem:Commutator}(3) we have $\frac{\mcl{P}_{\ul{\gl}}}{[\mcl{P}_{\ul{\gl}},\mcl{P}_{\ul{\gl}}]} \cong (\Z/p\Z)^{\mid T \mid}$. Consider the map $\gth:H^2_{Trivial\ Action}(\frac{\mcl{P}_{\ul{\gl}}}{[\mcl{P}_{\ul{\gl}},\mcl{P}_{\ul{\gl}}]},\Z/p\Z) \lra P(\frac{\mcl{P}_{\ul{\gl}}}{[\mcl{P}_{\ul{\gl}},\mcl{P}_{\ul{\gl}}]}, \frac{\mcl{P}_{\ul{\gl}}}{[\mcl{P}_{\ul{\gl}},\mcl{P}_{\ul{\gl}}]},\Z/p\Z)$.
If $[x]\in H^2_{Trivial\ Action}$ $(\frac{\mcl{P}_{\ul{\gl}}}{[\mcl{P}_{\ul{\gl}},\mcl{P}_{\ul{\gl}}]},\Z/p\Z)^{\mcl{D}_{\ul{\gl}}}$ then we have, for $g^a,g^b\in \mcl{P}_{\ul{\gl}}$,
\equa{x(g^a\mcl{N}^s_{\ul{\gl}},g^b\mcl{N}^s_{\ul{\gl}})&\approx_{cohomologous}\us{t\in T_1}{\sum}\gb_ta_{\gf(t)}b_t\text{ and }\\
	\gth([x])(g^a\mcl{N}^s_{\ul{\gl}},&g^b\mcl{N}^s_{\ul{\gl}})=\us{t\in T_1}{\sum}\gb_ta_{\gf(t)}b_t}
for some $\gb_t\in \Z/p\Z$
where $T_1=\{t\in T\mid \text{ there exists }1\leq m<k, t=(1,1,1,m,m+1),\gr_m=1=\gr_{m+1}\}$ and $\gf:T_1\lra T$ such that 
$\gf(t)=(0,1,1,m+1,m)$ the entry in $T$ which is symmetric w.r.t $t$ about the diagonal.

Now for $t\in T_1$ define a $2$-cocycle $u_t:\mcl{P}_{\ul{\gl}} \times \mcl{P}_{\ul{\gl}}\lra \Z/p\Z$ given by 
$u_t(g^a,g^b)=a_{\gf(t)}b_t$. Indeed $u_t\in Z^2_{Trivial\ Action}(\mcl{P}_{\ul{\gl}},\Z/p\Z)^{\mcl{D}_{\ul{\gl}}}$ because the cocycle identity is satisfied, that is, for $g^a,g^b,g^c\in \mcl{P}_{\ul{\gl}}$
\equ{a_{\gf(t)}b_t+(a_{\gf(t)}+b_{\gf(t)})c_t=b_{\gf(t)}c_t+a_{\gf(t)}(b_t+c_t)}
and $u_t$ is invariant under the action of the group $\mcl{D}_{\ul{\gl}}$.  
Now we know that 
\equ{Inf([x])(g^a,g^b)\approx_{cohomologous}\us{t\in T_1}{\sum}\gb_ta_{\gf(t)}b_t.}
So to prove the theorem it is enough to show that $u_{t_0}$ is a $2$-coboundary for any $t_0\in T_1$, that is, $u_{t_0}\in B^2_{Trivial\ Action}(\mcl{P}_{\ul{\gl}},\Z/p\Z)^{\mcl{D}_{\ul{\gl}}}$ for $t_0\in T_1$. Let $Pos(t_0)=(x,x+1),Pos(\gf(t_0))=(x+1,x)$ for $1\leq x<\gr=\gr_1+\cdots+\gr_k$.

Let \equa{S^x_0&=\{s=(l,i,j,m,n)\in S\mid l=0, \gch(Pos(s))>0,\text{ if }Pos(s)=(x',y')\text{ then }x'\leq x\},\\ \ti{S}^x_1&=\{s=(l,i,j,m,n)\in S\mid l=1,\gch(Pos(s))<0,\text{ if }Pos(s)=(x',y')\text{ then }y'\leq x\},\\ 
S^x_2&=\{s=(l,i,j,m,n)\in S\mid l=1, \gch(Pos(s))=0,\text{ if }Pos(s)=(x',x')\text{ then }x'\leq x\}.}
Let $\gf:\ti{S}_1^x\lra S_0^x$ be defined as $\gf(1,i,j,m,n)=(0,j,i,n,m)$. Note that the map $\gf$ defined on $T_1$ agrees with the map $\gf$ defined on $\ti{S}^x_1$ on the set $\ti{S}_1^x\cap T_1$. Now it may happen that for $(0,j,i,n,m)\in S^x_0$, the element $(1,i,j,m,n)$ need not be in $\ti{S}^x_1$. So we enlarge the set $\ti{S}^x_1$ to $S^x_1$ so that we include that all such elements. Now the extended map $\gf:S^x_1\lra S^x_0, \gf(1,i,j,m,n)=(0,j,i,n,m)$ is a bijection. Define $a_t=0=b_t$ for all $t\in S^x_1\bs \ti{S}^x_1$. 

We observe the following aspect of matrix multiplication.
\equan{InflationCob}{\us{s\in S^x_2}{\sum}v_s(g^a,g^b)&\equiv \us{s\in \ti{S}_1^x}{\sum}\big(a_sb_{\gf(s)}+a_{\gf(s)}b_s\big)+a_{t_0}b_{\gf(t_0)}\\ &\equiv \us{s\in S_1^x}{\sum}\big(a_sb_{\gf(s)}+a_{\gf(s)}b_s\big)+a_{t_0}b_{\gf(t_0)}\mod p}
where $v_s$ is the standard coboundary on $\mcl{P}_{\ul{\gl}}$ as in Definition~\ref{defn:StandardCoboundary}.
We also observe that for $g^c=g^ag^b,s\in S^x_1,Pos(s)=(x',y')$
\equ{c_s\equiv a_s+b_s+\us{\os{t_1 \in S^x_1,t_2\in S^x_0}{Pos(t_1)=(z',y'),Pos(t_2)=(x',z')}}{\sum}b_{t_1}a_{t_2}+\us{\os{t_1\in S^x_1,t_2 \in S^x_0,}{Pos(t_1)=(x',z'),Pos(t_2)=(z',y')}}{\sum}a_{t_1}b_{t_2}\mod p.}
Also for $g^c=g^ag^b,s\in S^x_0,Pos(s)=(x',y')$
\equ{c_s\equiv a_s+b_s+\us{\os{t_1,t_2\in S^x_0,}{Pos(t_1)=(x',z'),Pos(t_2)=(z',y')}}{\sum}a_{t_1}b_{t_2} \mod p.}
Note that if $s\in S^x_1\bs \ti{S}^x_1$ then $c_s=0$ since $g^c=g^ag^b\in \mcl{P}_{\ul{\gl}}$.
The entries $a_s,b_s,c_s$ with $s\in S^x_1\cup S^x_0$ gives an instance for the application of Theorem~\ref{theorem:Identity}.
Hence we conclude that 
\equ{\us{s\in S_1^x}{\sum}\big(a_sb_{\gf(s)}+a_{\gf(s)}b_s\big)} is a coboundary since each of the following summands in the identity given in Theorem~\ref{theorem:Identity}
\equ{v_{t_1,t_2,\cdots,t_i}(g^a,g^b)=c_{t_1}c_{t_2}\cdots c_{t_i}-a_{t_1}a_{t_2}\cdots a_{t_i}-b_{t_1}b_{t_2}\cdots b_{t_i}} is a $2$-coboundary on $\mcl{P}_{\ul{\gl}}$ where the positions of $t_j,1\leq j\leq i$ are either strictly upper triangular or strictly lower triangular.
Moreover we have $Pos(t_1)=(x_1',x_2')$, $Pos(t_2)=(x_2',x_3'),\cdots,Pos(t_i)=(x_i',x_1')$ for some $1\leq x_1',x_2',\cdots,x_i'\leq \gr$. Hence the above coboundary is a $\mcl{D}_{\ul{\gl}}$ invariant coboundary. 

Using Equation~\ref{Eq:InflationCob} we get that $a_{t_0}b_{\gf(t_0)}$ is a coboundary. But now we have
\equ{a_{t_0}b_{\gf(t_0)}+a_{\gf(t_0)}b_{t_0}=(a_{t_0}+b_{t_0})(a_{\gf(t_0)}+b_{\gf(t_0)})-a_{t_0}a_{\gf(t_0)}-b_{t_0}b_{\gf(t_0)}.}

So $u_{t_0}(g^a,g^b)=a_{\gf(t_0)}b_{t_0}$ is a $2$-coboundary which is what we wanted to prove. This proves the theorem.
\end{proof}
\begin{remark}
	In the proof of Theorem~\ref{theorem:InflationFromAbelinization}, $p$ can be any prime. 
\end{remark}
\subsubsection{\bf{The Cohomology Vanishing Theorem}}
\begin{theorem}
	\label{theorem:VanishingH2}
	Let $\ugl$ be a partition such that $\gl_i=k-i+1$. Let $\grpp$ be the finite abelian $p\operatorname{-}$group associated to $\ul{\gl}$, where $p\neq 3$ is an odd prime.
Let $\autgp=\Aut(\grpp)$ be its automorphism group. Then $H^2_{Trivial\ Action}(\autgp,\Z/p\Z)=0$. As a consequence $H^2_{Trivial\ Action}(\autgp,\grpp)=0$.
\end{theorem}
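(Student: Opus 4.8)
The plan is to reduce the statement to the invariants of the second cohomology of a single Sylow subgroup and then to climb its chief series one central extension at a time. First I would remove the coefficient module: by Theorem~\ref{theorem:SecondCohoTriv} it is enough to prove $H^2_{Trivial\ Action}(\autgp,\Z/p\Z)=0$, since $\grpp$ is a finite abelian $p$-group, and then the assertion for $\grpp$-coefficients is immediate. Next, Theorem~\ref{theorem:CartanEilenberg} reduces the computation to the Sylow subgroup $\mcl{P}_{\ul{\gl}}$, and Theorem~\ref{theorem:StabilityConditions} shows that $H^2_{Trivial\ Action}(\autgp,\Z/p\Z)$ embeds into the $\autgp$-stable, hence $\mcl{D}_{\ul{\gl}}$-invariant, submodule of $H^2_{Trivial\ Action}(\mcl{P}_{\ul{\gl}},\Z/p\Z)$. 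Thus the entire problem becomes the vanishing statement $H^2_{Trivial\ Action}(\mcl{P}_{\ul{\gl}},\Z/p\Z)^{\mcl{D}_{\ul{\gl}}}=0$.

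The heart of the argument is an induction along $(S,\leq_{MTO})$, where at each step the central extensions of Theorem~\ref{theorem:ModifiedChain}(4) feed the extended Hochschild--Serre sequence of Theorem~\ref{theorem:HSESequence}. Concretely I would prove, by induction on $s$ running through $S\bs T$, that $Inf\big(H^2_{Trivial\ Action}(\frac{\mcl{P}_{\ul{\gl}}}{\mcl{N}^s_{\ul{\gl}}},\Z/p\Z)^{\mcl{D}_{\ul{\gl}}}\big)=H^2_{Trivial\ Action}(\mcl{P}_{\ul{\gl}},\Z/p\Z)^{\mcl{D}_{\ul{\gl}}}$, where $Inf$ denotes inflation to the full group $\mcl{P}_{\ul{\gl}}$; the base case $s=\es$ is trivial. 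For the inductive step, let $s'$ be the successor of $s$; the exactness of
\equa{H^2_{Trivial\ Action}(\frac{\mcl{P}_{\ul{\gl}}}{\mcl{N}^{s'}_{\ul{\gl}}},\Z/p\Z)&\os{Inf}{\lra}H^2_{Trivial\ Action}(\frac{\mcl{P}_{\ul{\gl}}}{\mcl{N}^s_{\ul{\gl}}},\Z/p\Z)\\&\os{Res\times\gth}{\lra}H^2_{Trivial\ Action}(\frac{\mcl{N}^{s'}_{\ul{\gl}}}{\mcl{N}^s_{\ul{\gl}}},\Z/p\Z)\times P(\frac{\mcl{P}_{\ul{\gl}}}{\mcl{N}^s_{\ul{\gl}}},\frac{\mcl{N}^{s'}_{\ul{\gl}}}{\mcl{N}^s_{\ul{\gl}}},\Z/p\Z)}
identifies the image of inflation with $\Ker(Res)\cap\Ker(\gth)$. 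Theorem~\ref{theorem:KernelRes} places every $\mcl{D}_{\ul{\gl}}$-invariant class in $\Ker(Res)$, and Theorem~\ref{theorem:KernelThetaOne}, resp.\ Theorem~\ref{theorem:KernelThetaTwo}, places it in $\Ker(\gth)$, so each invariant class (after possibly subtracting a class that inflates to zero) is inflated from $\frac{\mcl{P}_{\ul{\gl}}}{\mcl{N}^{s'}_{\ul{\gl}}}$. Theorem~\ref{theorem:Averaging} then provides a $\mcl{D}_{\ul{\gl}}$-invariant preimage, which advances the induction.

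The casework of Theorem~\ref{theorem:KernelThetaOne}(1)--(5) together with Theorem~\ref{theorem:KernelThetaTwo} is organized by the matrix position $Pos(s')$ and its mirror $(y',x')$, and it is arranged to exhaust all possibilities for $s'\in S\bs T$: diagonal positions, positions whose mirror avoids $T$, and the delicate positions lying on (or mirrored onto) the first sub/super-diagonal recorded in $T$. Precisely in the latter positions the hypothesis $p\neq 3$ is used, since it supplies $\gga\in(\Z/p\Z)^*$ with $\gga^2\neq 1$ that annihilates the surviving symmetric bilinear term under the $\mcl{D}_{\ul{\gl}}$-action; when $Pos(s')$ is a first-superdiagonal entry one only reaches $\Ker(\gth)$ after subtracting the auxiliary class $[v]$ of Theorem~\ref{theorem:KernelThetaTwo}, whose inflation to $\mcl{P}_{\ul{\gl}}$ is zero and so does not affect the inductive identity. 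Running the induction to the maximal element $s_0$ of $S\bs T$, at which $\mcl{N}^{s_0}_{\ul{\gl}}=\mcl{P}'_{\ul{\gl}}$ and $\frac{\mcl{P}_{\ul{\gl}}}{\mcl{N}^{s_0}_{\ul{\gl}}}\cong(\Z/p\Z)^{\mid T\mid}$ by Theorem~\ref{theorem:Commutator}(3), Theorem~\ref{theorem:InflationFromAbelinization} shows that the inflated invariants from the abelianization already vanish. Hence $H^2_{Trivial\ Action}(\mcl{P}_{\ul{\gl}},\Z/p\Z)^{\mcl{D}_{\ul{\gl}}}=0$, and the theorem follows.

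I expect the main obstacle to be the combinatorial coordination of the last two steps: verifying that every successor position in the chief series genuinely falls into one of the enumerated cases, and that the diagonal action together with $p\neq 3$ kills every off-diagonal and symmetric contribution at each stage without introducing classes that fail to inflate correctly. This bookkeeping, interlocking the modified order $\leq_{MTO}$, the chief series $\mcl{N}^s_{\ul{\gl}}$, and the vanishing inputs of Theorems~\ref{theorem:KernelRes},~\ref{theorem:KernelThetaOne} and~\ref{theorem:KernelThetaTwo}, is what the bulk of Section~\ref{sec:H2TrivialAction} is set up to control, and its final abelianization step ultimately rests on the algebraic identity of Theorem~\ref{theorem:Identity}.
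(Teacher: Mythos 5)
Your proposal is correct and follows essentially the same route as the paper's own proof: reduce via Theorem~\ref{theorem:SecondCohoTriv} and Theorem~\ref{theorem:StabilityConditions} to showing $H^2_{Trivial\ Action}(\mcl{P}_{\ul{\gl}},\Z/p\Z)^{\mcl{D}_{\ul{\gl}}}=0$, then climb the chief series $\mcl{N}^s_{\ul{\gl}}$, $s\in S\bs T$, using the extended Hochschild--Serre sequence together with Theorems~\ref{theorem:KernelRes},~\ref{theorem:KernelThetaOne},~\ref{theorem:KernelThetaTwo} and~\ref{theorem:Averaging}, finishing at the abelianization with Theorem~\ref{theorem:InflationFromAbelinization}. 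Your packaging of the paper's repeated lifting as an explicit induction on the statement $Inf\big(H^2_{Trivial\ Action}(\frac{\mcl{P}_{\ul{\gl}}}{\mcl{N}^s_{\ul{\gl}}},\Z/p\Z)^{\mcl{D}_{\ul{\gl}}}\big)=H^2_{Trivial\ Action}(\mcl{P}_{\ul{\gl}},\Z/p\Z)^{\mcl{D}_{\ul{\gl}}}$, with the subtracted classes $[v]$ harmless because they inflate to zero, is only a cosmetic reorganization of the same argument.
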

\begin{proof}
To prove $H^2_{Trivial\ Action}(\autgp,\Z/p\Z)=0$ it is enough to prove that $H^2_{Trivial\ Action}$ $(\mcl{P}_{\ul{\gl}}$, $\Z/p\Z)^{\mcl{D}_{\ul{\gl}}}=0$. For this we use the normal series $\mcl{N}^s_{\ul{\gl}},s\in S$ and use repeatedly the extended Hochschild-Serre exact sequence for the central extensions given in exact sequence~\ref{Eq:ModifiedCentralExtension} for $s,s'\in S\bs T$ with $s'$ the successor element of $s$ in $S\bs T$. 

If $[x]\in H^2_{Trivial\ Action}(\frac{\mcl{P}_{\ul{\gl}}}{\mcl{N}^s_{\ul{\gl}}},\Z/p\Z)^{\mcl{D}_{\ul{\gl}}}$ then $[x]\in \Ker(Res:H^2_{Trivial\ Action}(\frac{\mcl{P}_{\ul{\gl}}}{\mcl{N}^s_{\ul{\gl}}},\Z/p\Z)$ $\lra H^2_{Trivial\ Action}(\frac{\mcl{N}^{s'}_{\ul{\gl}}}{\mcl{N}^s_{\ul{\gl}}},\Z/p\Z))$ and one of the following $(1),(2)$ occurs, using Theorems~\ref{theorem:KernelRes},~\ref{theorem:KernelThetaOne},~\ref{theorem:KernelThetaTwo}.
\begin{enumerate}
\item $[x] \in \Ker(\gth:H^2_{Trivial\ Action}(\frac{\mcl{P}_{\ul{\gl}}}{\mcl{N}^s_{\ul{\gl}}},\Z/p\Z)\lra P(\frac{\mcl{P}_{\ul{\gl}}}{\mcl{N}^s_{\ul{\gl}}},\frac{\mcl{N}^{s'}_{\ul{\gl}}}{\mcl{N}^s_{\ul{\gl}}},\Z/p\Z))$.
\item There exists a $\mcl{D}_{\ul{\gl}}$ invariant cohomology class $[v]\in H^2_{Trivial\ Action}(\frac{\mcl{P}_{\ul{\gl}}}{\mcl{N}^s_{\ul{\gl}}},\Z/p\Z)^{\mcl{D}_{\ul{\gl}}}$ such that $[x]-[v]\in \Ker(\gth:H^2_{Trivial\ Action}(\frac{\mcl{P}_{\ul{\gl}}}{\mcl{N}^s_{\ul{\gl}}},\Z/p\Z)\lra P(\frac{\mcl{P}_{\ul{\gl}}}{\mcl{N}^s_{\ul{\gl}}},\frac{\mcl{N}^{s'}_{\ul{\gl}}}{\mcl{N}^s_{\ul{\gl}}},\Z/p\Z))$ and $Inf([v])=0$ for the map $Inf:H^2_{Trivial\ Action}(\frac{\mcl{P}_{\ul{\gl}}}{\mcl{N}^s_{\ul{\gl}}},\Z/p\Z) \lra H^2_{Trivial\ Action}$ $(\mcl{P}_{\ul{\gl}},\Z/p\Z)$.
\end{enumerate} 
By exactness of the extended Hochschild-Serre exact sequence and using Theorem~\ref{theorem:Averaging} we can get a $\mcl{D}_{\ul{\gl}}$ invariant cohomology class $[y]\in H^2_{Trivial\ Action}(\frac{\mcl{P}_{\ul{\gl}}}{\mcl{N}^{s'}_{\ul{\gl}}},\Z/p\Z)^{\mcl{D}_{\ul{\gl}}}$ such that either 
$Inf([y])=[x]$ in case $(1)$ or $Inf([y])=[x]-[v]$ in case $(2)$ where $Inf:H^2_{Trivial\ Action}(\frac{\mcl{P}_{\ul{\gl}}}{\mcl{N}^{s'}_{\ul{\gl}}},\Z/p\Z)\lra H^2_{Trivial\ Action}(\frac{\mcl{P}_{\ul{\gl}}}{\mcl{N}^{s}_{\ul{\gl}}},\Z/p\Z)$.

We start with $[x]\in H^2_{Trivial\ Action}(\mcl{P}_{\ul{\gl}},\Z/p\Z)^{\mcl{D}_{\ul{\gl}}}$ and by continuing this process repeatedly we get that there exists $[z]\in H^2_{Trivial\ Action}(\frac{\mcl{P}_{\ul{\gl}}}{[\mcl{P}_{\ul{\gl}},\mcl{P}_{\ul{\gl}}]},\Z/p\Z)^{\mcl{D}_{\ul{\gl}}}$ such that we have 
$Inf([z])=[x]$ where $Inf:H^2_{Trivial\ Action}(\frac{\mcl{P}_{\ul{\gl}}}{[\mcl{P}_{\ul{\gl}},\mcl{P}_{\ul{\gl}}]},\Z/p\Z) \lra H^2_{Trivial\ Action}(\mcl{P}_{\ul{\gl}},\Z/p\Z)$. Now using Theorem~\ref{theorem:InflationFromAbelinization} we conclude that $[x]=Inf([z])=0$. 

Hence $H^2_{Trivial\ Action}(\mcl{P}_{\ul{\gl}},\Z/p\Z)^{\mcl{D}_{\ul{\gl}}}=0\Ra H^2_{Trivial\ Action}(\autgp,\Z/p\Z)=0$.
Now using Theorem~\ref{theorem:SecondCohoTriv} we conclude that $H^2_{Trivial\ Action}(\autgp,\grpp)=0$. This proves the theorem.
\end{proof}
\subsubsection{\bf{The Vanishing/Nonvanishing Criterion for $H^2_{Trivial\ Action}(\autgp,\grpp)$ for Odd Primes $p\neq 3$}}
As a consequence of Theorems~\ref{theorem:NonvanishingH2},~\ref{theorem:VanishingH2} we have proved the following theorem for odd primes $p\neq 3$.
\begin{theorem}
	\label{theorem:CriterionH2TrivialAction}
	Let $\ugl$ be a partition. Let $\grpp$ be the finite abelian $p$-group associated to $\ul{\gl}$, where $p\neq 3$ is an odd prime. Let $\autgp=\Aut(\grpp)$ be its automorphism group. Then $H^2_{Trivial\ Action}(\autgp,\grpp)=0$ if and only if the difference between two successive parts of $\ul{\gl}$ is at most $1$.
\end{theorem}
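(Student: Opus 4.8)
The plan is to deduce this criterion directly from the nonvanishing Theorem~\ref{theorem:NonvanishingH2} and the vanishing Theorem~\ref{theorem:VanishingH2}, so that the only genuine content is to match the combinatorial hypotheses of those two theorems against the single condition that the difference between two successive parts of $\ul{\gl}$ is at most $1$. First I would fix the convention, already in force in Theorem~\ref{theorem:NonvanishingH2}, that $\gl_{k+1}=0$, so that ``two successive parts'' ranges over the interior differences $\gl_i-\gl_{i+1}$ for $1\le i\le k-1$ together with the boundary difference $\gl_k-\gl_{k+1}=\gl_k$. Under this reading, ``every successive difference is at most $1$'' means precisely that $\gl_i-\gl_{i+1}\le 1$ for all $1\le i\le k-1$ and $\gl_k\le 1$.

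For the contrapositive of the ``only if'' direction I would argue as follows. Suppose some successive difference exceeds $1$. Then either $\gl_i-\gl_{i+1}\ge 2$ for some $1\le i\le k-1$, or the boundary difference $\gl_k=\gl_k-\gl_{k+1}\ge 2$. In either case the hypothesis of Theorem~\ref{theorem:NonvanishingH2} is satisfied, and that theorem yields $H^2_{Trivial\ Action}(\autgp,\grpp)\neq 0$. This shows that $H^2_{Trivial\ Action}(\autgp,\grpp)=0$ forces every successive difference of $\ul{\gl}$ to be at most $1$.

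For the ``if'' direction I would start from the assumption that every successive difference is at most $1$. Since $\gl_1>\gl_2>\cdots>\gl_k$ are the distinct parts, each interior difference $\gl_i-\gl_{i+1}$ is a positive integer that is $\le 1$, hence equal to $1$; combined with $\gl_k\le 1$, which forces $\gl_k=1$, this gives $\gl_i=k-i+1$ for all $i$. This is exactly the standing hypothesis of Theorem~\ref{theorem:VanishingH2}, which, for $p\neq 3$ an odd prime, gives $H^2_{Trivial\ Action}(\autgp,\grpp)=0$ (that theorem first establishes $H^2_{Trivial\ Action}(\autgp,\Z/p\Z)=0$ and then passes to $\grpp$-coefficients via Theorem~\ref{theorem:SecondCohoTriv}). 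Combining the two directions completes the proof.

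The only delicate point I anticipate is the bookkeeping at the boundary: the single clause ``difference between two successive parts is at most $1$'' must be read to compare the trailing part $\gl_k$ with the phantom value $\gl_{k+1}=0$, since it is precisely the case $\gl_k\ge 2$ in Theorem~\ref{theorem:NonvanishingH2} that detects nonvanishing when all interior gaps equal $1$ but the smallest part is large. Once this convention is fixed there is no further computation to perform, and the theorem follows formally from the two cited results together with Theorem~\ref{theorem:SecondCohoTriv}.
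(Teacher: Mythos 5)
Your proposal is correct and takes essentially the same approach as the paper, which likewise obtains Theorem~\ref{theorem:CriterionH2TrivialAction} as an immediate consequence of Theorems~\ref{theorem:NonvanishingH2} and~\ref{theorem:VanishingH2}. Your explicit bookkeeping at the boundary, reading the condition with the convention $\gl_{k+1}=0$ so that $\gl_k\geq 2$ counts as a successive difference exceeding $1$ and so that strict decrease of the parts plus $\gl_k=1$ forces $\gl_i=k-i+1$, is exactly the reading the paper intends.
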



\begin{thebibliography}{1}
	\bibitem{MR2355780} A.~Adem,
	{\it Lectures on the cohomology of finite groups}, Interactions between homotopy theory and algebra, Contemporary Mathematics, 436, American Mathematical Society, Providence, RI, 2007, pp. 317–334, \url{http://www.ams.org/books/conm/436/8415/conm436-8415.pdf}, MR2355780 
	
	\bibitem{MR1460209} A.~Adem,
	{\it Recent developments in the cohomology of finite groups}, Notices of the American Mathematical Society, Vol. 44, No. {\bf 7}, 1997, pp. 806-812, \url{https://www.ams.org/notices/199707/adem.pdf}, MR1460209
	
	
	
	\bibitem{MR1110581} D.~Benson,
	{\it Representations and Cohomology I: Basic representation theory of finite groups and associative algebras}, Cambridge Studies in Advanced Mathematics 30, 1991, pp. xii+224, ISBN: 0-521-36134-6  \url{https://doi.org/10.1017/CBO9780511623615}, MR1110581
	\bibitem{MR1156302} D.~Benson,
	{\it Representations and Cohomology II: Cohomology of groups and modules}, Cambridge Studies in Advanced Mathematics 31, 1991, pp. x+278, ISBN: 0-521-36135-4 \url{https://doi.org/10.1017/CBO9780511623622}, MR1156302
	\bibitem{MR1731415} H.~Cartan, S.~Eilenberg,
	{\it Homological Algebra}, Vol. 19 (PMS-19), Princeton Landmarks in Mathematics, Princeton University Press, Princeton NJ, USA, Reprint of the 1956 original, Published Dec 19,1999, pp. xvi+390, ISBN: 0-691-04991-2 \url{https://press.princeton.edu/books/paperback/9780691049915/homological-algebra-pms-19-volume-19}, MR1731415
	
	\bibitem{MR0722727} E.~M.~Friedlander, B.~J.~Parshall,
	{\it On the Cohomology of Algebraic and Related Finite Groups}, Inventiones Mathematicae Vol. 74, 1983,  pp. 85-117,
	\url{http://eudml.org/doc/143063}, MR0722727
	
	\bibitem{MR0180607} N.~Iwahori, H.~Matsumoto, 
	{\it Several Remarks on Projective Representations of Finite Groups}, Journal of Faculty of Science, University of Tokyo, Section I, Vol. 10, 1964, pp. 129-146, \url{https://repository.dl.itc.u-tokyo.ac.jp/record/39906/files/jfs100204.pdf}, MR0180607
	
	\bibitem{MR1183469} G.~Karpilovsky, 
	{\it Group Representations, Volume 1, Part A: Background Material}, North Holland Mathematical Studies 175 (1992),
	Elsevier Science Publishers, pp. xii+620, ISBN: 978-0-444-88632-X, \url{https://www.twirpx.com/file/3180904/}, MR1183469
	
	\bibitem{MR1215935} G.~Karpilovsky, 
	{\it Group Representations, Volume 2}, North Holland Mathematical Studies 177 (1993), pp. xvi+902, ISBN: 0-444-88726-1, \url{https://www.elsevier.com/books/group-representations/karpilovsky/978-0-444-88726-9}, MR1215935
	
	\bibitem{MR0500961} B.~M.~Mann,
	{\it The Cohomology of the Symmetric Groups}, Transactions of the American Mathematical Society, Vol. 242, Aug. 1978, pp. 157-184, 28 pages, \url{https://doi.org/10.2307/1997731}, \url{https://doi.org/10.1090/S0002-9947-1978-0500961-9}, MR0500961
	
	\bibitem{MR0087657} W.~H.~Mills,
	{\it The automorphisms of the holomorph of a finite abelian group}, Transactions of the American Mathematical Society, Vol. 85 (1957), pp. 1-34, \url{https://doi.org/10.1090/S0002-9947-1957-0087657-1}, MR0087657
	
	\bibitem{MR0315016} D.~Quillen,
	{\it On the Cohomology and K-Theory of the General Linear Groups Over a Finite Field}, Annals of Mathematics, Second Series, Vol. 96, No. {\bf 3}, Nov. 1972, pp. 552-586, 35 pages, \url{https://doi.org/10.2307/1970825}, MR0315016
	
	\bibitem{MR0298694} D.~Quillen,
	{\it The Spectrum of Equivariant Cohomology Ring: I}, Annals of Mathematics, Second Series, Vol. 94, No. {\bf 3}, Nov. 1971, pp. 549-572, 24 pages, \url{https://doi.org/10.2307/1970770}, MR0298694
	
\end{thebibliography}
\end{document}